\documentclass[10pt,a4paper]{article}
\usepackage[a4paper]{geometry}
\usepackage{amssymb,latexsym,amsmath,amsfonts,amsthm}
\usepackage{bm}
\usepackage{graphicx}
\usepackage{epsfig}
\usepackage{tikz}
\usepackage{subfigure}
\usepackage{comment}
\usepgflibrary{decorations.markings}
\usetikzlibrary{decorations.markings}

\newcommand{\Tr}{\mathrm{Tr}\,}

\newcommand{\supp}{\mathrm{supp}}
\newcommand{\ir}{\text{\rm{i}}}
\newcommand{\diag}{\mathrm{diag}\,}
\newcommand{\Imag}{\mathrm{Im}\,}
\newcommand{\Real}{\mathrm{Re}\,}
\newcommand{\area}{\text{\rm{area}}}

\newcommand{\ud}{\,\mathrm{d}}

\newcommand{\crit}{\text{\rm{crit}}}

\newcommand{\Res}{\mathrm{Res}}

\newtheorem{theorem}{Theorem}[section]
\newtheorem{lemma}[theorem]{Lemma}
\newtheorem{proposition}[theorem]{Proposition}

\newtheorem{corollary}[theorem]{Corollary}

\theoremstyle{definition}
\newtheorem{definition}[theorem]{Definition}
\newtheorem{rhp}[theorem]{RH problem}

\theoremstyle{remark}

\newtheorem{remark}[theorem]{Remark}

\numberwithin{equation}{section}

\hyphenation{pa-ra-me-tri-za-tion}

\title{The normal matrix model with a monomial potential, a vector equilibrium problem, 
and multiple orthogonal polynomials on a star}

\date{\today}

\author{Arno B.J. Kuijlaars\footnotemark[1]\, and Abey L\'{o}pez-Garc\'{i}a\footnotemark[2]}

\begin{document}

\maketitle
\renewcommand{\thefootnote}{\fnsymbol{footnote}}
\footnotetext[1]{Department of Mathematics, University of Leuven (KU Leuven), Celestijnenlaan 200B, B-3001 Leuven, Belgium. email: arno.kuijlaars\symbol{'100}wis.kuleuven.be}
\footnotetext[2]{Department of Mathematics and Statistics, University of South Alabama, 411 University Blvd North, ILB 325, Mobile, AL 36688, USA, email: lopezgarcia@southalabama.edu}

\begin{abstract}
We investigate the asymptotic behavior of a family of multiple orthogonal polynomials that is naturally linked with the 
normal matrix model with a monomial potential of arbitrary degree $d+1$. 
The polynomials that we investigate are multiple orthogonal with respect to a system of $d$ analytic weights 
defined on a symmetric $(d+1)$-star centered at the origin. 
In the first part we analyze in detail a vector equilibrium problem involving 
a system of $d$ interacting measures $(\mu_{1},\ldots,\mu_{d})$ supported on star-like 
sets in the plane. We show that in the subcritical regime, the first component $\mu_{1}^{*}$  of the solution
to this problem is the asymptotic zero distribution of the multiple orthogonal polynomials. 
It also characterizes the domain where the eigenvalues in the normal matrix model accumulate, 
in the sense that the Schwarz function associated with the boundary of this domain can be 
expressed explicitly in terms of $\mu_{1}^{*}$. The second part of the paper is devoted to the asymptotic 
analysis of the multiple orthogonal polynomials. The asymptotic results are obtained again in the 
subcritical regime, and they follow from the Deift/Zhou steepest descent analysis of 
a Riemann-Hilbert problem of size $(d+1)\times (d+1)$. 
The vector equilibrium problem and the Riemann-Hilbert problem that we investigate are generalizations 
of those studied recently by Bleher-Kuijlaars in the case $d=2$. 

\smallskip

\textbf{Keywords:} Multiple orthogonal polynomial, vector equilibrium problem, normal matrix model, Riemann-Hilbert problem.



\end{abstract}

\tableofcontents

\section{Introduction}\label{section:introduction}

\subsection{Normal matrix model and Laplacian growth}

The starting point of this work is the analysis of the \textit{normal matrix model}. 
This is a probability measure on the space of $n\times n$ normal matrices $M$ of the form
\begin{equation}\label{eq:NMM}
\frac{1}{Z_{n}}\,e^{-n\, \Tr \mathcal{V}(M)}\ud M.
\end{equation}
A standard expression for the potential $\mathcal{V}$ in \eqref{eq:NMM} is
\begin{equation}\label{potcalV}
\mathcal{V}(M)=\frac{1}{t_{0}}(M M^{*}-V(M)-\overline{V}(M^{*})),\qquad t_{0}>0,
\end{equation}
where $V$ is a polynomial and $\overline{V}$ is the polynomial obtained from $V$ by conjugating 
the coefficients.
This model has attracted considerable interest in recent years and important connections 
have been established with other problems in mathematics and physics, most notably the 
study of Laplacian growth \cite{MPT,LeeTeoWieg,TBAZW,WiegZab}, two-dimensional Coulomb gases \cite{ElbauFelder,HedMak} and 
Toda lattice systems \cite{KKMWZ}, see \cite{Zab} for an overview. Recently, eigenvalue statistics 
in the normal matrix model are considered in \cite{AmHeMa,AmHeMa2,Riser} and 
connections with orthogonal polynomials on the complex plane are also in \cite{BBLM,Elbau,ItsTak,SincYatt}.

According to the theory that has been mainly developed by Wiegmann and Zabrodin, 
the eigenvalues of a matrix $M$ with distribution \eqref{eq:NMM}--\eqref{potcalV} fill 
out, as $n\rightarrow\infty$, a two-dimensional domain $\Omega$ (the \textit{droplet}) 
with uniform density. Moreover, the boundary of $\Omega$ evolves, as the \textit{time} 
parameter $t_{0}>0$ increases, according to the model of Laplacian growth, see \cite{KKMWZ}. 
At a critical time for $t_{0}$, the boundary of $\Omega$ is expected to develop singularities 
and a breakdown takes place in the Laplacian growth evolution.

The model \eqref{eq:NMM}--\eqref{potcalV} requires a certain regularization 
in order to be well-defined since the integral
\begin{equation}\label{partfunc}
Z_{n}=\int e^{-n\, \Tr \mathcal{V}(M)}\ud M
\end{equation}
diverges for any polynomial $V$ of degree $\geq 3$. A natural approach is to use
a cut-off, as proposed by Elbau and Felder \cite{Elbau,ElbauFelder}. This approach consists 
of restricting the model to those normal matrices with spectrum confined in a fixed two-dimensional bounded domain 
containing the droplet $\Omega$.
Another possible approach is to modify $V$ outside of the droplet 
in such a way that \eqref{partfunc} converges, while still keeping the essential features of the model (which take
place on the droplet). This is the approach followed by Ameur-Hedenmalm-Makarov \cite{AmHeMa,AmHeMa2,HedMak}. 

In these models the eigenvalues of $M$ are distributed according to a determinantal point process with a 
correlation kernel that is constructed in terms of polynomials that have a two-dimensional orthogonality.
In the cut-off model of \cite{Elbau,ElbauFelder} with cut-off domain $D$ the orthogonality is 
associated with the scalar product (depending on $n$)
\begin{equation}\label{def:scalarprod}
\langle f,g \rangle_{D}=\iint_{D} f(z)\,\overline{g(z)}\, e^{-n \mathcal{V}(z)} \ud A(z),
\end{equation}
where
\begin{equation}\label{potcalV:2}
\mathcal{V}(z)=\frac{1}{t_{0}}\Big(|z|^{2}-V(z)-\overline{V(z)}\Big),\qquad t_{0}>0,
\end{equation}
and $\ud A$ denotes area measure on $D$. For each $n$, if $(Q_{k,n})_{k=0}^{\infty}$ is the sequence of 
monic polynomials (i.e., $Q_{k,n}(z)=z^{k}+\cdots$) satisfying
\begin{equation}\label{orthogpolyQ}
\langle Q_{k,n}, Q_{j,n}\rangle_{D}=h_{k,n}\,\delta_{j,k},
\end{equation}
then the correlation kernel for the determinantal point process is given by
\[
K_{n}(w,z)=e^{-\frac{n}{2}(\mathcal{V}(z)+\mathcal{V}(w))}\sum_{k=0}^{n-1}\frac{Q_{k,n}(z) \overline{Q_{k,n}(w)}}{h_{k,n}}.
\]
Elbau and Felder also showed in \cite{Elbau,ElbauFelder} that for any polynomial $V$ of the form
\begin{equation}\label{def:V}
V(z)=\sum_{k=1}^{d+1}\frac{t_{k}}{k} z^{k},\qquad t_{k}\in\mathbb{C},
\end{equation}
with $t_{1}=0$ and $|t_{2}|<1$, there exists a compact domain $D$ with $0$ in its interior such that for all 
$t_{0}>0$ small enough, the eigenvalues of $M$ in the model with cut-off $D$ indeed accumulate in a domain 
$\Omega\subset D$ as $n\rightarrow\infty$. The boundary $\partial\Omega$ of $\Omega=\Omega(t_{0};t_{1},\ldots,t_{d+1})$ 
is moreover characterized as the only positively oriented polynomial curve of degree at most $d$ satisfying
\begin{equation} \label{eq:harmmoments}
\frac{1}{2\pi\ir}\oint_{\partial \Omega} \frac{\overline{z}}{z^{k}}\,\ud z=\left\{
\begin{array}{@{\hspace{0cm}}ll}
t_{k}, & k\in\{1,\ldots,d+1\},\\[0.5em]
0, & k\in\mathbb{N}\setminus\{1,\ldots,d+1\},
\end{array}
\right.
\end{equation}
and enclosing a domain with area $\pi t_{0}$. The equations \eqref{eq:harmmoments} are
characteristic for the model of Laplacian growth \cite{WiegZab}.

\subsection{Approach based on sesquilinear forms} 

Motivated by certain boundary integral estimates in the cut-off model and an analysis of the algebraic properties of 
the scalar product \eqref{def:scalarprod}, Bleher and Kuijlaars introduced in \cite{BleherKuij} a new construction which 
replaces the cut-off domain $D$ by a system of unbounded contours in the complex plane. This construction leads to a sequence 
of sesquilinear forms $\langle \cdot,\cdot\rangle_{n}:\mathcal{P}\times \mathcal{P}\longrightarrow\mathbb{C}$ defined on 
the vector space $\mathcal{P}$ of all polynomials in one variable. Such sesquilinear forms are defined as follows. Given 
an integer $d\geq 2$, let us consider the directions at infinity
\begin{equation}\label{pointsinf}
\infty_{\ell}=e^{\frac{(2\ell+1)\pi\ir}{d+1}}\infty,  \qquad \ell = 0, \ldots, d,
\end{equation}
and let $\Gamma_{\ell}$ denote any unbounded oriented contour from $\infty_{\ell-1}$ to $\infty_{\ell}$,
see Figure~\ref{contoursGamma} for the case $d = 3$. 
By $\overline{\Gamma}_{\ell}$ we indicate the image of $\Gamma_{\ell}$ under the map $z\mapsto \overline{z}$, with the induced orientation. Then, given $t_{0}>0$ and a polynomial $V$ as in \eqref{def:V} with $t_{d+1}>0$, the sesquilinear forms are defined by
\begin{equation}\label{def:sesqform}
\langle f,g \rangle_{n}=\sum_{j=0}^{d}\sum_{k=0}^{d} C_{j,k} \int_{\Gamma_{j}} \ud z \int_{\overline{\Gamma}_{k}}\ud w\, f(z)\,\overline{g}(w)\,e^{-\frac{n}{t_{0}}\,(wz-V(z)-\overline{V}(w))},\qquad f,g\in\mathcal{P},
\end{equation}
where $\overline{g}$ is the polynomial obtained from $g$ by conjugating the coefficients
and $C =(C_{j,k})_{j,k=0}^{d}$ is a complex matrix of coefficients.  

\begin{figure}
\begin{center}
\begin{tikzpicture}[scale=1]
\draw [very thin,>=stealth,->] (-3,0) -- (3,0);
\draw [very thin,>=stealth,->] (0,-3) -- (0,3);
\draw [line width =1, smooth, postaction = decorate, decoration = {markings, mark = at position .6 with {\arrow[black]{>};}}] (3.3,-3) .. controls (0.4,0) .. node [near start,right=3pt]{$\Gamma_{0}$} (3.3,3);
\draw [rotate=90, line width =1, smooth, postaction = decorate, decoration = {markings, mark = at position .6 with {\arrow[black]{>};}}] (3.3,-3) .. controls (0.4,0) .. node [near start,above=3pt]{$\Gamma_{1}$} (3.3,3);
\draw [rotate=180, line width =1, smooth, postaction = decorate, decoration = {markings, mark = at position .6 with {\arrow[black]{>};}}] (3.3,-3) .. controls (0.4,0) .. node [near start,left=3pt]{$\Gamma_{2}$} (3.3,3);
\draw [rotate=270, line width =1, smooth, postaction = decorate, decoration = {markings, mark = at position .6 with {\arrow[black]{>};}}] (3.3,-3) .. controls (0.4,0) .. node [near start,below=3pt]{$\Gamma_{3}$} (3.3,3);
\draw (3.3,-3.2) node[scale=1.0, right]{$\infty_3 = \infty_{-1}$};
\draw (3.3,3.2) node[scale=1.0, right]{$\infty_0$};
\draw (-3.3,3.2) node[scale=1.0, left]{$\infty_1$};
\draw (-3.3,-3.2) node[scale=1.0, left]{$\infty_2$};
\end{tikzpicture}
\end{center}
\caption{The contours $\Gamma_{\ell}$ and the points at infinity $\infty_{\ell}$ for $\ell =0,1, \ldots,d$ 
in the case $d=3$.}
\label{contoursGamma}
\end{figure}
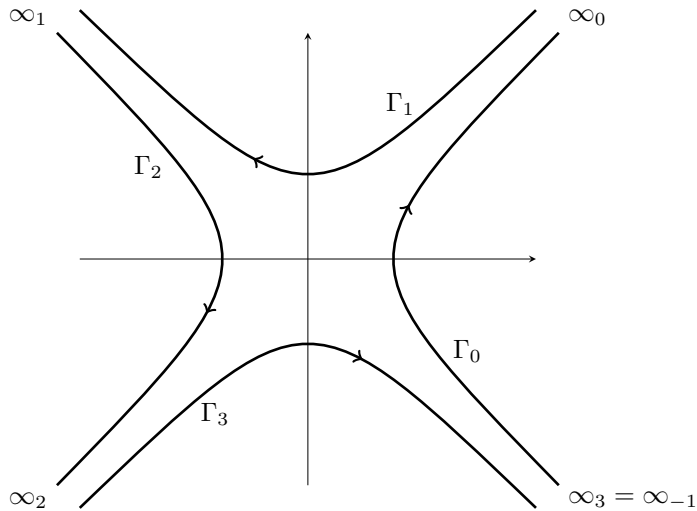

The matrix $C$ is naturally 
assumed in \cite{BleherKuij} to be Hermitian and circulant, in order to guarantee the hermiticity and rotational invariance 
(with respect to the angle $2\pi/(d+1)$) of the sesquilinear forms. In the cubic monomial case
\begin{equation}\label{cubicpot}
V(z)=\frac{t_{3}}{3}\,z^{3},\qquad t_{3}>0,
\end{equation}
an asymptotic analysis of the orthogonal polynomials associated with the corresponding sesquilinear forms was performed in 
\cite{BleherKuij}, for a suitable choice of the matrix $C$. The choice of $C$ is governed by the desire to recover the main 
features of the normal matrix model. In the cubic case \eqref{cubicpot}, it was shown in \cite{BleherKuij} that it is indeed 
possible to find a matrix $C=(C_{j,k})_{j,k=0}^{2}$ such that, in a subcritical regime, the orthogonal polynomials associated 
with the corresponding sesquilinear forms have the same asymptotic behavior as the orthogonal polynomials associated with 
the corresponding scalar product \eqref{def:scalarprod}--\eqref{potcalV:2}. 

The study in \cite{BleherKuij} involved a characterization of the orthogonal polynomials as multiple orthogonal
polynomials with respect to two orthogonality weights defined on the sets $\Gamma_0$, $\Gamma_1$ and $\Gamma_2$, and
built out of Airy functions.  The multiple orthogonality implies a characterization of the orthogonal polynomials
in terms of a $3 \times 3$ matrix valued Riemann-Hilbert problem, which is analyzed  in 
the large $n$ limit. A major role in the asymptotic analysis of \cite{BleherKuij} is played by
a vector equilibrium problem for two measures and an associated three sheeted
Riemann surface. The first component $\mu_1^*$ of the minimizer of the vector equilibrium problem gives
the limiting distribution of the zeros of the orthogonal polynomials. The support of this measure is
a three-star
\[ \supp(\mu_1^*) = [0, x^*] \cup [0, e^{\frac{2\pi \ir}{3}} x^*] \cup [0, e^{\frac{4\pi \ir}{3}} x^*] \]
for a certain $x^*$ that depends on $t_0$ and $t_3$. It is also shown in \cite{BleherKuij}
how the domain $\Omega$  that evolves according to the Laplacian growth
can be recovered from $\mu_1^*$.
This only applies to the subcritical regime, that
is for $t_0$ less than a critical value $t_{0,\crit}$,  depending on $t_3 >0$.

This approach based on sesquilinear forms and multiple orthogonality is extended to the  
supercritical regime in the recent work \cite{KuiTov}, where it is found that 
an analogue of the domain $\Omega$ can still be defined but this domain is no longer growing
as $t_0$ increases, but instead it shrinks to a point at a second critical value.
Lee et al.\ \cite{LeeTeoWieg2} argue that the
Painlev\'e I equation provides a model for the shock phenomomenon at the critical value
$t_{0,\crit}$. This connection with Painlev\'e I is also likely to appear, if one would try to
extend the Riemann-Hilbert analysis from \cite{BleherKuij} to the critical case. Indeed,
it would then be necessary to construct a local parametrix out of the Lax pair solutions 
associated with Painlev\'e I, as it is done for example in \cite{DuiKui}. However, this has
not been worked out yet.

\subsection{Aim of the paper}

In this paper we study monomial $V$ of higher degree, that is,
\begin{equation}\label{monomialpot}
V(z)=\frac{t_{d+1}}{d+1}\,z^{d+1},\qquad t_{d+1}>0,
\end{equation}
where $d \geq 3$.  
Our initial expectation was that the approach of \cite{BleherKuij} for the case $d=2$ would
carry through, with proper but non-essential modifications. That is, we expected
a formulation of the orthogonal polynomials as multiple orthogonal polynomials
with $d$ weights built out of solutions of the differential equation
\begin{equation} \label{eq:pODE} 
	p^{(d)}(z) = (-1)^d z p(z) 
	\end{equation}
of order $d$, which for $d=2$ is the Airy differential equation. This gives rise to a
RH problem of size $(d+1) \times (d+1)$. An ingredient for the asymptotic analysis
of this RH problem would be  an appropriate vector equilibrium problem with $d$ measures 
which is related to a $(d+1)$-sheeted Riemann surface. The first component $\mu_1^*$ 
of the minimizer of the vector equilibrium problem would be the limiting zero distribution of the
orthogonal polynomials. The support of $\mu_1^*$ is a $(d+1)$-star
\[ \supp(\mu_1^*) = \bigcup_{j=0}^d  [0, \omega^j x^*], \qquad \omega = \exp(\tfrac{2\pi \ir}{d+1}) \]
for some $x^* > 0$, and from $\mu_1^*$ one would recover the domain 
$\Omega = \Omega(t_0; 0, \ldots, 0, t_{d+1}, 0, \ldots)$
satisfying \eqref{eq:harmmoments} and evolving according to the Laplacian
growth model.

It came somewhat as a surprise to us that for the case $d=3$ we could no longer 
select a circulant Hermitian matrix $C = (C_{j,k})_{j,k=0}^3$ 
that gives a sesquilinear form \eqref{def:sesqform} for which we can do
an asymptotic analysis of the corresponding orthogonal polynomials in the way
described above.  In addition to being circulant and Hermitian, the matrix $C$ 
should give rise to multiple orthogonality involving
on the interval $[0, x^*]$ the recessive solution of \eqref{eq:pODE} as $z \to \infty$, 
$\arg z = 0$. This yields a number of conditions  on the coefficients $C_{j,k}$
which turned out to be incompatible with the conditions to be circulant and Hermitian.
We do not have a conceptual proof why this happens.

\subsection{From 2D orthogonality to orthogonality on contours}

For this reason, we had to devise another approach, which is inspired by the work 
of Balogh et al.\ \cite{BBLM}, and consists of first
transforming the two-dimensional orthogonality on $D$ given by \eqref{def:scalarprod}--\eqref{potcalV:2} 
into orthogonality over contours by means of Green's theorem. The contours consist of $\partial D$
together with a certain number of contours within $D$ that for the monomial potential \eqref{monomialpot}
we take as a $(d+1)$-star
\begin{equation}\label{def:starSigma}
  \Sigma := \{ z \in D \mid z^{d+1} \in \mathbb R^+ \},
\end{equation}
see Figure \ref{fig:OmegaD}. We let
\begin{equation} \label{def:omega} 
	\omega := \omega_{d+1} = \exp\left( \frac{2\pi \ir}{d+1}\right) 
	\end{equation}
be the primitive $(d+1)$-st root of unity, and this notation will be used throughout the paper.
We also continue to use the notation $\infty_{\ell}$ as in  \eqref{pointsinf} 
and the contours $\Gamma_{\ell}$ that appear in \eqref{def:sesqform},
where  $\ell$ is considered modulo $d+1$, so that
\[ \infty_{-\ell} = \infty_{d+1-\ell}, \qquad \Gamma_{-\ell} = \Gamma_{d+1-\ell}. \]

\begin{proposition}\label{prop:equivorthog}
Let $V(z)$ be the monomial \eqref{monomialpot} with $d \geq 2$.
Let $D$ be a simply connected two-dimensional Jordan domain with $0$ in its interior, that is
invariant under rotation $z \mapsto \omega z$, as well as under reflection $z \mapsto \bar{z}$ in
the real axis and let $\Sigma$ be as in \eqref{def:starSigma}.
Then, for a polynomial $Q$ and an integer $j \geq 0$, we have
\begin{equation}\label{contourort}
	2\ir\iint_{D} Q(z)\,\overline{z}^{j}\, e^{-\frac{n}{t_{0}}(|z|^2-V(z)-\overline{V(z)})}\, \ud A(z)
	=
	\int_{\Sigma} Q(z)\,w_{j,n}(z)\,\ud z+\oint_{\partial D} Q(z)\, \widetilde{w}_{j,n}(z)\,\ud z,
\end{equation}
where the functions $w_{j,n}(z)$ and $\widetilde{w}_{j,n}(z)$ have the following expressions:
\begin{align}
	w_{j,n}(z) & = \int_{\Gamma_{-\ell}} s^{j}\,e^{-\frac{n}{t_{0}}(sz-V(s)-V(z))} \ud s,
		\qquad \arg z = \frac{2\pi}{d+1} \ell, \label{def:wjn}\\
	\widetilde{w}_{j,n}(z) & = \int_{\infty_{-\ell-1}}^{\overline{z}} s^{j}\,e^{-\frac{n}{t_{0}}(sz-V(s)-V(z))} \ud s,
		\qquad  \frac{2\pi}{d+1} \ell < \arg z < \frac{2\pi}{d+1} (\ell+1),
		\label{def:tildewjn}
\end{align}
where $\ell = 0, \ldots, d+1$. In \eqref{contourort}, each segment of 
$\Sigma$ is given the outward orientation (i.e., away from $0$) and 
$\partial D$ is given the positive counterclockwise orientation.
\end{proposition}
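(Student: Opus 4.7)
The proof rests on applying the complex form of Stokes' theorem,
\[
\iint_{R}\partial_{\bar z}F\,\ud A=\frac{1}{2\ir}\oint_{\partial R}F\,\ud z,
\]
to a carefully chosen antiderivative of the integrand on the LHS of \eqref{contourort}. My plan is to take $F(z)=Q(z)\,\widetilde{w}_{j,n}(z)$, whose construction via the integrals \eqref{def:tildewjn} uses different endpoints $\infty_{-\ell-1}$ in different angular sectors, thereby producing jumps across $\Sigma$ that will give rise to the $\Sigma$-contribution, while the $\partial D$-contribution appears from the outer boundary.

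On each open sector $D_{\ell}:=\{z\in D:\tfrac{2\pi\ell}{d+1}<\arg z<\tfrac{2\pi(\ell+1)}{d+1}\}$, the integrand $s^{j}e^{-\frac{n}{t_{0}}(sz-V(s)-V(z))}$ defining $\widetilde{w}_{j,n}$ is entire in $s$ and holomorphic in $z$; it decays at $s=\infty_{-\ell-1}$ because $(d+1)\arg\infty_{-\ell-1}\equiv\pi\pmod{2\pi}$ makes $\mathrm{Re}(V(s))\to-\infty$ there. So $\widetilde{w}_{j,n}$ is smooth on $D_\ell$, and differentiating under the integral sign with respect to $\bar z$ (only the upper limit $\bar z$ contributes, since the integrand has no $\bar z$ dependence) yields
\[
\partial_{\bar z}\bigl[Q(z)\,\widetilde{w}_{j,n}(z)\bigr]=Q(z)\,\bar z^{\,j}\,e^{-\frac{n}{t_{0}}(|z|^{2}-V(z)-\overline{V(z)})},
\]
which is precisely the integrand on the LHS of \eqref{contourort}, using that $\overline{V(z)}=V(\bar z)$ since the coefficient $t_{d+1}$ of $V$ is real.

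Next I apply Stokes' theorem separately on each $D_\ell$ (a Jordan domain, thanks to the rotational and reflection symmetries of $D$) and sum over $\ell=0,\ldots,d$. The bulk integrals combine to $\frac{1}{2\ir}$ times the desired LHS (divided by $2\ir$). On the boundary, each $\partial D_\ell$ consists of an arc of $\partial D$ and the two bounding rays of $\Sigma$. The $\partial D$-arcs assemble into $\oint_{\partial D}Q\,\widetilde{w}_{j,n}\,\ud z$. On the ray $\arg z=\tfrac{2\pi\ell}{d+1}$, the sectors $D_{\ell-1}$ and $D_\ell$ give opposite orientations, and the net contribution is
\[
\int_{\Sigma,\,\text{outward}}Q(z)\bigl[\widetilde{w}_{j,n}^{+}(z)-\widetilde{w}_{j,n}^{-}(z)\bigr]\,\ud z,\qquad \widetilde{w}_{j,n}^{\pm}\ \text{defined via}\ \infty_{-\ell-1},\ \infty_{-\ell}.
\]
Since the integrand is entire in $s$ and decays at both $\infty_{-\ell-1}$ and $\infty_{-\ell}$, the difference integral $\int_{\infty_{-\ell-1}}^{\infty_{-\ell}}s^{j}e^{-\frac{n}{t_{0}}(sz-V(s)-V(z))}\,\ud s$ is path-independent and can be realized along $\Gamma_{-\ell}$, which identifies the jump with $w_{j,n}(z)$ as given by \eqref{def:wjn}. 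Multiplying the resulting identity by $2\ir$ yields \eqref{contourort}.

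The main obstacle is not analytic but combinatorial: tracking orientations consistently across the $(d+1)$ sectors and verifying that the sectorial choice of endpoint $\infty_{-\ell-1}$ in \eqref{def:tildewjn} is exactly compatible with the direction of $\Gamma_{-\ell}$ (from $\infty_{-\ell-1}$ to $\infty_{-\ell}$) used in \eqref{def:wjn}. Once the conventions are pinned down, the jump calculation and the identification with $w_{j,n}$ are essentially tautological.
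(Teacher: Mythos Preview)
Your proof is correct and is essentially the same as the paper's: both partition $D$ into the $d+1$ angular sectors $D_\ell$, apply Green's theorem with an antiderivative of the form $\int_{\infty_{-\ell-1}}^{\bar z}s^{j}e^{-\frac{n}{t_0}(sz-V(s))}\,\ud s$ on each sector, and then sum to get the $\partial D$ contribution plus the jump across $\Sigma$, which is identified with $w_{j,n}$ via the contour $\Gamma_{-\ell}$. The only cosmetic difference is that the paper factors out $e^{\frac{n}{t_0}V(z)}$ and works with the auxiliary functions $F_{j,\ell}$, whereas you work directly with $\widetilde w_{j,n}$.
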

\begin{proof}
Let us define the functions
\begin{equation} \label{def:Fjell}
F_{j,\ell}(z)=\int_{\infty_{-\ell-1}}^{\overline{z}} s^{j}\, e^{-\frac{n}{t_{0}}(sz-V(s))} \ud s,
\qquad j\geq 0,\qquad 0\leq \ell\leq d.
\end{equation}
Let $D_{\ell}$ be the part of $D$ given by
\begin{equation} \label{sectorD1}
	D_{\ell} = \{ z \in D \mid   \frac{2\pi}{d+1} \ell < \arg z < \frac{2\pi}{d+1} (\ell+1) \}.
	\end{equation}
Noting that 
\[ \frac{\partial F_{j,\ell}}{\partial \overline{z}} = \overline{z}^j e^{-\frac{n}{t_0}(|z|^2 - \overline{V(z)})} \]
we find by Green's theorem in the complex plane
\begin{align} \nonumber 
\iint_{D_\ell} Q(z)\,\overline{z}^{j}\, e^{-\frac{n}{t_{0}}(|z|^2-V(z)-\overline{V(z)})} \ud A(z) 
	& = \iint_{D_{\ell}} Q(z)\, e^{\frac{n}{t_0} V(z)}\,\frac{\partial F_{j,\ell}(z)}{\partial \overline{z}} \ud A(z) \\
	\label{Stokes}
  &=\frac{1}{2\ir} \oint_{\partial D_{\ell}} Q(z)\,e^{\frac{n V(z)}{t_{0}}} F_{j,\ell}(z)\,\ud z.
\end{align}
The boundary $\partial D_{\ell}$ consists of the part of $\partial D$ with 
$\frac{2\pi}{d+1} \ell < \arg z < \frac{2\pi}{d+1}(\ell+1)$ and 
the parts in $\Sigma$ with $\arg z = \frac{2\pi}{d+1} \ell$
and $\arg z = \frac{2 \pi}{d+1}(\ell+1)$, the latter one traversed towards the origin.

We take the sum of \eqref{Stokes} for $\ell = 0,\ldots, d$ and we find that
the contribution of $\partial D$  is
\begin{equation} \label{int:dD} 
	\frac{1}{2\ir}\oint_{\partial D} Q(z) \widetilde{w}_{j,n}(z) \ud z 
	\end{equation}
since $e^{\frac{n}{t_0} V(z)} F_{j,\ell}(z) = \widetilde{w}_{j,n}(z)$ for $z \in \partial D_{\ell} \setminus \Sigma$, 
see \eqref{def:tildewjn} and \eqref{def:Fjell}.

Each segment in $\Sigma$ is part of the boundary of two subdomains, say  $D_{\ell}$ and $D_{\ell-1}$
for some $\ell$. We observe by \eqref{def:wjn} and \eqref{def:Fjell} that
\[ e^{\frac{n}{t_0} V(z)} ( F_{j, \ell}(z) - F_{j, \ell-1}(z)) = w_{j,n}(z) \qquad \text{for }  \arg z = \frac{2\pi}{d+1} \ell,
\]
since by definition $\Gamma_{-\ell}$ is a contour from $\infty_{-\ell-1}$ to $\infty_{-\ell}$.
Thus the total contribution of $\Sigma$ in the sum of \eqref{Stokes}
\begin{equation} \label{int:Sigma} 
	\frac{1}{2\ir}\int_{\Sigma} Q(z) w_{j,n}(z) \ud z. 
	\end{equation}
In total we get the sum of \eqref{int:dD} and \eqref{int:Sigma},
which proves \eqref{contourort}.
\end{proof}

\begin{figure}[t]
\begin{center}
\begin{tikzpicture}[scale=1.2]
\draw [postaction=decorate, decoration = {markings, mark = at position .63 with {\arrow[black]{>};}}, 
scale=8, line width=1,domain=0:360, smooth, variable=\t, black] 
plot ({0.2*cos(\t)+0.05*cos(3*\t)},{0.2*sin(\t)-0.05*sin(3*\t)});
\draw [postaction=decorate, decoration = {markings, mark = at position .39 with {\arrow[black]{>};}}, 
scale=8, line width=1,domain=0:360, smooth, variable=\t, black] 
plot ({0.2*cos(\t)+0.05*cos(3*\t)},{0.2*sin(\t)-0.05*sin(3*\t)});
\draw [postaction=decorate, decoration = {markings, mark = at position .12 with {\arrow[black]{>};}}, 
scale=8, line width=1,domain=0:360, smooth, variable=\t, black] 
plot ({0.2*cos(\t)+0.05*cos(3*\t)},{0.2*sin(\t)-0.05*sin(3*\t)});
\draw [postaction=decorate, decoration = {markings, mark = at position .88 with {\arrow[black]{>};}}, 
scale=8, line width=1,domain=0:360, smooth, variable=\t, black] 
plot ({0.2*cos(\t)+0.05*cos(3*\t)},{0.2*sin(\t)-0.05*sin(3*\t)});
\draw [line width=1, postaction=decorate, decoration = {markings, mark = at position .5 with {\arrow[black]{>};}}] (0,0) -- (2,0);
\draw [line width=1, postaction=decorate, decoration = {markings, mark = at position .5 with {\arrow[black]{>};}}] (0,0) -- (0,2);
\draw [line width=1, postaction=decorate, decoration = {markings, mark = at position .5 with {\arrow[black]{>};}}] (0,0) -- (-2,0);
\draw [line width=1, postaction=decorate, decoration = {markings, mark = at position .5 with {\arrow[black]{>};}}] (0,0) -- (0,-2);
\draw (1,0) node[scale=0.8, above=0.3pt]{$\Sigma$};
\draw (0,1) node[scale=0.8, left=0.3pt]{$\Sigma$};
\draw (-1,0) node[scale=0.8, below=0.3pt]{$\Sigma$};
\draw (0,-1) node[scale=0.8, right=0.3pt]{$\Sigma$};
\draw (2.15,0) node[scale=0.8]{$\widehat{x}$};
\draw (0,2.2) node[scale=0.8]{$\ir \widehat{x}$};
\draw (-2.25,0) node[scale=0.8]{$-\widehat{x}$};
\draw (0,-2.2) node[scale=0.8]{$-\ir \widehat{x}$};
\draw (1.2,1.0) node[scale=0.8]{$\partial D$};
\filldraw [black] (2,0) circle (1pt);
\filldraw [black,rotate=90] (2,0) circle (1pt);
\filldraw [black,rotate=180] (2,0) circle (1pt);
\filldraw [black,rotate=270] (2,0) circle (1pt);
\end{tikzpicture}
\end{center}
\caption{A possible domain $D$ and the star $\Sigma$ in case $d=3$. 
The star has endpoints $\omega^\ell \widehat{x}$ for $\ell=0, \ldots, d$.}
\label{fig:OmegaD}
\end{figure}
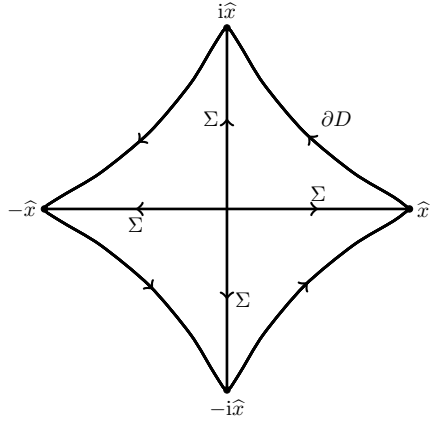

It is a consequence of Proposition \ref{prop:equivorthog} that the 
orthogonal polynomials $Q_{k,n}$ also satisfy
\begin{equation} \label{altdefQkn} 
	\int_{\Sigma} Q_{k,n}(z) w_{j,n}(z) \ud z + \oint_{\partial D} Q_{k,n}(z) \widetilde{w}_{j,n}(z) \ud z = 0,
	\qquad j = 0, \ldots, k-1.
	\end{equation}
	
\subsection{Orthogonality on $(d+1)$-star and multiple orthogonality}
Guided by the idea that in the subcritical case, the domain $D$ can be chosen in such a
way that boundary contributions are negligible in the large $n$ limit, we now 
take the approach to drop the integral over $\partial D$, and consider instead of the
polynomials $Q_{k,n}$ a new set of polynomials $P_{k,n}$ that are orthogonal on the
$(d+1)$-star $\Sigma$ with respect to the weights \eqref{def:wjn}. 
Since these polynomials will be the focus of the present paper we introduce them
with the following formal definition.

\begin{definition} \label{def:orthogpol} Let $d \geq 2$, and $\widehat{x}, t_0, t_{d+1} > 0$.
Then for every $k, n \in \mathbb N$, we let $P_{k,n}(z)=z^{k}+\cdots$ be the 
monic polynomial that satisfies
\begin{equation}\label{ortcond:Pkn}
	\int_{\Sigma} P_{k,n}(z)\,w_{j,n}(z)\,\ud z=0,\qquad j=0,\ldots,k-1,
\end{equation}
where 
\[ \Sigma = \bigcup_{j=0}^d [0, \omega^j \widehat{x}] \]
and with $w_{j,n}(z)$ given in \eqref{def:wjn}.
\end{definition}
The polynomials  depend on the endpoint $\widehat{x}$ of the star. In what follows we are going
to take it in a suitable way, depending on $t_0$ and $t_{d+1}$.

Note that $w_{j,n}$ is analytic on each segment of $\Sigma$.  If we put
\begin{equation} \label{def:pell} 
	p_{\ell}(z) = \frac{1}{2\pi \ir} \int_{\Gamma_{\ell}} e^{-sz + \frac{1}{d+1} s^{d+1}} \ud s 
	\end{equation}
then it is easy to see from \eqref{def:wjn} that
\begin{equation} \label{w0nandpell} 
	w_{0,n}(z) = 2 \pi \ir d_n e^{\frac{n}{t_0} V(z)} p_{-\ell}(c_n z),  \quad \arg z = \frac{2\pi }{d+1} \ell. 
	\end{equation}
with constants 
\begin{equation}\label{def:dn:cn}
 c_{n}=\Big(\frac{n^{d}}{t_{0}^{d} t_{d+1}}\Big)^{\frac{1}{d+1}}, \qquad
 d_{n}=  \Big(\frac{t_{0}}{n t_{d+1}}\Big)^{\frac{1}{d+1}},
\end{equation}
and for every $j \geq 1$,
\begin{equation} \label{wjnandpell} 
	w_{j,n}(z) = 2 \pi \ir (-1)^j d_n^{j+1}  e^{\frac{n}{t_0} V(z)} p_{-\ell}^{(j)}(c_nz), \quad \arg z = \frac{2\pi }{d+1} \ell.
\end{equation}
The functions \eqref{def:pell} satisfy \eqref{eq:pODE} and $p_{-\ell}$ is the recessive solution of \eqref{eq:pODE}
as $z \to \infty$ in the sector $ \frac{2\pi }{d+1}(\ell - \frac{1}{2}) < \arg z < \frac{2\pi }{d+1}(\ell + \frac{1}{2})$.

Inserting the definition  \eqref{def:wjn} of $w_{j,n}$ we can rewrite the orthogonality
\eqref{ortcond:Pkn} in terms of a double integral
\[ 
\sum_{\ell = 0}^d \int_0^{\omega^{\ell} \widehat{x}} \ud z \int_{\Gamma_{-\ell}} \ud s P_{k,n}(z) s^j e^{-\frac{n}{t_0} (sz - V(z) - V(s))} = 0,
	\qquad j=0, 1, \ldots, k-1.
\]
This can be viewed as orthogonality with respect to a sesquilinear form
\begin{equation} \label{def:newsesq} 
	\langle f, g \rangle = \sum_{\ell=0}^d \int_0^{\omega^{\ell} \widehat{x}} \ud z \int_{\Gamma_{-\ell}} \ud s 
	f(z) \overline{g}(s) e^{-\frac{n}{t_0} (sz - V(z) - V(s))},
	 \end{equation}
comparable to \eqref{def:sesqform}.  The new sesquilinear form \eqref{def:newsesq}
is not Hermitian, but it does satisfy the structure relation
\begin{equation} \label{structure}
	t_0 \langle f, g'\rangle - n \langle z f, g \rangle + n \langle f, V'g \rangle=0, 
\end{equation}
as can be seen from an integration by parts on the $s$-integral in \eqref{def:newsesq},
cf.\ also  \cite[section 2.1]{BleherKuij}. 

As in \cite{BleherKuij} it follows from \eqref{structure} that the polynomials $P_{n,n}$
are multiple orthogonal with respect to the system 
of $d$ weights $\{w_{j,n}\}_{j=0}^{d-1}$.

\begin{lemma}\label{lemma:multorthog}
The polynomials $P_{n,n}$ are characterized by the multiple orthogonality conditions
\begin{equation}\label{eq:multorthog}
	\int_{\Sigma}P_{n,n}(z)\,z^{k}\,w_{j,n}(z)\,\ud z=0,\qquad k=0,\ldots,\Big\lceil\frac{n-j}{d}\Big\rceil-1,\qquad j=0,\ldots,d-1.
\end{equation}
\end{lemma}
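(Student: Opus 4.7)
The plan is to recast the integral appearing in \eqref{eq:multorthog} in terms of the sesquilinear form \eqref{def:newsesq}, and then to iterate the structure relation \eqref{structure} starting from the single-indexed orthogonality \eqref{ortcond:Pkn}, in order to trade powers of $z$ sitting in the second argument of the form for powers sitting in the first. First I would observe, via the integral representation \eqref{def:wjn} of $w_{j,n}$ and a swap of the order of integration, that
\[
\int_{\Sigma} P_{n,n}(z)\, z^{k}\, w_{j,n}(z)\, \ud z \;=\; \langle z^k P_{n,n},\, z^j \rangle.
\]
Proving \eqref{eq:multorthog} therefore reduces to showing $\langle z^k P_{n,n}, z^j \rangle = 0$ whenever $0 \le j \le d-1$ and $0 \le k \le \lceil (n-j)/d \rceil - 1$. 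The base case $k=0$ is just a restatement of \eqref{ortcond:Pkn}: $\langle P_{n,n}, z^m\rangle = 0$ for $m = 0,\ldots, n-1$.

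Next, with $V(z) = \frac{t_{d+1}}{d+1} z^{d+1}$ one has $V'(z) = t_{d+1} z^{d}$, so \eqref{structure} specializes, upon taking $f = z^k P_{n,n}$ and $g(z) = z^m$, to the three-term identity
\[
\langle z^{k+1} P_{n,n},\, z^m \rangle \;=\; \frac{t_0\, m}{n}\, \langle z^k P_{n,n},\, z^{m-1} \rangle \;+\; t_{d+1}\, \langle z^k P_{n,n},\, z^{m+d} \rangle.
\]
I would then proceed by induction on $k$, aiming to establish that $\langle z^k P_{n,n}, z^m \rangle = 0$ whenever $0 \le m \le n - 1 - kd$. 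Given this at level $k$, both summands on the right of the identity vanish provided $m \le n - kd$ (the case $m=0$ being absorbed by the coefficient $m$) and $m + d \le n - 1 - kd$, which jointly yield the bound $m \le n - 1 - (k+1)d$ at level $k+1$. Setting $m = j \in \{0,\ldots,d-1\}$, the resulting constraint $k \le \lfloor (n-1-j)/d \rfloor$ is easily checked to coincide with $k \le \lceil (n-j)/d \rceil - 1$ by a short case analysis on $n-j \bmod d$.

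For the \emph{characterization} clause, I would verify by a telescoping count that $\sum_{j=0}^{d-1} \lceil (n-j)/d \rceil = n$, so \eqref{eq:multorthog} imposes exactly $n = \deg P_{n,n}$ linear conditions on $P_{n,n}$; together with the monic normalization and the non-degeneracy inherited from the well-posedness of \eqref{ortcond:Pkn}, these conditions pin down $P_{n,n}$ uniquely. I do not anticipate a serious obstacle: the argument is purely algebraic once \eqref{structure} is admitted, and the only care needed lies in the index bookkeeping, in particular in the floor/ceiling identity that converts the induction bound into the range stated in \eqref{eq:multorthog}.
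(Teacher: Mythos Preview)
Your approach is correct and is exactly the argument the paper has in mind: the proof in the paper consists solely of the sentence ``This follows from the structure relation \eqref{structure} in the same manner as in [Lemma~5.1, Bleher--Kuijlaars]'', and what you have written is precisely that argument spelled out---rewriting the integral as $\langle z^k P_{n,n}, z^j\rangle$, specializing \eqref{structure} with $V'(z)=t_{d+1}z^d$, and inducting on $k$ to push the degree from the second slot to the first. The only point worth tightening is the ``characterization'' direction: rather than invoking non-degeneracy of \eqref{ortcond:Pkn}, you can run the same recursion in reverse (using $t_{d+1}\neq 0$) to show that the $n$ conditions \eqref{eq:multorthog} imply the $n$ conditions \eqref{ortcond:Pkn}, which makes the equivalence unconditional.
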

\begin{proof}
This follows from the structure relation \eqref{structure} in the same manner as in  
\cite[Lemma 5.1]{BleherKuij}.
\end{proof}

Multiple orthogonal polynomials on star-like sets have been studied in recent years under different 
frameworks. Some references on this subject are \cite{AptKalVanIseg,AptKalSaff,DelLop,Lop,Romdhane}. 
See also \cite{EiVar,HeSaff} for the related study of Faber polynomials associated with hypocycloidal domains and stars.

Lemma \ref{lemma:multorthog} allows us to obtain the asymptotic properties of the polynomials 
$P_{n,n}$ through the analysis of a matrix-valued Riemann-Hilbert problem of size $(d+1)\times (d+1)$ 
that encodes the multiple orthogonality conditions \eqref{eq:multorthog}. The first characterization 
of multiple orthogonality in terms of a RH problem appeared in \cite{VAGK}. The RH problem we will 
analyze is described in Section \ref{section:firstRHP}.

The analysis is basically along the same lines as in \cite{BleherKuij}. As in \cite{BleherKuij}
a major role is played by a vector equilibrium problem that we will describe first. 

\section{Statement of results}

In this section we state our main results.

\subsection{The vector equilibrium problem}

In this paper we will obtain the asymptotic behavior of the polynomials $P_{n,n}$ as $n \to \infty$
in a \textit{subcritical} regime. It is characterized  through the solution of a vector equilibrium problem. 
In order to describe this vector equilibrium problem we need to introduce the star-like sets,
where we recall that $\omega = \omega_{d+1} = \exp(2\pi \ir/(d+1))$,
\begin{equation} \label{def:Sigma1}
	\Sigma_{1} := \bigcup_{\ell=0}^{d} [0, \omega^{\ell} \widehat{x}], \qquad \widehat{x} >0,
\end{equation}
and for $2 \leq k \leq d$,
\begin{equation}
	\Sigma_k := \begin{cases}
		\{ z \in \mathbb C \mid z^{d+1} \in \mathbb R^- \} & \text{if $k$ is even}, \\
		\{ z \in \mathbb C \mid z^{d+1} \in \mathbb R^+ \} & \text{if $k$ is odd}.
		\end{cases} \label{def:Sigmak}
		\end{equation}
Thus $\Sigma_k$ is an unbounded set consisting of $d+1$ halfrays for each $k \geq 2$, and they
alternate between $\Sigma_2$ and $\Sigma_3$. Note also that $\Sigma_1 \subset \Sigma_3$.
		
Given two positive measures $\mu$, $\nu$, we use the standard notations
from logarithmic potential theory
\[
I(\mu)=\iint\log\frac{1}{|x-y|}\ud\mu(x)\,\ud\mu(y),\qquad I(\mu,\nu)=\iint\log\frac{1}{|x-y|}\ud\mu(x)\,\ud\nu(y),
\]
for the logarithmic energy of $\mu$ and the mutual logarithmic energy of $\mu$ and $\nu$, respectively. 
Main references in logarithmic potential theory are \cite{Deift,NikSor,Ransford,SaffTotik}.

\begin{definition}\label{def:VEP}
Fix $\widehat{x}, t_{0}, t_{d+1}>0$. The vector equilibrium problem asks for the minimization of the energy functional
\begin{equation}\label{energyfunc}
\sum_{k=1}^{d} I(\mu_{k})-\sum_{k=1}^{d-1}I(\mu_{k},\mu_{k+1})
+\frac{1}{t_{0}}\int\Big(\frac{d}{d+1}\,\frac{1}{t_{d+1}^{1/d}}\,
|z|^\frac{d+1}{d}-\frac{t_{d+1}}{d+1}\,z^{d+1}\Big)\,\ud \mu_{1}(z),
\end{equation}
among all positive Borel measures $\mu_{1},\ldots,\mu_{d}$ satisfying the following conditions:
\begin{itemize}
\item[(1)] For each $k=1,\ldots,d,$ the measure $\mu_{k}$ has total mass
\begin{equation}\label{masses}
\|\mu_{k}\|=1-\frac{k-1}{d}.
\end{equation}
\item[(2)] For each $k=1,\ldots,d,$
\begin{equation}\label{supp:muk}
\supp (\mu_{k})\subset \Sigma_{k},
\end{equation}
where the sets $\Sigma_{k}$ are given in \eqref{def:Sigma1}--\eqref{def:Sigmak}.
\end{itemize}
\end{definition}

Observe that this vector equilibrium problem depends exclusively on $\widehat{x}, t_{0}, t_{d+1}>0$. 
For any choice of these parameters, this vector equilibrium problem is weakly admissible in the 
sense of \cite{HardyKuij}, and therefore it admits a unique minimizer $(\mu_{1}^{*},\ldots,\mu_{d}^{*})$. 
See also \cite{BKMW} for a similar class of vector equilibrium problems. As we indicated before, in 
this paper we are interested in the solution of the stated problem under special assumptions on the 
parameters involved, that is, in a subcritical situation. The vector equilibrium problem was 
analyzed in \cite{BleherKuij} in the case $d=2$.

\subsection{Subcritical case}

\begin{theorem}\label{theo:subcrit}
Let $d\geq 2$ be an arbitrary integer. Fix $t_{d+1}>0$ and set
\begin{equation}\label{def:t0crit}
t_{0,\crit}=(d^{-\frac{2}{d-1}}-d^{-\frac{d+1}{d-1}}) \, t_{d+1}^{-\frac{2}{d-1}} >0.
\end{equation}
Let $0<t_{0} <  t_{0,\crit}$ and define
\begin{equation}\label{def:xstar}
x^{*}=(d+1)\,d^{-\frac{d}{d+1}}\,t_{d+1}^{\frac{1}{d+1}}\,r^{\frac{2d}{d+1}},
\end{equation}
where $r$ denotes the smallest positive solution of the equation
\begin{equation}\label{def:r}
t_{0}=r^2-d\,t_{d+1}^{2}\,r^{2d}
\end{equation}
(which exists  because of $t_0 <  t_{0,\crit}$).
Then there is an $x^{**} >  x^*$ such that for every $\widehat{x} \in [x^*, x^{**}]$,
the unique minimizer $(\mu_1^*, \ldots, \mu_d^*)$ of the vector equilibrium problem
of Definition \ref{def:VEP} satisfies the following.

\begin{enumerate}
\item[\rm (a)] All measures are invariant with respect to rotation $z \mapsto \omega z$.
\item[\rm (b)] $\mu_1^*$ is supported on 
\begin{equation} \label{def:Sigma1star}
	\supp(\mu_1^*) = \Sigma_1^* = \bigcup_{j=0}^d [0, \omega^j x^*]
	\end{equation}
	and the density
of $\mu_1^*$ is positive on $[0, x^*)$ and vanishes like a square root at $x^*$.
\item[\rm (c)] For $k = 2, \ldots, d$, the measure $\mu_k^*$ is supported on 
\begin{equation} \label{def:Sigmakstar}
	\supp(\mu_k^*) = \Sigma_k^* = \Sigma_k,
	\end{equation} 
	see \eqref{def:Sigmak}, with a positive density.
\end{enumerate}
\end{theorem}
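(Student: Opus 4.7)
My plan is to obtain (a) directly from uniqueness, to reduce the remaining problem to a scalar vector equilibrium problem with the help of rotational symmetry, and then to construct the minimizer candidate explicitly through an algebraic spectral curve whose branch points are the desired $\omega^{j}x^{*}$.

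For (a), I would observe that every ingredient of the minimization in Definition~\ref{def:VEP} is invariant under the rotation $\mu_{k}\mapsto R_{\omega}\mu_{k}$ with $R_{\omega}\mu(E)=\mu(\omega^{-1}E)$: the logarithmic kernels are rotation invariant, the external field is invariant since $|z|^{(d+1)/d}$ depends only on $|z|$ and $(\omega z)^{d+1}=z^{d+1}$, and each constraint set $\Sigma_{k}$ together with the mass conditions \eqref{masses} is preserved. Weak admissibility from \cite{HardyKuij} gives unicity of the minimizer, and therefore $R_{\omega}\mu_{k}^{*}=\mu_{k}^{*}$ for every $k$. Consequently each $\mu_{k}^{*}$ is determined by its restriction to one fundamental sector, and pushing these restrictions forward under $z\mapsto z^{d+1}$ turns the vector equilibrium problem into an equivalent scalar problem on $\mathbb{R}^{+}$ (odd $k$) and $\mathbb{R}^{-}$ (even $k$); the coefficient $\tfrac{d}{d+1}t_{d+1}^{-1/d}$ of the $|z|^{(d+1)/d}$ term in \eqref{energyfunc} is exactly the one making the resulting external field a clean linear combination of powers of the new variable.

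To identify the minimizer I would write down the Euler-Lagrange conditions, differentiate them along each support, and use the resulting jump relations for the Stieltjes transforms $\xi_{k}(z)=\int(z-s)^{-1}\,d\mu_{k}^{*}(s)$, augmented by $\xi_{0}:=V'_{\text{ext}}/t_{0}-\xi_{1}$ to absorb the external field at level $k=1$. These jump relations identify the $\xi_{k}$ as the $d+1$ branches of a single meromorphic function $\xi(z)$ on a $(d+1)$-sheeted Riemann surface with $\mathbb{Z}_{d+1}$-symmetric branch structure. Because the potential is monomial, this surface admits a rational uniformization which I would seek as an ansatz of the form $z=z(\zeta)$, $\xi=\xi(\zeta)$ with $\zeta$ running over the Riemann sphere. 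Imposing the large-$z$ behaviour that encodes the masses \eqref{masses}, the absence of spurious poles, and the square-root vanishing at the branch point $z=x^{*}$ collapses the data to a single scalar equation. A direct computation identifies this equation with \eqref{def:r}, and the smallest positive root $r$ (which exists precisely under the subcritical condition, since $r\mapsto r^{2}-dt_{d+1}^{2}r^{2d}$ attains its maximum $t_{0,\crit}$ from \eqref{def:t0crit} at the point dictated by $t_{0}=t_{0,\crit}$) produces the endpoint $x^{*}$ given by \eqref{def:xstar}.

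From the explicit uniformization one then reads off the densities $d\mu_{k}^{*}/|dz|$ on the proposed supports, checks that they are positive, that $\|\mu_{k}^{*}\|=1-(k-1)/d$, and that $d\mu_{1}^{*}/|dz|$ vanishes like a square root at $x^{*}$. The final tasks are the Euler-Lagrange inequalities on $\Sigma_{k}\setminus\supp(\mu_{k}^{*})$, and in particular on $[0,\omega^{j}\widehat{x}]\setminus[0,\omega^{j}x^{*}]$ for $k=1$. These inequalities are the source of the upper bound $x^{**}$: the effective potential $2U^{\mu_{1}^{*}}-U^{\mu_{2}^{*}}+V_{\text{ext}}$ exceeds its Lagrange constant on $(x^{*},\omega^{j}\widehat{x}]$ only up to a maximal radius $x^{**}>x^{*}$, beyond which the restriction $\supp(\mu_{1})\subset\Sigma_{1}$ becomes active and produces a different minimizer. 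I expect this last block of variational inequalities to be the main technical obstacle: one must combine the spectral-curve formulas with a careful branch-tracking of the multi-valued $\xi$ in order to rule out sign changes of the derivatives of the effective potentials along each ray, both at finite radius and (for $k\geq 2$) out to infinity. Once these inequalities are established, uniqueness of the minimizer identifies the explicit candidate with $(\mu_{1}^{*},\ldots,\mu_{d}^{*})$, completing (b) and (c).
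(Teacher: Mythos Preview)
Your overall strategy---build the spectral curve and read the minimizer from its branches---is the same as the paper's, and (a) is handled identically. But the execution differs in direction and, more importantly, in where you locate the work.

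The paper does not derive the spectral curve from the Euler--Lagrange conditions. It postulates the rational uniformization $z=\psi(w)=rw+t_{d+1}r^{d}w^{-d}$, $\xi=\psi(1/w)$ from the outset, defines the branches $\xi_{k}$ via the ordering $|w_{1}(z)|\geq\cdots\geq|w_{d+1}(z)|$, then \emph{defines} candidate measures by $d\mu_{k}^{*}=\frac{1}{2\pi i\,t_{0}}(\eta_{k,-}-\eta_{k,+})\,dz$ and verifies a posteriori that they have the correct masses, are positive, and satisfy the variational equalities. Your proposal to reduce via $z\mapsto z^{d+1}$ to a problem on $\mathbb{R}^{\pm}$ is not used; the paper works directly on the stars and cites \cite{DelLop,AptKalSaff} for the identification of the branch cuts with the sets $\Sigma_{k}^{*}$.

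You have the difficulty inverted. The variational inequality on $(x^{*},\widehat{x}]$ that produces $x^{**}$ is in fact short: one expands $\xi_{1}(z)-\xi_{2}(z)=-2c_{3}(z-x^{*})^{1/2}+O(z-x^{*})$ from the uniformization, notes this equals $t_0$ times the derivative of the effective potential, and integrates. The genuinely delicate step is the one you pass over with ``checks that they are positive'': for $k\geq 2$ the densities involve $\xi_{k}-\kappa_{k,\ell}^{\pm}z^{1/d}$ with sector-dependent constants $\kappa_{k,\ell}^{\pm}$, and no direct sign argument is offered. Instead the paper proves positivity of $\mu_{k}^{*}$ for $k\geq 2$ by an auxiliary equilibrium problem (Lemma~\ref{lemma:positivemuk}): fixing the already-positive $\mu_{1}^{*}$, it lets $(\widehat{\mu}_{2},\ldots,\widehat{\mu}_{d})$ be the balayage-chain minimizer, and shows via a maximum-principle cascade on $U^{\widehat{\mu}_{k}}-U^{\mu_{k}^{*}}$ that $\widehat{\mu}_{k}=\mu_{k}^{*}$. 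You should plan for this step rather than for the inequality at $x^{*}$.
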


The condition $t_0 < t_{0,\crit}$ represents the subcritical case. 
In the critical case $t_0 = t_{0,\crit}$, we can take $\hat{x} = x^*$ and then all of the
above still holds, except that the density of $\mu_1^*$ vanishes to higher order at the
endpoints.

\subsection{Asymptotics of $P_{n,n}$ in subcritical case}

We assume $t_{d+1} > 0$ is fixed and we let $0 < t_0 < t_{0,\crit}$, where 
$t_{0,\crit}$ is given by \eqref{def:t0crit}. Let $x^* > 0$ and $r > 0$ be as in
\eqref{def:xstar}--\eqref{def:r}. Then, for a choice of $\widehat{x} > x^*$ sufficiently
close to $x^*$ we can perform an asymptotic analysis of the RH problem for
the polynomials $P_{n,n}$. This analysis occupies a large  part of the paper.

As an outcome of the analysis we have strong and uniform asymptotic expansions for
the polynomials $P_{n,n}$ as $n \to \infty$ in all domains in the complex plane.
These include the exponential asymptotics in the domain $\mathbb C \setminus \Sigma_1^*$,
the oscillatory asymptotics on $\Sigma_1^*$ and turning point asymptotics involving
Airy functions at the endpoints $\omega^j x^*$.  We do not spell out all results in detail
but we only give the exponential asymptotics (which is the simplest), which takes
the following form.

\begin{theorem} \label{theo:strongasymp}
Let $d\geq 2$ be an integer. Assume that $t_{d+1}>0$, $0<t_{0}<t_{0,\crit}$, 
and let $x^{*}>0$ be given by \eqref{def:xstar}--\eqref{def:r}. Then, 
for all $\widehat{x}>x^{*}$ sufficiently close to $x^{*}$, the polynomials $P_{n,n}$ exist 
and are unique for all $n$ large enough that are a multiple of $d$. The polynomials $P_{n,n}$ satisfy
\begin{equation}\label{strongasympform}
P_{n,n}(z)=\mathbf{M}_{1,1}(z)\,e^{n g_{1}(z)}\,(1+O(1/n)),\qquad \text{as } n\rightarrow\infty,
\end{equation}
uniformly on compact subsets of $\mathbb{C}\setminus\Sigma_{1}^*$. Here $g_{1}$ is given by
\[
g_{1}(z)=\int\log(z-t)\,\ud \mu_{1}^{*}(t),\qquad z\in\mathbb{C}\setminus\Sigma_{1}^*,
\]
and $\mathbf{M}_{1,1}$ is an analytic function with no zeros in $\mathbb{C}\setminus\Sigma_{1}^*$.
\end{theorem}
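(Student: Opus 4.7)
The plan is to apply the Deift/Zhou steepest descent method to the $(d+1) \times (d+1)$ Riemann-Hilbert problem referenced at the end of the introduction and set up in Section \ref{section:firstRHP}, whose $(1,1)$ entry is $P_{n,n}(z)$. Proving \eqref{strongasympform} then reduces to tracking this entry through a finite sequence of explicit transformations $Y \mapsto T \mapsto S \mapsto R$ culminating in a small-norm residual problem for $R$.

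The first transformation $Y \mapsto T$ uses the logarithmic transforms
\[
g_k(z) = \int \log(z-t)\, \ud \mu_k^*(t), \qquad k=1,\ldots,d,
\]
of the minimizing measures from Theorem \ref{theo:subcrit}. The Euler-Lagrange (in)equalities characterizing $(\mu_1^*,\ldots,\mu_d^*)$, combined with the rotational invariance from part (a), are precisely what force $T$ to be normalized at infinity and to have jumps that split cleanly into oscillatory ``model'' factors on each support $\Sigma_k^*$ together with exponentially small corrections elsewhere. The flexibility in choosing $\widehat{x}>x^*$ enters here to guarantee that the weights $w_{j,n}$ remain exponentially suppressed on the inactive part $\Sigma_1\setminus\Sigma_1^*$ of the star. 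The second transformation $T \mapsto S$ then opens lenses around every segment of $\Sigma_1^*$ and around the unbounded rays of $\Sigma_2^*,\ldots,\Sigma_d^*$, factoring each oscillatory jump into three pieces so that only constant model jumps on the supports remain, with off-lens jumps being $I + O(e^{-cn})$.

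Next I would build a global parametrix $\mathbf{M}$ on $\mathbb{C} \setminus \Sigma_1^*$ solving the resulting model RH problem, by constructing it on the compact $(d+1)$-sheeted Riemann surface naturally attached to the vector equilibrium problem, with branch points at the $d+1$ endpoints $\omega^j x^*$. The coefficient $\mathbf{M}_{1,1}$ appearing in \eqref{strongasympform} is then identified as the $(1,1)$ entry of this parametrix; its analyticity and nonvanishing on $\mathbb{C} \setminus \Sigma_1^*$ follow from its explicit description in terms of meromorphic functions on the surface, normalized so that $\mathbf{M}(z) \to I$ as $z \to \infty$. Near each endpoint $\omega^j x^*$ the density of $\mu_1^*$ vanishes like a square root by Theorem \ref{theo:subcrit}(b), so I would install a local Airy-type parametrix $P_j$ in small disks $U_j$ around each endpoint, matching $\mathbf{M}$ on $\partial U_j$ up to $O(1/n)$; the symmetry (a) lets us construct it at $x^*$ and rotate. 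The final transformation $S \mapsto R$, where $R$ equals $S\mathbf{M}^{-1}$ outside the disks and $SP_j^{-1}$ inside, has jumps that are $I + O(1/n)$ uniformly, so standard small-norm theory gives $R = I + O(1/n)$ and unraveling recovers \eqref{strongasympform}.

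The main obstacle I anticipate is the explicit construction of the global parametrix: the underlying $(d+1)$-sheeted Riemann surface has positive genus that grows with $d$, and producing a matrix solution with the prescribed sheet structure, correct behavior at infinity, and nonvanishing $(1,1)$ entry on $\mathbb{C} \setminus \Sigma_1^*$ is substantially more delicate than in the $d=2$ case of \cite{BleherKuij}. A secondary subtlety will be building the local parametrix at the points $\omega^j x^*$, where the supports $\Sigma_1^*$ and $\Sigma_3^*$ meet (and further supports meet at the origin); one should expect to need an analogue of the Airy model problem built from the recessive solutions $p_\ell$ of \eqref{eq:pODE} rather than from the classical Airy function, in order to respect the full $(d+1) \times (d+1)$ jump structure.
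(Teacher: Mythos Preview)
Your outline captures the broad architecture, but it misses the step that actually makes the analysis tractable and contains two misconceptions that would derail the execution.

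The essential gap is that you go directly from $\mathbf{Y}$ to a $g$-function transformation. In the paper the jump of $\mathbf{Y}$ on $\Sigma$ involves the weights $v_{j,n}(z)=e^{nV/t_0}p_{-\ell}^{(j)}(c_n z)$, i.e.\ derivatives of solutions of the order-$d$ ODE \eqref{eq:pODE}. Before any $g$-function can be useful, the paper performs two preliminary transformations $\mathbf{Y}\mapsto\mathbf{X}\mapsto\mathbf{U}$ built from the Wronskian matrix of a carefully ordered basis $p_{-\ell},p_{-\ell\mp1},\ldots$ in each sector (the matrices $\mathbf{P}$, $\mathbf{F}$, $\mathbf{Q}$ of Section~\ref{section:firsttransf}). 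This is what converts the complicated $p$-function jumps into the elementary $2\times2$ block jumps that the $g$-functions and lens-opening can then handle. Without this step the factorization you need for $T\mapsto S$ simply does not exist, and constructing these sector-dependent bases with the right triangular structure (the matrices $R^\pm$, $C^\pm$) is where most of the new work for general $d$ lies.

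Two further points. First, the Riemann surface attached to the equilibrium problem has genus \emph{zero}, not positive genus growing with $d$: it carries the explicit rational parametrization $z=\psi(w)$, $\xi=\psi(1/w)$ of \eqref{param:RS}, and this is precisely what makes the global parametrix buildable via a single meromorphic differential. Second, once the $\mathbf{Y}\mapsto\mathbf{X}$ step has absorbed the $p_\ell$-functions, the local problem at each endpoint $\omega^j x^*$ is an ordinary $2\times2$ square-root vanishing, so the \emph{standard} Airy parametrix suffices; no generalized parametrix built from the $p_\ell$ is needed, and no local parametrix at the origin is needed at all.
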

The Szeg\H{o} type prefactor $\mathbf{M}_{1,1}$ is denoted this way since it is the $(1,1)$ entry 
of a certain matrix-valued function $\mathbf{M}$. This function $\mathbf{M}$ is the solution to a certain Riemann-Hilbert problem 
described in Section \ref{section:globalparam}. An explicit formula for $\mathbf{M}_{1,1}$ is given in \eqref{formulaM11} 
in terms of a certain meromorphic differential $\bm{\eta}$ defined on a $(d+1)$-sheeted Riemann surface. 

Theorem \ref{theo:strongasymp} is similar to \cite[Lemma 6.1]{BleherKuij} which is for polynomials
that are orthogonal with respect to a sesquilinear form \eqref{def:sesqform} with $d=2$. 
The proof is also along the same lines as the proof in \cite{BleherKuij}.
However, since we are dealing with arbitrary $d$, some issues arise at a number of
places. These issues could be dealt with in an ad hoc way for $d=2$, but now we have to resolve
them more systematically.  

The restriction to $n$ being a multiple of $d$ is made for purposes of presentation only.
There will be no essential difficulty to extend the proof to general values of $n$.
However, doing so  would lengthen the exposition even more (it is already quite long) 
and also complicate the notation, since one would have to distinguish between
$\lfloor \frac{n}{d} \rfloor$ and $\lceil \frac{n}{d} \rceil$ at many places in the paper.
Therefore we decided to restrict ourselves to values of $n$ that are multiple of $d$.

As a corollary of Theorem \ref{theo:strongasymp} we have the 
following asymptotic zero distribution for the sequence of polynomials $P_{n,n}$. 
If $P$ is a polynomial of degree $n$ with zeros $\{z_{j}\}_{j=1}^{n}\subset \mathbb{C}$, we let
\[
\nu(P)=\frac{1}{n}\sum_{j=1}^{n}\delta_{z_{j}}
\]
denote the associated normalized zero counting measure.

\begin{corollary}\label{cor:main:1}
Under the same conditions as in Theorem \ref{theo:strongasymp}, we have that 
all zeros of $P_{n,n}$ accumulate on the star 
$\Sigma_{1}^*$ given by \eqref{def:Sigma1} as $n \to \infty$,  and the sequence 
$(\nu(P_{n,n}))$ converges weakly to $\mu_{1}^{*}$, the first component of 
the solution to the vector equilibrium problem given in Definition \ref{def:VEP}.
\end{corollary}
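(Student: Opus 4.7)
The plan is to deduce both assertions directly from the strong asymptotic expansion \eqref{strongasympform} by standard potential-theoretic arguments. Throughout, $n$ is understood to run through multiples of $d$, as in Theorem \ref{theo:strongasymp}. Two things need to be established: first, that the zeros of $P_{n,n}$ cluster on $\Sigma_1^*$, and second, that the normalized zero counting measures $\nu(P_{n,n})$ converge weakly to $\mu_1^*$.

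For the zero-accumulation claim, I would fix an arbitrary compact set $K\subset \mathbb{C}\setminus\Sigma_1^*$. Since $\mathbf{M}_{1,1}$ is analytic and nonvanishing on $\mathbb{C}\setminus\Sigma_1^*$, there is a positive constant $c$ with $|\mathbf{M}_{1,1}(z)|\geq c$ on $K$. The factor $|e^{ng_1(z)}|$ is strictly positive, and the asymptotic \eqref{strongasympform} is uniform on $K$, so for $n$ large enough $|1+O(1/n)|\geq 1/2$ throughout $K$, which forces $P_{n,n}$ to be nonzero on $K$. Since this holds for every compact set disjoint from $\Sigma_1^*$, and since $\Sigma_1^*$ is bounded, all zeros of $P_{n,n}$ eventually lie in any prescribed neighborhood of $\Sigma_1^*$; in particular the sequence $(\nu(P_{n,n}))$ is tight.

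For the weak convergence, I would use that $P_{n,n}$ is monic of degree $n$, so
\[
\frac{1}{n}\log|P_{n,n}(z)| = -U^{\nu(P_{n,n})}(z),
\]
while taking the logarithm of the modulus of \eqref{strongasympform} gives
\[
\frac{1}{n}\log|P_{n,n}(z)| = \Real g_1(z) + \frac{1}{n}\log|\mathbf{M}_{1,1}(z)| + \frac{1}{n}\log\bigl|1+O(1/n)\bigr|
\longrightarrow \Real g_1(z) = -U^{\mu_1^*}(z),
\]
uniformly on compact subsets of $\mathbb{C}\setminus\Sigma_1^*$. Combining these identities yields $U^{\nu(P_{n,n})}\to U^{\mu_1^*}$ uniformly off $\Sigma_1^*$. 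By tightness and Helly selection, every subsequence of $(\nu(P_{n,n}))$ has a further subsequence converging weakly to some probability measure $\nu^*$ with $\supp\nu^*\subset\Sigma_1^*$. The principle of descent (together with the uniform convergence above) identifies $U^{\nu^*}(z)=U^{\mu_1^*}(z)$ for all $z\in\mathbb{C}\setminus\Sigma_1^*$.

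The only subtle point, which I expect to be the main conceptual step, is the unicity argument that turns this pointwise agreement of potentials into equality of measures. Since $\Sigma_1^*$ is a finite union of line segments it has two-dimensional Lebesgue measure zero, so $U^{\nu^*}=U^{\mu_1^*}$ holds almost everywhere in $\mathbb{C}$; both potentials are locally integrable, hence the equality is an equality of distributions on $\mathbb{C}$, and taking the distributional Laplacian (using $-\frac{1}{2\pi}\Delta U^{\mu}=\mu$) gives $\nu^*=\mu_1^*$. Since every subsequential limit equals $\mu_1^*$, the full sequence $(\nu(P_{n,n}))$ converges weakly to $\mu_1^*$, which completes the proof.
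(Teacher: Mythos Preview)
Your argument is correct and is exactly the standard potential-theoretic deduction one expects here. The paper does not give a separate proof of this corollary; it simply states it as an immediate consequence of the strong asymptotics \eqref{strongasympform}, so your write-up supplies precisely the routine details (nonvanishing of $\mathbf{M}_{1,1}\,e^{ng_1}$ on compacts off $\Sigma_1^*$, convergence of $\tfrac{1}{n}\log|P_{n,n}|$ to $-U^{\mu_1^*}$, tightness plus the unicity theorem for logarithmic potentials) that the paper leaves implicit.
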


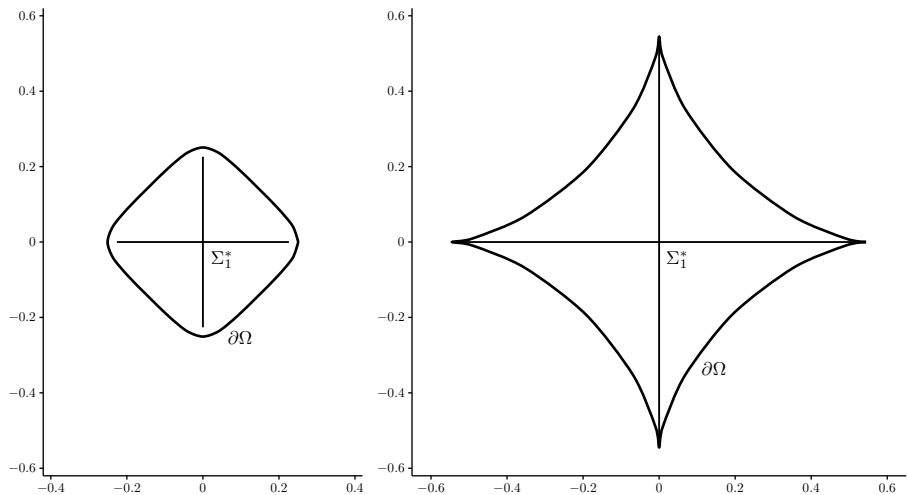
\begin{figure}[t]
\begin{center}
\begin{tikzpicture}[scale=5]
\draw [scale=1,line width=1,domain=0:360, smooth, variable=\t] plot ({0.2272747707857996649689087*cos(\t)+0.0234792208540358571900382*cos(3*\t)},
{0.2272747707857996649689087*sin(\t)-0.0234792208540358571900382*sin(3*\t)});
\draw [line width=0.7] (0,0) -- (0.2261014730906884413472390,0);
\draw [line width=0.7,rotate=90] (0,0) -- (0.2261014730906884413472390,0);
\draw [line width=0.7,rotate=180] (0,0) -- (0.2261014730906884413472390,0);
\draw [line width=0.7,rotate=270] (0,0) -- (0.2261014730906884413472390,0);
\draw (0.05,0) node[below=1pt, scale=0.7]{$\Sigma_{1}^{*}$};
\draw (0,-0.22) node[below,scale=0.7,xshift=0.7cm]{$\partial \Omega$};
\draw [scale=1,line width=1,xshift=1.2cm,domain=0:360, smooth, variable=\t] plot ({0.40824829046*cos(\t)+0.1360827635*cos(3*\t)},
{0.40824829046*sin(\t)-0.1360827635*sin(3*\t)});
\draw [line width=0.7,xshift=1.2cm] (0,0) -- (0.5443310542,0);
\draw [line width=0.7,xshift=1.2cm,rotate=90] (0,0) -- (0.5443310542,0);
\draw [line width=0.7,xshift=1.2cm,rotate=180] (0,0) -- (0.5443310542,0);
\draw [line width=0.7,xshift=1.2cm,rotate=270] (0,0) -- (0.5443310542,0);
\draw (0.1,0) node[below=1pt, scale=0.7,xshift=8.2cm]{$\Sigma_{1}^{*}$};
\draw (0,-0.3) node[below,scale=0.7,xshift=9.6cm]{$\partial \Omega$};
\draw [scale=1,line width=0.5] (-0.42,-0.62) -- (0.42,-0.62);
\draw [line width=0.5] (0,-0.62) -- (0,-0.63);
\draw [line width=0.5] (0.2,-0.62) -- (0.2,-0.63);
\draw [line width=0.5] (0.4,-0.62) -- (0.4,-0.63);
\draw [line width=0.5] (-0.2,-0.62) -- (-0.2,-0.63);
\draw [line width=0.5] (-0.4,-0.62) -- (-0.4,-0.63);
\draw [line width=0.5] (-0.42,-0.6) -- (-0.43,-0.6);
\draw (-0.42,-0.6) node[left=1pt, scale=0.5]{$-0.6$};
\draw (0.2,-0.62) node[below=1pt, scale=0.5]{$0.2$};
\draw (0,-0.62) node[below=1pt, scale=0.5]{$0$};
\draw (0.4,-0.62) node[below=1pt, scale=0.5]{$0.4$};
\draw (-0.2,-0.62) node[below=1pt, scale=0.5]{$-0.2$};
\draw (-0.4,-0.62) node[below=1pt, scale=0.5]{$-0.4$};
\draw (-0.42,-0.4) node[left=1pt, scale=0.5]{$-0.4$};
\draw (-0.42,-0.2) node[left=1pt, scale=0.5]{$-0.2$};
\draw (-0.42,0) node[left=1pt, scale=0.5]{$0$};
\draw (-0.42,0.2) node[left=1pt, scale=0.5]{$0.2$};
\draw (-0.42,0.4) node[left=1pt, scale=0.5]{$0.4$};
\draw (-0.42,0.6) node[left=1pt, scale=0.5]{$0.6$};
\draw [line width=0.5] (-0.42,-0.4) -- (-0.43,-0.4);
\draw [line width=0.5] (-0.42,-0.2) -- (-0.43,-0.2);
\draw [line width=0.5] (-0.42,0) -- (-0.43,0);
\draw [line width=0.5] (-0.42,0.2) -- (-0.43,0.2);
\draw [line width=0.5] (-0.42,0.4) -- (-0.43,0.4);
\draw [line width=0.5] (-0.42,0.6) -- (-0.43,0.6);
\draw [scale=1, line width=0.5] (-0.42,-0.62) -- (-0.42,0.62);
\draw [scale=1, line width=0.5, xshift=1.2cm] (-0.65,-0.62) -- (0.65,-0.62);
\draw [scale=1, line width=0.5, xshift=1.2cm] (-0.65,-0.62) -- (-0.65,0.62);
\draw [line width=0.5, xshift=1.2cm] (-0.65,-0.6) -- (-0.66,-0.6);
\draw [line width=0.5, xshift=1.2cm] (-0.65,-0.4) -- (-0.66,-0.4);
\draw [line width=0.5, xshift=1.2cm] (-0.65,-0.2) -- (-0.66,-0.2);
\draw [line width=0.5, xshift=1.2cm] (-0.65,0) -- (-0.66,0);
\draw [line width=0.5, xshift=1.2cm] (-0.65,0.2) -- (-0.66,0.2);
\draw [line width=0.5, xshift=1.2cm] (-0.65,0.4) -- (-0.66,0.4);
\draw [line width=0.5, xshift=1.2cm] (-0.65,0.6) -- (-0.66,0.6);
\draw [line width=0.5, xshift=1.2cm] (0,-0.62) -- (0,-0.63);
\draw [line width=0.5, xshift=1.2cm] (0.2,-0.62) -- (0.2,-0.63);
\draw [line width=0.5, xshift=1.2cm] (0.4,-0.62) -- (0.4,-0.63);
\draw [line width=0.5, xshift=1.2cm] (-0.2,-0.62) -- (-0.2,-0.63);
\draw [line width=0.5, xshift=1.2cm] (-0.4,-0.62) -- (-0.4,-0.63);
\draw [line width=0.5, xshift=1.2cm] (-0.6,-0.62) -- (-0.6,-0.63);
\draw [line width=0.5, xshift=1.2cm] (0.6,-0.62) -- (0.6,-0.63);
\draw [xshift=1.2cm] (0.2,-0.62) node[below=1pt, scale=0.5]{$0.2$};
\draw [xshift=1.2cm] (0,-0.62) node[below=1pt, scale=0.5]{$0$};
\draw [xshift=1.2cm] (0.4,-0.62) node[below=1pt, scale=0.5]{$0.4$};
\draw [xshift=1.2cm] (-0.2,-0.62) node[below=1pt, scale=0.5]{$-0.2$};
\draw [xshift=1.2cm] (-0.4,-0.62) node[below=1pt, scale=0.5]{$-0.4$};
\draw [xshift=1.2cm] (-0.6,-0.62) node[below=1pt, scale=0.5]{$-0.6$};
\draw [xshift=1.2cm] (0.6,-0.62) node[below=1pt, scale=0.5]{$0.6$};
\draw [xshift=1.2cm] (-0.65,0.6) node[left=1pt, scale=0.5]{$0.6$};
\draw [xshift=1.2cm] (-0.65,0.4) node[left=1pt, scale=0.5]{$0.4$};
\draw [xshift=1.2cm] (-0.65,0.2) node[left=1pt, scale=0.5]{$0.2$};
\draw [xshift=1.2cm] (-0.65,0) node[left=1pt, scale=0.5]{$0$};
\draw [xshift=1.2cm] (-0.65,-0.2) node[left=1pt, scale=0.5]{$-0.2$};
\draw [xshift=1.2cm] (-0.65,-0.4) node[left=1pt, scale=0.5]{$-0.4$};
\draw [xshift=1.2cm] (-0.65,-0.6) node[left=1pt, scale=0.5]{$-0.6$};
\end{tikzpicture}
\end{center}
\caption{
The boundary $\partial \Omega$ of the domain $\Omega$ and the star $\Sigma_{1}^*$ that
supports the measure $\mu_1^*$ for the case $d=3$ and $t_4=2$ at a 
subcritical time $t_{0}=\frac{1}{20}$ (on the left) and at the critical time 
$t_{0}=t_{0,\crit}=\frac{1}{9}$ (on the right).}
\label{fig:OmegaStar}
\end{figure}

\subsection{Laplacian growth and the spectral curve}

Given the data $t_0$, $t_{d+1}$ and $x^*$ as in Theorem \ref{theo:strongasymp}, we let
$\mu_{1}^{*}$ be the first component of the minimizer of the corresponding vector equilibrium problem. 

\begin{definition} \label{def:Schwarz}
We define $\xi_1 : \mathbb C \setminus \Sigma_1^* \to \mathbb C$ by
\begin{equation}\label{rel:xi1mu1}
\xi_{1}(z)=t_{d+1}\,z^{d}+t_{0}\int\frac{\ud \mu_{1}^{*}(\zeta)}{z-\zeta}.
\end{equation}
\end{definition}

\begin{theorem} \label{theo:Omega} Let $t_{d+1} > 0$ and $0 < t_0 < t_{0,\crit}$.
Then there is a simply connected bounded domain $\Omega$, whose boundary
satisfies the equation
\begin{equation}\label{def:boundOmega}
	\xi_1(z) = \overline{z}, \qquad z \in \partial \Omega.
\end{equation}
The star $\Sigma_1^*$ is contained in $\Omega$. 
For an integer $k\geq 0$ we have
\begin{equation}\label{ehm}
	\frac{1}{2\pi\ir}\oint_{\partial\Omega}\frac{\overline{z}}{z^{k}}\,\ud z=\left\{\begin{array}{@{\hspace{0cm}}ll}
t_{0}, & \text{if } k=0,\\[0.3em]
t_{d+1}, & \text{if } k=d+1,\\[0.3em]
0, & \text{otherwise},
\end{array}
\right.
\end{equation}
where $\partial \Omega$ is given the positive orientation. Moreover, 
$\mu_{1}^{*}$ and the normalized area measure on $\Omega$ are related by
\begin{equation}\label{rel:mu1:area}
\int\frac{\ud\mu_{1}^{*}(\zeta)}{z-\zeta}=\frac{1}{\pi t_{0}}\iint\frac{\ud A(\zeta)}{z-\zeta},\qquad z\in\mathbb{C}\setminus\Omega.
\end{equation}
\end{theorem}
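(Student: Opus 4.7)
The plan is to first construct $\Omega$ by showing that the equation $\xi_1(z) = \bar z$ determines a simple closed Jordan curve in $\mathbb{C} \setminus \Sigma_1^*$ enclosing the star, and then derive \eqref{ehm} via residue calculus and \eqref{rel:mu1:area} via a $\bar\partial$-Stokes computation.

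The hard part is the construction of $\Omega$. By \eqref{rel:xi1mu1}, $\xi_1$ is holomorphic on $\mathbb{C} \setminus \Sigma_1^*$ with asymptotic behavior $\xi_1(z) = t_{d+1} z^d + t_0/z + O(z^{-2})$ at infinity, using $\|\mu_1^*\| = 1$ from \eqref{masses}. Using the rotational and reflection symmetry of $\mu_1^*$ in Theorem \ref{theo:subcrit}(a), one checks the functional identities $\xi_1(\omega z) = \omega^{-1}\xi_1(z)$ and $\xi_1(\bar z) = \overline{\xi_1(z)}$, so the level set $\{\xi_1(z) = \bar z\}$ is invariant under the dihedral group generated by $z \mapsto \omega z$ and $z \mapsto \bar z$. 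Working in one fundamental sector $0 \le \arg z \le 2\pi/(d+1)$, I would combine: (i) the asymptotic behavior, which gives $|\xi_1(z) - \bar z|\to\infty$ at infinity, so the level set is bounded; (ii) the detailed behavior of the boundary values $\xi_{1,\pm}$ on the two segments $[0,\widehat x]$ and $[0,\omega\widehat x]$ of $\Sigma_1^*$, extracted from the variational conditions of the equilibrium problem and the spectral-curve description of $\xi_1$ on which the paper rests, to show the level set does not touch $\Sigma_1^*$; and (iii) the implicit function theorem applied starting from a solution of the real equation $\xi_1(x) = x$ on the positive real axis, to obtain a smooth arc connecting the positive real axis to the ray $\arg z = 2\pi/(d+1)$. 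Taking the union over the dihedral orbit then yields the Jordan curve $\partial\Omega$, and by construction $\Sigma_1^* \subset \Omega$.

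Once $\Omega$ exists, the moment identities follow by substituting $\bar z = \xi_1(z)$ on $\partial \Omega$: for $k \ge 0$,
\[
\frac{1}{2\pi \ir}\oint_{\partial\Omega}\frac{\bar z}{z^{k}}\,\ud z
= \frac{t_{d+1}}{2\pi \ir}\oint_{\partial\Omega}z^{d-k}\,\ud z
+ \frac{t_0}{2\pi \ir}\int \ud\mu_1^*(\zeta) \oint_{\partial\Omega} \frac{\ud z}{z^k (z-\zeta)},
\]
the Fubini exchange being legitimate since $\supp(\mu_1^*) = \Sigma_1^* \subset \Omega$. The first term contributes $t_{d+1}$ exactly when $k = d+1$ and zero otherwise. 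For the second and any $\zeta \in \Sigma_1^* \subset \Omega$, residue calculus at $z = \zeta$ gives $\zeta^{-k}$, while the Taylor expansion $1/(z-\zeta) = -\sum_{j\ge 0} z^j/\zeta^{j+1}$ yields residue $-\zeta^{-k}$ at $z = 0$ for $k \ge 1$, so the two cancel; for $k=0$ only the pole at $z = \zeta$ contributes $2\pi\ir$, and integrating against $\mu_1^*$ of total mass one yields $t_0$. This gives \eqref{ehm}.

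For the area relation \eqref{rel:mu1:area}, fix $z$ outside $\overline\Omega$, note that $g(\zeta) := \bar\zeta/(z - \zeta)$ is smooth on $\overline\Omega$ with $\partial_{\bar\zeta} g = 1/(z-\zeta)$, and apply the $\bar\partial$-Stokes formula:
\[
\iint_\Omega \frac{\ud A(\zeta)}{z-\zeta}
= \frac{1}{2\ir}\oint_{\partial\Omega} \frac{\bar\zeta}{z-\zeta}\,\ud\zeta
= \frac{1}{2\ir}\oint_{\partial\Omega} \frac{\xi_1(\zeta)}{z-\zeta}\,\ud\zeta.
\]
The polynomial piece $t_{d+1}\zeta^d/(z-\zeta)$ of $\xi_1(\zeta)/(z-\zeta)$ is holomorphic in $\zeta \in \Omega$ (since $z \notin \overline\Omega$), hence integrates to $0$; the Cauchy-transform piece gives, by Fubini and a residue at the simple pole $\zeta = \eta \in \Sigma_1^* \subset \Omega$ (the pole at $\zeta = z$ lies outside $\Omega$),
\[
\frac{t_0}{2\ir}\int \ud\mu_1^*(\eta) \oint_{\partial\Omega} \frac{\ud\zeta}{(z-\zeta)(\zeta-\eta)}
= \pi t_0 \int \frac{\ud\mu_1^*(\eta)}{z-\eta},
\]
which is \eqref{rel:mu1:area}.
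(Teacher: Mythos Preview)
Your computations of the exterior harmonic moments \eqref{ehm} and of the area--balayage relation \eqref{rel:mu1:area} are correct, and they take a route dual to the paper's: you evaluate the contour integrals by residues \emph{inside} $\Omega$ (at $z=\zeta\in\Sigma_1^*$ and at $z=0$), whereas the paper deforms $\partial\Omega$ \emph{outward} and reads off the answer from the asymptotics $\xi_1(z)=t_{d+1}z^{d}+t_0 z^{-1}+O(z^{-d-2})$. Both work; the outward deformation is a shade slicker because it avoids the case split $k=0$ versus $k\ge 1$ and any discussion of the pole at the origin.

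The real gap is in your construction of $\Omega$. The paper does \emph{not} attempt to analyze the level set $\{\xi_1(z)=\bar z\}$ directly via symmetry plus the implicit function theorem. Instead it uses the explicit rational uniformization of the spectral curve,
\[
z=\psi(w)=rw+\frac{t_{d+1}r^{d}}{w^{d}},\qquad \xi_1(z)=\psi\!\left(\frac{1}{w_1(z)}\right),
\]
and proves (Lemma~\ref{lemma:proppsi}) that $\psi$ is univalent on $\{|w|\ge\rho\}$ with $\rho\le 1$, so that $\psi(\{|w|=1\})$ is a simple closed analytic curve. One then \emph{defines} $\Omega$ as the interior of this curve; $\Sigma_1^*\subset\Omega$ follows from part~(e) of that lemma, and $\xi_1(z)=\bar z$ on $\partial\Omega$ is the one--line computation $\xi_1(\psi(w))=\psi(1/w)=\psi(\overline{w})=\overline{\psi(w)}$ for $|w|=1$. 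Your sketch (boundedness from asymptotics, appeal to variational conditions to rule out touching $\Sigma_1^*$, implicit function theorem from a real solution, then dihedral symmetry) would have to prove global simplicity of the curve and exclude spurious components of the real--analytic set $\{\xi_1(z)=\bar z\}$; none of that is addressed, and it is not clear how to do it without the parametrization. The point is that the existence and shape of $\Omega$ are essentially consequences of the genus--zero structure encoded in $\psi$, and bypassing $\psi$ makes the construction substantially harder.
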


Theorem \ref{theo:Omega} generalizes the result obtained in \cite[Theorem 2.6]{BleherKuij} in the case $d=2$. 
Observe that \eqref{def:boundOmega} indicates that $\xi_1$ is 
the Schwarz function associated with the curve $\partial\Omega$. The property \eqref{ehm} with $k=0$
means that
\[ \area (\Omega) = \pi t_0, \]
and so in terms of the parameter $t_0$, the domain $\Omega$ evolves according to the equations of Laplacian growth,
see Figure \ref{fig:OmegaStar} for an illustration in the case $d=3$.

Our final result is that $\xi_1$ satisfies an algebraic equation, the so-called spectral curve.

\begin{theorem}\label{theo:spectralcurve}
The Schwarz function \eqref{rel:xi1mu1} satisfies an algebraic equation of degree $d+1$ with 
real coefficients of the form
\begin{equation}\label{algequat}
P(z,\xi)=\xi^{d+1}+z^{d+1}-\sum_{k=1}^{d} c_{k} z^{k} \xi^{k}+\beta=0,
\end{equation}
where $c_{k}>0$ for all $k=1,\ldots,d,$ and $\beta>0$.

The coefficients $c_{d}$ and $c_{d-1}$ in  \eqref{algequat} have the form
\begin{align}
c_{d} & =t_{d+1},\label{def:cd}\\
c_{d-1} & = \begin{cases} t_{0}\,t_{d+1}, & \text{for } d\geq 3,\\[0.3em] 
	t_{0}\,t_{3}+\frac{1}{t_{3}}, & \text{for } d=2.
\end{cases} \label{def:cdm1}
\end{align}

\end{theorem}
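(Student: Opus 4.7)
The plan is to exhibit $\xi_{1}$ as one branch of a meromorphic function $\xi$ on the $(d+1)$-sheeted Riemann surface $\mathcal{R}$ that was already constructed earlier in the paper in connection with the vector equilibrium problem, and then read off the algebraic equation it satisfies. On the remaining $d$ sheets we define $\xi_{2},\ldots,\xi_{d+1}$ out of the Cauchy transforms of $\mu_{2}^{*},\ldots,\mu_{d}^{*}$ and the polynomial piece $t_{d+1}z^{d}$, arranged so that $\xi_{k}$ and $\xi_{k+1}$ exchange boundary values across $\Sigma_{k}^{*}$ (this is the standard consequence of the variational equations for the equilibrium problem). Then the elementary symmetric functions $e_{k}(\xi_{1},\ldots,\xi_{d+1})$ are single-valued on $\mathbb{C}$: the jumps across the various $\Sigma_{k}^{*}$ cancel, removability at the endpoints $\omega^{j}x^{*}$ and at the origin follows from the square-root (and higher) local behavior established in Theorem~\ref{theo:subcrit}, and at infinity the growth of each $\xi_{j}$ is polynomial ($\xi_{1}\sim t_{d+1}z^{d}$ on the top sheet and $\xi_{j}=O(z^{1/d})$ on the others, as these $d$ sheets are glued at a single branch point of order $d$ over $\infty$). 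Thus each $e_{k}$ is a polynomial in $z$, so $\xi$ satisfies an equation $P(z,\xi)=0$ with $P$ polynomial of degree $d+1$ in $\xi$.

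Next I would use the two symmetries of the problem. Invariance under $z\mapsto\overline{z}$ (which extends to $\xi\mapsto\overline{\xi}$ because all $\mu_{k}^{*}$ are symmetric under complex conjugation) makes $P$ have real coefficients. Invariance under $z\mapsto\omega z$ permutes the sheets and acts as $\xi_{j}(\omega z)=\omega^{-1}\xi_{j}(z)$, so $e_{k}(\omega z)=\omega^{-k}e_{k}(z)$. Writing $e_{k}(z)=\sum a_{k,j}z^{j}$ this forces $a_{k,j}=0$ unless $j\equiv -k\pmod{d+1}$. Combined with the degree bound $\deg e_{k}\leq d+1-k$ for $1\leq k\leq d$ and $\deg e_{d+1}\leq d+1$ coming from the asymptotics at infinity, this leaves precisely $e_{k}(z)=\alpha_{k}z^{d+1-k}$ for $1\le k\le d$ and $e_{d+1}(z)=\alpha_{d+1}z^{d+1}+\gamma$. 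Translating back to $P$ via $P(z,\xi)=\prod_{j}(\xi-\xi_{j})$ and matching the normalization $\alpha_{d+1}=-1$ (from $\xi_{1}\cdot \xi_{2}\cdots\xi_{d+1}\sim -z^{d+1}$, a computation using $\xi_{1}\sim t_{d+1}z^{d}$ and the product of the remaining branches $\sim -z/t_{d+1}$ forced by the Newton-polygon balance of the top-sheet and constant terms) yields precisely the form \eqref{algequat}.

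Finally, to identify $c_{d}$ and $c_{d-1}$, I substitute the expansion $\xi_{1}(z)=t_{d+1}z^{d}+t_{0}z^{-1}+O(z^{-d-2})$ (the intermediate moments of $\mu_{1}^{*}$ vanish by rotational invariance) into $P(z,\xi_{1})=0$. The coefficient of $z^{d(d+1)}$ gives $t_{d+1}^{d+1}=c_{d}t_{d+1}^{d}$, hence $c_{d}=t_{d+1}$. At the next relevant order $z^{d^{2}-1}$, the contribution $(d+1)t_{0}t_{d+1}^{d}$ from $\xi_{1}^{d+1}$ minus $dt_{0}t_{d+1}^{d}$ from $-c_{d}z^{d}\xi_{1}^{d}$ must equal $c_{d-1}t_{d+1}^{d-1}$, yielding $c_{d-1}=t_{0}t_{d+1}$ for $d\geq 3$; for $d=2$ the term $z^{d+1}=z^{3}$ in $P$ itself lies at the same order $z^{d^{2}-1}=z^{3}$, contributing an extra $+1$ and giving $c_{1}=t_{0}t_{3}+1/t_{3}$. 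Positivity of $c_{k}$ and $\beta$ would follow most transparently from the rational parametrization of the spectral curve provided by the map from $\mathcal{R}$ to the $(z,\xi)$-plane, coupled with the positivity of the densities of $\mu_{k}^{*}$ (Theorem~\ref{theo:subcrit}); alternatively, viewing $\beta=P(0,0)$ one evaluates directly from the values of $\xi_{j}$ at $z=0$ coming from Theorem~\ref{theo:subcrit}(a)--(c). The main difficulty I anticipate is controlling the behavior of the $\xi_{k}$ at the origin and at the endpoints $\omega^{j}x^{*}$ carefully enough to ensure the removability of all apparent singularities of the $e_{k}$, and then systematically proving the positivity of all intermediate coefficients $c_{2},\ldots,c_{d-2}$, which do not admit a simple asymptotic formula and seem to require either an explicit parametrization of the curve or a more global argument.
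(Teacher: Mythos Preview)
Your strategy for obtaining the \emph{form} \eqref{algequat} is sound and close to the paper's, and your computation of $c_{d}$ and $c_{d-1}$ by substituting the expansion of $\xi_{1}$ is correct (including the extra $1/t_{3}$ for $d=2$). The paper, however, takes a shorter path: it never argues removability of the $e_{k}$ abstractly, because the $\xi_{k}$ are already given in \eqref{def:xifunc} via the rational parametrization $(z,\xi)=(\psi(w),\psi(1/w))$ of \eqref{param:RS}. This parametrization makes $P(z,\xi)=0$ manifestly algebraic of genus zero, and the substitution $w\mapsto 1/w$ gives the symmetry $P(z,\xi)=0\Leftrightarrow P(\xi,z)=0$ for free. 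Combined with the rotational symmetry (your $e_{k}(\omega z)=\omega^{-k}e_{k}(z)$), this pins down the form \eqref{algequat} without any growth estimate on the $e_{k}$. Your ``Newton-polygon balance'' for the $z^{d+1}$ coefficient is the one shaky step: in fact $\prod_{j}\xi_{j}(z)\sim(-1)^{d+1}z^{d+1}$, so your claimed sign $-1$ is wrong for odd $d$; the $z\leftrightarrow\xi$ symmetry avoids this computation entirely.

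The genuine gap is positivity of the remaining $c_{k}$ (and of $\beta$), which you flag but do not address. Here the paper's argument is specific and worth knowing. For $\beta$ it evaluates $\prod_{j}\xi_{j}(0)$ directly from the parametrization, obtaining the closed form \eqref{def:beta}, which is positive since $a=t_{d+1}r^{d}<r$ in the subcritical regime. For the $c_{k}$ it does \emph{not} use the densities of $\mu_{2}^{*},\ldots,\mu_{d}^{*}$ at all; instead it proves by induction on $k$ that $(-1)^{k}e_{k}(\xi_{2},\ldots,\xi_{d+1})$ has a Laurent expansion $z^{-k}\sum_{j\ge0}b_{j,k}z^{-j(d+1)}$ with every $b_{j,k}>0$. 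The base case $k=1$ is $t_{0}F_{1}(z)$, whose Laurent coefficients are the moments $\int z^{j(d+1)}\,d\mu_{1}^{*}$, positive by rotational symmetry and positivity of $\mu_{1}^{*}$. The inductive step uses $e_{k}(\xi_{2},\ldots,\xi_{d+1})=e_{k}(\xi_{1},\ldots,\xi_{d+1})-\xi_{1}\,e_{k-1}(\xi_{2},\ldots,\xi_{d+1})$ together with $\xi_{1}=c_{d}z^{d}+t_{0}F_{1}$; the leading $z^{d+1-k}$ terms cancel (giving the recursion $c_{d+1-k}=c_{d}\,b_{0,k-1}$, which also shows $c_{k}>0$), and what remains is a product of series with positive coefficients. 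Your proposal does not contain this idea, and neither the rational parametrization alone nor the positivity of the densities of the other $\mu_{k}^{*}$ yields it.
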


The function $\xi_{1}(z)$ is the only solution of \eqref{algequat} that can be defined as an analytic 
function in the exterior of $\Sigma_{1}^*$. Because of \eqref{algequat}, it has an analytic continuation
to a $(d+1)$-sheeted Riemann surface. The analytic continuations  are also important for the asymptotic analysis
that follows.

\subsection{Outline of the paper}

The rest of the paper is organized as follows. In Section \ref{section:vep} we analyze the properties of the solution 
to the vector equilibrium problem and prove Theorems \ref{theo:subcrit}, \ref{theo:Omega} and \ref{theo:spectralcurve}. In Section 
\ref{section:firstRHP} we describe the RH 
problem associated with the multiple orthogonal polynomials $P_{n,n}$. As a result of the steepest descent 
analysis of this RH problem, we will obtain the strong asymptotic behavior of the polynomials $P_{n,n}$ outside 
the star $\Sigma_{1}^*$, see Theorem \ref{theo:strongasymp}. The steepest descent analysis is developed in 
Sections \ref{section:firsttransf}--\ref{section:conclusion}.

\section{Proofs of Theorems \ref{theo:subcrit}, \ref{theo:Omega} and \ref{theo:spectralcurve}} \label{section:vep}

In this section we analyze the vector equilibrium problem introduced in Definition \ref{def:VEP}, in the subcritical regime
and thereby prove the Theorems  \ref{theo:subcrit}, \ref{theo:Omega} and \ref{theo:spectralcurve}.

\subsection{The Riemann surface in the subcritical case} \label{subsection:xik}

Let $t_{d+1}>0$ be fixed. Then it is easy to see that the function
\[ r \mapsto r^2-d\,t_{d+1}^{2}\,r^{2d} \]
attains its maximum value on the positive real axis at the point $r_{\crit}=(d\,t_{d+1})^{-\frac{1}{d-1}}$, and
the maximum value is exactly given by $t_{0,\crit}$ in \eqref{def:t0crit}. The function is increasing
on $[0,r_{\crit}]$. See also Figure \ref{graph:f}.
Thus for $t_{0}\in(0,t_{0,\crit}]$, there is a unique $r \in [0,r_{\crit}]$ such that \eqref{def:r} holds. 
Then $x^*$ is defined by \eqref{def:xstar} as in Theorem \ref{theo:subcrit}.
From now on, we assume that $t_0 < t_{0,\crit}$, $r$ and $x^*$ are fixed.

\begin{figure}[t]
\begin{center}
\begin{tikzpicture}[scale=1.2]
\draw [line width=0.5,>=stealth,->] (-1,0) -- (3,0);
\draw [line width=0.5,>=stealth,->] (0,-1) -- (0,3);
\draw [line width=1.2, smooth, rounded corners=8pt] (0,0) .. controls (0.7,0) and (1,2.3) .. (3,-1);
\draw [densely dotted, line width=0.4] (0,0.79) -- (1.25,0.79);
\draw [densely dotted, line width=0.4] (1.25,0.79) -- (1.25,0);
\draw (-0.35,0.79) node[scale=0.8]{$t_{0,\crit}$};
\draw [densely dotted, line width=0.4] (0,0.4) -- (0.55,0.4);
\draw (-0.19,0.4) node[scale=0.8]{$t_{0}$};
\draw [densely dotted, line width=0.4] (0.56,0.4) -- (0.56,0);
\draw (0.56,-0.15) node[scale=0.8]{$r$};
\draw (1.25,-0.18) node[scale=0.8]{$r_{\crit}$};
\end{tikzpicture}
\end{center}
\caption{The graph of the function $r \mapsto r^{2}-d\,t_{d+1}^{2}\,r^{2d}$ for $r \geq 0$.}
\label{graph:f}
\end{figure}
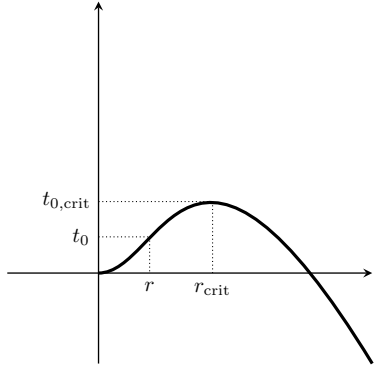

Our starting point is the compact Riemann surface $\mathcal{R}$ of genus zero 
given by the collection of all pairs $(z,\xi)$ of the form
\begin{equation}\label{param:RS}
\begin{array}{l}
z=\psi(w)=rw+\frac{t_{d+1}\,r^{d}}{w^{d}},\\[0.3em]
\xi=\psi(\frac{1}{w})=\frac{r}{w}+t_{d+1}\,r^{d}\,w^{d}, 
\end{array} \qquad w \in \overline{\mathbb C}.
\end{equation}
Let us first examine the sheet structure of this Riemann surface.

The finite branch points of $\mathcal{R}$ are precisely the endpoints $\omega^{\ell} x^{*}$, $\ell =0, \ldots, d$ 
of the star $\Sigma_{1}^*$. Indeed, the derivative $\psi'(w) = r - \frac{d t_{d+1}}{w^{d+1}} r^d$ has zeros 
$\omega^{\ell}\,w^{*}$, $\ell=0, \ldots, d$, where $w^{*}=(d\,t_{d+1}\,r^{d-1})^{\frac{1}{d+1}}$ and
$\psi(\omega^{\ell} w^{*})= \omega^{\ell}\,x^{*}$.

For any $z$, the equation 
\begin{equation} \label{eq:zw} 
z = r w + \frac{t_{d+1} r^d}{w^d} 
\end{equation}
has $d+1$ solutions $w_k(z)$, $k=1, \ldots, d+1$, which we label such that 
\begin{equation}\label{ordering}
	|w_{1}(z)|\geq |w_{2}(z)|\geq \cdots \geq |w_{d+1}(z)|>0.
\end{equation}
The values $w_{k}(z)$ are unambiguously defined at points $z$ where all inequalities in 
\eqref{ordering} are strict. If an equality occurs, we assign an arbitrary 
labeling such that \eqref{ordering} holds. It follows from \cite[Thm. 2.2]{DelLop} 
(see also \cite[Prop. 1]{AptKalSaff}) that the star-like sets \eqref{def:Sigma1star}--\eqref{def:Sigmakstar} 
are such that
\begin{equation} \label{eq:Sigmak}
\Sigma_{k}^*=\{z\in\mathbb{C}: |w_{k}(z)|=|w_{k+1}(z)|\},\qquad k=1,\ldots,d.
\end{equation}
This implies that for each $k$, the solution $w_{k}(z)$ of \eqref{eq:zw} defines an analytic 
function on $\mathbb{C}\setminus(\Sigma_{k-1}^*\cup\Sigma_{k}^*)$ (
with the understanding that $\Sigma_{0}^*=\Sigma_{d+1}^*=\emptyset$) 
and it is analytically continued by $w_{k+1}(z)$ through $\Sigma_{k}^*$.

We use the branches $w_k$ to define the sheet structure on $\mathcal R$. Thus $\mathcal R$
has sheets $\mathcal R_1, \ldots, \mathcal R_{d+1}$ given by
\begin{equation}\label{def:RiemannsurfR}
\begin{aligned}
\mathcal{R}_{1} & =\overline{\mathbb{C}}\setminus\Sigma_{1}^*, \\
\mathcal{R}_{k} & =\overline{\mathbb{C}}\setminus(\Sigma_{k-1}^*\cup\Sigma_{k}^*),\quad 2\leq k\leq d, \\
\mathcal{R}_{d+1}& =\overline{\mathbb{C}}\setminus\Sigma_{d}^*.
\end{aligned}
\end{equation}
For each $k=1,\ldots,d,$ the sheets $\mathcal{R}_{k}$ and $\mathcal{R}_{k+1}$ are glued together 
through the cut $\Sigma_{k}$ in the usual crosswise manner.
See Figure \ref{riemannsurface} for a visualization of the Riemann surface in the case $d=3$. 

\begin{definition}
We define, in accordance with \eqref{param:RS}, the functions
\begin{equation}\label{def:xifunc}
	\xi_{k}(z)=\psi\Big(\frac{1}{w_{k}(z)}\Big)=\frac{r}{w_{k}(z)}+t_{d+1}\,r^{d}\,w_{k}(z)^{d}, 
\end{equation}
for $k = 1, \ldots, d+1$.
\end{definition}
We consider $\xi_k(z)$ to be defined on the $k$th sheet of the Riemann surface,
and then these functions are branches of a meromorphic function on $\mathcal R$.
 
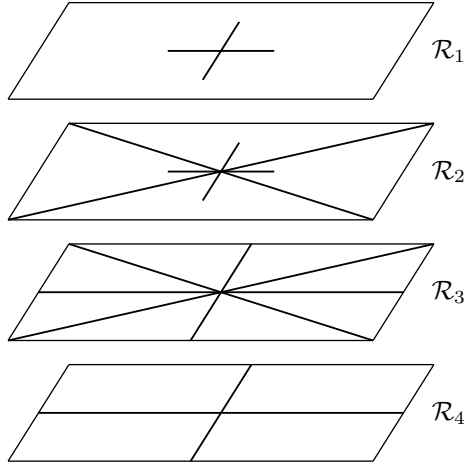
\begin{figure}[t]
\begin{center}
\begin{tikzpicture}[scale = 2,line width = .5]
\draw [line width = .7](-0.35,0) -- (0.35,0);
\draw [line width = .7](-0.12,-0.192) -- (0.12,0.192);
\draw (1,-0.32) -- (1.4,0.32);
\draw (-1.4,-0.32) -- (1,-0.32);
\draw (-1.4,-0.32) -- (-1,0.32);
\draw (-1,0.32) -- (1.4,0.32);
\draw (1.5,0) node[scale=1]{$\mathcal{R}_{1}$};
\draw [line width = .7,yshift=-0.8cm] (-0.35,0) -- (0.35,0);
\draw [line width = .7,yshift=-0.8cm] (-0.12,-0.192) -- (0.12,0.192);
\draw [yshift=-0.8cm] (1,-0.32) -- (1.4,0.32);
\draw [yshift=-0.8cm] (-1.4,-0.32) -- (1,-0.32);
\draw [yshift=-0.8cm] (-1.4,-0.32) -- (-1,0.32);
\draw [yshift=-0.8cm] (-1,0.32) -- (1.4,0.32);
\draw (1.5,0) node[scale=1,yshift=-1.6cm]{$\mathcal{R}_{2}$};
\draw [line width = .7,yshift=-0.8cm] (-1.4,-0.32) -- (1.4,0.32);
\draw [line width = .7,yshift=-0.8cm] (-1,0.32) -- (1,-0.32);
\draw [yshift=-1.6cm] (1,-0.32) -- (1.4,0.32);
\draw [yshift=-1.6cm] (-1.4,-0.32) -- (1,-0.32);
\draw [yshift=-1.6cm] (-1.4,-0.32) -- (-1,0.32);
\draw [yshift=-1.6cm] (-1,0.32) -- (1.4,0.32);
\draw [line width = .7,yshift=-1.6cm] (-1.4,-0.32) -- (1.4,0.32);
\draw [line width = .7,yshift=-1.6cm] (-1,0.32) -- (1,-0.32);
\draw [line width = .7,yshift=-1.6cm] (-1.2,0) -- (1.2,0);
\draw [line width = .7,yshift=-1.6cm] (-0.2,-0.32) -- (0.2,0.32);
\draw (1.5,0) node[scale=1,yshift=-3.2cm]{$\mathcal{R}_{3}$};
\draw [yshift=-2.4cm] (1,-0.32) -- (1.4,0.32);
\draw [yshift=-2.4cm] (-1.4,-0.32) -- (1,-0.32);
\draw [yshift=-2.4cm] (-1.4,-0.32) -- (-1,0.32);
\draw [yshift=-2.4cm] (-1,0.32) -- (1.4,0.32);
\draw [line width = .7,yshift=-2.4cm] (-1.2,0) -- (1.2,0);
\draw [line width = .7,yshift=-2.4cm] (-0.2,-0.32) -- (0.2,0.32);
\draw (1.5,0) node[scale=1,yshift=-4.8cm]{$\mathcal{R}_{4}$};
\end{tikzpicture}
\caption{The Riemann surface $\mathcal{R}$ in the case $d=3$. The Riemann surface has $d+1$ sheets
and $z\mapsto \xi_k(z)$ is considered as an analytic function on the $k$th sheet.}
\label{riemannsurface}
\end{center}
\end{figure}

It easily follows from \eqref{eq:zw} that for each $k=1,\ldots,d+1,$
\begin{align}
w_{k}(\overline{z}) & =\overline{w_{k}(z)},\label{symm:w:1}\\
w_{k}(\omega  z) & =\omega \, w_{k}(z).\label{symm:w:2}
\end{align}
We also deduce easily that as $z\rightarrow\infty$,
\begin{align}
w_{1}(z) & =\frac{z}{r}-\frac{t_{d+1}\,r^{2d-1}}{z^{d}}+O\Big(\frac{1}{z^{2d+1}}\Big),\label{asymp:w1}\\
w_{k}(z) & =O\Big(\frac{1}{z^{1/d}}\Big), & \text{for } k=2,\ldots,d+1.\label{asymp:wk}
\end{align}
The properties \eqref{symm:w:1}--\eqref{asymp:wk} and \eqref{def:xifunc} imply that for $k=1,\ldots,d+1,$
\begin{align}
\xi_{k}(\overline{z}) & = \overline{\xi_{k}(z)},   \label{symm:xik:1}\\
\xi_{k}(\omega z) & = \omega^{-1}\,\xi_{k}(z),   \label{symm:xik:2}
\end{align}
and as $z\rightarrow\infty$,
\begin{align}
\xi_{1}(z) & =t_{d+1}\,z^{d}+\frac{t_{0}}{z}+O\Big(\frac{1}{z^{d+2}}\Big), \label{asymp:xi1}\\
\xi_{k}(z) & =O(z^{1/d}), & \text{for } k=2,\ldots,d+1,\label{asymp:xik}
\end{align}
where in \eqref{asymp:xi1} we used the relation $t_{0}=r^{2}-d\,t_{d+1}^{2}\,r^{2d}$ in identifying the coefficient of $1/z$. We will now give a more detailed description of the asymptotic behavior \eqref{asymp:xik}.

\subsection{Asymptotics of functions $\xi_k$}

In the complex plane we define the infinite sectors
\begin{equation}\label{sectorSl}
S_{\ell}=\{z\in \mathbb{C}: \frac{(2\ell-1)\pi}{d+1}<\arg z<\frac{(2\ell+1)\pi}{d+1}\},\qquad \ell=0,\ldots,d,
\end{equation}
and we subdivide each sector $S_{\ell}$ into two parts
\begin{equation}\label{sectorSlpSlm}
\begin{aligned}
S_{\ell}^{+} & =\{z\in \mathbb{C}: \frac{2\ell\pi}{d+1}<\arg z<\frac{(2\ell+1)\pi}{d+1}\},\\[0.5em]
S_{\ell}^{-} & =\{z\in \mathbb{C}: \frac{(2\ell-1)\pi}{d+1}<\arg z<\frac{2\ell\pi}{d+1}\}.
\end{aligned}
\end{equation}
We are going to use the index $\ell$ with period $d+1$, 
so that $S_{\ell} = S_{\ell+d+1}$. In total we have $2(d+1)$ sectors $S_{\ell}^{\pm}$. 
Frequently, and especially in Lemma \ref{lem:xikasymp} below, we will not number
the sectors from $S_0^-$ to $S_d^+$ but from 
$S_{- \frac{d}{2}}^-$ to $S_{\frac{d}{2}}^+$ in case $d$ is even and from
$S_{- \frac{d+1}{2}}^+$ to $S_{\frac{d+1}{2}}^-$ in case $d$ is odd.
See Figures~\ref{fig:sectors} and \ref{fig:sectors4} for the cases $d=3$ and $d=4$.

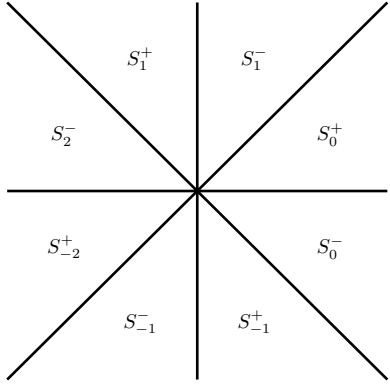
\begin{figure}[t]
\begin{center}
\begin{tikzpicture}
\draw [line width=1] (-2.5,0) -- (2.5,0);
\draw [line width=1] (0,-2.5) -- (0,2.5);
\draw [line width=1] (-2.5,-2.5) -- (2.5,2.5);
\draw [line width=1] (-2.5,2.5) -- (2.5,-2.5);
\draw (1.75,0.75) node[scale=0.75]{$S_{0}^{+}$};
\draw (1.75,-0.75) node[scale=0.75]{$S_{0}^{-}$};
\draw [rotate=90] (1.75,-0.75) node[scale=0.75]{$S_{1}^{-}$};
\draw [rotate=90] (1.75,0.75) node[scale=0.75]{$S_{1}^{+}$};
\draw [rotate=-90] (1.75,-0.75) node[scale=0.75]{$S_{-1}^{-}$};
\draw [rotate=-90] (1.75,0.75) node[scale=0.75]{$S_{-1}^{+}$};
\draw (-1.75,0.75) node[scale=0.75]{$S_{2}^{-}$};
\draw (-1.75,-0.75) node[scale=0.75]{$S_{-2}^{+}$};
\end{tikzpicture}
\caption{The eight sectors $S_{\ell}^{\pm}$ in the case $d=3$.}
\label{fig:sectors}
\end{center}
\end{figure}

\begin{figure}[t]
\begin{center}
\begin{tikzpicture}
\draw [line width=1] (-2.5,0) -- (2.5,0);
\draw [line width=1] (-2.5,-1.8) -- (2.5,1.8);
\draw [line width=1] (-2.5,1.8) -- (2.5,-1.8);
\draw [line width=1] (-0.8,-2.5) -- (0.8,2.5);
\draw [line width=1] (-0.8,2.5) -- (0.8,-2.5);
\draw (1.9,0.6) node[scale=0.75]{$S_{0}^{+}$};
\draw (1.9,-0.6) node[scale=0.75]{$S_{0}^{-}$};
\draw (1.2,1.6) node[scale=0.75]{$S_{1}^{-}$};
\draw (0, 2.0) node[scale=0.75]{$S_{1}^{+}$};
\draw (1.2,-1.6) node[scale=0.75]{$S_{-1}^{+}$};
\draw (0,-2.0) node[scale=0.75]{$S_{-1}^{-}$};
\draw (-1.2,1.6) node[scale=0.75]{$S_2^-$};
\draw (-1.9,0.6) node[scale=0.75]{$S_2^+$};
\draw (-1.2,-1.6) node[scale=0.75]{$S_{-2}^{+}$};
\draw (-1.9,-0.6) node[scale=0.75]{$S_{-2}^{-}$};
\end{tikzpicture}
\caption{The ten sectors $S_{\ell}^{\pm}$ in the case $d=4$.}
\label{fig:sectors4}
\end{center}
\end{figure}
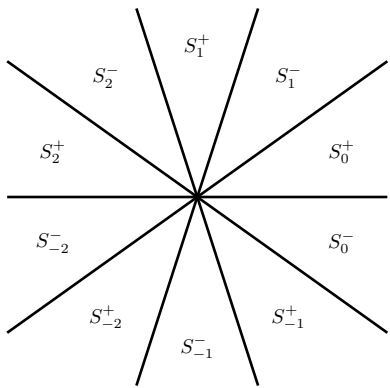

Thus, we have 
\[ \mathbb{C}\setminus(\Sigma_{2}\cup\Sigma_{3}) = \bigcup_{\ell=0}^{d} (S_{\ell}^{+}\cup S_{\ell}^{-}), \]
cf.~\eqref{def:Sigmak} and \eqref{sectorSlpSlm}. 

In Lemma \ref{lem:xikasymp} and throughout the paper we use the principal  branch of fractional exponents,
that is, with a branch cut along the negative real axis.
\begin{lemma}   \label{lem:xikasymp}
For $k =2,\ldots, d+1$, we have
\begin{equation}\label{asymp:xik:2}
	\xi_{k}(z) = \kappa_{k, \ell}^{\pm} \,z^{1/d}-\frac{t_{0}}{d} \, z^{-1} 
	+O\Big(z^{-2 - 1/d} \Big), \qquad \text{as } z \to \infty \text{ in } S_{\ell}^{\pm} 
\end{equation}
where
\begin{equation} \label{def:kappa}
\begin{aligned}
	\kappa_{k,\ell}^+ & = \omega_d^{-\ell + (-1)^k \lfloor \frac{k-1}{2} \rfloor}  t_{d+1}^{-1/d}, && -\lceil d/2 \rceil \leq \ell\leq \lfloor d/2 \rfloor,\\
	\kappa_{k,\ell}^- & = \omega_d^{-\ell - (-1)^k \lfloor \frac{k-1}{2} \rfloor}  t_{d+1}^{-1/d}, && -\lfloor d/2 \rfloor \leq \ell \leq \lceil d/2 \rceil,
\end{aligned}	
\end{equation}
and $\omega_d = \exp( \frac{2\pi \ir}{d})$.
\end{lemma}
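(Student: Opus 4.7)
The plan is to work on one sheet at a time via $v_k(z)=1/w_k(z)$, using the symmetry $\xi_k(z)=\psi(v_k(z))=rv_k+t_{d+1}r^d v_k^{-d}$. For $k\geq 2$ the branches $w_k(z)$ are $O(z^{-1/d})$, so $v_k(z)\to\infty$ and the defining equation $z=r/v_k+t_{d+1}r^d v_k^d$ can be inverted by iteration. Setting $v_k=Az^{1/d}(1+\epsilon)$ and matching powers, one finds $A^d=1/(t_{d+1}r^d)$, hence $A=\omega_d^{j_k}t_{d+1}^{-1/d}r^{-1}$ for some $j_k\in\{0,1,\ldots,d-1\}$, and $\epsilon=-\frac{r}{dAz^{1+1/d}}+O(z^{-2-2/d})$. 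Substituting into $\xi_k$ and using $A^d=1/(t_{d+1}r^d)$ gives the coefficient of $z^{-1}$ equal to $-\frac{r^2}{d}+t_{d+1}^2r^{2d}$, which by the defining relation $t_0=r^2-dt_{d+1}^2r^{2d}$ is $-t_0/d$. This immediately yields the shape $\xi_k(z)=rAz^{1/d}-\frac{t_0}{d}z^{-1}+O(z^{-2-1/d})$ with $rA=\omega_d^{j_k}t_{d+1}^{-1/d}$, so the whole point is to identify the integer $j_k$ as a function of $k$ and of the sector $S_\ell^\pm$.

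To pin down $j_k$, the plan is first to fix $z\in S_0^+$ (i.e.\ $\arg z=\theta$, small positive) and use the ordering convention $|w_1|\geq|w_2|\geq\cdots\geq|w_{d+1}|$, which, since $w_1\sim z/r$ is already labelled, reduces to ordering the $d$ small branches by modulus. At leading order all $|w_k|$ ($k\geq 2$) equal $t_{d+1}^{1/d}r|z|^{-1/d}$; the ordering is then read off from the next term. A short computation gives
\[
|w_k(z)|^2=|\alpha_0|^2\Bigl(1+\tfrac{2r^2}{d}\,|z|^{-(d+1)/d}\cos\!\bigl(\theta\tfrac{d+1}{d}-\tfrac{2\pi j_k}{d}\bigr)+O(|z|^{-2(d+1)/d})\Bigr),
\]
so for $\theta=0^+$ the modulus is a decreasing function of $|j_k|\bmod d$ measured from $0$. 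With the further perturbation $\theta>0$ breaking the ties at $\pm j$, one obtains for $z\in S_0^+$ the labelling $j_2=0$, $j_3=1$, $j_4=-1$, $j_5=2$, $j_6=-2,\ldots$, i.e.\ $j_k=(-1)^k\lfloor(k-1)/2\rfloor$. This gives exactly $\kappa_{k,0}^+=\omega_d^{(-1)^k\lfloor(k-1)/2\rfloor}t_{d+1}^{-1/d}$.

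The rest is propagation by the two symmetries. From \eqref{symm:xik:2} and the identity $\omega^{-1-1/d}=\omega_d^{-1}$ on the principal branch of $z^{1/d}$, one gets $\kappa_{k,\ell+1}^\pm=\omega_d^{-1}\kappa_{k,\ell}^\pm$, which yields the $-\ell$ term in the exponent for all sectors $S_\ell^\pm$ not separated from $S_0^\pm$ by the cut of $z^{1/d}$. For $\arg z>\pi$ or $\arg z<-\pi$ one instead goes through $S_0^-$ via \eqref{symm:xik:1}: conjugating $\xi_k(\bar z)=\overline{\xi_k(z)}$ swaps $S_\ell^+$ with $S_{-\ell}^-$ and turns $\omega_d^{(-1)^k\lfloor(k-1)/2\rfloor}$ into $\omega_d^{-(-1)^k\lfloor(k-1)/2\rfloor}$. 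Combining these two actions produces the formulas \eqref{def:kappa}, with the stated asymmetric ranges of $\ell$ reflecting exactly where the principal branch cut of $z^{1/d}$ forces one to leave $S_0^+$ going through $S_0^-$ rather than continuously around.

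The main obstacle is step 4: the ordering of the $d$ equal-modulus branches is controlled only by a subleading term of order $|z|^{-(d+1)/d}$, which is extremely small compared to $|\alpha_0|=t_{d+1}^{1/d}r|z|^{-1/d}$. Getting the combinatorial identification correct, and in particular checking that the alternation of signs $(-1)^k\lfloor(k-1)/2\rfloor$ survives as one crosses the rays of $\Sigma_2\cup\Sigma_3$ (where two adjacent $|w_k|$'s coincide and swap), requires tracking the subleading correction through the swap, which is the bookkeeping step that takes real care. Once this is verified on a single crossing, rotational and conjugation invariance handle everything else.
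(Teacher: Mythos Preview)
Your approach is essentially the paper's: expand the small branches (you via $v_k=1/w_k$, the paper via $w_k$ directly), identify the $d$th root of unity from the ordering convention, then substitute into $\xi_k=\psi(1/w_k)$. The paper in fact asserts the labelling \eqref{asymp:wk:3} without writing out the tie-breaking argument, so your subleading-modulus computation is more explicit than what appears there.

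One internal inconsistency to repair: your explicit list ``$j_2=0,\ j_3=1,\ j_4=-1,\ j_5=2,\ldots$'' does not match your own formula $j_k=(-1)^k\lfloor(k-1)/2\rfloor$, which gives $j_3=-1,\ j_4=1,\ j_5=-2$. The formula is the correct one (and is what feeds into $\kappa_{k,0}^+$). The discrepancy comes from a sign slip in your cosine: a direct computation of $|w_k|^2=|\alpha_0|^2\bigl(1-2\Real\epsilon_k+\cdots\bigr)$ gives
\[
|w_k|^2=|\alpha_0|^2\Bigl(1+\tfrac{2r^2 t_{d+1}^{1/d}}{d}\,|z|^{-(d+1)/d}\cos\!\bigl(\tfrac{d+1}{d}\theta+\tfrac{2\pi j_k}{d}\bigr)+\cdots\Bigr),
\]
with $+2\pi j_k/d$, not $-2\pi j_k/d$. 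For small $\theta>0$ the larger cosine between $j=\pm1$ is attained at $j=-1$, so $j_3=-1$, and the pattern $0,-1,+1,-2,+2,\ldots$ follows, in agreement with $j_k=(-1)^k\lfloor(k-1)/2\rfloor$ and with the paper's \eqref{asymp:wk:3}. Once this sign is fixed, your propagation via \eqref{symm:xik:2} and \eqref{symm:xik:1} is correct and yields \eqref{def:kappa} with the stated ranges of $\ell$.
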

\begin{proof}
Let $2\leq k\leq d+1$. Since the function $w_{k}(z)$ vanishes at infinity and satisfies
\[
w_{k}(z)^{d+1}-\frac{z}{r}\,w_{k}(z)^{d}+t_{d+1}\,r^{d-1}=0,
\]
we deduce that as $z\rightarrow\infty$ in one of the sectors $S_{\ell}^{\pm}$,
\begin{equation}\label{asymp:wk:2}
w_{k}(z)= c\, t_{d+1}^{1/d}\, r  z^{-1/d} + c^{2}\, t_{d+1}^{2/d}\, r^{3} d^{-1} z^{-1 - 2/d} 
	+O\Big(z^{-2 - 3/d}\Big),
\end{equation}
where $c$ is a $d$-th root of unity (i.e., $c^d=1$) that depends on $k$ and on the sector. 
In order to respect the ordering \eqref{ordering} we have
\begin{equation} \label{asymp:wk:3} 
\begin{aligned} w_k(z) & =  \omega_d^{\ell - (-1)^k \lfloor \frac{k-1}{2} \rfloor}   t_{d+1}^{1/d}\, r  z^{-1/d} + O(z^{-1 - 2/d})
	&& \text{ as } z \to \infty \text{ in } S_{\ell}^+, \\
	w_k(z) & = \omega_d^{\ell + (-1)^k \lfloor \frac{k-1}{2} \rfloor}  t_{d+1}^{1/d}\, r  z^{-1/d} + O(z^{-1 - 2/d})
	&& \text{ as } z \to \infty \text{ in } S_{\ell}^-.
	\end{aligned}
	\end{equation}
Then applying \eqref{def:xifunc} and \eqref{asymp:wk:2}-\eqref{asymp:wk:3} we arrive at \eqref{asymp:xik:2}.
\end{proof}
Note that the coefficient of $z^{-1}$ in \eqref{asymp:xik:2} does not vary with $k$ and the sector.

\subsection{Construction of the vector equilibrium measure in terms of the functions $\xi_{k}$}
\label{subsection:construction}

We use the functions $\xi_k$ defined in \eqref{def:xifunc} to explicitly solve the
vector equilibrium problem. As already mentioned, the vector equilibrium problem
belongs to the class of weakly admissible problems considered in \cite{HardyKuij}. 
Consequently, it has a unique minimizer $(\mu_{1}^{*},\ldots,\mu_{d}^{*})$. 
The uniqueness readily implies that all the measures $\mu_{k}^{*}$ are  invariant under the
rotation $z \mapsto \omega z$.

Given a positive measure $\mu$, let us denote by $U^{\mu}$ its logarithmic potential, that is,
\begin{equation} \label{def:logpotential}
U^{\mu}(z)=\int\log\frac{1}{|z-t|}\,\ud\mu(t),\qquad z\in\mathbb{C}.
\end{equation}
In what follows we describe some general properties of the vector equilibrium measure $(\mu_{1}^{*},\ldots,\mu_{d}^{*})$ that are valid regardless of the values of $t_{0}$, $t_{d+1}$ and $x^{*}$. Later we will focus on the subcritical regime for these parameters.

Observe from \eqref{energyfunc} that the first measure $\mu_{1}^{*}$ minimizes the energy functional
\[
\mu_{1}\mapsto I(\mu_{1})-I(\mu_{1},\mu_{2}^{*})+\frac{1}{t_{0}}
\int_{\Sigma_{1}}\left(\frac{d}{(d+1)\,t_{d+1}^{1/d}}\,
|z|^\frac{d+1}{d}-\frac{t_{d+1}}{d+1}\,z^{d+1}\right)\,\ud \mu_{1}(z)
\]
among all probability measures on $\Sigma_{1}$. This means that $\mu_{1}^{*}$ is the equilibrium measure 
in the presence of an external field, cf.~\cite{SaffTotik}, therefore $\mu_{1}^{*}$ is characterized by 
the variational conditions
\begin{equation}\label{varcondmu1}
-2 U^{\mu_{1}^{*}}(z)+U^{\mu_{2}^{*}}(z)-
	\frac{1}{t_{0}}\left(\frac{d}{(d+1)\, t_{d+1}^{1/d}}|z|^{\frac{d+1}{d}}-\frac{t_{d+1}}{d+1}z^{d+1}\right)
	\begin{cases} = \ell_{1}, & z\in\supp(\mu_{1}^{*}),\\[0.3em]
	\leq \ell_{1}, & z\in\Sigma_{1}\setminus\supp(\mu_{1}^{*}),
	\end{cases}
\end{equation}
for a certain constant $\ell_{1}$.

Similarly, for a fixed $k$ with $2\leq k\leq d-1$, the measure $\mu_{k}^{*}$ minimizes the energy functional
\[
\mu_{k}\mapsto I(\mu_{k})-I(\mu_{k-1}^{*},\mu_{k})-I(\mu_{k},\mu_{k+1}^{*}),
\]
among all positive measures $\mu_{k}$ satisfying \eqref{masses}--\eqref{supp:muk}. 
Consequently, $\mu_{k}^{*}$ is the balayage of $(\mu_{k-1}^{*}+\mu_{k+1}^{*})/2$ onto 
$\Sigma_{k}$. (See e.g.\ \cite{SaffTotik} for the notion of balayage of a measure in logarithmic
potential theory.) This in turn implies that $\supp(\mu_{k}^{*})=\Sigma_{k}$ and $\mu_{k}^{*}$ 
is characterized by the condition
\begin{equation}\label{varcondmuk}
2 U^{\mu_{k}^{*}}(z)=U^{\mu_{k-1}^{*}}(z)+U^{\mu_{k+1}^{*}}(z),\qquad z\in\Sigma_{k}.
\end{equation}
Analogously, we obtain that $\mu_{d}^{*}$ is the balayage of $\mu_{d-1}^{*}/2$ onto $\Sigma_{d}$, 
and so $\supp(\mu_{d}^{*})=\Sigma_{d}$ and $\mu_{d}^{*}$ is characterized by the condition
\begin{equation}\label{varcondmud}
2 U^{\mu_{d}^{*}}(z)=U^{\mu_{d-1}^{*}}(z),\qquad z\in\Sigma_{d}.
\end{equation}

In what follows we choose an orientation on the stars $\Sigma_1^*$ and $\Sigma_k$, $k=2, \ldots, d$ 
that on each segment is pointing away from the origin. It induces a $+$ and $-$ side on these contours, where the $+$
side lies on the left and the $-$ side on the right as one moves away from the origin. For an oriented
contour $\Sigma$ and a function $f$ defined on a neighborhood of $\Sigma$ in the complex plane, 
we use $f_{\pm}$ to denote the boundary values on $\Sigma$ as one approaches $\Sigma$ from
the $\pm$ side.

\begin{proposition} \label{prop:VEPsol}
Let $t_{d+1} >0$, $t_{0} \in (0, t_{0,\crit})$ and $x^{*}$ be as in Theorem \ref{theo:subcrit}
and let $(\mu_{1}^{*},\ldots,\mu_{d}^{*})$ be the minimizer  of the vector equilibrium problem of
Definition \ref{def:VEP} with $\widehat{x} = x^*$. Then we have the following:
\begin{enumerate}
\item[\rm  (a)] The measure $\mu_1^*$ is given by
\begin{equation}\label{def:mu1}
	\ud\mu_{1}^{*}(z)=\frac{1}{2\pi\ir\, t_{0}}\,(\xi_{1,-}(z)-\xi_{1,+}(z))\,\ud z, \qquad z \in \Sigma_1^*,
\end{equation}
where $\ud z$ is the complex line element on $\Sigma_1^*$,
\item[\rm (b)] For $k \geq 2$, we have that $\mu_k^*$ is given by
\begin{equation}\label{def:muk}
\ud \mu_{k}^{*}(z)
	= \frac{1}{2\pi\ir\, t_{0}}\,\Big(\eta_{k,-}(z)-\eta_{k,+}(z) \Big)		\,\ud z, \qquad z \in \Sigma_k,
\end{equation}
where
\begin{equation} \label{def:etak}
\eta_k(z) = \xi_k(z) - \kappa_{k,\ell}^{\pm} z^{1/d}, \qquad z \in S_{\ell}^{\pm}, 
\end{equation}
and the numbers $\kappa_{k,\ell}^{\pm}$ are as in Lemma \ref{lem:xikasymp}.
\end{enumerate}
\end{proposition}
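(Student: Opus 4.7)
The plan is to exploit uniqueness of the minimizer of the vector equilibrium problem (it is weakly admissible in the sense of \cite{HardyKuij}), so it suffices to verify that the measures defined by formulas (a) and (b) are positive Borel measures on the claimed supports $\Sigma_{k}^{*}$ with the prescribed total masses \eqref{masses}, and that they satisfy the Euler--Lagrange conditions \eqref{varcondmu1}, \eqref{varcondmuk}, \eqref{varcondmud}. All three verifications will rest on two structural facts from Subsection~\ref{subsection:xik}: the \emph{trace identity}
\[
\xi_{1}(z)+\xi_{2}(z)+\cdots+\xi_{d+1}(z)=t_{d+1}\,z^{d},
\]
which follows from Vieta's formulas applied to the algebraic equation $r w^{d+1}-z w^{d}+t_{d+1}r^{d}=0$ satisfied by the preimages $w_{k}(z)$, together with the sheet-gluing relations
\[
\xi_{k,\pm}(z)=\xi_{k+1,\mp}(z),\qquad z\in\Sigma_{k}^{*},\ k=1,\ldots,d,
\]
coming from \eqref{def:RiemannsurfR}.

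For positivity and mass of $\mu_{1}^{*}$, I would parametrize $\Sigma_{1}^{*}$ via the image under $\psi$ of the circle $|w|=w^{*}$, $w^{*}=(dt_{d+1}r^{d-1})^{1/(d+1)}$. Writing $w_{1}=w^{*}e^{\ir\theta}$ and $w_{2}=\overline{w_{1}}$ as the conjugate preimages of $z\in\Sigma_{1}^{*}$, the jump $\xi_{1,-}(z)-\xi_{1,+}(z)$ becomes explicit and is seen to have the correct sign, with a square-root vanishing at the endpoints $\omega^{\ell}x^{*}$. The mass follows by contour deformation:
\[
\|\mu_{1}^{*}\|=\frac{1}{2\pi\ir\,t_{0}}\oint_{\Sigma_{1}^{*}}\xi_{1}(z)\,\ud z=\frac{1}{2\pi\ir\,t_{0}}\oint_{|z|=R}\xi_{1}(z)\,\ud z=1
\]
for $R$ large, using the expansion \eqref{asymp:xi1}. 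For $\mu_{k}^{*}$ with $k\geq 2$, the sector-dependent constants $\kappa_{k,\ell}^{\pm}$ in \eqref{def:kappa} are chosen precisely to cancel the $z^{1/d}$-growth of $\xi_{k}$ in each sector (Lemma~\ref{lem:xikasymp}), so that $\eta_{k}(z)=-t_{0}/(dz)+O(z^{-1-1/d})$ at infinity. An analogous explicit parametrization of each arm of $\Sigma_{k}$ gives positivity; deforming the jump integral on each of the $d+1$ arms toward infinity and summing the resulting residue contributions then produces $\|\mu_{k}^{*}\|=(d-k+1)/d=1-(k-1)/d$.

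For the variational conditions, set $C_{k}(z)=\int(z-\zeta)^{-1}\ud\mu_{k}^{*}(\zeta)$. The relation \eqref{rel:xi1mu1} gives $\xi_{1}=t_{d+1}z^{d}+t_{0}C_{1}$, and matching asymptotics and jumps with \eqref{def:muk} identifies $\eta_{k}=t_{0}(C_{k}-C_{k-1})$ for $k\geq 2$ (with the convention $C_{0}\equiv 0$), as can be checked from the $-t_{0}/(dz)$ behavior at infinity and the continuity of $C_{k-1}$ across $\Sigma_{k}$. Taking the tangential derivative of \eqref{varcondmu1} along $\Sigma_{1}^{*}$ converts it via Plemelj into
\[
C_{1,+}(z)+C_{1,-}(z)=C_{2}(z)+V_{\text{ext}}'(z),\qquad V_{\text{ext}}'(z)=\frac{1}{t_{0}}\bigl(t_{d+1}^{-1/d}z^{1/d}-t_{d+1}z^{d}\bigr),
\]
which becomes, after applying the gluing $\xi_{1,\pm}=\xi_{2,\mp}$ and the trace identity, an equation purely among the $\xi_{k}$ that is verified sector by sector using Lemma~\ref{lem:xikasymp}. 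The constant of integration $\ell_{1}$ is fixed by the rotational invariance (Theorem~\ref{theo:subcrit}(a)) and decay at infinity. For $k=2,\ldots,d-1$ the condition \eqref{varcondmuk} reduces by the same Plemelj argument to
\[
C_{k,+}(z)+C_{k,-}(z)=C_{k-1}(z)+C_{k+1}(z),\qquad z\in\Sigma_{k},
\]
which again follows from the trace identity and the gluing $\xi_{k,\pm}=\xi_{k\pm 1,\mp}$; the top condition \eqref{varcondmud} is handled identically with $C_{d+1}\equiv 0$.

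The main obstacle will be the careful bookkeeping around the sector-dependent corrections $\kappa_{k,\ell}^{\pm}z^{1/d}$: I must check that across each ray of $\Sigma_{2}\cup\Sigma_{3}$ that does \emph{not} belong to $\Sigma_{k}$, the jumps of $\xi_{k}$ and of the $\kappa$-correction exactly cancel, so that $\eta_{k}$ has jumps only on $\Sigma_{k}$ itself; and that the $d+1$ sectorial residues at infinity combine to the correct mass $(d-k+1)/d$ without undesirable cancellation. Both reduce to identities among the $d$-th roots of unity in \eqref{def:kappa}, complicated by the parity-of-$d$ case split inherited from Lemma~\ref{lem:xikasymp}, and this accounting is where the proof is most delicate.
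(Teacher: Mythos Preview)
Your overall plan (build the candidate measures from \eqref{def:mu1}--\eqref{def:muk}, check masses, positivity, and the Euler--Lagrange conditions, then invoke uniqueness of the minimizer) is exactly the route the paper takes. The variational--condition part is also essentially right: the identification $\eta_{k}=t_{0}(C_{k}-C_{k-1})$ is the paper's Lemma~\ref{lemma:rel:Fkxik}, and the Plemelj reduction to \eqref{relationF1F2}--\eqref{relationFdFdm1} is carried out in Lemma~\ref{lemma:varconditions}. (The trace identity you invoke is true but not needed here; the gluing $\xi_{k,\pm}=\xi_{k+1,\mp}$ alone does the job.)

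There are, however, two genuine gaps.

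\textbf{Jumps of $\eta_{k}$ and the mass computation.} Your assertion that the $\kappa$--correction makes $\eta_{k}$ analytic off $\Sigma_{k}$ is false. Since $\kappa_{k,\ell}^{+}=\kappa_{k,\ell}^{-}$ (check \eqref{def:kappa} for $k=2$), the correction term is continuous across $\Sigma_{3}\supset\Sigma_{1}^{*}$, while $\xi_{2}$ has a genuine jump there; hence $\eta_{2,-}-\eta_{2,+}=\xi_{2,-}-\xi_{2,+}=\xi_{1,+}-\xi_{1,-}\neq 0$ on $\Sigma_{1}^{*}$. In general $\eta_{k}$ jumps on \emph{both} $\Sigma_{k}$ and $\Sigma_{k-1}$, with $\eta_{k,\pm}=\eta_{k-1,\mp}$ on $\Sigma_{k-1}$. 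If you deform to infinity as you propose you pick up only the residue $-t_{0}/d$ from the $-t_{0}/(dz)$ term, which would give $\|\mu_{2}^{*}\|=-1/d$ rather than $1-1/d$. The missing piece, $+1$, is precisely the contribution from the jump of $\eta_{2}$ across $\Sigma_{1}^{*}$, which equals $\|\mu_{1}^{*}\|$. This forces an inductive computation (paper's Lemma~\ref{lemma:massesmuk}): surround $\Sigma_{k-1}\cup\Sigma_{k}$ by a contour $\gamma_{k}$, deform outward to get $-t_{0}/d$, deform inward to get $t_{0}\|\mu_{k}^{*}\|-t_{0}\|\mu_{k-1}^{*}\|$, and conclude $\|\mu_{k}^{*}\|=\|\mu_{k-1}^{*}\|-1/d$.

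\textbf{Positivity for $k\geq 2$.} Your proposed ``analogous explicit parametrization of each arm of $\Sigma_{k}$'' is not available: for $k\geq 2$ there is no single circle in the $w$--plane whose $\psi$--image is $\Sigma_{k}$, and the preimages $w_{k}(z),w_{k+1}(z)$ along an unbounded ray do not sit on a common compact curve that makes the sign of $\eta_{k,-}-\eta_{k,+}$ transparent. The paper sidesteps any direct sign computation by an auxiliary equilibrium problem (Lemma~\ref{lemma:positivemuk}): fix the already--positive $\mu_{1}^{*}$ and minimize the remaining energy over positive $(\mu_{2},\ldots,\mu_{d})$. The minimizers $\widehat{\mu}_{k}$ are balayages, hence positive, and a maximum--principle argument on $U^{\widehat{\mu}_{k}}-U^{\mu_{k}^{*}}$ using the shared variational equalities forces $\widehat{\mu}_{k}=\mu_{k}^{*}$. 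This indirect step is the idea your outline is missing.
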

Note that it follows from \eqref{def:mu1} and \eqref{asymp:xi1} that
\[ \xi_1(z) = t_{d+1} z^d + t_0 \int \frac{d\mu_1^*(s)}{z-s} \]
by Plemelj's formula  for the boundary values of a Cauchy transform. So the definition
of $\xi_1$ in \eqref{def:xifunc} is consistent with the earlier Definition \ref{def:Schwarz}.

To prove Proposition \ref{prop:VEPsol} we assume that $\mu_k^*$ for $k=1, \ldots, d$
are given by the right-hand sides of 
\eqref{def:mu1}--\eqref{def:muk} and we verify that they are real and positive measures with total masses
\begin{equation}\label{mass:muk}
\int_{\Sigma_{k}}\ud \mu_{k}^{*}(z)=1-\frac{k-1}{d},\qquad k=1,\ldots,d,
\end{equation}
satisfying the variational conditions \eqref{varcondmu1}, \eqref{varcondmuk} and \eqref{varcondmud}.
Note that due to \eqref{asymp:xik:2} and \eqref{def:etak} we have
\[ \eta_{k,-}(z) - \eta_{k,+}(z) = O\left( z^{-2 - 1/d}\right)  \text{ as } z \to \infty \]
so that the right-hand side of \eqref{def:muk} is a finite, but a priori complex, measure on $\Sigma_k$.

The proof of Proposition \ref{prop:VEPsol} is somewhat lengthy and it
is subdivided into a number of steps. 

\subsection{Proof of Proposition \ref{prop:VEPsol}}
\subsubsection{Total masses} 
We  start by establishing \eqref{mass:muk}.

\begin{lemma}\label{lemma:massesmuk}
The (a priori complex) measures $\mu_{k}^{*}$ defined in \eqref{def:mu1}--\eqref{def:muk} 
are real-valued, rotationally invariant, and have the total masses \eqref{mass:muk}.
\end{lemma}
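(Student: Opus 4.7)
The plan is to verify each of the three assertions of Lemma~\ref{lemma:massesmuk} using the explicit formulas \eqref{def:mu1}--\eqref{def:etak} together with the symmetries \eqref{symm:xik:1}--\eqref{symm:xik:2} and the sectorial expansions in Lemma~\ref{lem:xikasymp}.

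Real-valuedness and rotational invariance share a single argument. On the segment $(0, x^*)\subset\Sigma_1^*$ the Schwarz identity $\xi_1(\bar z) = \overline{\xi_1(z)}$ from \eqref{symm:xik:1} forces $\xi_{1,-}(x) = \overline{\xi_{1,+}(x)}$, so the jump is purely imaginary and the density $(\xi_{1,-}-\xi_{1,+})/(2\pi\ir t_0)$ is real. On a rotated arm $[0,\omega^j x^*]$ the substitution $z = \omega^j s$ together with $\xi_1(\omega z) = \omega^{-1}\xi_1(z)$ cancels the rotation phase against $\ud z = \omega^j\,\ud s$ and reduces to the positive-real-axis case; this simultaneously proves real-valuedness on every arm and rotational invariance of $\mu_1^*$. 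For $\mu_k^*$ with $k\geq 2$ the same argument works once one checks, directly from \eqref{def:kappa} and the properties of the principal branch of $z^{1/d}$, that $\kappa_{k,\ell}^{\pm}z^{1/d}$ inherits analogous Schwarz-reflection and rotational relations sector by sector.

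The mass computation is the heart of the proof. For $\mu_1^*$, since $\xi_1$ is analytic on $\overline{\mathbb C}\setminus\Sigma_1^*$ and by \eqref{asymp:xi1} has Laurent expansion $\xi_1(z) = t_{d+1}z^d + t_0/z + O(z^{-d-2})$ at infinity, straightforward contour deformation gives
\[
\int_{\Sigma_1^*}(\xi_{1,-} - \xi_{1,+})\,\ud z \;=\; \oint_{|z|=R,\,\mathrm{CCW}}\xi_1(z)\,\ud z \;=\; 2\pi\ir\,t_0,
\]
hence $\|\mu_1^*\| = 1$. For $\mu_k^*$ with $k\geq 2$ I would apply the same strategy to $\eta_k$: by Lemma~\ref{lem:xikasymp}, $\eta_k$ admits the uniform sectorial expansion $\eta_k(z) = -t_0/(dz) + O(z^{-2-1/d})$, so integrating the sectorial definition along $|z|=R$ (using $\eta_k = \xi_k - \kappa_{k,\ell}^{\pm}z^{1/d}$ on each arc in $S_\ell^{\pm}$) and letting $R\to\infty$ yields $-2\pi\ir t_0/d$. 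Deforming this circle inwards and collecting all jump contributions of $\eta_k$ should then allow one to express $-2\pi\ir t_0/d$ as an alternating combination of $\|\mu_k^*\|$ and $\|\mu_{k-1}^*\|$ via the Riemann-surface gluing \eqref{def:RiemannsurfR} that identifies $\xi_{k,\pm}$ with $\xi_{k-1,\mp}$ on $\Sigma_{k-1}^*$, from which an induction on $k$ starting from $\|\mu_1^*\| = 1$ delivers $\|\mu_k^*\| = 1 - (k-1)/d$.

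The main obstacle is precisely that $\eta_k$ is only piecewise defined: in addition to the genuine jumps of $\xi_k$ on $\Sigma_{k-1}^*\cup\Sigma_k$, the sector dependence of $\kappa_{k,\ell}^{\pm}$ in \eqref{def:kappa} and the branch cut of the principal $z^{1/d}$ introduce further apparent discontinuities of $\eta_k$ on all rays of $\Sigma_2\cup\Sigma_3$. Individually several of these pieces are non-integrable (they grow like $z^{1/d}$), but in the full deformation they must combine to produce the finite quantity $-2\pi\ir t_0/d$. Carrying out this bookkeeping so that the divergent pieces cancel pairwise and the surviving contributions match the desired recursion $\|\mu_k^*\| = \|\mu_{k-1}^*\| - 1/d$ is the technical heart of the proof; the expected cancellation is governed by the identity $\sum_{j=0}^{d-1}\omega_d^{j} = 0$ applied to the $d$-th-root-of-unity phases in \eqref{def:kappa}, combined with the orientation conventions on the $2(d+1)$ half-sector boundaries.
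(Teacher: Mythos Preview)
Your treatment of real-valuedness, rotational invariance, and the mass of $\mu_1^*$ is correct and matches the paper.

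For the masses with $k\geq 2$ your overall plan is also the paper's: compute the integral of $\eta_k$ around a contour surrounding $\Sigma_{k-1}\cup\Sigma_k$, evaluate it at infinity as $-2\pi\ir t_0/d$ from Lemma~\ref{lem:xikasymp}, and identify the interior jump contributions with $\|\mu_k^*\|$ and $\|\mu_{k-1}^*\|$ to obtain the recursion $\|\mu_k^*\|=\|\mu_{k-1}^*\|-1/d$. However, you misidentify the mechanism that makes this work. There are no divergent $z^{1/d}$ contributions to cancel, and the identity $\sum_j\omega_d^{j}=0$ is a red herring.

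The point you are missing is that the constants in \eqref{def:kappa} satisfy the exact gluing relations
\[
\kappa_{k,\ell}^{\pm}=\kappa_{k-1,\ell}^{\mp}\quad(k\text{ even}),\qquad
\kappa_{k,\ell}^{\pm}=\kappa_{k-1,\ell\pm 1}^{\mp}\quad(k\text{ odd}),
\]
which one checks directly from \eqref{def:kappa}. Combined with $\xi_{k,\pm}=\xi_{k-1,\mp}$ on $\Sigma_{k-1}$, this gives the \emph{pointwise} identity $\eta_{k,\pm}=\eta_{k-1,\mp}$ on $\Sigma_{k-1}$: the subtracted $z^{1/d}$ terms match exactly, they do not merely cancel after integration. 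Consequently the jump of $\eta_k$ on $\Sigma_{k-1}$ equals minus the jump of $\eta_{k-1}$, which is $O(z^{-2-1/d})$ and hence integrable. The contour integral then splits cleanly as $2\pi\ir t_0\|\mu_k^*\|-2\pi\ir t_0\|\mu_{k-1}^*\|$, and no regularization or pairing of infinite pieces is needed. (For $k=2$ one also uses $\kappa_{2,\ell}^{+}=\kappa_{2,\ell}^{-}$, so $\eta_2$ has no jump on $\Sigma_3\setminus\Sigma_1^*$.) Once you see this, the ``main obstacle'' you describe disappears entirely.
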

\begin{proof}
The property \eqref{symm:xik:1} implies that $\xi_{1,-}(x) = \overline{\xi_{1,+}(x)}$ 
for every $x\in(0,x^{*})$, hence $\mu_{1}^{*}$ is real-valued on $[0,x^{*}]$. But \eqref{symm:xik:2} 
implies immediately that $\mu_{1}^{*}$ is rotationally invariant, so this measure is real-valued everywhere.

Applying \eqref{symm:xik:1}--\eqref{symm:xik:2} for $z\in S_{0}^{-}$ and using \eqref{asymp:xik:2}, 
we deduce that $\kappa_{k,0}^{+}=\overline{\kappa_{k,0}^{-}}$ and 
$\kappa_{k,1}^{-}=\omega_{d}^{-1}\,\kappa_{k,0}^{-}$, hence $\kappa_{k,1}^{-}=\omega_{d}^{-1}\,\overline{\kappa_{k,0}^{+}}$. 
We also deduce from \eqref{symm:xik:1}--\eqref{symm:xik:2} that for $z\in\Sigma_{2}$ with $\arg z=\frac{\pi}{d+1}$, 
we have $\xi_{k,+}(z)=\omega_{d+1}^{-1}\,\overline{\xi_{k,-}(z)}$. With this information we conclude that for $k$ odd, 
$k\geq 3$, the measure $\mu_{k}^{*}$ is real-valued on $[0,\infty)$, and for $k$ even, $k\geq 2$, the measure 
$\mu_{k}^{*}$ is real-valued on $e^{\frac{\pi\ir}{d+1}}\,[0,\infty)$. The rotational invariance is then a 
consequence of \eqref{symm:xik:2}.

We now prove \eqref{mass:muk}. Firstly, since $\xi_{1}$ is analytic in 
$\mathbb{C}\setminus\Sigma_{1}^{*}$, using \eqref{def:mu1}, \eqref{asymp:xi1} and Cauchy's theorem, we obtain
\begin{equation}\label{masscomput:mu1}
\int_{\Sigma_{1}^{*}}\ud\mu_{1}^{*}(z)=\frac{1}{2\pi\ir\,t_{0}}\int_{\Sigma_{1}^{*}}(\xi_{1,-}(z)-\xi_{1,+}(z))\,\ud z=\frac{1}{2\pi\ir\, t_{0}}\oint_{\gamma_{1}}\xi_{1}(z)\,\ud z=1,
\end{equation}
where $\gamma_{1}$ is any positively oriented closed curve surrounding $\Sigma_{1}^{*}$, as in Figure \ref{fig:curves}. 
This proves \eqref{mass:muk} for $k=1$.

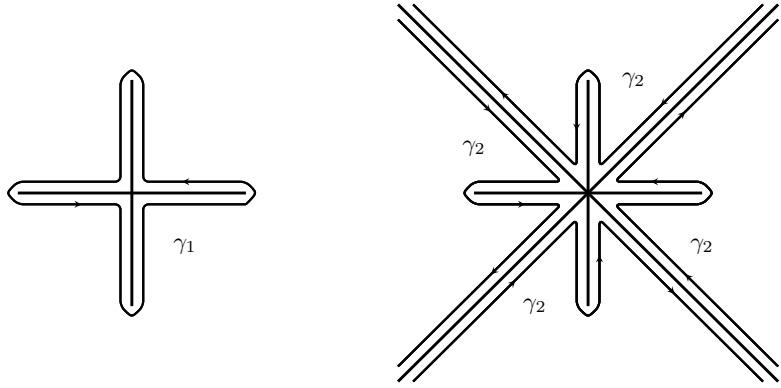
\begin{figure}[t]
\begin{center}
\begin{tikzpicture}
\draw [line width=1.1] (-1.5,0) -- (1.5,0);
\draw [line width=1.1] (0,-1.5) -- (0,1.5);
\draw [line width=0.8,smooth,rounded corners=2pt,postaction = decorate, decoration = {markings, mark = at position .6 with {\arrow[black,line width=0.2]{stealth};}}] (1.5,-0.15) .. controls (1.65,0) .. (1.5,0.15) -- (0.15,0.15) -- (0.15,1.5) .. controls (0,1.65) .. (-0.15,1.5) -- (-0.15,0.15) -- (-1.5,0.15) .. controls (-1.65,0) .. (-1.5,-0.15) -- (-0.15,-0.15) -- (-0.15,-1.5) .. controls (0,-1.65) .. (0.15,-1.5) -- (0.15,-0.15) -- (1.5,-0.15);
\draw [line width=0.8,smooth,rounded corners=2pt,postaction = decorate, decoration = {markings, mark = at position .1 with {\arrow[black,line width=0.2]{stealth};}}] (1.5,-0.15) .. controls (1.65,0) .. (1.5,0.15) -- (0.15,0.15) -- (0.15,1.5) .. controls (0,1.65) .. (-0.15,1.5) -- (-0.15,0.15) -- (-1.5,0.15) .. controls (-1.65,0) .. (-1.5,-0.15) -- (-0.15,-0.15) -- (-0.15,-1.5) .. controls (0,-1.65) .. (0.15,-1.5) -- (0.15,-0.15) -- (1.5,-0.15);
\draw (0.7,-0.7) node[scale=.9]{$\gamma_{1}$};
\draw [line width=1.1, xshift=6cm] (-1.5,0) -- (1.5,0);
\draw [line width=1.1, xshift=6cm] (0,-1.5) -- (0,1.5);
\draw [line width=1.1, xshift=6cm] (-2.5,-2.5) -- (2.5,2.5);
\draw [line width=1.1, xshift=6cm] (-2.5,2.5) -- (2.5,-2.5);
\draw [line width=0.8,xshift=6cm,smooth,rounded corners=2pt,postaction = decorate, decoration = {markings, mark = at position .6 with {\arrow[black,line width=0.2]{stealth};}}] (2.5,-2.3) -- (0.35,-0.15) -- (1.5,-0.15) .. controls (1.65,0) .. (1.5,0.15) -- (0.35,0.15) -- (2.5,2.3);
\draw [line width=0.8,xshift=6cm,smooth,rounded corners=2pt,postaction = decorate, decoration = {markings, mark = at position .2 with {\arrow[black,line width=0.2]{stealth};}}] (2.5,-2.3) -- (0.35,-0.15) -- (1.5,-0.15) .. controls (1.65,0) .. (1.5,0.15) -- (0.35,0.15) -- (2.5,2.3);
\draw [line width=0.8,xshift=6cm,smooth,rounded corners=2pt,postaction = decorate, decoration = {markings, mark = at position .8 with {\arrow[black,line width=0.2]{stealth};}}] (2.5,-2.3) -- (0.35,-0.15) -- (1.5,-0.15) .. controls (1.65,0) .. (1.5,0.15) -- (0.35,0.15) -- (2.5,2.3);
\draw [line width=0.8,xshift=6cm,rotate=90,smooth,rounded corners=2pt,postaction = decorate, decoration = {markings, mark = at position .6 with {\arrow[black,line width=0.2]{stealth};}}] (2.5,-2.3) -- (0.35,-0.15) -- (1.5,-0.15) .. controls (1.65,0) .. (1.5,0.15) -- (0.35,0.15) -- (2.5,2.3);
\draw [line width=0.8,xshift=6cm,rotate=90,smooth,rounded corners=2pt,postaction = decorate, decoration = {markings, mark = at position .22 with {\arrow[black,line width=0.2]{stealth};}}] (2.5,-2.3) -- (0.35,-0.15) -- (1.5,-0.15) .. controls (1.65,0) .. (1.5,0.15) -- (0.35,0.15) -- (2.5,2.3);
\draw [line width=0.8,xshift=6cm,rotate=90,smooth,rounded corners=2pt,postaction = decorate, decoration = {markings, mark = at position .81 with {\arrow[black,line width=0.2]{stealth};}}] (2.5,-2.3) -- (0.35,-0.15) -- (1.5,-0.15) .. controls (1.65,0) .. (1.5,0.15) -- (0.35,0.15) -- (2.5,2.3);
\draw [line width=0.8,xshift=6cm,rotate=180,smooth,rounded corners=2pt,postaction = decorate, decoration = {markings, mark = at position .2 with {\arrow[black,line width=0.2]{stealth};}}] (2.5,-2.3) -- (0.35,-0.15) -- (1.5,-0.15) .. controls (1.65,0) .. (1.5,0.15) -- (0.35,0.15) -- (2.5,2.3);
\draw [line width=0.8,xshift=6cm,rotate=180,smooth,rounded corners=2pt,postaction = decorate, decoration = {markings, mark = at position .6 with {\arrow[black,line width=0.2]{stealth};}}] (2.5,-2.3) -- (0.35,-0.15) -- (1.5,-0.15) .. controls (1.65,0) .. (1.5,0.15) -- (0.35,0.15) -- (2.5,2.3);
\draw [line width=0.8,xshift=6cm,rotate=180,smooth,rounded corners=2pt,postaction = decorate, decoration = {markings, mark = at position .8 with {\arrow[black,line width=0.2]{stealth};}}] (2.5,-2.3) -- (0.35,-0.15) -- (1.5,-0.15) .. controls (1.65,0) .. (1.5,0.15) -- (0.35,0.15) -- (2.5,2.3);
\draw [line width=0.8,xshift=6cm,rotate=270,smooth,rounded corners=2pt,postaction = decorate, decoration = {markings, mark = at position .81 with {\arrow[black,line width=0.2]{stealth};}}] (2.5,-2.3) -- (0.35,-0.15) -- (1.5,-0.15) .. controls (1.65,0) .. (1.5,0.15) -- (0.35,0.15) -- (2.5,2.3);
\draw [line width=0.8,xshift=6cm,rotate=270,smooth,rounded corners=2pt,postaction = decorate, decoration = {markings, mark = at position .6 with {\arrow[black,line width=0.2]{stealth};}}] (2.5,-2.3) -- (0.35,-0.15) -- (1.5,-0.15) .. controls (1.65,0) .. (1.5,0.15) -- (0.35,0.15) -- (2.5,2.3);
\draw [line width=0.8,xshift=6cm,rotate=270,smooth,rounded corners=2pt,postaction = decorate, decoration = {markings, mark = at position .22 with {\arrow[black,line width=0.2]{stealth};}}] (2.5,-2.3) -- (0.35,-0.15) -- (1.5,-0.15) .. controls (1.65,0) .. (1.5,0.15) -- (0.35,0.15) -- (2.5,2.3);
\draw [xshift=6.8cm] (0.7,-0.7) node[scale=.9]{$\gamma_{2}$};
\draw [xshift=6.5cm] (0.1,1.5) node[scale=.9]{$\gamma_{2}$};
\draw [xshift=4.5cm] (0,0.6) node[scale=.9]{$\gamma_{2}$};
\draw [xshift=5.3cm] (0,-1.5) node[scale=.9]{$\gamma_{2}$};
\end{tikzpicture}
\caption{The contour $\gamma_{1}$ around $\Sigma_{1}^*$ (on the left) and the system of unbounded contours $\gamma_{2}$ around 
$\Sigma_{1}^*\cup\Sigma_{2}$ (on the right).}
\label{fig:curves}
\end{center}
\end{figure}

In the case $k=2$, we let $\gamma_{2}$ be a system of unbounded contours around $\Sigma_{1}^* \cup\Sigma_{2}$ 
as shown in  the right part of Figure~\ref{fig:curves}, with orientation as also indicated in Figure~\ref{fig:curves}. 

Noting that $\eta_2(z) = -\frac{t_0}{d} z^{-1} + O(z^{-2-1/d})$ as $z \to \infty$, see \eqref{asymp:xik:2} and \eqref{def:etak},
we find by deforming the contour to infinity in $\mathbb C \setminus (\Sigma_1^* \cup \Sigma_2)$ that
\begin{equation} \label{masscomput:mu2:1} 
	\int_{\gamma_2} \eta_2(z) \ud z = - 2\pi \ir\, \frac{t_0}{d}. 
	\end{equation}
On the other hand, we have by deforming $\gamma_2$ towards $\Sigma_1^* \cup \Sigma_2$ that
\begin{equation} \label{masscomput:mu2:2} 
	\int_{\gamma_2} \eta_2(z)\, \ud z = \int_{\Sigma_2} (\eta_{2,-}(z) - \eta_{2,+}(z)) \ud z
	+ \int_{\Sigma_1^*} (\eta_{2,-}(z)-\eta_{2,+}(z)) \ud z.
	\end{equation}
On $\Sigma_1^*$ we have 
\[ \eta_{2,-}(z) - \eta_{2,+}(z) = \xi_{2,-}(z) - \xi_{2,+}(z), \qquad z \in \Sigma_1^* \]
which follows from \eqref{asymp:xik:2} and \eqref{def:etak} since $\kappa_{2,\ell}^+ = \kappa_{2,\ell}^-$, see \eqref{def:kappa}.
On $\Sigma_1^*$ we also have $\xi_{2,\mp} = \xi_{1,\pm}$, so that by 
\eqref{def:muk}, \eqref{masscomput:mu2:1} and \eqref{masscomput:mu2:2},
\begin{align}  \nonumber
 2 \pi \ir t_0	\int \ud \mu_2^*(z) & = 
	\int_{\Sigma_2} (\eta_{2,-}(z) - \eta_{2,+}(z)) \ud z \\
	& = \nonumber
		- 2\pi \ir \frac{t_0}{d} - \int_{\Sigma_1^*} (\xi_{1,+}(z) - \xi_{1,-}(z)) \ud z  \\
		& = - 2\pi \ir \frac{t_0}{d} + 2\pi \ir t_0  \label{masscomput:mu2:3}
		\end{align}
		where in the last step we used \eqref{masscomput:mu1}.
This proves \eqref{mass:muk} for $k=2$.

The case $k \geq 3$ follows from a similar argument based on induction
and the use of a contour $\gamma_{k}$ around $\Sigma_{k-1}\cup\Sigma_{k}$. Here
it is important that 
\[ \eta_{k,\pm}(z)= \eta_{k-1,\mp}(z), \qquad z\in\Sigma_{k-1}, \]
which holds since $\xi_{k-1,\mp}$ is the analytic continuation of $\xi_{k,\pm}$ across $\Sigma_{k-1}$,
and therefore the leading coeffients in their expansions \eqref{asymp:xik:2} also agree if
we move via one of the half-rays in $\Sigma_{k-1}$ from one sector to the next.  Thus by \eqref{def:etak}
also $\eta_{k-1}$ is the analytic continuation of $\eta_{k}$ across $\Sigma_{k-1}$.

The fact that the leading coefficients agree comes down to the relations 
$\kappa_{k,\ell}^{\pm}  = \kappa_{k-1,\ell}^{\mp}$ if $k$ is even,
and $\kappa_{k, \ell}^{\pm} = \kappa_{k-1,\ell \pm 1}^{\mp}$ if $k$ is odd,
which can also be verified directly from \eqref{def:kappa}.
\end{proof}

\subsubsection{Positivity of $\mu_1^*$}

The next step is to show that $\mu_1^*$ is a positive measure. The proof that
the other measures are positive as well will come later.
We need a lemma about the
function $\psi(w) = rw + t_{d+1} r^d w^{-d}$ introduced in \eqref{param:RS}
that is important later as well.

\begin{lemma} \label{lemma:proppsi} \,

\begin{enumerate}
\item[\rm (a)] The function $\psi$ from \eqref{param:RS} is one-to one 
in the domain $\{w \in \mathbb C \mid |w| \geq \rho \}$ where
\begin{equation} \label{def:rho}
	\rho := (d t_{d+1} r^{d-1})^{1/(d+1)}.  
	\end{equation}
\item[\rm (b)] We have
\begin{equation} \label{ineq:rho}
	\rho \leq 1
\end{equation}
with equality if and only if $t_0 = t_{0,\crit}$.
\item[\rm (c)] $\psi$ satisfies  
\begin{equation}  \label{prop:psi}
	\begin{aligned}
	 \Imag w > 0  \, \Longrightarrow \,  \Imag \psi (w)  > 0, \\ 
	 \Imag w < 0  \, \Longrightarrow \,  \Imag \psi (w)  < 0, \\ 
\end{aligned}
	\quad \text{ for } |w| \geq \rho.
\end{equation}
\item[\rm (d)] For $z = \psi(w)$ with $|w| \geq \rho$, we have
\begin{equation} \label{eq:relxi1psi} 
	\xi_1(z) = \psi \left( \frac{1}{\psi^{-1}(z)}\right). \end{equation}
\item[\rm (e)] We have
\[ \Sigma_1^*  \subset \mathbb C \setminus  \psi(\{w \in \mathbb C \mid |w| > \rho\}). \]
\end{enumerate}
\end{lemma}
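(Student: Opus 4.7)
The plan is to treat the parts in the order (b), (c), (a), (d), (e), since (a) is the key injectivity statement on which (d) and (e) rest, while (b) and (c) are largely independent calculations.

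For part (b), the definition of $\rho$ gives $\rho^{d+1}=d\,t_{d+1}\,r^{d-1}$, and substituting $r=r_\crit=(d\,t_{d+1})^{-1/(d-1)}$ yields $\rho^{d+1}=1$. Since equation \eqref{def:r} gives a strictly increasing bijection $t_0\mapsto r$ from $[0,t_{0,\crit}]$ onto $[0,r_\crit]$ (see the discussion around Figure~\ref{graph:f}), and $\rho$ is strictly increasing in $r$, the inequality $\rho\le 1$ follows, with equality iff $t_0=t_{0,\crit}$. For part (c), writing $w=Re^{\ir\theta}$ and using the Chebyshev identity $\sin(d\theta)=\sin\theta\,U_{d-1}(\cos\theta)$ together with the sharp bound $|U_{d-1}(\cos\theta)|\le d$ (equality only at $\cos\theta=\pm 1$), one obtains
\[
\Imag\psi(w)=\sin\theta\bigl[rR-t_{d+1}r^{d}R^{-d}\,U_{d-1}(\cos\theta)\bigr]\ge \sin\theta\cdot rR^{-d}\bigl(R^{d+1}-\rho^{d+1}\bigr).
\]
For $R\ge\rho$ and $\sin\theta\ne 0$, equality would force $R=\rho$ together with $\cos\theta=\pm 1$, which is excluded; hence the sign of $\Imag\psi(w)$ matches that of $\sin\theta$.

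For part (a), the derivative $\psi'(w)=r\bigl(1-\rho^{d+1}/w^{d+1}\bigr)$ vanishes exactly on $|w|=\rho$, so $\psi$ is locally univalent on $\{|w|>\rho\}$. Using $t_{d+1}r^{d}\rho^{-d}=r\rho/d$, the parametrization $\psi(\rho e^{\ir\theta})=r\rho\bigl(e^{\ir\theta}+\tfrac{1}{d}e^{-\ir d\theta}\bigr)$ traces a curve with $d+1$ cusps at $\omega^{k}x^{*}$. I would verify that this curve is a simple closed hypocycloid $\mathcal{H}$ by combining the rotational equivariance $\psi(\omega w)=\omega\,\psi(w)$, the conjugation symmetry $\psi(\bar{w})=\overline{\psi(w)}$, and part (c) applied at $R=\rho$: these jointly confine the image of each open cuspal arc to a single open sector cut out by the rays $\arg z=2\pi k/(d+1)$, so distinct arcs have disjoint images, and local univalence away from the cusps rules out self-intersection within each arc. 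Once $\mathcal{H}$ is known to be a Jordan curve, the argument principle applied to $\psi(\cdot)-c$ on the annulus $\rho<|w|<R$ for large $R$ and any $c$ in the unbounded component of $\mathbb{C}\setminus\mathcal{H}$ gives exactly one preimage: the contribution from $|w|=R$ is $+1$ and that from $|w|=\rho$ is $0$. Hence $\psi$ is a bijection from $\{|w|>\rho\}$ onto that unbounded component, and continuity extends injectivity to $\{|w|\ge\rho\}$. The hardest step is the verification that $\mathcal{H}$ has no self-intersections, which is where the symmetry reduction and part (c) become essential.

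Parts (d) and (e) then follow quickly. For (d), if $z=\psi(w)$ with $|w|\ge\rho$, then by (a) any other root $\tilde{w}$ of $\psi(\tilde{w})=z$ satisfies $|\tilde{w}|<\rho\le|w|$, so $w$ is the root of largest modulus and hence equals $w_1(z)$ in the ordering \eqref{ordering}. Substituting into \eqref{def:xifunc} yields $\xi_1(z)=\psi(1/w_1(z))=\psi(1/\psi^{-1}(z))$. For (e), an interior point $z\in\Sigma_1^{*}$ satisfies $|w_1(z)|=|w_2(z)|$ by \eqref{eq:Sigmak}; if this common modulus exceeded $\rho$, one would have two distinct preimages of $z$ in $\{|w|>\rho\}$, contradicting (a). Hence $z\notin\psi(\{|w|>\rho\})$. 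The endpoints $\omega^{k}x^{*}=\psi(\omega^{k}\rho)$ lie in $\psi(\{|w|=\rho\})$ and, by the injectivity on $\{|w|\ge\rho\}$ from (a), are not in $\psi(\{|w|>\rho\})$ either. Combining these cases gives $\Sigma_1^{*}\subset\mathbb{C}\setminus\psi(\{|w|>\rho\})$.
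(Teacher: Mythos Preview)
Your treatment of parts (b), (d), and (e) is correct and matches the paper. Your argument for (c) is also correct but follows a different route: you compute $\Imag\psi$ directly via the Chebyshev identity, whereas the paper first proves (a) and then obtains (c) in two lines from injectivity together with the conjugation symmetry $\psi(\bar w)=\overline{\psi(w)}$ (if $\psi(w)$ were real then $\psi(w)=\psi(\bar w)$, forcing $w=\bar w$). Your reordering, proving (c) before (a), is legitimate since your proof of (c) is self-contained.

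The genuine gap is in part (a). Your argument-principle strategy is sound \emph{once} the hypocycloid $\mathcal{H}=\psi(|w|=\rho)$ is known to be a Jordan curve, and your symmetry reduction correctly confines distinct open cuspal arcs to distinct open sectors. But the step ``local univalence away from the cusps rules out self-intersection within each arc'' does not hold: local univalence of $\psi$ (equivalently, nonvanishing tangent along the arc) only says the image is an immersed arc, and an immersed arc inside a sector connecting two boundary points can certainly self-intersect. You still need a separate argument to exclude this, and the natural one---taking $w_1\neq w_2$ on $|w|=\rho$ with $\psi(w_1)=\psi(w_2)$ and deriving a contradiction---leads straight to the algebraic identity
\[
\lambda_1^d\lambda_2^d=\frac{1}{d}\bigl(\lambda_1^{d-1}+\lambda_1^{d-2}\lambda_2+\cdots+\lambda_2^{d-1}\bigr),\qquad \lambda_j=w_j/\rho,
\]
which is exactly the paper's approach. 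The paper in fact proves (a) entirely by this algebraic route: assuming $\psi(w_1)=\psi(w_2)$ with $|w_1|\ge|w_2|\ge\rho$ and $w_1\neq w_2$, the above identity plus the triangle inequality force $|\lambda_1|=|\lambda_2|=1$, and then equality in the triangle inequality forces the terms $\lambda_1^{d-1-j}\lambda_2^{j}$ to share a common argument, which is impossible for $\lambda_1\neq\lambda_2$ on the unit circle. This direct argument avoids all topological considerations and is shorter than the geometric programme you outline; in effect, filling the gap in your proof of (a) requires the paper's argument anyway.
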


\begin{proof}
(a) Let us put $a = t_{d+1} r^d$. Assume to get a contradiction that $\psi(w_1) = \psi(w_2)$
with $w_1 \neq w_2$, $|w_1| \geq |w_2| \geq \rho$ where $\rho$ is given by \eqref{def:rho}.
Thus
\[ r(w_1 - w_2) = a \left( \frac{1}{w_2^d} - \frac{1}{w_1^d} \right) = a \frac{w_1^d - w_2^d}{w_1^d w_2^d}, \]
which since $w_1 \neq w_2$ leads to
\begin{equation} \label{eq:psiproof:1}
	r w_1^d w_2^d = a (w_1^{d-1} + w_1^{d-2} w_2 + \cdots + w_2^{d-1}). 
	\end{equation}
Using $a/r = \frac{1}{d} \rho^{d+1}$, and setting $\lambda_j = w_j/\rho$, for $j=1,2$, 
we can rewrite this further as
\begin{equation} \label{eq:psiproof:2} 
 \lambda_1^d \lambda_2^d =	\frac{1}{d}( \lambda_1^{d-1} + \lambda_1^{d-2} \lambda_2 + \cdots + \lambda_2^{d-1}) 
	\end{equation}
with $|\lambda_1| \geq |\lambda_2| \geq 1$ and $\lambda_1 \neq \lambda_2$.
Taking absolute values on both sides of \eqref{eq:psiproof:2}, we obtain
\begin{equation} \label{eq:psiproof:3} 
	\begin{aligned}
	|\lambda_1|^d  & \leq  |\lambda_1^d \lambda_2^d| = \frac{1}{d} \left| \lambda_1^{d-1} + \lambda_1^{d-2} \lambda_2 + \cdots + \lambda_2^{d-1} \right| \\
	& \leq \frac{1}{d}( |\lambda_1|^{d-1} + |\lambda_1|^{d-2} |\lambda_2| + \cdots + |\lambda_2|^{d-1}|)
		\leq |\lambda_1|^{d-1}, 
		\end{aligned}
		\end{equation}
and from this it follows that $|\lambda_1| = |\lambda_2| = 1$.

It also follows that equality should hold in every inequality in \eqref{eq:psiproof:3}
and in particular
\[ \left| \lambda_1^{d-1} + \lambda_1^{d-2} \lambda_2 + \cdots + \lambda_2^{d-1} \right| 
	=  |\lambda_1|^{d-1} + |\lambda_1|^{d-2} |\lambda_2| + \cdots + |\lambda_2|^{d-1}. \]
This implies that the complex numbers $\lambda_1^{d-1-j} \lambda_2^j$, $j=0, \ldots, d-1$ 
have the same argument, and it is easily seen that this is impossible if $|\lambda_1| = |\lambda_2|$
with $\lambda_1 \neq \lambda_2$. This contradiction proves that $\psi$ is one-to-one on the closed
set $|w| \geq \rho$.
		
\medskip
(b)
For part (b) we note that $r \leq r_{\crit} = (d t_{d+1})^{-1/(d-1)}$, as is easily seen from Figure \ref{graph:f}.
In view of \eqref{def:rho} we get the inequality \eqref{ineq:rho}. Equality holds if and only $r= r_{\crit}$
and this happens if and only if $t_0 = t_{0,\crit}$.

\medskip

(c) Let $|w| \geq \rho$ and $w \in \mathbb C \setminus \mathbb{R}$. Since $\psi(\overline{w}) = \overline{\psi(w)}$ and
$\psi$ is one-to-one, it follows that $\psi(w) \in \mathbb C \setminus \mathbb{R}$, and then part (c) follows.

\medskip

(d) Let $z= \psi(w_0)$ with $|w_0| \geq \rho$. Since $\psi$ is one-to-one for $|w| \geq \rho$, 
it is then clear that $w_0$ is the largest in absolute value solution of \eqref{eq:zw}.  
Thus  $w_0= w_1(z)$ as in \eqref{ordering} and part (d) follows because of the definition \eqref{def:xifunc}. 

\medskip
(e)  If $z = \psi(w_0)$ with $|w_0| > \rho$, then $w_0=w_1(z)$ as in the proof of part (d), and since $\psi$
is one-to-one for $|w| > \rho$, we have $|w_2(z)| \leq \rho$. Then $z \not\in \Sigma_1^*$ because
of the characterization \eqref{eq:Sigmak} of $\Sigma_1^*$. 
\end{proof}

It follows in particular that for $t_0 < t_{0,\crit}$ the curve $\psi( |w| = 1)$ is a closed analytic
curve that surrounds a domain containing $\Sigma_1^*$.

\begin{lemma}\label{lemma:propmu1}
The measure $\mu_1^*$ is positive with $\supp(\mu_1^*) = \Sigma_1^*$.
\end{lemma}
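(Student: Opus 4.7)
The plan is to reduce to the segment $[0, x^*]$ by rotational invariance and establish strict positivity of the density there by parameterizing via $\psi$ and applying parts (b), (c), (e) of Lemma \ref{lemma:proppsi}.

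Since $\mu_1^*$ is rotationally invariant and supported in $\Sigma_1^*$ by Lemma \ref{lemma:massesmuk}, it suffices to prove strict positivity of the density on the open segment $(0, x^*)$. Fix $x \in (0, x^*)$. With the outward orientation on $[0, x^*]$, the $+$ side lies in the upper half-plane. Since $\psi$ has real coefficients, $w_1(\overline{z}) = \overline{w_1(z)}$, so $w_{1,-}(x) = \overline{w_{1,+}(x)}$. These are the two roots of $\psi(w) = x$ of largest modulus; because $x$ is not a branch point, they form a distinct complex conjugate pair, in particular both non-real.

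To locate the half-plane containing $w_{1,+}(x)$, I would work in the open sector $\Omega_0 = \{z : 0 < \arg z < 2\pi/(d+1)\}$. This sector is disjoint from $\Sigma_1^*$ (only its boundary rays support the two relevant segments of the star), so $w_1$ is analytic on $\Omega_0$. For large $|z|$ in $\Omega_0$, $w_1(z) \sim z/r$ lies in $\Omega_0$, hence in the upper half-plane since $2\pi/(d+1) \leq \pi$. If $w_1(z_0) \in \mathbb{R}$ at some $z_0 \in \Omega_0$, then $z_0 = \psi(w_1(z_0)) \in \mathbb{R}$, contradicting $\Omega_0 \cap \mathbb{R} = \emptyset$. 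By connectedness $w_1(\Omega_0) \subset \{\Imag w > 0\}$, and taking limits gives $w_{1,+}(x)$ in the open upper half-plane.

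Consequently $|w_{1,+}(x)| \leq \rho$ by part (e), so $|1/w_{1,+}(x)| \geq 1/\rho \geq \rho$ by part (b), while $\Imag(1/w_{1,+}(x)) = -\Imag(w_{1,+}(x))/|w_{1,+}(x)|^2 < 0$; part (c) then yields $\Imag \xi_{1,+}(x) = \Imag \psi(1/w_{1,+}(x)) < 0$. Combined with $\xi_{1,-}(x) = \overline{\xi_{1,+}(x)}$ (from \eqref{symm:xik:1}), formula \eqref{def:mu1} gives
\[ \frac{\ud \mu_1^*}{\ud x}(x) = \frac{\xi_{1,-}(x) - \xi_{1,+}(x)}{2\pi \ir\, t_0} = -\frac{\Imag \xi_{1,+}(x)}{\pi t_0} > 0, \]
so the density is strictly positive on $(0, x^*)$, and hence on $\Sigma_1^*$ away from the star endpoints by rotational symmetry, giving $\supp(\mu_1^*) = \Sigma_1^*$. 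The main technical point is the half-plane location of $w_{1,+}(x)$, which hinges on combining the real-coefficient identity $\psi(\overline{w}) = \overline{\psi(w)}$ with the sheet normalization $w_1(z) \sim z/r$ at infinity.
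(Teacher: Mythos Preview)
Your proof is correct and follows essentially the same route as the paper: reduce to $(0,x^*)$ by rotational invariance, then use parts (b), (c), (e) of Lemma~\ref{lemma:proppsi} together with $\xi_{1,\pm}(x)=\psi(1/w_{1,\pm}(x))$ to control the imaginary part of $\xi_1$ on the cut.

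The one genuine difference is in how you pin down the sign. You track the half-plane of $w_{1,+}(x)$ explicitly via a connectedness argument in the sector $\Omega_0$, which lets you conclude $\Imag\xi_{1,+}(x)<0$ directly. The paper instead only shows $\Imag\xi_{1,-}(x)\neq 0$ (hence the density is nowhere vanishing on $(0,x^*)$), and then invokes the already-established total mass $\int_0^{x^*}\ud\mu_1^*=1/(d+1)>0$ together with continuity to force positivity. Your version is a bit more self-contained (it does not rely on Lemma~\ref{lemma:massesmuk} for the sign), at the cost of the extra topological step; the paper's version is shorter but leans on the mass computation.
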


\begin{proof}
By the rotational invariance of $\mu_{1}^{*}$, see Lemma \ref{lemma:massesmuk}, it 
is enough to show that for $0<x<x^{*}$,
\begin{equation}\label{density:mu1}
	\frac{\ud\mu_1^*(x)}{\ud x} = \frac{\xi_{1,-}(x)-\xi_{1,+}(x)}{2\pi\ir\,t_{0}}=\frac{\Imag \xi_{1,-}(x)}{\pi t_{0}}>0,
\end{equation}
see \eqref{def:mu1}. By \eqref{def:xifunc} we have
\begin{equation} \label{eq:xi1min} 
	\xi_{1,-}(x) = \psi \left( \frac{1}{w_{1,-}(x)} \right), 
	\end{equation}
where $w_{1,-}(x)$ is a solution of $\psi(w) = x$ of highest absolute value. We have 
that $\Imag w_{1,-}(x) \neq 0$.  

Since $x \in \Sigma_1^*$ we find by Lemma \ref{lemma:proppsi} (e) that $|w_{1,-}(x)| \leq  \rho$.
Thus $ \frac{1}{|w_{1,-}(x)|} \geq \frac{1}{\rho} \geq \rho$  (since $\rho \leq 1$, see Lemma \ref{lemma:proppsi} (b)).
Applying Lemma \ref{lemma:proppsi} (c) and \eqref{eq:xi1min}, we find $\Imag \xi_{1,-}(x) \neq 0$.
Thus by \eqref{density:mu1} the density of $\mu_1^*$ does not vanish on $(0,x^*)$. By Lemma \ref{lemma:massesmuk}
and the rotational symmetry we already know that
\[ \int_0^{x^*} \ud\mu_1^*(x) = \frac{1}{d+1} > 0. \]
The density is real and continuous, and since it does not vanish we conclude it is positive.
This proves \eqref{density:mu1}.

It is clear that $\supp(\mu_1^*) = \Sigma_1^*$.
\end{proof}

\subsubsection{Identities for Cauchy transforms}

The next steps depend on the properties of the Cauchy transforms of
the measures $\mu_k^*$ that we define as follows.

\begin{definition}\label{def:Cauchytransf}
For $k=1, \ldots, d$, we define $F_k$ as 
\begin{equation}\label{def:Fk}
F_{k}(z):=\int\frac{\ud \mu_{k}^{*}(t)}{z-t},\qquad z\in\mathbb{C}\setminus\Sigma_{k},\qquad k=1,\ldots,d.
\end{equation}
\end{definition}

These functions are closely related to the functions $\xi_k$ as shown in the following
lemma. 

\begin{lemma}\label{lemma:rel:Fkxik}
The following relations hold between the functions $\xi_{k}$ defined in \eqref{def:xifunc} 
and the functions $F_{k}$ defined in \eqref{def:Fk}:
\begin{equation}\label{rel:xi1F1}
\xi_{1}(z)=t_{d+1}\,z^{d}+t_{0}\,F_{1}(z),\qquad z\in\mathbb{C}\setminus\Sigma_{1}^*,
\end{equation}
and for $k=2,\ldots,d+1,$
\begin{equation}\label{rel:xikFk}
\xi_{k}(z)=t_{0}(F_{k}(z)-F_{k-1}(z))+
	\kappa_{k,\ell}^{\pm}\,z^{1/d}, \qquad z\in S_{\ell}^{\pm}, 
	\end{equation}
where $F_{d+1}\equiv 0$, and the coefficients $\kappa_{k,\ell}^{\pm}$ are given in \eqref{def:kappa}.
\end{lemma}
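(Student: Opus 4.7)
My plan is to verify both identities by a Liouville argument: in each case I define the difference between the two sides, show it extends to an entire function via jump-matching, and then that it vanishes at infinity.

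For \eqref{rel:xi1F1}, I set $g(z) := \xi_1(z) - t_{d+1} z^d - t_0 F_1(z)$, which is a priori analytic on $\mathbb{C}\setminus\Sigma_1^*$. By the definition \eqref{def:mu1}, $\xi_{1,+}-\xi_{1,-}=-2\pi\ir\,t_0\, \frac{d\mu_1^*}{dz}$, while the Sokhotski--Plemelj formula gives $F_{1,+}-F_{1,-}=-2\pi\ir\, \frac{d\mu_1^*}{dz}$, so the jumps of $\xi_1$ and $t_0 F_1$ across $\Sigma_1^*$ coincide and $g$ extends analytically across $\Sigma_1^*$. Using \eqref{asymp:xi1} together with $F_1(z)=\|\mu_1^*\|/z+O(z^{-2})=1/z+O(z^{-2})$ (invoking Lemma \ref{lemma:massesmuk} for the total mass), the leading terms $t_{d+1} z^d$ and $t_0/z$ cancel, so $g(z)\to 0$ at infinity and $g\equiv 0$ by Liouville.

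For \eqref{rel:xikFk}, I set $h_k(z) := \xi_k(z) - t_0\bigl(F_k(z)-F_{k-1}(z)\bigr) - \kappa_{k,\ell}^{\pm}\, z^{1/d}$ for $z\in S_\ell^{\pm}$, with the convention $F_{d+1}\equiv 0$. Lemma \ref{lem:xikasymp} gives $\xi_k(z)-\kappa_{k,\ell}^{\pm} z^{1/d}=-\tfrac{t_0}{dz}+O(z^{-2-1/d})$, while \eqref{masses} yields $F_k(z)=\tfrac{d-k+1}{dz}+O(z^{-2})$, so $t_0(F_k - F_{k-1})(z) = -\tfrac{t_0}{dz}+O(z^{-2})$; hence the $1/z$ contributions cancel in $h_k$ and $h_k(z)\to 0$ as $z\to\infty$ in each sector. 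The jump across $\Sigma_k$ vanishes by the same mechanism as in the $k=1$ case: \eqref{def:muk} and Plemelj produce matching jumps for $\xi_k$ and $t_0 F_k$, while $F_{k-1}$ is analytic across $\Sigma_k$. The jump across $\Sigma_{k-1}$ is then handled by an induction in $k$ (the base case $k=2$ using \eqref{rel:xi1F1} already established), exploiting the Riemann-surface identification $\xi_{k,\pm}=\xi_{k-1,\mp}$ on $\Sigma_{k-1}$ together with Plemelj for $F_{k-1}$.

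The main obstacle is the piecewise prefactor $\kappa_{k,\ell}^{\pm}\, z^{1/d}$: I must verify that it introduces no spurious jumps across rays of $\Sigma_2\cup\Sigma_3$ that do not belong to $\Sigma_{k-1}\cup\Sigma_k$, and that it contributes correctly across $\Sigma_{k-1}$. This reduces to a collection of algebraic identities among the constants \eqref{def:kappa} describing how they transform under $\ell\mapsto\ell+1$ and under the $\pm$ flip, combined with the $e^{\pm\ir\pi/d}$ discontinuity of the principal branch $z^{1/d}$ across the negative real axis (which lies in $\Sigma_2$ when $d$ is even and in $\Sigma_3$ when $d$ is odd). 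A short parity-dependent case analysis confirms these identities. Once all jumps of $h_k$ vanish, $h_k$ extends to a bounded entire function that tends to $0$ at infinity, and $h_k\equiv 0$ by Liouville, completing the proof.
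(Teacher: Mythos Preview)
Your Liouville argument is a legitimate alternative to the paper's proof, which proceeds instead by writing $t_0 F_1(z)$ and $t_0(F_k-F_{k-1})(z)$ directly as Cauchy integrals $\tfrac{1}{2\pi\ir}\int_{\gamma_1}\frac{\xi_1(s)}{z-s}\,\ud s$ and $\tfrac{1}{2\pi\ir}\int_{\gamma_k}\frac{\eta_k(s)}{z-s}\,\ud s$ over contours surrounding $\Sigma_1^*$ (respectively $\Sigma_{k-1}\cup\Sigma_k$), and then deforming the contour to infinity to pick up the residue at $s=z$ and, for $k=1$, the polar contribution $-t_{d+1}z^d$ from $\infty$. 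The contour-deformation route is slightly cleaner here because it never has to discuss the origin or disentangle the sector-by-sector behaviour of $\kappa_{k,\ell}^{\pm}z^{1/d}$: both issues are absorbed into the single analytic object $\eta_k$, whose continuation properties across $\Sigma_{k-1}$ were already recorded in the proof of Lemma~\ref{lemma:massesmuk}.

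Your argument is correct in outline, but two technical points deserve care. First, for $k\ge 2$ the measures $\mu_k^*$ have unbounded support, so the expansion $F_k(z)=\|\mu_k^*\|/z+O(z^{-2})$ is not available as written; what you actually need, and what follows from the $O(|t|^{-2-1/d})$ decay of the density, is that $F_k(z)\to 0$ at infinity in each sector, which together with \eqref{masses} still yields $h_k(z)\to 0$. Second, after the jumps cancel, $g$ and $h_k$ are a priori analytic only on $\mathbb{C}\setminus\{0\}$; you still owe a sentence ruling out a singularity at the origin. This does hold---the rotational invariance of $\mu_j^*$ forces $F_j(z)=z^{d}\,G_j(z^{d+1})$ for a Cauchy transform $G_j$ with integrable density, and a short estimate then gives boundedness near $0$---but it is not automatic and should be stated.
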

\begin{proof}
With the contour $\gamma_1$ introduced in the proof of Lemma \ref{lemma:massesmuk}, see also Figure \ref{fig:curves},
we have by \eqref{def:Fk} and \eqref{def:mu1} for any $z\in\mathbb{C}\setminus \Sigma_{1}^*$ in the
exterior of $\gamma_1$,
\begin{align*} \nonumber
	t_{0}\,F_{1}(z) & =\frac{1}{2\pi\ir}\int_{\Sigma_{1}^{*}}\frac{\xi_{1,-}(s)-\xi_{1,+}(s)}{z-s}\,\ud s
		=\frac{1}{2\pi\ir}\int_{\gamma_{1}}\frac{\xi_{1}(s)}{z-s}\,\ud s.
		\end{align*}
Moving the contour to infinity we pick up a residue contribution at $s=z$, which is $\xi_1(z)$,
and a contribution at infinity, which because of \eqref{asymp:xi1} is equal to $-t_{d+1} z^d$.
This proves \eqref{rel:xi1F1}.

To prove  \eqref{rel:xikFk} for $k \geq 2$ we use a contour $\gamma_2$ as in Figure \ref{fig:curves}. 
Then for $z \in \mathbb C \setminus (\Sigma_1^* \cup \Sigma_2)$, we find (where we use $\eta_2$ as in \eqref{def:etak})
\begin{align*} 
	2\pi \ir t_0 (F_2(z) - F_1(z)) & = \int_{\Sigma_2} \frac{\eta_{2,-}(s)- \eta_{2,+}(s)}{z-s} \ud s
			- \int_{\Sigma_1^{*}} \frac{\xi_{1,-}(s) - \xi_{1,+}(s)}{z-s} \ud s.
			\end{align*}
We have $-\xi_{1,-} + \xi_{1,+} =-\xi_{2,+} +\xi_{2,-} =-\eta_{2,+} +\eta_{2,-}$ on $\Sigma_1^{*}$,
which implies that
\[ 2 \pi \ir t_0 (F_2(z) - F_1(z)) = \int_{\gamma_2} \frac{\eta_2(s)}{z-s} \ud s, \]
provided that $z$ is in the exterior of $\gamma_2$. Moving $\gamma_2$ to infinity we pick up a residue contribution
$\eta_2(z)$ at $s=z$, and now there is no contribution at infinity, since $\eta_2(s) = O(s^{-1})$ as $s \to \infty$.
This proves \eqref{rel:xikFk}.

The proof for general $k$ is similar.
\end{proof}

\subsubsection{The variational equalities}

\begin{lemma} \label{lemma:varconditions}
The identities \eqref{varcondmu1}, \eqref{varcondmuk} and \eqref{varcondmud} hold
where the measures $\mu_k^*$ are introduced in \eqref{def:mu1} and \eqref{def:muk}.
\end{lemma}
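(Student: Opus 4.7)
My plan is to prove each of the three variational identities by verifying that the left-hand side, viewed as a function on the support of $\mu_k^*$, has vanishing tangential derivative along each ray, and then invoking rotational symmetry and continuity to conclude it equals a single constant. The rotational invariance of each $\mu_k^*$ (from uniqueness of the minimizer and the $z\mapsto\omega z$ symmetry of the problem) reduces the task to one representative ray: the segment $(0,x^*)\subset\Sigma_1^*$ for \eqref{varcondmu1} and a single ray of $\Sigma_k$ for \eqref{varcondmuk} and \eqref{varcondmud}. Throughout I use the standard identity
\[
\frac{d}{ds}\,U^{\mu}(z(s)) = -\Real\bigl(F_{\mu}(z(s))\,\tau\bigr), \qquad \tau=z'(s),
\]
which on $\supp(\mu)$ yields the same value whether one uses $F_{\mu,+}$ or $F_{\mu,-}$, because the jump paired with $\tau$ is purely imaginary (reflecting that $\tau\,d\mu/dz$ is a real arclength density).

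For \eqref{varcondmu1} on $(0,x^*)$ with $\tau=1$, I must check
\[
-2t_0\Real F_{1,\pm}(x) + t_0\Real F_2(x) + \bigl(t_{d+1}^{-1/d}x^{1/d}-t_{d+1} x^d\bigr) = 0.
\]
Using the sheet gluing $\xi_{2,+}=\xi_{1,-}$ on $\Sigma_1^*$ to write $\xi_{1,+}+\xi_{1,-}=\xi_{1,+}+\xi_{2,+}$, and invoking \eqref{rel:xi1F1} together with \eqref{rel:xikFk} for $k=2$ (where $\kappa_{2,0}^{\pm}=t_{d+1}^{-1/d}$ is real and independent of $\pm$ since $\lfloor 1/2\rfloor=0$), I obtain
\[
\xi_{1,+}(x)+\xi_{1,-}(x) = t_{d+1} x^d + t_0 F_2(x) + t_{d+1}^{-1/d} x^{1/d}.
\]
Matching this with the direct expression $\xi_{1,+}+\xi_{1,-}=2t_{d+1}x^d+t_0(F_{1,+}+F_{1,-})$ from \eqref{rel:xi1F1} and taking real parts yields the desired identity. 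Hence the tangential derivative vanishes on $(0,x^*)$, and rotational symmetry together with continuity at the origin extends the conclusion to all of $\Sigma_1^*$.

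For \eqref{varcondmuk} and \eqref{varcondmud} on a ray of $\Sigma_k$ ($2\leq k\leq d$) with tangent $\tau_k$, the target (under the convention $F_{d+1}\equiv 0$) is $\Real(\tau_k(2F_{k,\pm}-F_{k-1}-F_{k+1}))=0$. Subtracting \eqref{rel:xikFk} at indices $k$ and $k+1$ gives
\[
t_0(F_{k-1}+F_{k+1}-2F_k) = -\xi_k + \xi_{k+1} + (\kappa_{k,\ell}^{\pm}-\kappa_{k+1,\ell}^{\pm})\,z^{1/d}.
\]
After taking $+$ boundary values on $\Sigma_k$, using $\xi_{k+1,+}=\xi_{k,-}$, and invoking \eqref{def:etak} to trade $\xi_{k,-}-\xi_{k,+}$ for $\eta_{k,-}-\eta_{k,+}$ (up to a $z^{1/d}$ correction), this simplifies to
\[
t_0(F_{k-1}+F_{k+1}-2F_{k,+}) = (\eta_{k,-}-\eta_{k,+}) + \bigl(\kappa_{k}^{\mathrm{from}\,-}-\kappa_{k+1}^{\mathrm{from}\,+}\bigr)\,z^{1/d},
\]
where $\kappa_{k}^{\mathrm{from}\,-}$ and $\kappa_{k+1}^{\mathrm{from}\,+}$ denote the values of the corresponding $\kappa_{\cdot,\ell}^{\pm}$ in the sectors adjacent to $\Sigma_k$ from the $-$ and $+$ sides. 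The first term paired with $\tau_k$ is purely imaginary by \eqref{def:muk}, and the second term vanishes once one verifies from \eqref{def:kappa} that $\kappa_{k}^{\mathrm{from}\,-}=\kappa_{k+1}^{\mathrm{from}\,+}$ on $\Sigma_k$. The case $k=d$ is identical with $F_{d+1}\equiv 0$, upon applying \eqref{rel:xikFk} at index $d+1$ to express $t_0F_d=-\xi_{d+1}+\kappa_{d+1,\ell}^{\pm}z^{1/d}$.

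The main obstacle I anticipate is the careful bookkeeping of sector labels and the matching of $\kappa$ coefficients. For $k$ even, $\Sigma_k=\Sigma_2$ with $+$-side in $S_{\ell+1}^{-}$ and $-$-side in $S_\ell^{+}$; for $k$ odd, $\Sigma_k=\Sigma_3$ with $+$-side in $S_\ell^{+}$ and $-$-side in $S_\ell^{-}$. Checking $\kappa_{k}^{\mathrm{from}\,-}=\kappa_{k+1}^{\mathrm{from}\,+}$ then becomes a direct algebraic exercise from \eqref{def:kappa} that tracks the $(-1)^k$ sign and the values of $\lfloor(k-1)/2\rfloor$ versus $\lfloor k/2\rfloor$ (which coincide for $k$ odd and differ by $1$ for $k$ even, exactly compensating the shift of $\ell$ by $1$ that appears in the even case). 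Once this identity is established, the rest of the argument is the clean combination of Lemma~\ref{lemma:rel:Fkxik}, the sheet-gluing $\xi_{k,\pm}=\xi_{k+1,\mp}$, and the Plemelj-type identification of $\tau_k\,d\mu_k^*/dz$ with a real arclength density.
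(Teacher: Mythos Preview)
Your approach is essentially the same as the paper's: both combine Lemma~\ref{lemma:rel:Fkxik} with the sheet gluing $\xi_{k,\pm}=\xi_{k+1,\mp}$ on $\Sigma_k$ to obtain a relation among the Cauchy transforms $F_k$, and then pass to the potentials by integration along a ray, using rotational symmetry to extend. The paper writes this as explicit identities for $F_{1,+}+F_{1,-}$ and $F_{k,+}+F_{k,-}$ (its equations \eqref{relationF1F2}--\eqref{relationFdFdm1}) and then invokes \eqref{rel:derivU:F}; you phrase the same computation as ``the tangential derivative of the left-hand side vanishes.'' Your verification of the $\kappa$-matching $\kappa_{k}^{\mathrm{from}\,-}=\kappa_{k+1}^{\mathrm{from}\,+}$ is correct and is exactly the mechanism behind the paper's statement that the $z^{1/d}$ terms cancel.

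One point you leave implicit, which the paper does address, is the value of the integration constant for $k\geq 2$: showing that the tangential derivative of $2U^{\mu_k^*}-U^{\mu_{k-1}^*}-U^{\mu_{k+1}^*}$ vanishes on a ray gives only that it is constant there, and rotational symmetry plus continuity at $0$ makes it the same constant on all rays, but \eqref{varcondmuk}--\eqref{varcondmud} assert this constant is \emph{zero}. The paper handles this by ``inspecting the behaviour as $z\to\infty$'' along a ray of $\Sigma_k$; you should add that sentence (the masses $\|\mu_j^*\|=1-(j-1)/d$ make the logarithmic parts cancel, and the remaining tails go to $0$).
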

\begin{proof}

Using the identities  \eqref{rel:xi1F1}--\eqref{rel:xikFk} and the fact that 
$\xi_{k,\pm} =\xi_{k+1,\mp}(z)$ on $\Sigma_{k}$, and taking note of the constants \eqref{def:kappa}, 
we easily obtain the following for the Cauchy transforms $F_k$,
\begin{align}
	F_{1,+}(z)+F_{1,-}(z) & = F_{2}(z) + \frac{\omega^{\ell d}}{t_{0}}
	\left(\frac{1}{t_{d+1}^{1/d}}\,|z|^{1/d}-t_{d+1}\,|z|^{d}\right),
	&&  z\in (0, \omega^{\ell} x^{*}] \subset\Sigma_{1},\label{relationF1F2} \\
	F_{k,+}(z)+F_{k,-}(z)&=F_{k-1}(z)+F_{k+1}(z),
		&& z\in\Sigma_{k},\quad k =2, \ldots, d-1, \label{relationFks} \\
	F_{d,+}(z)+F_{d,-}(z)&=F_{d-1}(z),
	  && z\in\Sigma_{d}.\label{relationFdFdm1}
\end{align}
For instance, using \eqref{rel:xi1F1}, \eqref{rel:xikFk} for $k=2$, and $\kappa_{2,0}^{\pm}=t_{d+1}^{-1/d}$, 
the relation $\xi_{1,+} =\xi_{2,-}$ on  $(0,x^{*})$ implies
\[
t_{d+1}\,x^{d}+t_{0}\,F_{1,+}(x)=t_{0}\,(F_{2}(x)-F_{1,-}(x))+\frac{x^{1/d}}{t_{d+1}^{1/d}},\qquad x\in(0,x^{*}),
\]
which gives \eqref{relationF1F2} for $\ell=0$. Then the 
symmetry property $F_{k}(\omega z)=\omega^{d} F_{k}(z)$, $z\in\mathbb{C}\setminus\Sigma_{k}$, 
implies \eqref{relationF1F2} for every $\ell$.

From \eqref{def:logpotential} and \eqref{def:Fk} we get $F_k = - \left(\frac{\partial}{\partial x} -
	i \frac{\partial}{\partial y} \right) U^{\mu_k^*}$. Then in view of the symmetry 
$F_{k}(\overline{z})=\overline{F_{k}(z)}$, we obtain for any $z\in\mathbb{C}\setminus\Sigma_{k}$,
\begin{equation}\label{rel:derivU:F}
-2\frac{\partial}{\partial x}\,U^{\mu_{k}^{*}}(z)=F_{k}(z)+F_{k}(\overline{z}),\qquad 
	-2\ir\,\frac{\partial}{\partial y}\,U^{\mu_{k}^{*}}(z)=F_{k}(\overline{z})-F_{k}(z).
\end{equation}
This shows that for any $x\in(0,x^{*})$,
\[
-2\,\frac{\ud}{\ud x}\,U^{\mu_{1}^{*}}(x)=F_{1,+}(x)+F_{1,-}(x),\qquad -\frac{\ud}{\ud x}\,U^{\mu_{2}^{*}}(x)
=F_{2}(x),
\]
hence from \eqref{relationF1F2} we obtain
\[
-2\,\frac{\ud}{\ud x}\,U^{\mu_{1}^{*}}(x)+\frac{\ud}{\ud x}\,U^{\mu_{2}^{*}}(x)=
\frac{1}{t_{0}}\,\Big(\frac{1}{t_{d+1}^{1/d}}\,x^{1/d}-t_{d+1}\,x^{d}\Big),\qquad x\in[0,x^{*}].
\]
Integrating this relation we get
\[
-2 U^{\mu_{1}^{*}}(x)+U^{\mu_{2}^{*}}(x)-\frac{1}{t_{0}}\,
	\Big(\frac{d}{(d+1)\,t_{d+1}^{1/d}}\,x^{\frac{d+1}{d}}-\frac{t_{d+1}}{d+1}\,x^{d+1}\Big)=\ell_{1},\qquad x\in[0,x^{*}],
\]
for some constant $\ell_{1}$. Using the rotational invariance of the measures $\mu_{k}^{*}$ 
we deduce that \eqref{varcondmu1} holds with equality everywhere on $\Sigma_{1}^*$.

Similarly, using \eqref{relationFks} and the first relation in \eqref{rel:derivU:F}, we argue 
that for $k$ odd, \eqref{varcondmuk} holds for $z\in\mathbb{R}_{+}$ and hence by rotational symmetry it also 
holds for every $z\in\Sigma_{k}$. Here the constant of integration vanishes, as can be seen by inspecting the
behavior as $z \to \infty$.
The same proof is valid for \eqref{varcondmud} if $d$ is odd. The other cases follow in a similar way.
\end{proof}

\subsubsection{Positivity of measures $\mu_k^*$ with $k \geq 2$}

The only piece of information that is still missing is that the measures $\mu_k^*$ are positive for $k \geq 2$.
Recall that the positivity of $\mu_1^*$ was established in Lemma \ref{lemma:propmu1}.

\begin{lemma}  \label{lemma:positivemuk} 
The measures $\mu_k^*$ are positive for $k \geq 2$ with 
$\supp(\mu_k^*) = \Sigma_k^*$.
\end{lemma}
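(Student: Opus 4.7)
The plan is to deduce positivity of $\mu_k^*$ for $k \geq 2$ from the representation of $\mu_k^*$ as a balayage of a positive measure, combined with the explicit sign information encoded in the parametrization of the Riemann surface $\mathcal R$ by $\psi$.

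First I would record the balayage characterization implicit in what has already been proved. The variational equalities of Lemma~\ref{lemma:varconditions}, together with $\supp(\mu_k^*) \subset \Sigma_k$ and the matching of total masses from Lemma~\ref{lemma:massesmuk} (one checks $\|\tfrac12(\mu_{k-1}^* + \mu_{k+1}^*)\| = 1 - (k-1)/d = \|\mu_k^*\|$ for $2 \leq k \leq d-1$, and $\|\tfrac12 \mu_{d-1}^*\| = 1/d = \|\mu_d^*\|$), force $\mu_k^*$ to coincide with the balayage onto $\Sigma_k$ of $\tfrac12(\mu_{k-1}^* + \mu_{k+1}^*)$ for $2 \leq k \leq d-1$ and of $\tfrac12\mu_{d-1}^*$ for $k=d$. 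Here it is essential that balayage onto the unbounded $(d+1)$-ray set $\Sigma_k$ preserves total mass; this is controlled by the $O(z^{-1})$ coefficient in \eqref{asymp:xik:2}, which is the same for every sector $S_\ell^\pm$.

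This balayage description reduces positivity to a sign computation. Iterating the identities and starting from $\mu_1^*$, which is positive by Lemma~\ref{lemma:propmu1}, the measure $\mu_k^*$ is expressible as a non-negative combination of iterated balayages of $\mu_1^*$ onto the sets $\Sigma_2, \Sigma_3, \ldots, \Sigma_k$, and each such balayage sends positive measures to positive measures. Equivalently, one may verify positivity directly from \eqref{def:muk}: by the rotational invariance established in Lemma~\ref{lemma:massesmuk} it suffices to check one ray of $\Sigma_k^*$. On that ray, the boundary values $\eta_{k,\pm}(z)$ correspond via \eqref{def:xifunc}--\eqref{def:etak} to two branches $w_k(z)$, $w_{k+1}(z)$ of $\psi^{-1}$ of equal modulus (by \eqref{eq:Sigmak}), and by the reflection symmetry \eqref{symm:w:1}--\eqref{symm:w:2} these two branches form a complex conjugate pair (up to a unimodular factor). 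A short computation then identifies the density of $\mu_k^*$ as a positive multiple of $\Imag\psi(1/w_k)$, which is nonzero by Lemma~\ref{lemma:proppsi}(c) since $|w_k(z)| \leq \rho$ on $\Sigma_k^*$ (by Lemma~\ref{lemma:proppsi}(e)) and hence $|1/w_k(z)| \geq 1/\rho \geq \rho$.

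The support claim $\supp(\mu_k^*) = \Sigma_k^* = \Sigma_k$ is then immediate: the density is continuous and nowhere vanishing on each ray of $\Sigma_k$ (away from the origin, which is irrelevant for the support), and it is a positive real measure by the preceding paragraph. The main obstacle in writing this out carefully is the bookkeeping of which branches $w_k, w_{k+1}$ collide on which ray of $\Sigma_k$ and of the associated constants $\kappa_{k,\ell}^\pm$ from Lemma~\ref{lem:xikasymp}: the $\kappa_{k,\ell}^\pm z^{1/d}$ subtraction in the definition \eqref{def:etak} of $\eta_k$ is precisely tuned so that the smooth contributions at infinity cancel and only the genuine branch-cut jump of $\xi_k$ survives on $\Sigma_k$, producing a positive density.
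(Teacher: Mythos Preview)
Your proposal sketches two routes, but neither closes the gap as written.

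For the balayage route: the variational equality $2U^{\mu_k^*} = U^{\mu_{k-1}^*} + U^{\mu_{k+1}^*}$ on $\Sigma_k$ identifies $\mu_k^*$ with the balayage of $\tfrac12(\mu_{k-1}^* + \mu_{k+1}^*)$ \emph{only once the source measure is known to be positive}. Since positivity of $\mu_{k+1}^*$ is precisely what you are trying to prove, the system is coupled and you cannot simply ``iterate from $\mu_1^*$''. Your claim that $\mu_k^*$ is a non-negative combination of iterated balayages of $\mu_1^*$ is the crux, and it is not justified; solving the tridiagonal system for $\mu_k^*$ in terms of $\mu_1^*$ involves inverting operators of the form $2I - \tfrac12 B_{k}B_{k\pm1}$, and positivity of those inverses is not automatic. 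The paper breaks this circularity by introducing an \emph{auxiliary} vector equilibrium problem in which $\mu_1^*$ is held fixed: its minimizer $(\widehat\mu_2,\dots,\widehat\mu_d)$ is automatically positive (being a genuine minimizer over positive measures), satisfies the same potential identities, and then a maximum-principle argument on the harmonic functions $U^{\widehat\mu_k}-U^{\mu_k^*}$ forces $\widehat\mu_k=\mu_k^*$.

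For the direct route: on (say) the positive real ray the density of $\mu_k^*$ equals $-\tfrac{1}{\pi t_0}\Imag\eta_{k,+}(x)$, not a multiple of $\Imag\psi(1/w_k(x))=\Imag\xi_{k,+}(x)$. The subtraction of $\kappa_{k,\ell}^{\pm} z^{1/d}$ in \eqref{def:etak} does \emph{not} cancel on $\Sigma_k$: the two constants $\kappa_{k,0}^+$ and $\kappa_{k,0}^-$ from \eqref{def:kappa} are complex conjugates rather than equal, and their difference contributes a nonzero term proportional to $\sin\!\big(\tfrac{(k-1)\pi}{d}\big)\,t_{d+1}^{-1/d}\,x^{1/d}$ to the density. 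Hence Lemma~\ref{lemma:proppsi}(c) alone does not determine the sign. (A minor point: Lemma~\ref{lemma:proppsi}(e) concerns $\Sigma_1^*$; the correct reason $|w_k(z)|\le\rho$ for $k\ge2$ is the injectivity in part~(a).)
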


\begin{proof}
 Given the positive measure $\mu_{1}^{*}$ \eqref{def:mu1} on $\Sigma_{1}$, we consider 
the auxiliary vector equilibrium problem consisting of minimizing the energy functional
\[
\sum_{k=2}^{d}I(\mu_{k})-I(\mu_{1}^{*},\mu_{2})-\sum_{k=2}^{d-1} I(\mu_{k},\mu_{k+1})
\]
among all positive Borel measures $\mu_{2},\ldots,\mu_{d}$ satisfying $\supp(\mu_{k})\subset\Sigma_{k}$ 
and $\|\mu_{k}\|=1-\frac{k-1}{d}$ for every $k=2,\ldots,d$.

Let us denote by $(\widehat{\mu}_{2},\ldots,\widehat{\mu}_{d})$ the minimizer to this new problem. 
Since $\mu_{1}^{*}$ is positive, we deduce that $\widehat{\mu}_{2}$ is the balayage of 
$(\mu_{1}^{*}+\widehat{\mu}_{3})/2$ onto $\Sigma_{2}$, therefore we have 
$\supp(\widehat{\mu}_{2})=\Sigma_{2}$ and
\begin{equation}\label{rel:mu2hatmu1}
2 U^{\widehat{\mu}_{2}}(z)=U^{\mu_{1}^{*}}(z)+U^{\widehat{\mu}_{3}}(z),\qquad z\in\Sigma_{2}.
\end{equation}
Similarly we deduce that for each $k=3,\ldots,d,$ the measure $\widehat{\mu}_{k}$ is the balayage 
of $(\widehat{\mu}_{k-1}+\widehat{\mu}_{k+1})/2$ onto $\Sigma_{k}$, and therefore 
$\supp(\widehat{\mu}_{k})=\Sigma_{k}$ and
\begin{equation}\label{rel:mukhat}
2 U^{\widehat{\mu}_{k}}(z)=U^{\widehat{\mu}_{k-1}}(z)+U^{\widehat{\mu}_{k+1}}(z),\qquad z\in\Sigma_{k},
\end{equation}
where $U^{\widehat{\mu}_{d+1}}\equiv 0$.

Using \eqref{varcondmuk} for $k=2$ and \eqref{rel:mu2hatmu1}, we obtain
\begin{equation}\label{rel:pot:mu2mu3}
2(U^{\widehat{\mu}_{2}}(z)-U^{\mu_{2}^{*}}(z))=U^{\widehat{\mu}_{3}}(z)-U^{\mu_{3}^{*}}(z),\qquad z\in\Sigma_{2}.
\end{equation}
Analogously, from \eqref{varcondmuk}--\eqref{varcondmud} and \eqref{rel:mukhat} we obtain 
for each $k=3,\ldots,d-1,$
\begin{equation}\label{rel:pot:muks}
2(U^{\widehat{\mu}_{k}}(z)-U^{\mu_{k}^{*}}(z))=U^{\widehat{\mu}_{k-1}}(z)-U^{\mu_{k-1}^{*}}(z)
+U^{\widehat{\mu}_{k+1}}(z)-U^{\mu_{k+1}^{*}}(z),\qquad z\in\Sigma_{k},
\end{equation}
and
\begin{equation}\label{rel:pot:mudmudm1}
2(U^{\widehat{\mu}_{d}}(z)-U^{\mu_{d}^{*}}(z))=U^{\widehat{\mu}_{d-1}}(z)-U^{\mu_{d-1}^{*}}(z),\qquad z\in\Sigma_{d}.
\end{equation}
Let us now define the  constants
\begin{align} \label{def:mkMk}
m_{k} :=\inf_{z\in\mathbb{C}} \left(U^{\widehat{\mu}_k}(z) - U^{\mu_k^*}(z) \right), \quad
M_{k} :=\sup_{z\in\mathbb{C}} \left(U^{\widehat{\mu}_k}(z) - U^{\mu_k^*}(z) \right), \qquad k=2,\ldots,d.
\end{align}
Since $U^{\widehat{\mu}_k} - U^{\mu_k^*}$ is continuous and vanishes at infinity, 
we have $-\infty < m_k \leq 0 \leq M_{k}<\infty$ for every $k=2,\ldots,d$.

Note that $U^{\widehat{\mu}_k} - U^{\mu_k^*}$ is harmonic in $\mathbb C \setminus \Sigma_k$, which means
by the maximum and minimum principles for harmonic functions, that the maximum and minimum are taken on $\Sigma_k$.
Then \eqref{rel:pot:mu2mu3}--\eqref{rel:pot:mudmudm1}
lead to the inequalities between the numbers $M_k$, namely
\begin{align} \label{rel:Mn:1}
2 M_{2} &\leq M_{3},  \\  \label{rel:Mn:2}
2 M_k & \leq M_{k-1} + M_{k+1},  \text{ for } k =3, \ldots, d-1, \\  \label{rel:Mn:3}
2 M_d & \leq M_{d-1}.
\end{align}
Inductively, we obtain from \eqref{rel:Mn:1} and \eqref{rel:Mn:2} that
\begin{equation} \label{rel:Mn:4} 
	k M_k \leq (k-1) M_{k+1} \qquad \text{ for } k =2, \ldots, d-1. 
	\end{equation}
Taking $k=d-1$ in \eqref{rel:Mn:4} and combining this with \eqref{rel:Mn:3} 
we conclude that $M_d = M_{d-1} = 0$. Then $M_k = 0$ for every $k$ by \eqref{rel:Mn:4}.

In a similar way we prove that $m_k = 0$ for every $k$, and therefore $U^{\mu_{k}^{*}}\equiv U^{\widehat{\mu}_{k}}$.
This implies that $\mu_{k}^{*}=\widehat{\mu}_{k}$ for every $k=2,\ldots,d$ by the uniqueness theorem for logarithmic
potentials, see e.g.\ \cite[Theorem II.2.1]{SaffTotik}. In particular $\mu_k^*$ is a positive measure. 
\end{proof}

\subsection{Conclusion of the proof of Proposition \ref{prop:VEPsol}}

We proved in Lemmas \ref{lemma:propmu1} 
and \ref{lemma:positivemuk} that the measures are real and positive. The total
masses \eqref{mass:muk} are established in Lemma \ref{lemma:massesmuk},
and the variational conditions \eqref{varcondmu1}--\eqref{varcondmud}
are satisfied by Lemma \ref{lemma:varconditions}.

\subsection{Proof of Theorem \ref{theo:subcrit}}

Recall from \eqref{param:RS} and the paragraph that follows it, that 
$\psi'(w^*) = 0$ and $x^* = \psi(w^*)$. Then as $w \to w^*$,
\begin{align} \label{eq:zatbranch} 
	z = \psi(w) = x^* + c_1 (w - w^*)^2 + O(w-w^*)^3 
\end{align}
where $c_1 = \frac{1}{2} \psi''(w^*)> 0$.  
Also from \eqref{param:RS} we have
\begin{align} \label{eq:xiatbranch}	
	\xi = \psi(\frac{1}{w}) = \xi_1(x^*) - c_2 (w-w^*) + O(w-w^*)^2 
\end{align}
with $c_2 = \frac{1}{(w^*)^2} \psi'(\frac{1}{w^*}) > 0$, since $t_0 < t_{0,\crit}$ 
(which implies that $r<r_{\crit}=(d t_{d+1})^{-1/(d-1)}$, see Figure~\ref{graph:f}). 
Inverting \eqref{eq:zatbranch} for $z$ on the first sheet, we get
\begin{equation} \label{eq:zatbranch2} 
	w - w^* =  \frac{1}{\sqrt{c_1}} (z-x^*)^{1/2} + O(z-x^*) \qquad \text{as } z \to x^*, 
	\end{equation}
with the principal branch of the square root since $w > w^*$ for real $z > x^*$.
Thus by \eqref{eq:zatbranch2} and \eqref{eq:xiatbranch} 
\begin{align} \label{eq:xi1atbranch} 
	\xi_1(z) = \xi_1 (x^*) - c_3 (z-x^*)^{1/2} + O(z-x^*) 
	\end{align}
as $z \to x^*$ with $c_3  = \frac{c_2}{\sqrt{c_1}} > 0$.

Then by \eqref{def:mu1} we have for $z \in (0, x^*)$,
\[ \frac{\ud\mu_1^{*}(z)}{\ud z} = \frac{1}{\pi t_0} \Imag \xi_{1,-}(z) = c_4 \sqrt{x^*-z}
	\qquad \text{as } z \nearrow x^*. \]
where $c_4 = \frac{c_3}{\pi t_0} > 0$. Thus the density of $\mu_1^*$ vanishes
as a square root at the endpoint $x^*$. By rotational symmetry the density
vanishes as a square root at each of the endpoints.

Similarly to \eqref{eq:xi1atbranch} we also have
\begin{align} \label{eq:xi2atbranch} 
	\xi_2(z) = \xi_1(x^*) + c_3 (z-x^*)^{1/2} + O(z-x^*),  
	\end{align}
as $z \to x^*$. Then 
\begin{align} \label{eq:xi1minusxi2}
	\xi_1(z) - \xi_2(z) = -2c_3 (z-x^*)^{1/2} + O(z-x^*).  
\end{align}
Now it is easy to calculate that for $x > x^*$,
\begin{align} \nonumber
	\frac{\ud}{\ud x} \left(-2 U^{\mu_1^*}(x) + U^{\mu_2^*}(x) 
	-  \frac{1}{t_0} 		\left( \frac{d}{(d+1) t_{d+1}^{1/d}} x^{\frac{d+1}{d}} - \frac{t_{d+1}}{d+1} x^{d+1} \right)
\right) \\ 
\label{eq:dUnearbranch}
\begin{aligned} 
\qquad \qquad \qquad & = 2F_1(x) - F_2(x) - \frac{1}{t_0} \left( \frac{1}{t_{d+1}^{1/d}} x^{1/d} - t_{d+1} x^d \right) \\
  & = \frac{1}{t_0} (\xi_1(x) - \xi_2(x)),
	\end{aligned}
	\end{align}
where in the last step we used \eqref{rel:xi1F1}, \eqref{rel:xikFk} and \eqref{def:kappa}.
By \eqref{eq:xi1minusxi2} there is  $x^{**} > x^*$ such that $\xi_1(z) - \xi_2(z) < 0$ for $x \in (x^*, x^{**}]$.
Then the variational inequality, see \eqref{varcondmu1}, 
\[ -2 U^{\mu_1^*}(x) + U^{\mu_2^*}(x) - 
		\frac{1}{t_0} \left( \frac{d}{(d+1) t_{d+1}^{1/d}} x^{\frac{d+1}{d}} - \frac{t_{d+1}}{d+1} x^{d+1} \right)
			< \ell_1, \qquad x \in (x^*, x^{**}], \]
follows from \eqref{eq:dUnearbranch} and the fact that we have equality at $x=x^*$.

It now follows that for any choice of $\widehat{x} \in [x^*, x^{**}]$ the measures $(\mu_1^*, \mu_2^*, \ldots, \mu_d^*)$
are the minimizers for the vector equilibrium problem.  Theorem  \ref{theo:subcrit}
is now fully proved.

\subsection{Proof of Theorem \ref{theo:Omega}}

By Lemma \ref{lemma:proppsi} we have that $\psi(|w|=1)$ is a simple closed
curve containing $\Sigma_1^*$ in its interior if $t_0 < t_{0,\crit}$.

We define $\Omega$ as the domain enclosed by the curve $\psi(|w|=1)$.
By parts (b) and (e) of Lemma \ref{lemma:proppsi} we then have that $\Sigma_1^*$
is contained in $\Omega$ if $t_0 < t_{0,\crit}$. 
If $z \in \partial \Omega$ then $z=\psi(w)$ with $|w|=1$. 
By part (d) of Lemma \ref{lemma:proppsi}, we have
$ \xi_1(z) = \psi( \frac{1}{w})$. Using $|w|= 1$ and  the fact that the coefficients of $\psi$ are real,
we then find $\xi_1(z) = \overline{z}$ for $z \in \partial \Omega$.

Using \eqref{def:boundOmega}, we find for an integer $k \geq 0$,
\[
\frac{1}{2\pi\ir}\oint_{\partial\Omega}\frac{\overline{z}}{z^{k}}\,\ud z=\frac{1}{2\pi\ir}\oint_{\partial\Omega}\frac{\xi_{1}(z)}{z^{k}}\,\ud z.
\]
Since $\xi_{1}(z)$ is analytic in the exterior of $\partial\Omega$, by deforming this contour and 
applying \eqref{asymp:xi1} we obtain immediately \eqref{ehm}. 
Note that \eqref{ehm} with $k=0$ implies by Green's theorem that the domain $\Omega$ has area $\pi t_{0}$.

For $z\in\mathbb{C}\setminus\Omega$, we have by Green's theorem and \eqref{def:boundOmega}
\[
\frac{1}{\pi}\iint_{\Omega}\frac{\ud A(\zeta)}{z-\zeta}=\frac{1}{2\pi\ir}\oint_{\partial\Omega}\frac{\overline{\zeta}}{z-\zeta}\,\ud \zeta=\frac{1}{2\pi\ir}\oint_{\partial\Omega}\frac{\xi_{1}(\zeta)}{z-\zeta}\,\ud \zeta.
\]
Using \eqref{asymp:xi1} again and Cauchy's integral formula, the last integral 
is easily seen to be equal to $\xi_{1}(z)-t_{d+1} z^{d}$, and this is $t_{0}\,F_{1}(z)$ by
\eqref{rel:xi1F1}. This proves \eqref{rel:mu1:area}.

\subsection{Proof of Theorem \ref{theo:spectralcurve}}

We write
\begin{equation} \label{def:Pzxi}
P(z,\xi)=\prod_{k=1}^{d+1}(\xi-\xi_{k}(z)),
\end{equation}
where as before we  consider $\xi_k(z)$ as being defined on the $k$th sheet of
the Riemann surface. The equation $P(z,\xi) = 0$ is an equation of the Riemann surface
with rational parametrization \eqref{param:RS}. 


By changing $w \mapsto 1/w$ in the parametrization \eqref{param:RS} we see
that $P(z,\xi) = 0 \implies P(\xi,z) = 0$. This means that $P$ is symmetric in $z$ and $\xi$,
and since $P$ is of degree $d+1$ in $\xi$, we find
\begin{equation} \label{P:expansion2}
P(z,\xi) = \sum_{j,k=0}^{d+1} a_{j,k} z^j \xi^k
\end{equation} 
for certain real constants $a_{j,k}$ that satisfy $a_{k,j} = a_{j,k}$, and so in particular
\[ a_{0,d+1} = a_{d+1,0} = 1. \]
It is also clear by changing $w \mapsto \omega w$ in \eqref{param:RS} where $\omega = \omega_{d+1}$,
that $P(z,\xi) = 0 \implies P(\omega z, \omega^{-1} \xi) = 0$. 
This implies
\[ a_{j,k} \omega^{j-k} = 0, \qquad \text{for all}\,\, j, k= 0, \ldots, d\,\,\text{with}\,\,\omega^{j-k}\neq 1. \]
Hence the only $a_{j,k}$ that are possibly non-zero are those with $j=k$ or $j=0$, $k=d+1$,
or $j=d+1$, $k=0$. It thus follows that $P$ has the form \eqref{algequat}.

Inserting \eqref{param:RS} into \eqref{algequat}, we obtain a Laurent polynomial
in $w$. Setting the coefficients in this expansion equal to $0$, we obtain 
and  recursive relations that determine
the coefficients $c_k$ and $\beta$. In particular we
find \eqref{def:cd}. It remains to show that the coefficients are positive.

In what follows we use the elementary symmetric function
\[ e_k(x_1, \ldots, x_n) = \sum_{j_1 < \ldots < j_k} x_{j_1} x_{j_2} \cdots x_{j_k} \]
and we note that 
\begin{equation} \label{rel:cj:ej}
	c_{d+1-k}z^{d+1-k}  = (-1)^{k-1} e_{k}(\xi_1(z), \ldots, \xi_{d+1}(z)), \qquad k = 1, \ldots, d, 
	\end{equation}
and 
\begin{equation} \label{rel:beta} 
	z^{d+1} + \beta = (-1)^{d+1} e_{d+1}(\xi_1(z), \ldots, \xi_{d+1}(z)) = (-1)^{d+1} \prod_{j=1}^{d+1} \xi_j(z).
	\end{equation}

From \eqref{rel:beta} we get
\[ \beta = (-1)^{d+1} \lim_{z \to 0}  \prod_{j=1}^{d+1} \xi_j(z). \]
By \eqref{eq:zw} we have that $z=0$ corresponds to $w$ values that are solutions of
$rw^{d+1} + a = 0$ where $a = t_{d+1} r^d$.
This implies
\[ \prod_{j=1}^{d+1} w_j(0) = (-1)^{d+1} \frac{a}{r} \]
Also by \eqref{param:RS} we have 
\[ \xi_j(0) = \frac{a w_j(0)^{d+1} + r}{w_j(0)} = \frac{r^2-a^2}{r w_j(0)} \]
and so
\begin{equation} \label{def:beta} 
	\beta = (-1)^{d+1} \prod_{j=1}^{d+1}  \frac{r^2-a^2}{r w_j(0)} = 
	 \left(\frac{r^2-a^2}{r} \right)^{d+1}  \frac{r}{a}. 
	\end{equation}
Since $r  < r_{\crit} = (d t_{d+1})^{-1/(d-1)}$, see Figure \ref{graph:f}, we easily get
$d t_{d+1} r^{d-1} < 1$, and so
\[ a = t_{d+1} r^d < \frac{r}{d} < r. \] 
Then from \eqref{def:beta} we see that $\beta > 0$.

\medskip

In order to show that $c_k > 0$ for $k = 1, \ldots, d-1$, we are going to prove the following claim.
\paragraph{Claim:} For $k = 1, \ldots, d-1$, we have that 
$(-1)^k e_k(\xi_2(z), \ldots, \xi_{d+1}(z))$ has a Laurent expansion at infinity of the form
\begin{equation} \label{claim:1} 
	(-1)^k e_k(\xi_2(z), \ldots, \xi_{d+1}(z)) =  \frac{1}{z^{k}} \sum_{j=0}^{\infty} b_{j,k} z^{-j(d+1)} 
	\end{equation}
with $b_{j,k} > 0$ for all $j$.

\paragraph{Proof of the claim:} We use induction.

For $k=1$, we note that  by \eqref{rel:cj:ej} 
\begin{equation} \label{claim:e1:1} 
	- e_1(\xi_2(z), \ldots, \xi_{d+1}(z)) = \xi_1(z) - c_d z^d =  t_0 F_1(z) 
	\end{equation}
where we used \eqref{rel:xi1F1} and the fact that $c_d = t_{d+1}$. Since $F_1$ is the Cauchy transform of 
$\mu_1^*$ we have
\begin{equation} \label{claim:e1:2} 
	F_1(z) = \frac{1}{z} \left( 1 + \sum_{j=1}^{\infty} s_j z^{-j(d+1)} \right)
		\quad \text{ with } \quad s_j = \int_{\Sigma_1^*} z^{j(d+1)} d\mu_1^*(z).
	\end{equation}
By the  rotational symmetry and the positivity of $\mu_1^*$ we have
\begin{equation} \label{claim:e1:3} 
	s_j = (d+1)\int_0^{x^*}  x^{j(d+1)} d\mu_1^*(x) > 0. 
	\end{equation}
Combining \eqref{claim:e1:1}--\eqref{claim:e1:3} we have proved the claim for $k=1$.

\medskip

Now let $k \geq 2$ with $k \leq d-1$ and assume that the claim is true for $k-1$.
The following basic relation for the elementary symmetric polynomials
\[ e_{k}(\xi_2(z), \ldots, \xi_{d+1}(z)) = e_{k}(\xi_1(z), \xi_2(z), \ldots, \xi_{d+1}(z))
	- \xi_1(z) e_{k-1}(\xi_2(z), \ldots, \xi_{d+1}(z)) \]
leads by \eqref{rel:cj:ej}, \eqref{claim:1} and \eqref{claim:e1:1}  to
\begin{equation} \label{claim:ek:2} 
	(-1)^{k} e_{k}(\xi_2(z), \ldots, \xi_{d+1}(z))  = - c_{d+1-k}z^{d+1-k} + 
	(c_d z^d + t_{0}\, F_1(z))   \frac{1}{z^{k-1}} \sum_{j=0}^{\infty}  b_{j,k-1} z^{-j(d+1)}. 
	\end{equation}
The term with $z^{d+1-k}$ in the right-hand side cancels out, since it is clear from the asymptotic
behavior \eqref{asymp:xik:2} of the $\xi_j$ functions that the elementary symmetric polynomial
cannot grow like $z^{d+1-k}$ as $z \to \infty$. This fact leads to 
\begin{equation} \label{rel:ckb0k} 
	c_{d+1-k} = b_{0,k-1} c_d. 
	\end{equation}
We then obtain from \eqref{claim:e1:2}, \eqref{claim:e1:3} and the induction
hypothesis that the right-hand side of \eqref{claim:ek:2} has a Laurent expansion 
of the form \eqref{claim:1} with positive coefficients $b_{j,k}$, which proves the claim.

\bigskip

Having proved the claim, we know in particular that $b_{0,k} > 0$ for $k=2, \ldots, d-1$, and the relation \eqref{rel:ckb0k} also holds for $k=2,\ldots,d-1$.
These two facts imply that $c_k > 0$ for $k=2, \ldots, d-1$.

The relation \eqref{rel:ckb0k} does not hold for $k=d$, instead we have
\begin{equation}\label{rel:c1cd}
c_{1}=\frac{1}{c_{d}}+b_{0,d-1}\,c_{d}=\frac{1}{t_{d+1}}+b_{0,d-1}\,t_{d+1},
\end{equation}
which implies $c_{1}>0$. So we finish this proof by justifying \eqref{rel:c1cd}.

The relation \eqref{claim:ek:2} also holds for $k=d$, and it implies that
\[
(-1)^{d} e_{d}(\xi_{2}(z),\ldots,\xi_{d+1}(z))=(-1)^{d} \prod_{j=2}^{d+1} \xi_{j}(z)=(-c_{1}+c_{d}\, b_{0,d-1})\,z+O(z^{-d})
\]
as $z\rightarrow\infty$. On the other hand, from \eqref{rel:beta} we deduce 
\[
(-1)^{d} e_{d}(\xi_{2}(z),\ldots,\xi_{d+1}(z))=-\frac{z^{d+1}+\beta}{\xi_{1}(z)}=-(z^{d+1}+\beta)(\frac{1}{t_{d+1}}\,z^{-d}+O(z^{-2d-1})),
\]
as $z\rightarrow\infty$. Identifying the leading coefficients in both expansions we obtain \eqref{rel:c1cd}.

\section{The multiple orthogonal polynomials $P_{n,n}$ and the associated Riemann-Hilbert problem}\label{section:firstRHP}

In this section we start the proof of Theorem \ref{theo:strongasymp}. 
We fix $t_{d+1} > 0$, $0 < t_0 < t_{0,\crit}$ and let $x^*$ be as  \eqref{def:xstar}. 
We also take $\widehat{x} > x^*$ with $\widehat{x} < x^{**}$ as in Theorem \ref{theo:subcrit} and let
$(\mu_1^*, \ldots, \mu_d^*)$ be the minimizer of the vector equilibrium problem
as in Theorem \ref{theo:subcrit}. 
We also use the notions that were developed in Sections \ref{subsection:xik}
and \ref{subsection:construction}, namely
the functions $\xi_k$ given in \eqref{def:xifunc} and the Cauchy transforms $F_k$
given in \eqref{def:Fk} that satisfy the conditions of Lemma \ref{lemma:varconditions}.
These functions will come to play a role in the third transformation of the steepest
descent analysis. 

Throughout we assume that $n$ is a multiple of $d$.

\subsection{The Riemann-Hilbert problem}
We start with the formulation of the Riemann-Hilbert problem for the
polynomials $P_{n,n}$. This is based on the multiple orthogonality stated in Lemma \ref{lemma:multorthog}.
As a result \cite{VAGK} there is a Riemann Hilbert problem of size $(d+1)\times (d+1)$.

Recall that the weight functions $w_{j,n}$ are related to the functions $p_{\ell}$ as in 
\eqref{def:pell}--\eqref{wjnandpell}. The functions $p_{\ell}$ are solutions of \eqref{eq:pODE}.
There are $d+1$ different functions $p_{\ell}$ and any $d$ of them form a basis for the solution space of 
\eqref{eq:pODE}. It is immediate from \eqref{def:pell} that 
\begin{equation}\label{rel:p}
\sum_{\ell=0}^{d} p_{\ell}\equiv 0,
\end{equation}
and
\begin{equation}\label{symmproppl}
p_{\ell}(z)=\omega^{\ell}\, p_{0}(\omega^{\ell}\,z),\qquad \ell=0,\ldots,d,
\end{equation}
recall $\omega = \omega_{d+1}=e^{\frac{2\pi\ir}{d+1}}$.

It will be convenient to forget about the constant prefactors
in \eqref{wjnandpell}, and therefore we introduce the following definition.

\begin{definition}\label{def:vjn}
For $n \in \mathbb{N}$, and $j=0, \ldots, d-1$, we put 
\begin{equation}\label{eq:def:vjn}
	v_{j,n}(z)=e^{\frac{n V(z)}{t_{0}}}\,p_{-\ell}^{(j)}(c_{n}z), \qquad 
		\arg z = \frac{2\pi }{d+1} \ell.
\end{equation}
\end{definition}

By \eqref{wjnandpell} and \eqref{eq:def:vjn}, the weight $v_{j,n}$ is a constant multiple of $w_{j,n}$,
and consequently the polynomial $P_{n,n}$ may be defined alternatively through the multiple orthogonality conditions
\begin{equation}\label{eq:altmultorthog}
	\int_{\Sigma} P_{n,n}(z)\,z^{k}\,v_{j,n}(z)\,\ud z=0,
	\qquad j=0,\ldots,\Big\lceil\frac{n-j}{d}\Big\rceil-1,\qquad j=0,\ldots,d-1.
\end{equation}

We now introduce the RH problem that is associated with the polynomials $P_{n,n}$ and which will be the subject of 
analysis during the rest of the paper. Recall that the star $\Sigma$ is given the outward radial orientation. 
This induces a $+$ and $-$ side on each segment $[0, \omega^j \widehat{x}]$ of $\Sigma$, where the $+$ side ($-$ side) is on the 
left (right) as we traverse the ray according to its orientation. Recall that the star $\Sigma$ is also denoted by $\Sigma_{1}$.

\begin{rhp}\label{RHPforY}
Find a function $\mathbf{Y}:\mathbb{C}\setminus \Sigma \rightarrow\mathbb{C}^{(d+1)\times(d+1)}$  
with  the following properties:
\begin{itemize}
\item[$\bullet$] $\mathbf{Y}$ is analytic in $\mathbb{C}\setminus\Sigma$.
\item[$\bullet$] For every $z\in\Sigma$, the function $\mathbf{Y}$ has boundary values 
$\mathbf{Y}_{+}(z)$ and $\mathbf{Y}_{-}(z)$ and they are related by $\mathbf{Y}_{+}(z)=\mathbf{Y}_{-}(z) \mathbf{J}_{Y}(z)$, where
    \begin{equation}\label{jump:Y}
    \mathbf{J}_{Y}(z)=\begin{pmatrix} 
	1 & v_{0,n}(z) & v_{1,n}(z) & \cdots & v_{d-1,n}(z)\\
  & 1 &   &   & \\
  &   & 1 &   & \\
  &   &   & \ddots  & \\
  &  & & & 1
\end{pmatrix}, \qquad z\in \Sigma. 
    \end{equation}
\item[$\bullet$] As $z\rightarrow\infty$,
\begin{equation}\label{asymp:Y}
\mathbf{Y}(z)=\Big(I+O\Big(\frac{1}{z}\Big)\Big)\,
	\diag \left( z^{n}, z^{-\frac{n}{d}}, z^{-\frac{n}{d}},  \ldots,  z^{-\frac{n}{d}} \right),
\end{equation}
where $I$ denotes the identity matrix of size $(d+1)\times(d+1)$.
\item[$\bullet$] As $z$ approaches one of the endpoints $\omega^{\ell}\,\widehat{x}$ of $\Sigma$,
\begin{equation} \label{endpoint:Y}
\mathbf{Y}(z)=\begin{pmatrix} 
O(1) & O(\log(z-\omega_{d+1}^{\ell}\, \widehat{x})) & \cdots & O(\log(z-\omega_{d+1}^{\ell}\, \widehat{x})) \\[0.3em]
O(1) & O(\log(z-\omega_{d+1}^{\ell}\, \widehat{x})) & \cdots  & O(\log(z-\omega_{d+1}^{\ell}\, \widehat{x})) \\[0.3em]
\vdots  & \vdots  &  & \vdots \\
O(1)  & O(\log(z-\omega_{d+1}^{\ell}\, \widehat{x})) & \cdots & O(\log(z-\omega_{d+1}^{\ell}\, \widehat{x}))
\end{pmatrix},
\end{equation}
and $\mathbf{Y}(z)=O(1)$ as $z\rightarrow 0$.
\end{itemize}
\end{rhp}

The relevance of this RH problem relies on the fact that it characterizes the multiple orthogonal polynomials $P_{n,n}$, 
as it was already indicated. This characterization result is now standard in the theory of RH problems associated with 
multiple orthogonal polynomials, and was first obtained in \cite{VAGK} in the context of multiple
orthogonality on the real line. It generalizes the RH problem for orthogonal polynomials \cite{Deift,FIK}.

\begin{lemma}\label{lemma:RHcharact}
The polynomial $P_{n,n}$ exists and is unique if and only if the RH problem \ref{RHPforY} is solvable. 
In this case we have $\mathbf{Y}_{1,1}=P_{n,n}$.
\end{lemma}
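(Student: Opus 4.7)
The plan is to prove the equivalence by the standard construction going back to Van Assche--Geronimo--Kuijlaars \cite{VAGK}, adapted to the present setting in which $\Sigma$ is a star rather than a subset of $\mathbb{R}$. The adaptation is cosmetic: the weights $v_{j,n}$ are analytic on each ray of $\Sigma$, and Sokhotski--Plemelj applies on any rectifiable contour. I would prove both directions in parallel, extracting $P_{n,n}$ from an arbitrary solution $\mathbf{Y}$ and, conversely, constructing $\mathbf{Y}$ explicitly from $P_{n,n}$.

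For the forward direction, note that the first column of $\mathbf{J}_{Y}$ in \eqref{jump:Y} is $(1,0,\ldots,0)^{T}$, so each $Y_{k,1}$ has no jump across $\Sigma$, and the endpoint bound \eqref{endpoint:Y} together with the removable-singularity theorem makes each $Y_{k,1}$ entire. The growth \eqref{asymp:Y} then forces $Y_{1,1}$ to be a monic polynomial of degree exactly $n$, and the other $Y_{k,1}$ to be polynomials of degree at most $n-1$. From $\mathbf{Y}_{+}=\mathbf{Y}_{-}\mathbf{J}_{Y}$ I read off $Y_{1,j+2,+}-Y_{1,j+2,-} = Y_{1,1}\,v_{j,n}$ on $\Sigma$; combined with $Y_{1,j+2}(z) = O(z^{-n/d-1})$ at infinity and the logarithmic endpoint behavior, Sokhotski--Plemelj yields
\[
Y_{1,j+2}(z) = \frac{1}{2\pi\ir}\int_{\Sigma}\frac{Y_{1,1}(s)\,v_{j,n}(s)}{s-z}\,\ud s.
\]
Expanding the Cauchy kernel in geometric series at infinity and matching against the required decay produces
\[
\int_{\Sigma} s^{k}\,Y_{1,1}(s)\,v_{j,n}(s)\,\ud s = 0, \qquad k=0,\ldots,\tfrac{n}{d}-1,\quad j=0,\ldots,d-1.
\]
Since $n$ is a multiple of $d$ and $\lceil (n-j)/d\rceil = n/d$ for every $j=0,\ldots,d-1$, these are precisely the multiple orthogonality conditions \eqref{eq:altmultorthog} characterizing $P_{n,n}$. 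Hence $P_{n,n}$ exists and $Y_{1,1}=P_{n,n}$.

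Conversely, assume $P_{n,n}$ exists and is unique. The first row of $\mathbf{Y}$ is prescribed by $Y_{1,1}:=P_{n,n}$ together with $Y_{1,j+2}(z):=\frac{1}{2\pi\ir}\int_{\Sigma}\frac{P_{n,n}(s)v_{j,n}(s)}{s-z}\,\ud s$, and the orthogonality conditions produce the correct decay. The remaining $d$ rows are built from the dual (type I) multiple orthogonal polynomials associated with the weights $(v_{0,n},\ldots,v_{d-1,n})$ along the template of \cite{VAGK}: for each $k=2,\ldots,d+1$ one finds polynomials $A_{k,j}$ of appropriate degrees so that $Y_{k,1}$ is a polynomial of degree $\leq n-1$, each $Y_{k,j+2}$ is the corresponding Cauchy transform, and the lower-right $d\times d$ block exhibits the asymptotic behavior $\delta_{k,l}\,z^{-n/d}+O(z^{-n/d-1})$. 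Uniqueness of $\mathbf{Y}$ then follows by a standard argument: $\det\mathbf{J}_{Y}=1$ makes $\det\mathbf{Y}$ entire, \eqref{asymp:Y} forces $\det\mathbf{Y}\to 1$ at infinity, and the integrable endpoint singularities of \eqref{endpoint:Y} are removable, so $\det\mathbf{Y}\equiv 1$; any two solutions $\mathbf{Y},\widetilde{\mathbf{Y}}$ then satisfy $\widetilde{\mathbf{Y}}\mathbf{Y}^{-1}\equiv I$ by Liouville. The main technical point is the construction of the dual polynomials $A_{k,j}$ with exactly the right orthogonality to produce the biorthogonal block, but this is a finite-dimensional linear algebra problem dual to the one defining $P_{n,n}$, and its solvability is equivalent to the non-singularity of the same moment matrix that governs $P_{n,n}$; no new ideas beyond \cite{VAGK} are required.
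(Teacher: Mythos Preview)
Your proposal is correct and follows exactly the standard argument of \cite{VAGK} that the paper invokes; in fact the paper does not supply a proof of this lemma at all, stating only that the characterization is ``now standard'' and referring to \cite{VAGK,Deift,FIK}. Your write-up fills in precisely those omitted details, and the minor adaptations you flag (star contour instead of real line, analytic weights on each ray, Sokhotski--Plemelj on rectifiable arcs, and the observation that $\lceil (n-j)/d\rceil=n/d$ for $j=0,\ldots,d-1$ when $d\mid n$) are all correctly handled.
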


We obtain the large $n$ asymptotics of the RH problem \ref{RHPforY} by a steepest descent analysis.
The analysis is rather involved, since we are dealing with a RH problem of size $(d+1) \times (d+1)$,
where $d \geq 2$ can be any natural number. 
See \cite{DelKui} for an earlier example of a steepest descent analysis for RH problems of arbitrary size.

One of the outcomes
of the steepest descent analysis is that the RH problem \ref{RHPforY} has a solution for $n$ large enough,
and so the polynomials $P_{n,n}$ exist for $n$ large enough. The main outcome is the asymptotic formula
\eqref{strongasympform} for the polynomials $P_{n,n}$, as given in Theorem \ref{theo:strongasymp}.

The steepest descent analysis proceeds via a number of transformations.

The sectors $S_{\ell}$ and their subsectors $S_{\ell}^{\pm}$ were introduced in \eqref{sectorSl} and \eqref{sectorSlpSlm},
see also Figures \ref{fig:sectors}--\ref{fig:sectors4}. There are $d+1$ sectors $S_{\ell}$ and $\ell$ is
considered modulo $d+1$. In many cases it does not matter which value modulo $d+1$ we take, but
in some cases it does matter. The canonical choice is indicated in Figures \ref{fig:sectors}--\ref{fig:sectors4}.
Roughly speaking the index $\ell$ runs from $-\frac{d}{2}$ to $\frac{d}{2}$. To be precise,
we use the following convention
\begin{equation} \label{eq:Sellnumbering}
	\begin{aligned} 
		\text{for }  S_{\ell}^+ : & \quad
				\ell =  - \lceil \frac{d}{2} \rceil, \ldots, \lfloor \frac{d}{2} \rfloor  \\
     \text{for } S_{\ell}^-: & \quad 
			\ell = - \lfloor \frac{d}{2} \rfloor, \ldots, \lceil \frac{d}{2} \rceil.
			\end{aligned}
			\end{equation}

We use $\omega = \omega_{d+1} = e^{\frac{2\pi \ir}{d+1}}$ as before.

We also let $E_{j,k}$ be the elementary matrix with $1$ in
	position $(j,k)$ and $0$ elsewhere. The size of $E_{j,k}$ will be clear from
	the context. It is either size $d\times d$ or $(d+1) \times (d+1)$.

The case $d=2$ is essentially done in \cite{BleherKuij}. In this proof we 
assume $d \geq 3$. Then
\[ \mathbb C \setminus \bigcup_{\ell=0}^d (S_{\ell}^+ \cup S_{\ell}^-) = \Sigma_2 \cup \Sigma_3. \]
For a function that is analytic in $\bigcup_{\ell=0}^d (S_{\ell}^+ \cup S_{\ell}^-)$
we describe its boundary values separately on $\Sigma_2$ and $\Sigma_3$.

\section{First transformation $\mathbf{Y}\mapsto \mathbf{X}$}\label{section:firsttransf}

The first transformation is based on the construction of a $d \times d$-matrix valued
function $\mathbf{F} : \mathbb{C}\setminus(\Sigma_{2}\cup\Sigma_{3}) \to \mathbb C^{d\times d}$ out
of solutions of the ODE \eqref{eq:pODE}.
In each sector we pick a basis $f_1, \ldots, f_d$ of solutions of \eqref{eq:pODE} and
we define $\mathbf{F}$ as the Wronskian matrix
\begin{equation} \label{defF} 
	\mathbf{F} := \begin{pmatrix} f_1 & f_2 & \cdots & f_d \\
		f_1' & f_2' & \cdots & f_d' \\
		\vdots & \vdots &  & \vdots \\
		f_1^{(d-1)} & f_2^{(d-1)} & \cdots & f_d^{(d-1)} \end{pmatrix}.
		\end{equation}

\subsection{Definition and properties of $\mathbf{P}$}
We start from the functions $p_{\ell}$ from \eqref{def:pell}
that are special solutions of \eqref{eq:pODE}. We use cyclic notation here so that
\[ p_{\ell} = p_{d+1+ \ell }. \]
The solution $p_0$ is recessive  in the sector $S_0$.
Indeed a classical steepest descent analysis shows that
\begin{align} \label{asympp0}
	p_0(z) = \frac{1}{\sqrt{2\pi d}} z^{- \frac{d-1}{2d}}  e^{- \frac{d}{d+1} z^{\frac{d+1}{d}}} (1 + O(z^{- \frac{d+1}{d}}))
\end{align}
as $z \to \infty$ with $- \pi < \arg z < \pi$. 
Since
\[ p_{\ell}(z) = \omega^{\ell} p_0(\omega^{\ell} z), \qquad \omega = \omega_{d+1}, \]
we also find
\begin{align} \label{asymppl}
	p_{\ell}(z) = \frac{1}{\sqrt{2\pi d}} e^{\frac{\ell}{d} \pi \ir} z^{- \frac{d-1}{2d}}  
		e^{- \frac{d}{d+1} \omega_d^{\ell} z^{\frac{d+1}{d}}} (1 + O(z^{- \frac{d+1}{d}})),
				\quad \omega_d = e^{\frac{2\pi \ir}{d}},
\end{align}
as $z \to \infty$ with $- \frac{2\ell}{d+1}\pi   - \pi < \arg z < - \frac{2\ell}{d+1}\pi  + \pi$.
Thus $p_{-\ell}$ is recessive as $z \to \infty$ in sector $S_{\ell}$.

The asymptotic behaviors \eqref{asympp0}-\eqref{asymppl} show that in each sector $S_{\ell}^{\pm}$
all possible asymptotic behaviors occur among the functions $p_0, \ldots, p_d$.
We order them  according to their  absolute value as $z \to \infty$ in that sector. 
Ordering them from smallest to largest,  we obtain
\begin{equation}
\begin{aligned} \label{pellordering}
	\text{in } S_\ell^+:  & \quad p_{-\ell}, p_{-\ell-1}, p_{-\ell+1}, p_{-\ell-2}, \ldots, 
		p_{-\ell - (-1)^d  \lfloor \frac{d}{2} \rfloor},  p_{-\ell  + (-1)^d  \lceil \frac{d}{2} \rceil }\\
	\text{in } S_\ell^-:  & \quad p_{-\ell}, p_{-\ell+1}, p_{-\ell-1}, p_{-\ell+2}, \ldots, 
		p_{-\ell + (-1)^d  \lfloor \frac{d}{2} \rfloor}, p_{-\ell  - (-1)^d  \lceil \frac{d}{2} \rceil }.
\end{aligned}
\end{equation}
There are $d+1$ entries in each list, but for the construction that follows we do not use the last one.

\begin{definition}
We define $\mathbf{P} : \mathbb C \setminus (\Sigma_2 \cup \Sigma_3) \to \mathbb C^{d\times d} $ as the piecewise analytic
matrix valued function defined by
\begin{equation} \label{def:P} 
	\mathbf{P} = \begin{pmatrix} p_{-\ell} & p_{-\ell \mp 1} & p_{-\ell \pm 1} & p_{-\ell \mp 2} & \cdots \\  
	p'_{-\ell} & p'_{-\ell \mp 1} & p'_{-\ell \pm 1} & p'_{-\ell \mp 2} & \cdots \\
		\vdots & \vdots & \vdots & \vdots &  \\
		\vdots & \vdots & \vdots & \vdots & \\
		p^{(d-1)}_{-\ell} & p^{(d-1)}_{-\ell \mp 1} & p^{(d-1)}_{-\ell \pm 1} & p^{(d-1)}_{-\ell \mp 2} & \cdots 
	\end{pmatrix}_{d \times d}  \qquad \text{in } S_{\ell}^{\pm}. 
	\end{equation}
The ordering of the columns in \eqref{def:P} is according to \eqref{pellordering}. The last entries
in \eqref{pellordering} do not appear as a column in the matrix $\mathbf{P}$.
\end{definition}
Note that $\mathbf{P}$ has size $d \times d$ and so the last column of $\mathbf{P}$ is
\begin{equation} \label{P:last:d even} 
	\begin{pmatrix} p_{-\ell \mp \frac{d}{2}} & \cdots & p^{(d-1)}_{-\ell \mp \frac{d}{2}} \end{pmatrix}^t \quad \text{ if $d$ is even} 
	\end{equation}
and
\begin{equation} \label{P:last:d odd} 
	\begin{pmatrix} p_{-\ell \pm \frac{d-1}{2}} & \cdots & p^{(d-1)}_{-\ell \pm \frac{d-1}{2}} \end{pmatrix}^t \quad \text{ if $d$ is odd}.
	\end{equation}

Then $\mathbf{P}$ satisfies a RH problem that we state next.
\begin{rhp} \label{rhpforP}
$\mathbf{P} : \mathbb C \setminus (\Sigma_2 \cup \Sigma_3) \to \mathbb C^{d \times d}$ satisfies
\begin{itemize}
\item $\mathbf{P}$ is analytic in $\mathbb C \setminus (\Sigma_2 \cup \Sigma_3)$,
\item $\mathbf{P}_+ = \mathbf{P}_- \mathbf{J}_P$ with jump matrices 
\begin{align}  \label{jump:P:Sigma3}
	\mathbf{J}_P & = 
		\begin{cases} \diag(1, \sigma_1, \ldots, \sigma_1,0) - \sum_{j=1}^d E_{j,d} &  \text{ if $d$ is even,} \\
		\diag(1, \sigma_1, \ldots, \sigma_1) &  \text{ if $d$ is odd,} \end{cases} &&
		\text{ on } \Sigma_3, \\
		\label{jump:P:Sigma2}
	\mathbf{J}_P & = \begin{cases} \diag(\sigma_1, \ldots, \sigma_1) &  \quad \text{ if $d$ is even,} \\
		 \diag( \sigma_1, \ldots, \sigma_1,0) - \sum_{j=1}^d E_{j,d} & \quad  \text{ if $d$ is odd,} 
		\end{cases} && 		\text{ on } \Sigma_2, 
		\end{align}
where $\sigma_1 = \begin{pmatrix} 0 & 1 \\ 1 & 0 \end{pmatrix}$.
\item As $z \to \infty$ in $S_{\ell}^{\pm}$ we have
\begin{equation} \label{asymp:P} 
	\mathbf{P}(z) = \frac{1}{\sqrt{2\pi d}}  \left( I_d + O(z^{-\frac{2}{d}})\right)
	\mathbf{D}(z)^{-1} B_{\ell}^{\pm} \exp(-\mathbf{\Theta}^{\pm}( \omega^{-\ell} z) ) 
	\end{equation}
where 
\begin{align} \label{def:Dz} 
	\mathbf{D}(z) & = \diag\left(z^{\frac{d-1}{2d}}, z^{\frac{d-3}{2d}}, \ldots, z^{-\frac{d-1}{2d}} \right) \\
	\mathbf{\Theta}^{\pm}(z) & =  \label{def:Thetaz}
	\diag \left(1, \omega_d^{\mp 1}, \omega_d^{\pm 1}, \omega_d^{\mp 2}, \ldots \right) \frac{d}{d+1} z^{\frac{d+1}{d}},
	\end{align}
and constant matrices $B_{\ell}^{\pm}$ given by
\begin{align} \label{def:B0} 
	B_0^{\pm} = \diag(1, -1, 1, -1, \ldots) VDM(1, \omega_d^{\mp 1}, \omega_d^{\pm 1}, \omega_d^{\mp 2}, \ldots)
	\diag(1, \omega_{2d}^{\mp 1}, \omega_{2d}^{\pm 1}, \omega_{2d}^{\mp 2}, \ldots) 
	\end{align}
where $VDM$ denotes a Vandermonde matrix 
\[ VDM(x_1, \ldots, x_d) = \begin{pmatrix} x_k^{j-1} \end{pmatrix}_{j,k=1,\ldots, d}, \]
$\omega_d = e^{\frac{2\pi \ir}{d}}$, $\omega_{2d} = e^{\frac{\pi \ir}{d}}$, and 
\begin{align} \label{def:Bl}
	B_{\ell}^{\pm} = 
	\diag(\omega_{2d}^{-\ell}, \omega_{2d}^{-3\ell}, \ldots,  \omega_{2d}^{-(2d-1)\ell}) B_0^{\pm}. 
	\end{align}
	\end{itemize}
\end{rhp} 
\begin{proof}
The jump matrix of $\mathbf{P}$ on $\Sigma_3$ is $\mathbf{J}_P = \mathbf{P}_-^{-1} \mathbf{P}_+$ where $\mathbf{P}_-$ is the limit of $\mathbf{P}$ 
taken from $S_{\ell}^-$ and $\mathbf{P}_+$ is the limit from $S_{\ell}^+$ onto $\Sigma_3$, for some $\ell$. 
Then the jump matrix
\eqref{jump:P:Sigma3} is immediate from the definition \eqref{def:P} in case $d$ is odd.
If $d$ is even then the last column \eqref{P:last:d even} in $S_{\ell}^+$ 
is associated with $p_{-\ell - \frac{d}{2}}$ and this column does not appear
in $\mathbf{P}$ in $S_{\ell}^-$. Because of the relation \eqref{rel:p} the last column
in $S_{\ell}^+$ is equal to minus the sum of all columns in $S_{\ell}^-$,
and this accounts for the term $- \sum_{j=1}^d E_{j,d}$ in the jump matrix \eqref{jump:P:Sigma3}
in case $d$ is even.

The proof of \eqref{jump:P:Sigma2} is similar.

To prove \eqref{asymp:P} we use the asymptotic behavior \eqref{asymppl} of the functions $p_{\ell}$
and their derivatives, which can be obtained by taking the derivatives of the leading term in \eqref{asymppl}. 
Using this in \eqref{def:P} in the sector $S_0^{\pm}$ it then follows that
\begin{equation} \label{asymp:P0} 
	\mathbf{P}(z) = \frac{1}{\sqrt{2\pi d}} \mathbf{D}(z)^{-1} B_0^{\pm} \left( I_d + O(z^{-\frac{d+1}{d}})\right) \exp(-\mathbf{\Theta}^{\pm}(z)) 
	\end{equation}
as $z \to \infty$ in the sectors $S_0^{\pm}$, with $\mathbf{D}(z)$, $\mathbf{\Theta}^{\pm}(z)$ and $B_0^{\pm}$ as in \eqref{def:Dz}, 
\eqref{def:Thetaz}, \eqref{def:B0}.
Moving the $O$-term to the left we obtain \eqref{asymp:P} for $\ell=0$ since
\[ \mathbf{D}(z)^{-1} O(z^{-\frac{d+1}{d}}) = O(z^{-\frac{2}{d}}) \mathbf{D}(z)^{-1} \]
as $\mathbf{D}(z) = O(z^\frac{d-1}{2d})$ and $\mathbf{D}(z)^{-1} = O(z^\frac{d-1}{2d})$ as $z \to \infty$.

We finally use the identity
\[ \mathbf{P}(z) = \diag(\omega, \omega^2, \ldots, \omega^{d}) \mathbf{P}(\omega z), \qquad \omega = \omega_{d+1} \]
to obtain the asymptotic behavior \eqref{asymp:P} with matrices $B_{\ell}^{\pm}$ as in \eqref{def:Bl} 
in the other sectors.
\end{proof}

The jump matrices in \eqref{jump:P:Sigma3} and \eqref{jump:P:Sigma2} have a block diagonal 
form, except partially for the last column. The term $- \sum_{j=1}^d E_{j,d}$ means that the last column
is filled with $-1$'s.

\begin{lemma} \label{jump:Bz}
Let 
\begin{equation} \label{def:Bz}
	\mathbf{B}(z) = \mathbf{D}(z)^{-1} B_{\ell}^{\pm}, \qquad \text{ for } z \in S_{\ell}^{\pm}.
	\end{equation}
where $\mathbf{D}(z)$ is as in \eqref{def:Dz} and the constant matrices $B_{\ell}^{\pm}$
are given in \eqref{def:B0}--\eqref{def:Bl}.
	Then $\mathbf{B}$ is analytic in
$\mathbb C \setminus (\Sigma_2 \cup \Sigma_3)$ and $\mathbf{B}_+ = \mathbf{B}_- \mathbf{J}_B$ with
\begin{align} \label{jump:B:Sigma3} 
	\mathbf{J}_B & = \begin{cases} 
	\diag(1, \sigma_1, \ldots, \sigma_1, -1) & \text{ if $d$ is even}  \\
	\diag(1, \sigma_1,  \ldots, \sigma_1) & \text{ if $d$ is odd} 
	\end{cases} && \text{ on } \Sigma_3, \\
	\mathbf{J}_B & = \begin{cases}
	\diag(\sigma_1, \sigma_1, \ldots, \sigma_1) & \text{ if $d$ is even} \\
	\diag(\sigma_1, \sigma_1, \ldots, \sigma_1, -1) & \text{ if $d$ is odd} \\
	\end{cases} && \text{ on } \Sigma_2. 
	\label{jump:B:Sigma2}
	\end{align}
\end{lemma}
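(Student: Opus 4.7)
The plan is to establish analyticity sector-by-sector and then compute the jump matrices directly from the definitions \eqref{def:B0}--\eqref{def:Bl}. Analyticity on each open sector $S_{\ell}^{\pm}$ is automatic, as $\mathbf{D}(z)^{-1}$ is a diagonal matrix of principal-branch fractional powers (analytic throughout the sector) and $B_{\ell}^{\pm}$ is constant. Two cases then arise in analysing the jumps on $\Sigma_{2}\cup\Sigma_{3}$: the ``generic'' rays where $\mathbf{D}$ has no branch-cut discontinuity, and the negative real axis (which belongs to $\Sigma_{3}$ when $d$ is odd and to $\Sigma_{2}$ when $d$ is even).

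On a generic ray of $\Sigma_{3}$ separating $S_{\ell}^{-}$ from $S_{\ell}^{+}$, $\mathbf{D}(z)$ extends continuously and the jump is $\mathbf{J}_{B}=(B_{\ell}^{-})^{-1}B_{\ell}^{+}$. Since $B_{\ell}^{\pm}=\Lambda_{\ell}B_{0}^{\pm}$ with the same $\Lambda_{\ell}:=\diag(\omega_{2d}^{-\ell},\omega_{2d}^{-3\ell},\ldots,\omega_{2d}^{-(2d-1)\ell})$ on both sides, this factor cancels, leaving the $\ell$-independent expression $(B_{0}^{-})^{-1}B_{0}^{+}$. Similarly, on a generic ray of $\Sigma_{2}$ separating $S_{\ell}^{+}$ from $S_{\ell+1}^{-}$, one gets
\[
\mathbf{J}_{B}=(B_{0}^{+})^{-1}\Lambda B_{0}^{-},\qquad
\Lambda:=\Lambda_{\ell}^{-1}\Lambda_{\ell+1}=\diag\bigl(\omega_{2d}^{-(2j-1)}\bigr)_{j=1}^{d},
\]
also independent of $\ell$.

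The computational heart is the explicit evaluation of these two constant matrices. Writing $B_{0}^{\pm}=D_{1}V^{\pm}D_{2}^{\pm}$ per \eqref{def:B0}, the sign-flip prefactor $D_{1}=\diag(1,-1,1,-1,\ldots)$ cancels, and a further simplification comes from the observation that $D_{2}^{+}D_{2}^{-}=I$, so $(D_{2}^{-})^{-1}=D_{2}^{+}$. Thus $(B_{0}^{-})^{-1}B_{0}^{+}=D_{2}^{+}(V^{-})^{-1}V^{+}D_{2}^{+}$. The key identity is that $V^{+}$ and $V^{-}$ use the same list of $d$-th roots of unity, reordered by swapping the consecutive pairs of columns $(2,3),(4,5),\ldots$; when $d$ is even the terminal argument $\omega_{d}^{\mp d/2}=-1$ coincides for both signs and is fixed. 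Hence $V^{+}=V^{-}\Pi_{1}$ with $\Pi_{1}=\diag(1,\sigma_{1},\ldots,\sigma_{1})$ if $d$ is odd and $\Pi_{1}=\diag(1,\sigma_{1},\ldots,\sigma_{1},1)$ if $d$ is even. A direct entry-by-entry check then shows that in $D_{2}^{+}\Pi_{1}D_{2}^{+}$ the $\omega_{2d}$-phases cancel within each $\sigma_{1}$ block (yielding entries $1$), while when $d$ is even the isolated terminal diagonal entry becomes $(D_{2}^{+})_{d,d}^{2}=\omega_{2d}^{-d}=-1$, producing \eqref{jump:B:Sigma3}. The $\Sigma_{2}$ analysis is parallel: the multiplication $\Lambda V^{-}$ is equivalent to scaling the Vandermonde nodes by $\omega_{d}^{-1}$ up to the scalar $\omega_{2d}^{-1}$, which shifts the pairing of columns by one position to give $\Lambda V^{-}=\omega_{2d}^{-1}V^{+}\Pi_{2}$ with $\Pi_{2}=\diag(\sigma_{1},\ldots,\sigma_{1})$ for $d$ even and $\Pi_{2}=\diag(\sigma_{1},\ldots,\sigma_{1},1)$ for $d$ odd. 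The same conjugation argument with $D_{2}^{\pm}$ then yields \eqref{jump:B:Sigma2}, the $-1$ for $d$ odd again arising from $\omega_{2d}^{-d}=-1$.

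The main obstacle concerns the negative real axis, where $\mathbf{D}(z)$ itself has a branch-cut discontinuity $\mathbf{D}_{-}\mathbf{D}_{+}^{-1}=\diag\bigl(e^{\pi\ir(d+1-2j)/d}\bigr)_{j=1}^{d}$ and where, in the convention \eqref{eq:Sellnumbering}, the two adjacent sectors are labelled by indices differing by $\pm(d+1)$. A short calculation shows that this $\mathbf{D}$ phase factor coincides exactly with the corresponding ratio $\Lambda_{\ell'}\Lambda_{\ell}^{-1}$, because $\omega_{2d}^{-(2j-1)(d+1)}$ and $e^{\pi\ir(d+1-2j)/d}$ differ by $e^{2\pi\ir j}=1$. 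Hence the extra $\mathbf{D}$ phase absorbs the $\Lambda$-discrepancy between the two $B$-matrices, and the jump on the negative real axis reduces to the same formula as on the other rays. The rest of the proof is then careful bookkeeping of parities and $\omega_{2d}$-phases, but once the Vandermonde reordering identity $V^{+}=V^{-}\Pi_{1}$ is isolated everything reduces to elementary diagonal arithmetic.
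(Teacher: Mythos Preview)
Your proposal is correct and follows essentially the same route as the paper: reduce the jump on generic rays to the $\ell$-independent constants $(B_0^{-})^{-1}B_0^{+}$ and $(B_0^{+})^{-1}\Lambda B_0^{-}$, evaluate these, and then treat the negative real axis separately by showing that the branch-cut phase of $\mathbf{D}$ cancels the $\Lambda$-discrepancy between the two adjacent $B_{\ell}^{\pm}$. The paper simply asserts the two key identities \eqref{jump:B0pm}--\eqref{jump:B01} as ``straightforward calculations'', whereas you supply the underlying mechanism (the Vandermonde column-permutation $V^{+}=V^{-}\Pi_{1}$ and the node-shift identity $\Lambda V^{-}=\omega_{2d}^{-1}V^{+}\Pi_{2}$), which is a cleaner explanation but not a different argument.
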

\begin{proof}
It is a straightforward calculation from \eqref{def:B0} to show that
\begin{equation} \label{jump:B0pm}
	(\mathbf{B}_0^-)^{-1} \mathbf{B}_0^+ = \begin{cases} \diag(1, \sigma_1, \ldots, \sigma_1, -1) & \text{ if $d$ is even} \\
		\diag(1, \sigma_1, \ldots, \sigma_1) & \text{ if $d$ is odd}
		\end{cases}
	\end{equation}
and using also \eqref{def:Bl} that
\begin{equation} \label{jump:B01} 
	(\mathbf{B}_0^+)^{-1} \mathbf{B}_1^- = \begin{cases}
	\diag(\sigma_1, \ldots, \sigma_1) & \text{ if $d$ is even} \\
	\diag(\sigma_1, \ldots, \sigma_1, -1) & \text{ if $d$ is odd}.
	\end{cases} 
	\end{equation}
It is then easy to see from the definitions \eqref{def:Bl} and \eqref{def:Bz} that we 
immediately obtain the jump matrices \eqref{jump:B:Sigma3} on $\Sigma_3 \setminus \mathbb R_-$
and \eqref{jump:B:Sigma2} on $\Sigma_2 \setminus \mathbb R_-$.

To compute the jump on the negative real axis (which is oriented from right to left) 
we note that 
\begin{align} \nonumber
\mathbf{J}_B & = \mathbf{B}_-^{-1}(z) \mathbf{B}_+(z) \\ \label{jump:B:Rmin}
		 & = \begin{cases} 
			\left(B_{\frac{d}{2}}^+\right)^{-1} \mathbf{D}_-(z) \mathbf{D}_+(z)^{-1}  B_{-\frac{d}{2}}^-  & \text{ if $d$ is even} \\
			\left(B_{\frac{d+1}{2}}^- \right)^{-1} \mathbf{D}_-(z) \mathbf{D}_+(z)^{-1}  B_{-\frac{d+1}{2}}^+  
			& \text{ if $d$ is odd }
			\end{cases}  \qquad z \in \mathbb R_-
		\end{align}
where due to \eqref{def:Dz} and the choice of principle branches of the fractional powers
\begin{equation} \label{jump:D} 
	\mathbf{D}_-(z) \mathbf{D}_+(z)^{-1} = \diag( \omega_{2d}^{d-1}, \omega_{2d}^{d-3}, \ldots, \omega_{2d}^{-(d-1)} ), \qquad z \in \mathbb R_-.
	\end{equation}

Using \eqref{def:Bl} and \eqref{jump:D} in \eqref{jump:B:Rmin} we find
\begin{align*}
\mathbf{J}_B & = (B_0^+)^{-1} \diag(\omega_{2d}^{\frac{d}{2}}, \omega_{2d}^{3\frac{d}{2}}, \ldots)  
				\diag( \omega_{2d}^{d-1}, \omega_{2d}^{d-3}, \ldots, \omega_{2d}^{-(d-1)} )   
			\diag(\omega_{2d}^{\frac{d}{2}}, \omega_{2d}^{3\frac{d}{2}}, \ldots)	B_0^- \\
			 & = (B_0^+)^{-1} \diag(\omega_{2d}^{-1}, \omega_{2d}^{-3}, \ldots) B_0^- \\
			&	= (B_0^+)^{-1} B_1^- 
			\end{align*}
if $d$ is even, while 
\begin{align*}
	\mathbf{J}_B & =			(B_0^-)^{-1} \diag(\omega_{2d}^{\frac{d+1}{2}}, \omega_{2d}^{3\frac{d+1}{2}}, \ldots) 
			 \diag( \omega_{2d}^{d-1}, \omega_{2d}^{d-3}, \ldots, \omega_{2d}^{-(d-1)} )   
			 \diag(\omega_{2d}^{\frac{d+1}{2}}, \omega_{2d}^{3\frac{d+1}{2}}, \ldots)   
			 B_0^+ \\
			& =  (B_0^-)^{-1} B_0^+ 
			\end{align*}
if $d$ is odd.			
In view of \eqref{jump:B0pm} and \eqref{jump:B01} we then have
\[ \mathbf{J}_B = \begin{cases}
				\diag(\sigma_1, \sigma_1, \ldots, \sigma_1) & \text{ if $d$ is even} \\
				\diag(1, \sigma_1, \sigma_1, \ldots, \sigma_1) & \text{ if $d$ is odd}
				\end{cases} \qquad \text{ on } \mathbb R_-. \]
Since $\mathbb R_- \subset \Sigma_2$ if $d$ is even, and $\mathbb R_- \subset \Sigma_3$ if
$d$ is odd, we verified the jump matrix \eqref{jump:B:Sigma3} and \eqref{jump:B:Sigma2} also
for the part $\mathbb R_-$.
\end{proof}

\subsection{Definition and properties of $\mathbf{F}$}
	
The jumps \eqref{jump:P:Sigma3}--\eqref{jump:P:Sigma2} are not convenient for us. 
We modify $\mathbf{P}$ by multiplying it on the right by certain constant matrices $R^{\pm}$
that we define first.
We use the elementary symmetric functions on $n$ variables:
\begin{align} 
	e_{0,n}(x_1, \ldots, x_n) & = 1 \label{elementary:1} \\
	e_{k,n}(x_1, \ldots, x_n) & = \sum_{1 \leq j_1 < j_2 < \cdots < j_k \leq n} x_{j_1} \cdots x_{j_k}, \qquad 1 \leq k \leq n,\label{elementary:2}  
	\end{align}
and define numbers
\begin{equation} \label{ekn} 
	e_{k,n}^+   := e_{k,n}(\omega, \omega^{-1}, \omega^2, \omega^{-2}, \omega^3, \ldots),  \quad
  e_{k,n}^- := e_{k,n}(\omega^{-1}, \omega, \omega^{-2}, \omega^2, \omega^{-3},\ldots), \qquad \omega = \omega_{d+1},
	\end{equation}
with the understanding that $e^+_{0,0} = e^-_{0,0} = 1$ and $e^+_{k,n} = e^-_{k,n} = 0$ for $k < 0$ or $k > n$.
Then we put for $j,k = 1, \ldots, d$,
\begin{align} \label{def:Rplus}
	(R^+)_{j,k} & :=  (-1)^{1+ \lfloor \frac{j}{2} \rfloor + \lceil \frac{k}{2} \rceil} \times
	\begin{cases}
		e^+_{\lfloor \frac{k}{2} \rfloor + \frac{j-1}{2}, k-1} & \text{ if $j$ is odd} \\
		e^+_{\lfloor \frac{k}{2} \rfloor - \frac{j}{2}, k-1} & \text{ if $j$ is even} 
		\end{cases} \\ \label{def:Rminus}
	(R^-)_{j,k} & := (-1)^{\lfloor \frac{j}{2} \rfloor + \lfloor \frac{k}{2} \rfloor} \times 
\begin{cases}
		e^-_{\lfloor \frac{k}{2} \rfloor + \frac{j-1}{2}, k-1} & \text{ if $j$ is odd} \\
		e^-_{\lfloor \frac{k}{2} \rfloor - \frac{j}{2}, k-1} & \text{ if $j$ is even.} 
		\end{cases} 
		\end{align}
		
For example, for $d=4$ we have $\omega = \omega_5 = \exp(2\pi \ir/5)$ and
\begin{align*}  
	R^+ & = \begin{pmatrix} 1 & e_{1,1}(\omega) & - e_{1,2}(\omega,\omega^{-1}) & -e_{2,3}(\omega, \omega^{-1}, \omega^2) \\
	0 & -1 & 1 & e_{1,3}(\omega, \omega^{-1},\omega^2) \\
	0 & 0 & e_{2,2}(\omega, \omega^{-1}) & e_{3,3}(\omega,\omega^{-1}, \omega^2) \\
	0 & 0 & 0 & -1 \end{pmatrix} \\
	R^- & = \begin{pmatrix} 1 & -e_{1,1}(\omega^{-1}) & - e_{1,2}(\omega^{-1},\omega) & e_{2,3}(\omega^{-1}, \omega, \omega^{-2}) \\
	0 & 1 & 1 & -e_{1,3}(\omega^{-1}, \omega,\omega^{-2}) \\
	0 & 0 & e_{2,2}(\omega^{-1}, \omega) & -e_{3,3}(\omega^{-1},\omega, \omega^{-2}) \\
	0 & 0 & 0 & 1 \end{pmatrix}. 
	\end{align*}
It can be readily verified that $R^{\pm}$ are upper triangular matrices with diagonal
entries 
\begin{equation} \label{Rdiagonal} 
	(R^+)_{k,k} = (-1)^{k-1}, \qquad (R^-)_{k,k} = 1.
	\end{equation}

\begin{definition}
We define $\mathbf{F}: \mathbb C \setminus (\Sigma_2 \cup \Sigma_3) \to \mathbb C^{d \times d}$ by
\begin{equation} \label{def:F} 
	\mathbf{F} = \mathbf{P}  R^{\pm}  \qquad \text{in } S_{\ell}^{\pm}
	\end{equation}
where $\mathbf{P}$ is defined in \eqref{def:P} and $R^{\pm}$ are the right 
upper triangular matrices defined above.
\end{definition}
Then $\mathbf{F}$ has the form \eqref{defF} with functions $f_1, \ldots f_d$ 
where  $f_k$ in $S_{\ell}^{\pm}$ is a linear combination of
the first $k$ functions in 
$p_{-\ell}, p_{-\ell \mp 1}, p_{-\ell \pm 1}, \ldots$.
It turns out that $\mathbf{F}$ is the solution of a RH problem with
lower triangular jump matrices with a simple $2 \times 2$ block structure.
\begin{rhp} \label{RHPforF}
$\mathbf{F}$ is the solution of the following  RH problem:
\begin{enumerate}
\item[$\bullet$] $\mathbf{F}$ is analytic in $\mathbb C \setminus (\Sigma_2 \cup \Sigma_3)$.
\item[$\bullet$] $\mathbf{F}_+ = \mathbf{F}_- \mathbf{J}_F$ on $\Sigma_2 \cup \Sigma_3$
where
\begin{align} \label{JFonSigma3}
	 \mathbf{J}_F & = I - \sum_{j=1}^{\lfloor \frac{d-1}{2} \rfloor} E_{2j+1, 2j} && \text{ on } \Sigma_3, \\
	\mathbf{J}_F & = \diag(\omega, \omega^{-1}, \omega^2, \omega^{-2}, \ldots ) - \sum_{j=1}^{\lfloor \frac{d}{2} \rfloor} E_{2j, 2j-1}
		&& \text{ on } \Sigma_2.  \label{JFonSigma2}
		\end{align}
\item[$\bullet$] 
$\mathbf{F}$ has the following asymptotic behavior as $z \to \infty$,
\begin{multline} \label{asymp:F} 
	\mathbf{F}(z) = \frac{1}{\sqrt{2\pi d}} \left( I_d + O(z^{-\frac{2}{d}}) \right) \mathbf{D}(z)^{-1} \\
	\times \begin{cases} B_{\ell}^{+} \diag(1, -1, 1, -1, \ldots ) \exp(- \mathbf{\Theta}^{+}(\omega^{-\ell} z)), & 
		\text{in } S_{\ell}^+ \text{ with } - \lceil \frac{d}{2} \rceil \leq \ell \leq \lfloor \frac{d}{2} \rfloor, \\
		B_{\ell}^{-} \exp(- \mathbf{\Theta}^{-}(\omega^{-\ell} z)), & 
		\text{in } S_{\ell}^- \text{ with } - \lfloor \frac{d}{2} \rfloor \leq \ell \leq \lceil \frac{d}{2} \rceil,
		\end{cases}
		\end{multline}
with $\mathbf{D}(z)$, $\mathbf{\Theta}^{\pm}(z)$ and $B_{\ell}^{\pm}$ as defined in \eqref{def:Dz}, \eqref{def:Thetaz}
and \eqref{def:B0}--\eqref{def:Bl}.
\end{enumerate}
\end{rhp}
\begin{proof}
The jump matrix is given by $\mathbf{J}_F = (R^-)^{-1} \mathbf{J}_P R^+$ on $ \Sigma_3$ and $\mathbf{J}_F = (R^+)^{-1} \mathbf{J}_P R^-$ 
on $ \Sigma_2$.
Here we insert $\mathbf{J}_P$ as given by \eqref{jump:P:Sigma3}--\eqref{jump:P:Sigma2} and the definitions
\eqref{def:Rplus} and \eqref{def:Rminus} of $R^+$ and $R^-$. Then after straightforward 
calculations we find that this indeed leads to \eqref{JFonSigma3}--\eqref{JFonSigma2}.

The asymptotic condition comes from \eqref{asymp:P} and the fact that $f_k$ is a linear combination of the first
$k$ of the $p_j$ functions. In each sector these are ordered according to their asymptotic behavior at infinity
in that sector, see \eqref{pellordering}. Thus the asymptotic behavior of $f_k$ is given by that of the $k$th member times
a constant coming from the diagonal entry of $R^+$ or $R^-$. The diagonal entries are $\pm 1$ according 
to \eqref{Rdiagonal} and this leads to \eqref{asymp:F} because of \eqref{asymp:P} and \eqref{Rdiagonal}.
\end{proof}

\subsection{Definition and properties of $\mathbf{Q}$}

Let us introduce the matrix $\mathbf{Q}$ that is 
employed in the first transformation of the RH problem \ref{RHPforY}.

\begin{definition} We define $\mathbf{Q}$ as
\begin{equation}\label{def:Q}
\mathbf{Q}(z):= \mathbf{F}(z)^{-t},
\end{equation}
that is, the inverse transpose of \eqref{def:F}.
\end{definition}

From the RH problem \ref{RHPforF} we obtain the following RH problem for $\mathbf{Q}$.
\begin{rhp} \label{RHPforQ}
$\mathbf{Q}$ is the solution of the following  RH problem:
\begin{enumerate}
\item[$\bullet$] $\mathbf{Q}$ is analytic in $\mathbb C \setminus (\Sigma_2 \cup \Sigma_3)$. 
\item[$\bullet$] $\mathbf{Q}_+ = \mathbf{Q}_- \mathbf{J}_Q$ on $\Sigma_2 \cup \Sigma_3$ where
\begin{align} \label{JQonSigma3}
	 \mathbf{J}_Q & = I_d + \sum_{j=1}^{\lfloor \frac{d-1}{2} \rfloor} E_{2j, 2j+1} && \text{ on } \Sigma_3, \\
	\mathbf{J}_Q & = \diag(\omega^{-1}, \omega, \omega^{-2}, \omega^{2}, \cdots ) + 
		\sum_{j=1}^{\lfloor \frac{d}{2} \rfloor} E_{2j-1, 2j}
		&& \text{ on } \Sigma_2  \label{JQonSigma2}.
		\end{align}
\item[$\bullet$] 
$\mathbf{Q}$ has the asymptotic behavior
\begin{align} \label{asymp:Q} 
	\mathbf{Q}(z) = \sqrt{2\pi d} \left( I_d + O(z^{-\frac{2}{d}}) \right) \mathbf{D}(z) A_{\ell}^{\pm} \exp(\mathbf{\Theta}^{\pm}(\omega^{-\ell} z))
	\end{align}
as $z \to \infty$ in $S_{\ell}^{\pm}$,
where
\begin{equation} \label{def:Aell}
\begin{aligned} 
		A_{\ell}^+ & = (B_{\ell}^{+})^{-t} \diag(1, -1, 1, -1, \ldots ), \\
		A_{\ell}^- & = (B_{\ell}^-)^{-t}. 
		\end{aligned}
		\end{equation}
\end{enumerate}
\end{rhp}
Note that the jump matrices for $\mathbf{Q}$ are upper triangular with a $2 \times 2$ block diagonal
structure.

For later use we define
\begin{equation} \label{def:Az} 
	\mathbf{A}(z) = \mathbf{D}(z) A_{\ell}^{\pm} \qquad \text{ for } z \in S_{\ell}^{\pm}. 
	\end{equation}
The following is immediate from Lemma \ref{jump:Bz} and \eqref{def:Aell}-\eqref{def:Az}.
\begin{lemma} \label{jump:Az}
$\mathbf{A}$ is analytic in $\mathbb C \setminus (\Sigma_2 \cup \Sigma_3)$  and $\mathbf{A}_+ = \mathbf{A}_- \mathbf{J}_A$ with
\begin{align} \label{jump:A:Sigma3} 
	\mathbf{J}_A & = \begin{cases} 
	\diag(1, \ir\sigma_2, \ldots, \ir\sigma_2, 1) & \text{ if $d$ is even}  \\
	\diag(1, \ir\sigma_2,  \ldots, \ir\sigma_2) & \text{ if $d$ is odd} 
	\end{cases} && \text{ on } \Sigma_3, \\
	\mathbf{J}_A & = \begin{cases}
	\diag(\ir\sigma_2, \ir\sigma_2, \ldots, \ir\sigma_2) & \text{ if $d$ is even} \\
	\diag(\ir\sigma_2, \ir\sigma_2, \ldots, \ir\sigma_2, -1) & \text{ if $d$ is odd} \\
	\end{cases} && \text{ on } \Sigma_2. 
	\label{jump:A:Sigma2}
	\end{align}
where  $\ir \sigma_2 = \begin{pmatrix} 0 & 1 \\ -1 & 0 \end{pmatrix}$.
\end{lemma}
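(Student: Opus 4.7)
The plan is to reduce everything to the already‑established Lemma~\ref{jump:Bz} by expressing $\mathbf{A}$ explicitly in terms of $\mathbf{B}^{-t}$. Since $\mathbf{D}(z)$ is diagonal, $\mathbf{D}^{-t} = \mathbf{D}^{-1}$, so from $\mathbf{B} = \mathbf{D}^{-1}B_\ell^\pm$ one obtains $\mathbf{B}^{-t} = \mathbf{D}\,(B_\ell^\pm)^{-t}$. Comparing with \eqref{def:Aell}--\eqref{def:Az} yields the key identity
\begin{equation*}
\mathbf{A}(z) = \begin{cases} \mathbf{B}(z)^{-t}\,S & \text{in } S_\ell^+, \\ \mathbf{B}(z)^{-t} & \text{in } S_\ell^-, \end{cases}
\qquad S := \diag(1,-1,1,-1,\ldots).
\end{equation*}
Analyticity of $\mathbf{A}$ on $\mathbb{C}\setminus(\Sigma_2\cup\Sigma_3)$ then follows at once from that of $\mathbf{B}$, once one observes that the branch cut of $\mathbf{D}$ (the negative real axis) is contained in $\Sigma_2\cup\Sigma_3$.

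Next I would compute the jumps separately on $\Sigma_2$ and $\Sigma_3$. Because the rays are oriented outward, a ray of $\Sigma_3$ separates $S_\ell^-$ ($-$-side) from $S_\ell^+$ ($+$-side), while a ray of $\Sigma_2$ separates $S_\ell^+$ ($-$-side) from $S_{\ell+1}^-$ ($+$-side). Inserting the identity above together with $\mathbf{B}_+ = \mathbf{B}_-\mathbf{J}_B$ gives
\begin{equation*}
\mathbf{J}_A \;=\; \mathbf{A}_-^{-1}\mathbf{A}_+ \;=\; \begin{cases} \mathbf{B}_-^t\,\mathbf{B}_+^{-t}\,S \;=\; \mathbf{J}_B^{-t}\,S & \text{on } \Sigma_3, \\[2pt]
S^{-1}\mathbf{B}_-^t\,\mathbf{B}_+^{-t} \;=\; S\,\mathbf{J}_B^{-t} & \text{on } \Sigma_2, \end{cases}
\end{equation*}
where $S^2=I$ was used in the second line.

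The remaining step is to substitute the explicit block forms of $\mathbf{J}_B$ from Lemma~\ref{jump:Bz}. Since $\sigma_1 = \sigma_1^{-t}$ and the diagonal $\pm 1$ corners are self‑inverse under transpose, we have $\mathbf{J}_B^{-t} = \mathbf{J}_B$. The claim then reduces to the elementary $2\times 2$ identities $\sigma_1\cdot\diag(-1,1) = \ir\sigma_2$ and $\diag(1,-1)\cdot\sigma_1 = \ir\sigma_2$, each of which converts a $\sigma_1$ block of $\mathbf{J}_B$ into an $\ir\sigma_2$ block of $\mathbf{J}_A$; the scalar corner factors $\pm 1$ (present in the even-$d$/$\Sigma_3$ and odd-$d$/$\Sigma_2$ cases) are handled by direct multiplication against the corresponding entry of $S$.

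The only real obstacle will be the bookkeeping: one must verify that for each $\sigma_1$ block in $\mathbf{J}_B$ the adjacent entries of $S$ have pattern $(-1,+1)$ on $\Sigma_3$ (so that $S$ multiplies $\sigma_1$ on the right in the correct orientation) and $(+1,-1)$ on $\Sigma_2$ (so that $S$ multiplies on the left), both yielding $+\ir\sigma_2$ rather than $-\ir\sigma_2$. This amounts to checking the starting parities in the four subcases arising from (even/odd $d$) $\times$ ($\Sigma_2$/$\Sigma_3$), but once the parities align, the proof is complete; no separate treatment of $\mathbb{R}_-$ (where $\mathbf{D}$ itself jumps) is needed, since the identity $\mathbf{A} = \mathbf{B}^{-t}\cdot(\text{sign})$ already incorporates the boundary values of $\mathbf{D}$ from each side.
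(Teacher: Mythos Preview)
Your proof is correct and is exactly the paper's argument, just written out in more detail: the paper's one-line proof reads ``We have $\mathbf{J}_A = (\mathbf{J}_B)^{-t}\diag(1,-1,1,-1,\ldots)$ on $\Sigma_3$ and $\mathbf{J}_A = \diag(1,-1,1,-1,\ldots)(\mathbf{J}_B)^{-t}$ on $\Sigma_2$. Then use \eqref{jump:B:Sigma3} and \eqref{jump:B:Sigma2}.'' Your derivation of these two formulas via $\mathbf{A} = \mathbf{B}^{-t}S$ in $S_\ell^+$ and $\mathbf{A} = \mathbf{B}^{-t}$ in $S_\ell^-$, and your parity bookkeeping for the $\sigma_1$ blocks, are precisely what the paper leaves implicit.
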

\begin{proof}
We have $\mathbf{J}_A = (\mathbf{J}_B)^{-t} \diag(1,-1,1,-1, \ldots)$  on $\Sigma_3$ and 
$\mathbf{J}_A = \diag(1,-1,1, -1, \ldots) (\mathbf{J}_B)^{-t}$ on $\Sigma_2$. Then use \eqref{jump:B:Sigma3} and \eqref{jump:B:Sigma2}.
\end{proof}

\subsection{Transformation $\mathbf{Y} \mapsto \mathbf{X}$}

\begin{definition}\label{def:X} Let $\Sigma_X = \Sigma_2 \cup \Sigma_3$ and 
define 
$\mathbf{X} : \mathbb C \setminus \Sigma_X \to \mathbb C^{(d+1) \times (d+1)}$ by 
\begin{equation}\label{eq:def:X}
\mathbf{X}(z):=\begin{pmatrix} 
1 & 0 \\
0  & \frac{1}{\sqrt{2\pi d}}\,\mathbf{D}(c_{n})^{-1}
\end{pmatrix} \mathbf{Y}(z)\begin{pmatrix} 
1 & 0 \\
0  & \mathbf{Q}(c_{n}z)
\end{pmatrix}
\end{equation}
where $c_{n}$ is given in \eqref{def:dn:cn}, $\mathbf{D}$ is the diagonal matrix \eqref{def:Dz},
and $\mathbf{Q}$ is the matrix \eqref{def:Q}.
\end{definition}

Note that $\mathbf{D}$ and $\mathbf{Q}$ are matrices of size $d \times d$, and \eqref{eq:def:X}
is written in a block form with diagonal blocks of size $1 \times 1$ and $d \times d$.

\begin{rhp}\label{RHPforX}
The matrix $\mathbf{X}(z)$ is the solution to the following RH problem:
\begin{itemize}
\item[$\bullet$] $\mathbf{X}$ is analytic in $\mathbb{C}\setminus \Sigma_X$.
\item[$\bullet$] $\mathbf{X}_{+}=\mathbf{X}_{-} \mathbf{J}_{X}$ on $\Sigma_{2}\cup \Sigma_{3}$ with jump matrix 
\begin{align} \label{jump:X:Sigma1}
	\mathbf{J}_{X} &= I_{d+1} + e^{\frac{n}{t_0} V} E_{1,2} + \sum_{j=2}^{\lceil \frac{d}{2} \rceil} E_{2j-1,2j} && \text{ on } \Sigma_1 \\
		\label{jump:X:Sigma3}
	\mathbf{J}_X & = I_{d+1} + \sum_{j=2}^{\lceil \frac{d}{2} \rceil} E_{2j-1,2j} && \text{ on } \Sigma_3 \setminus \Sigma_1 \\
	\label{jump:X:Sigma2}
	\mathbf{J}_X & = \diag(1, \omega^{-1}, \omega, \omega^{-2}, \omega^2, \ldots ) + \sum_{j=1}^{\lfloor \frac{d}{2} \rfloor} E_{2j,2j+1} 
		&& \text{ on } \Sigma_2.
		\end{align} 
\item[$\bullet$] As $z$ tends to infinity in the sector $S_{\ell}^{\pm}$, 
\begin{equation}\label{asymp:X}
\mathbf{X}(z)=\Big(I_{d+1}+O\left(z^{-\frac{2}{d}} \right)\Big) \begin{pmatrix} 1 & 0 \\ 0 & \mathbf{A}(z) \end{pmatrix} 
  \begin{pmatrix} z^n & 0 \\ 0 & z^{-\frac{n}{d}} \exp(\mathbf{\Theta}^{\pm}(\omega^{-\ell} c_n z)) \end{pmatrix},
\end{equation}
where $\mathbf{A}(z)$ is given by \eqref{def:Az}.
\item[$\bullet$] $\mathbf{X}$ satisfies the same endpoint condition \eqref{endpoint:Y} as $\mathbf{Y}$.
\end{itemize}
\end{rhp}
\begin{proof}
For $z\in\Sigma_{1}$, using \eqref{jump:Y} and \eqref{eq:def:X} we find after simple computations that
\begin{equation} \label{Xjump1}
\mathbf{X}_{-}^{-1}(z)\, \mathbf{X}_{+}(z)= \begin{pmatrix} 1 &  
	\begin{pmatrix} v_{0,n}(z) & \cdots & v_{d-1,n}(z) \end{pmatrix} \mathbf{Q}_{+}(c_{n}z) \\ 
	0  & \mathbf{Q}_{-}^{-1}(c_{n}z)\,\mathbf{Q}_{+}(c_{n}z)
\end{pmatrix}.
\end{equation}
By the jump \eqref{JQonSigma3} of $\mathbf{Q}$ we have that the right lower block is 
\[ \mathbf{Q}_{-}^{-1}(c_{n}z)\,\mathbf{Q}_{+}(c_{n}z)=\mathbf{J}_{Q} =  I_d + \sum_{j=1}^{\lfloor \frac{d-1}{2} \rfloor} E_{2j,2j+1}. \] 

From the definitions \eqref{def:F} and \eqref{def:P}, and the fact that $R^{\pm}$ is upper triangular with $(1,1)$ entry equal to $1$,
see \eqref{Rdiagonal}, we obtain that
\[ \mathbf{F}(z)  \begin{pmatrix} 1 \\ 0 \\ \vdots \\ 0 \end{pmatrix} = \mathbf{P}(z) \begin{pmatrix} 1 \\ 0 \\ \vdots \\ 0 \end{pmatrix} 
	= \begin{pmatrix} p_{-\ell}(z) \\ p_{-\ell}'(z) \\ \vdots \\ p_{-\ell}^{(d-1)}(z) \end{pmatrix},
	\qquad z \in S_{\ell}. \]
Then by \eqref{eq:def:vjn}
\[ \mathbf{F}_+(c_n z) \begin{pmatrix} 1 \\ 0 \\ \vdots \\ 0 \end{pmatrix} = 
	e^{-\frac{n}{t_0} V(z)} \begin{pmatrix} v_{0,n}(z) \\ v_{1,n}(z) \\ \vdots \\ v_{d-1,n}(z) \end{pmatrix},
	\qquad z \in \Sigma_1, \]
and therefore, since $\mathbf{Q} =  \mathbf{F}^{-t}$, see \eqref{def:Q},
\begin{align*}
	\begin{pmatrix} v_{0,n}(z) &  \cdots &  v_{d-1,n}(z) \end{pmatrix} 
	\mathbf{Q}_+(c_n z)  = e^{\frac{n}{t_0} V(z)} \begin{pmatrix} 1 & 0 & \cdots & 0 \end{pmatrix},
\end{align*}
which gives the vector in the first row of \eqref{Xjump1}. This proves \eqref{jump:X:Sigma1}.

The computation of $\mathbf{J}_X$ on  $\Sigma_{3}\setminus\Sigma_{1}$ and $\Sigma_2$ is
straightforward, since $\mathbf{Y}$ is analytic on these sets, and we just get the jump $\mathbf{J}_X = \diag(1, \mathbf{J}_Q)$ with
$\mathbf{J}_Q$ given by \eqref{JQonSigma3}--\eqref{JQonSigma2}.

The asymptotic condition \eqref{asymp:X} comes from combining the asymptotic conditons \eqref{asymp:Y}
and \eqref{asymp:Q} in the RH problems for $\mathbf{Y}$ and $\mathbf{Q}$. 

Finally, it is clear that $\mathbf{X}$ satisfies the same endpoint conditions as $\mathbf{Y}$.
\end{proof}

\section{Second transformation $\mathbf{X}\mapsto \mathbf{U}$}

The goal of the next transformation is to simplify the asymptotic condition \eqref{asymp:X}. 
\begin{definition}
We define $\mathbf{U}(z)$ by 
\begin{equation}\label{def:U}
\mathbf{U}(z):=\mathbf{X}(z) \begin{pmatrix} 
1 & 0 \\
0 & \exp(- \mathbf{\Theta}^{\pm}(\omega^{-\ell} c_{n}z)) \end{pmatrix}, \qquad z\in S_{\ell}^{\pm}.  
\end{equation}
\end{definition}

In order to formulate the jump conditions for $\mathbf{U}$ it is  convenient to introduce here 
the following functions:
\begin{equation}\label{def:lambdafunc}
\lambda_{k,n}(z)=\exp\Big(\frac{2\, \ir\, n d}{(d+1)\,t_{0}\, t_{d+1}^{1/d}}
\sin\Big(\frac{\pi k}{d}\Big)|z|^{\frac{d+1}{d}}\Big),\qquad k\in\mathbb{N}.
\end{equation}
We will use $\lambda_{2k,n}$ on $\Sigma_{3}$ and $\lambda_{2k-1,n}$ on $\Sigma_2$. 
Note that $|\lambda_{k,n}(z)|=1$ for all $z\in\mathbb{C}$, but when considered on $\Sigma_2$
or $\Sigma_3$ they allow for analytic continuation into a neighborhood.

\begin{rhp}\label{RHPforU}
The matrix valued function $\mathbf{U}(z)$ is the solution to the following RH problem:
\begin{itemize}
\item[$\bullet$] $\mathbf{U}$ is analytic in $\mathbb{C}\setminus \Sigma_U$ where $\Sigma_U = \Sigma_X = \Sigma_{2}\cup\Sigma_{3}$.
\item[$\bullet$] $\mathbf{U}_{+}=\mathbf{U}_{-} \mathbf{J}_{U}$ on $\Sigma_U$ with jump matrix 
\begin{equation} \label{jump:U:Sigma1}
\mathbf{J}_{U} = \diag(1,1, \lambda_{2,n}, \lambda_{2,n}^{-1}, \lambda_{4,n},\ldots)  + e^{-\frac{n}{t_0} V_1} E_{1,2}
	+ \sum_{j=2}^{\lceil \frac{d}{2} \rceil} E_{2j-1,2j} \quad \text{ on } \Sigma_1 
	\end{equation}
where
\begin{equation} \label{def:V1}
V_1(z) = \frac{d}{(d+1) t_{d+1}^{1/d}} |z|^{\frac{d+1}{d}} - \frac{t_{d+1}}{d+1} z^{d+1},
\end{equation}
and
\begin{align} \label{jump:U:Sigma3}
	\mathbf{J}_U & = \diag(1,1, \lambda_{2,n}, \lambda_{2,n}^{-1}, \lambda_{4,n}, \ldots)  + 
	\sum_{j=2}^{\lceil \frac{d}{2} \rceil} E_{2j-1,2j} && \text{ on } \Sigma_3 \setminus \Sigma_1 \\
	\mathbf{J}_U & = \diag(1, \omega^{-1} \lambda_{1,n}, \omega \lambda_{1,n}^{-1}, \omega^{-2} \lambda_{3,n}, \ldots ) 
	+ \sum_{j=1}^{\lfloor \frac{d}{2}\rfloor} E_{2j,2j+1} && \text{ on } \Sigma_2. \label{jump:U:Sigma2}
	\end{align}
\item[$\bullet$] As $z \to \infty$,
\begin{equation}\label{asymp:U}
\mathbf{U}(z)=\Big(I_{d+1}+O\left(z^{-\frac{2}{d}}\right) \Big)\begin{pmatrix} 1 & 0 \\ 0 & \mathbf{A}(z) \end{pmatrix} 
	\begin{pmatrix} z^n & 0 \\ 0 & z^{-\frac{n}{d}} I_d \end{pmatrix}.
\end{equation}
\item[$\bullet$] $\mathbf{U}$ satisfies the same endpoint condition \eqref{endpoint:Y} as $\mathbf{Y}$.  
\end{itemize}
\end{rhp}
\begin{remark}
The jump matrices \eqref{jump:U:Sigma1}, \eqref{jump:U:Sigma3} and \eqref{jump:U:Sigma2} have size $(d+1) \times (d+1)$.
It means that the pattern on the diagonal repeats until we reach size $(d+1) \times (d+1)$. If $d$ is even then
the diagonals in \eqref{jump:U:Sigma1} and \eqref{jump:U:Sigma3} end with
$\lambda_{d,n} \equiv 1$, see \eqref{def:lambdafunc}. If $d$ is odd, then the diagonal in \eqref{jump:U:Sigma2} ends with
$\omega^{-(d+1)/2} \lambda_{d,n} \equiv -1$. 
\end{remark}

\begin{proof}
Since $\exp(-\mathbf{\Theta}^{\pm}(\omega^{-\ell} c_nz))$ is a diagonal matrix it follows 
from \eqref{def:U} that the non-zero entries in $\mathbf{J}_U$
are in the same positions as those of $\mathbf{J}_X$.

Let $z \in \Sigma_3$ with $z \in S_{\ell}$. Then $(\omega^{-\ell} z)^{\frac{d+1}{d}} = |z|^{\frac{d+1}{d}}$,
which means that by \eqref{def:Thetaz} and \eqref{def:dn:cn}
\begin{equation} \label{ThetaonSigma3} 
	\exp(- \mathbf{\Theta}^{\pm}(\omega^{-\ell} c_n z)) = 
		\exp\left(-\frac{ n d}{(d+1) t_0 t_{d+1}^{1/d}}  \diag(1, \omega_d^{\mp 1}, \omega_d^{\pm 1},\omega_{d}^{\mp 2}, \ldots)
	 |z|^{\frac{d+1}{d}}\right). 
	\end{equation}
It then follows from \eqref{def:U} that the $(1,2)$ entry in the jump
matrix \eqref{jump:X:Sigma1} on $\Sigma_1$ is multiplied by the first entry of 
\eqref{ThetaonSigma3} which by  \eqref{def:V1} results
in
\[ (\mathbf{J}_U(z))_{1,2} = e^{\frac{n}{t_0}V(z)}  e^{-\frac{nd}{(d+1) t_0 t_{d+1}^{1/d}} |z|^{\frac{d+1}{d}}}
	= e^{- \frac{n}{t_0} V_1(z)},
	\qquad z \in \Sigma_1. \]

For $j \geq 2$, and $z \in \Sigma_3 \cap S_{\ell}$, we have that the $(2j-1,2j)$ entry of $\mathbf{J}_X$ is multiplied by the $2j-1$ entry of 
$\exp(-\mathbf{\Theta}^+(\omega^{-\ell} c_nz))$ and by the inverse of the $2j-2$ entry of 
$\exp(-\mathbf{\Theta}^-(\omega^{-\ell} c_nz))$. 
These factors cancel out because
of the way the exponents $\omega_d^{\mp j}, \omega_d^{\pm j}$ of $\omega_d$ appear in \eqref{def:Thetaz}. Thus
\[ (\mathbf{J}_U(z))_{2j-1,2j} = (\mathbf{J}_X(z))_{2j-1,2j} = 1, \qquad z \in \Sigma_3, \]
see \eqref{jump:X:Sigma1}--\eqref{jump:X:Sigma3}.

The diagonal entry $(2j+1,2j+1)$ of $\mathbf{J}_X$ is multiplied by the $2j$ entry of $\exp(-\mathbf{\Theta}^+(\omega^{-\ell}c_nz))$
and by the inverse of the $2j$ entry of $\exp(-\mathbf{\Theta}^-(\omega^{\ell}c_nz))$.  This leads to a combination
\[ - \omega_d^{-j} + \omega_d^{j} = 2\ir \sin \frac{2 \pi j}{d}  \]
in the exponentials. Then by \eqref{jump:X:Sigma1}--\eqref{jump:X:Sigma3} and the definition \eqref{def:lambdafunc} we get 
\[ (\mathbf{J}_U(z))_{2j+1,2j+1} = \lambda_{2j,n}(z), \qquad  z \in \Sigma_3,  \]
and similarly
\[ (\mathbf{J}_U(z))_{2j+2,2j+2} = \lambda_{2j,n}^{-1}(z), \qquad  z \in \Sigma_3.  \]
This establishes \eqref{jump:U:Sigma1} and \eqref{jump:U:Sigma3}.

The jump condition \eqref{jump:U:Sigma2} on $\Sigma_2$ follows from similar considerations. 

The other conditions in the RH problem \ref{RHPforU} are immediate from \eqref{asymp:U} and the
RH problem \ref{RHPforX} for $\mathbf{X}$.
\end{proof}

\section{The third transformation $\mathbf{U}\mapsto \mathbf{T}$}

\subsection{The $g$-functions}

In the third transformation we make use of the equilibrium measures described in Section~\ref{section:vep}. 
Let $(\mu_{1}^{*},\ldots,\mu_{d}^{*})$ be the vector of measures solving the vector equilibrium problem 
\eqref{energyfunc}--\eqref{supp:muk}. We associate to these measures the so-called $g$-functions.

\begin{definition} \label{define:gk} 
	We define for $k=1, \ldots, d$,
\begin{equation}\label{def:gk}
	g_{k}(z):=\int\log(z-t)\,\ud\mu_{k}^{*}(t)=-U^{\mu_{k}^{*}}(z)+\ir\int \arg(z-t)\,\ud\mu_{k}^{*}(t),
		\quad z \in \mathbb C \setminus (\Sigma_k^* \cup \mathbb R^-),
\end{equation}
where for each $t \in \Sigma_k^*$, $z \mapsto \arg(z-t)$ is defined with a branch cut along
$\mathbb R^- \cup [0, t]$.
\end{definition}

It follows from this definition that $g_k$ is analytic  in $\mathbb C \setminus (\Sigma_k^* \cup \mathbb R^-)$,
with symmetry relations
\begin{align}
	g_k(\omega^{\ell} z) &=g_k(z)+\frac{2\pi \ell \, \ir}{d+1} \| \mu_k^*\|, 
		\qquad && z\in S_{0},\quad \ell =\pm 1, \pm 2,\ldots, \pm \lfloor \frac{d}{2} \rfloor,\label{symm:gk}
		\end{align}
where we recall that $\| \mu_k^* \| = 1 - \frac{k-1}{d}$.

Note also that $g_k' = F_k$ where $F_k$ is the Cauchy transform of the measure $\mu_k^*$
as defined in \eqref{def:Fk}. The $g$-functions therefore satisfy certain jump relations,
which when differentiated give rise to \eqref{relationF1F2}--\eqref{relationFdFdm1}.
In the next lemma we state the relations for the $g$-function, where the main issue
is the determination of the constants of integration.

\begin{lemma} \label{lemma:jumpsgk}
The following relations hold:
\begin{itemize}
\item[\rm (a)] For $z\in [0, \omega^{\ell} x^{*}]\subset \Sigma_{1}$, $\ell =0,\pm 1,\ldots,\pm\lfloor \frac{d}{2} \rfloor$,
\begin{equation}\label{rel:g1g2:1}
g_{1,+}(z)+g_{1,-}(z)-g_{2}(z)=\frac{1}{t_{0}}\Big(\frac{d}{(d+1)\, t_{d+1}^{1/d}}|z|^{\frac{d+1}{d}}-\frac{t_{d+1}}{d+1}z^{d+1}\Big)+\frac{2\pi \ell \,\ir}{d}+\ell_{1},
\end{equation}
where $\ell_1$ is the variational constant from \eqref{varcondmu1}.

In addition, in case $d$ is odd, and $z\in[-x^{*},0]\subset\Sigma_{1}$,
\begin{equation}\label{rel:g1g2:2}
g_{1,+}(z)+g_{1,-}(z)-g_{2,\pm}(z)=\frac{1}{t_{0}}\Big(\frac{d}{(d+1)\, t_{d+1}^{1/d}}|z|^{\frac{d+1}{d}}-\frac{t_{d+1}}{d+1}z^{d+1}\Big)\pm\frac{\pi \ir\,(d-1)}{d}+\ell_{1}.
\end{equation}
\item[\rm (b)] For $k \geq 2$, 
\begin{equation}\label{rel:gfunc:1}
g_{k,+}(z)+g_{k,-}(z)=g_{k+1}(z)+g_{k-1}(z), \qquad z\in\Sigma_{k}\setminus \mathbb{R}^{-},
\end{equation}
where $\Sigma_{k}$ is defined in \eqref{def:Sigmak}, and in case that $k=d$, we understand $g_{d+1}\equiv 0$.
\item[\rm (c)] For $k \geq 2$, and $k \equiv d (\textrm{mod } 2)$.
\begin{equation}\label{rel:gfunc:2}
g_{k,+}(z)+g_{k,-}(z)=g_{k+1,\pm}(z)+g_{k-1,\pm}(z)\pm 2\pi\ir \|\mu_{k}^{*}\|,\qquad z\in\mathbb{R}^{-}.
\end{equation}
In case $k=d$ we again understand $g_{d+1} \equiv 0$.
\end{itemize}
\end{lemma}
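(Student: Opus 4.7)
My plan is to differentiate each identity with respect to $z$ and reduce it to one of the Cauchy transform relations \eqref{relationF1F2}, \eqref{relationFks}, or \eqref{relationFdFdm1} already proved in the course of Lemma \ref{lemma:varconditions}. Since $g_k'=F_k$ by Definitions \ref{define:gk} and \ref{def:Cauchytransf}, each claimed identity then holds up to an additive constant of integration, and the real work is to pin that constant down.

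For part (a) at $\ell=0$, that is on $(0,x^*)$, the real part of $g_{1,+}(x)+g_{1,-}(x)-g_2(x)$ equals $-2U^{\mu_1^*}(x)+U^{\mu_2^*}(x)$, which is pinned down by the variational equality \eqref{varcondmu1} on $\supp\mu_1^*$ to give the real constant $\ell_1$. The imaginary parts vanish by a pairing argument: for $x\in(0,x^*)$, $\arg(x-t)+\arg(x-\overline{t})=0$, so grouping rays of $\Sigma_k^*$ with their complex-conjugate rays and using rotational invariance of $\mu_k^*$ yields $\int\arg(x-t)\,d\mu_k^*(t)=0$ for $k=1,2$ (any self-paired real ray contributes $0$ because $\arg(x-t)=0$ for $t\le 0$). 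Hence both $g_{1,+}(x)+g_{1,-}(x)$ and $g_2(x)$ are real. The general $\ell$ case in \eqref{rel:g1g2:1} then follows by applying the symmetry \eqref{symm:gk} with $\|\mu_1^*\|=1$ and $\|\mu_2^*\|=(d-1)/d$, giving a shift of
\[
2\ell\cdot\frac{2\pi\ir}{d+1}-\ell\cdot\frac{2\pi\ir(d-1)}{d(d+1)}=\frac{2\pi\ell\ir}{d}.
\]
The identity \eqref{rel:g1g2:2}, valid only for $d$ odd where $\mathbb R^-\subset\Sigma_1^*$, cannot be reached directly by \eqref{symm:gk} since $\ell=(d+1)/2$ lies outside its admissible range. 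The idea is to start from the $\ell=(d-1)/2$ case and continue one further rotation by $\omega$ across $\mathbb R^-$: because $\mathbb R^-\not\subset\Sigma_2$ for $d$ odd, the jump of $g_2$ across $\mathbb R^-$ is purely the $\arg$-branch-cut jump from Definition \ref{define:gk}, of magnitude $2\pi\ir\|\mu_2^*\|=2\pi\ir(d-1)/d$, and choosing boundary values on one or the other side of $\mathbb R^-$ shifts the constant by $\pm\pi\ir(d-1)/d$ as in \eqref{rel:g1g2:2}.

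For part (b), differentiation yields \eqref{relationFks} or \eqref{relationFdFdm1}, so the relation holds on $\Sigma_k\setminus\mathbb R^-$ up to a constant whose real part is zero by \eqref{varcondmuk}/\eqref{varcondmud}. The imaginary constant vanishes by examining both sides as $z\to\infty$ along a ray of $\Sigma_k\setminus\mathbb R^-$: using $g_j(z)=\|\mu_j^*\|\log z+o(1)$ together with the mass identity
\[
2\|\mu_k^*\| = \|\mu_{k-1}^*\|+\|\mu_{k+1}^*\|
\]
(with the convention $\|\mu_{d+1}^*\|=0$), the leading logarithmic terms cancel and both sides tend to the same limit. For part (c), with $k\equiv d\pmod{2}$ we have $\mathbb R^-\subset\Sigma_k$ but $\mathbb R^-\not\subset\Sigma_{k\pm 1}$, so $g_{k\pm 1}$ has only the $\arg$-branch-cut jump of magnitude $2\pi\ir\|\mu_{k\pm 1}^*\|$ across $\mathbb R^-$; combining the identity from part (b), taken as a limit from one side of $\mathbb R^-$, with the same mass identity produces the extra $\pm 2\pi\ir\|\mu_k^*\|$ term in \eqref{rel:gfunc:2}.

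The main delicate point is the branch-cut bookkeeping along $\mathbb R^-$: one must carefully distinguish between support jumps of $g_k$ (when $\mathbb R^-\subset\Sigma_k^*$, so the $[0,t]$ portion of the branch cut of $\arg(z-t)$ contributes) and the purely $\arg$-induced jumps (when $\mathbb R^-$ is just the extra branch cut from Definition \ref{define:gk}), with the distinction depending on the parity of $k$ versus that of $d$. The $\pm$ conventions in \eqref{rel:g1g2:2} and \eqref{rel:gfunc:2} must also be consistently matched with the outward $\Sigma$-orientation of $\mathbb R^-$ as a ray of $\Sigma_1^*$ or $\Sigma_k$. I also expect the verification that the imaginary constant in part (b) really vanishes, along non-real rays of $\Sigma_k$ where no Schwarz reflection symmetry is available, to require some care with the $o(1)$ remainder in the logarithmic expansion.
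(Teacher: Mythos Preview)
Your approach for part (a) coincides with the paper's: both show that $g_{1,+}+g_{1,-}$ and $g_2$ are real on $(0,x^*)$, invoke the variational equality \eqref{varcondmu1} for the real part, and then propagate to other rays via \eqref{symm:gk}; your shift computation $\tfrac{4\pi\ell\ir}{d+1}-\tfrac{2\pi\ell\ir(d-1)}{d(d+1)}=\tfrac{2\pi\ell\ir}{d}$ is exactly right.

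For parts (b) and (c) you take a genuinely different route. The paper fixes a single ray of $\Sigma_k$ (for instance $\arg x=\pi/(d+1)$ when $k$ is even) and evaluates the imaginary parts there by a direct computation: using rotational invariance of $\mu_k^*$ it works out $\int_{\Sigma_k}\arg(x-t)\,d\mu_k^*(t)$ explicitly, obtaining closed-form expressions such as $g_{k,+}(x)+g_{k,-}(x)=-2U^{\mu_k^*}(x)+\tfrac{2\pi\ir\|\mu_k^*\|}{d+1}$ and the analogous formula for $g_{k\pm1}(x)$, then matches both sides via \eqref{varcondmuk}--\eqref{varcondmud} and extends by \eqref{symm:gk}. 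You instead determine the imaginary constant by sending $z\to\infty$ along a ray and using $g_j(z)=\|\mu_j^*\|\log z+o(1)$ together with the mass identity. Your argument is tidier and avoids the arc-by-arc bookkeeping of $\arg$-integrals, but the price is that the $o(1)$ remainder must be justified for the measures $\mu_k^*$, $k\ge2$, whose supports are unbounded. This is doable from the density decay $d\mu_k^*/|dz|=O(|z|^{-2-1/d})$ implicit in \eqref{asymp:xik:2} and \eqref{def:muk}--\eqref{def:etak} (the paper itself only records the sharper estimate $g_k(z)=\|\mu_k^*\|\log z+O(z^{-(d+1)/d})$ later, in the proof of RH problem~\ref{RHPforT}, without details). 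One further point to make explicit: the differentiation step only pins the constant on each connected ray of $\Sigma_k\setminus\mathbb R^-$, so either run the asymptotic on every ray or, as the paper does, treat one ray and transport by \eqref{symm:gk}.
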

\begin{proof}
(a) From the definition of $g_{1}$ we deduce that for $z\in [0,x^{*}]$,
\[
g_{1,+}(z)+g_{1,-}(z)=-2 U^{\mu_{1}^{*}}(z).
\]
Since $g_{2}$ is real-valued on $\mathbb{R}^{+}$, we have $g_{2}(z)=-U^{\mu_{2}^{*}}(z)$ 
for $z\in (0,x^{*}]$. Applying now \eqref{varcondmu1} and the fact  
$\supp(\mu_{1}^{*})=\Sigma_{1}^*$, we obtain \eqref{rel:g1g2:1} for $z \in [0, x^*]$. 
The remaining cases in \eqref{rel:g1g2:1} and \eqref{rel:g1g2:2} are then obtained  
by using the symmetry relations \eqref{symm:gk} of $g_1$ and $g_2$.
\medskip

(b) and (c)
In order to show \eqref{rel:gfunc:1} and \eqref{rel:gfunc:2} it is convenient to 
consider the four cases $k$ and $d$ even/odd separately. We consider the case
where $k$ is even and $d$ is odd. The other cases are analyzed in 
a similar manner. 

So we assume that $k$ is even and $d$ is odd. We first check that \eqref{rel:gfunc:1} is valid for 
$x\in \Sigma_{2}$ with $\arg x= \frac{\pi}{d+1}$. By definition of $g_{k}$, we have
\begin{equation}\label{eq:gk}
g_{k,+}(x)+g_{k,-}(x)=-2 U^{\mu_{k}^{*}}(x)+\lim_{z\rightarrow x^{-}}\ir\int_{\Sigma_{2}}\arg (z-t)\,\ud \mu_{k}^{*}(t)
+\lim_{z\rightarrow x^{+}}\ir\int_{\Sigma_{2}}\arg (z-t)\,\ud \mu_{k}^{*}(t).
\end{equation}
Observe that since $d$ is odd, $e^{\frac{\pi\ir}{d+1}}\mathbb{R}\subset\Sigma_{2}$ and we can write
\begin{multline*}
\lim_{z\rightarrow x^{-}}\int_{\Sigma_{2}}\arg (z-t)\,\ud \mu_{k}^{*}(t)
=\int_{\Sigma_{2}\setminus e^{\frac{\pi\ir}{d+1}}\mathbb{R}}\arg(x-t)\,\ud \mu_{k}^{*}(t)\\
+\frac{\pi}{d+1}\,\mu_{k}^{*} \left(e^{\frac{\pi\ir}{d+1}}\mathbb{R}\setminus (x,e^{\frac{\pi\ir}{d+1}}\infty) \right)+
\left(-\pi+\frac{\pi}{d+1}\right)\,\mu_{k}^{*}((x,e^{\frac{\pi\ir}{d+1}}\infty)),
\end{multline*}
where $-\pi<\arg(x-t)<\pi$. By the rotational invariance of $\mu_{k}^{*}$,
\[
\int_{\Sigma_{2}\setminus e^{\frac{\pi\ir}{d+1}}\mathbb{R}}\arg(x-t)\,\ud \mu_{k}^{*}(t)
=\frac{(d-1)\pi}{(d+1)^{2}}\,\|\mu_{k}^{*}\|.
\]
Since $\mu_{k}^{*}(e^{\frac{\pi\ir}{d+1}}\mathbb{R})=2\|\mu_{k}^{*}\|/(d+1)$, from the previous two identities we get 
\begin{equation}\label{boundlimitarg:1}
\lim_{z\rightarrow x^{-}}\int_{\Sigma_{2}}\arg (z-t)\,\ud \mu_{k}^{*}(t)=\frac{\pi \|\mu_{k}^{*}\|}{d+1}-\pi\mu_{k}^{*}((x,e^{\frac{\pi\ir}{d+1}}\infty)).
\end{equation}
On the other hand, it is easy to see that
\begin{equation}\label{boundlimitarg:2}
\lim_{z\rightarrow x^{+}}\int_{\Sigma_{2}}\arg (z-t)\,\ud \mu_{k}^{*}(t)=
\lim_{z\rightarrow x^{-}}\int_{\Sigma_{2}}\arg (z-t)\,\ud \mu_{k}^{*}(t)
+2\pi\,\mu_{k}^{*}\left((x,e^{\frac{\pi\ir}{d+1}}\infty)\right).
\end{equation}
Therefore from \eqref{eq:gk}, \eqref{boundlimitarg:1} and \eqref{boundlimitarg:2} we obtain
\begin{equation}\label{eq:gk:2}
g_{k,+}(x)+g_{k,-}(x)=-2 U^{\mu_{k}^{*}}(x)+\frac{2\pi\ir\, \|\mu_{k}^{*}\|}{d+1}.
\end{equation}
Now, the functions $g_{k-1}$ and $g_{k+1}$ are analytic on $e^{\frac{\pi\ir}{d+1}}\mathbb{R}^{+}$, 
and reasoning as before we deduce that
\begin{equation}\label{eq:gj}
g_{j}(x)=-U^{\mu_{j}^{*}}(x)+\frac{\pi\ir\,\|\mu_{j}^{*}\|}{d+1},\qquad j=k-1,k+1.
\end{equation}
It then follows from \eqref{varcondmuk} and \eqref{eq:gk:2}--\eqref{eq:gj} that \eqref{rel:gfunc:1} 
holds for $x\in\Sigma_{2}$ with $\arg x=\pi/(d+1)$. By applying the symmetry relations 
\eqref{symm:gk} we deduce from this that \eqref{rel:gfunc:1} holds everywhere in $\Sigma_{2}\setminus\mathbb{R}^{-}=\Sigma_{2}$.
\end{proof}

\subsection{The $\varphi$-functions}

The jump matrices in the RH problem that follows can be expressed 
in a very convenient way in terms of certain functions that we now introduce.

\begin{definition}
Let $\varphi_{1}$ be the analytic function defined on $\mathbb{C}\setminus(\Sigma_{1}^* \cup\Sigma_{2})$ by
\begin{equation}\label{def:phi1}
\varphi_{1}(z)=\frac{1}{2 t_{0}}\int_{\omega^{\ell}\,x^{*}}^{z}(\xi_{1}(s)-\xi_{2}(s))\,\ud s,\qquad 
	z\in S_{\ell}\setminus [0, \omega^{\ell} x^{*}],
\end{equation}
where for $z\in S_{\ell}\setminus [0, \omega^{\ell} x^{*}]$, 
integration is carried out in \eqref{def:phi1} along a path in $S_{\ell}\setminus [0, \omega^{\ell} x^{*}]$.

For $k \geq 2$, let  $\varphi_{k}$ be the analytic function 
on $\mathbb{C}\setminus(\Sigma_{2}\cup\Sigma_{3})$ given by
\begin{equation}\label{def:phik}
\varphi_{k}(z)=\frac{1}{2 t_{0}}\int_{0}^{z}(\xi_{k}(s)-\xi_{k+1}(s))\,\ud s\, 
	\mp (-1)^k \frac{\pi\ir}{(d+1)} \|\mu_{k}^*\|,
\qquad z\in S_{\ell}^{\pm},
\end{equation}
where for $z\in S_{\ell}^{\pm}$, integration in \eqref{def:phik} takes place 
along a path contained in $S_{\ell}^{\pm}$.
\end{definition}

\begin{lemma}\label{lemma:phifunc:1}
The following relations hold:
\begin{itemize}
\item[\rm (a)] For every $k=1,\ldots,d$,
\begin{equation}\label{symm:phifunc}
\varphi_{k}(\omega z)=\varphi_{k}(z),\qquad z\in\mathbb{C}\setminus(\Sigma_{2}\cup\Sigma_{3}).
\end{equation}
\item[\rm (b)] For $z\in\Sigma_{1}^* \setminus[-x^{*},0]$,
\begin{equation}\label{rel:g1phi1:1}
g_{1,+}(z)-g_{1,-}(z)=\pm 2 \varphi_{1,\pm}(z),
\end{equation}
and,  in case $d$ is odd, for $z\in[-x^{*},0]\subset\Sigma_{1}^*$,
\begin{equation}\label{rel:g1phi1:2}
g_{1,+}(z)-g_{1,-}(z)=\pm 2\varphi_{1,\pm}(z)-2\pi\ir.
\end{equation}
\item[\rm (c)] Let $2\leq k\leq d$. Then for $z\in\Sigma_{k}\setminus\mathbb{R}^{-}$,
\begin{equation}\label{rel:gkphik:1}
g_{k,+}(z)-g_{k,-}(z)=\pm 2\varphi_{k,\pm}(z)+\frac{2\ir d\, \sin((k-1)\pi/d)}{(d+1)\,t_{0}\,t_{d+1}^{1/d}}\,|z|^{\frac{d+1}{d}},
\end{equation}
and for $z\in\mathbb{R}^{-}\subset\Sigma_{k}$,
\begin{equation}\label{rel:gkphik:2}
g_{k,+}(z)-g_{k,-}(z)=\pm 2\varphi_{k,\pm}(z)+\frac{2\ir d\, \sin((k-1)\pi/d)}{(d+1)\,t_{0}\,t_{d+1}^{1/d}}\,|z|^{\frac{d+1}{d}}
-\frac{2\pi\ir (d-k+1)}{d}.
\end{equation}
\item[\rm (d)] For $z\in S_{\ell}\setminus [0, \omega^{\ell} x^{*}]$, $\ell=0,\pm 1,\ldots,\pm\lfloor d/2\rfloor$,
\begin{equation}\label{rel:g1g2phi1:1}
2g_{1}(z)-g_{2}(z)-\ell_{1}=2\varphi_{1}(z)+
\frac{1}{t_{0}}\left(\frac{d\, \omega_{d}^{-\ell}}{(d+1)\,t_{d+1}^{1/d}}\,z^{\frac{d+1}{d}}-
\frac{t_{d+1}}{d+1}\,z^{d+1}\right)+\frac{2\pi \ell \ir}{d},
\end{equation}
and in case $d$ is odd and for $z\in S_{\frac{d+1}{2}}^{-} \cup S_{-\frac{d+1}{2}}^+$,
\begin{equation}\label{rel:g1g2phi1:2}
2g_{1}(z)-g_{2}(z)-\ell_{1}=2\varphi_{1}(z)+
\frac{1}{t_{0}}\left(\frac{d\, \omega_{d}^{\pm\frac{d+1}{2}}}{(d+1)\,t_{d+1}^{1/d}}\,z^{\frac{d+1}{d}}-
\frac{t_{d+1}}{d+1}\,z^{d+1}\right)\mp\frac{\pi\ir(d+1)}{d}.
\end{equation}
\item[\rm (e)] For $z\in S_{0}^{\pm}$ and $2\leq k\leq d$,
\begin{equation}\label{rel:gksphik}
2g_{k}(z)-g_{k-1}(z)-g_{k+1}(z)=2\varphi_{k}(z)+(-1)^{k}\,\frac{d\,(\omega_{d}^{\mp\lceil\frac{k-1}{2}\rceil}
-\omega_{d}^{\pm\lfloor\frac{k-1}{2}\rfloor})}{(d+1)\,t_{0}\, t_{d+1}^{1/d}}\,z^{\frac{d+1}{d}},
\end{equation}
with the convention $g_{d+1}\equiv 0$.
\end{itemize}
\end{lemma}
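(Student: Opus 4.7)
The plan is to treat each part by differentiating to identify derivatives and then pinning down the constants of integration, relying on Lemma~\ref{lemma:rel:Fkxik}, the sheet structure $\xi_{k,\pm}=\xi_{k+1,\mp}$, and the values of $\kappa_{k,\ell}^{\pm}$ from \eqref{def:kappa}.

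Part (a) follows immediately from the rotational symmetries \eqref{symm:xik:2} via the substitution $s\mapsto \omega s$ in \eqref{def:phi1}--\eqref{def:phik}: the integrand transforms as $(\xi_k-\xi_{k+1})(\omega s)\,d(\omega s)=(\xi_k-\xi_{k+1})(s)\,ds$, and the lower endpoint in \eqref{def:phi1} cycles correctly from $\omega^{\ell} x^*$ to $\omega^{\ell+1} x^*$. The additive constants in \eqref{def:phik} do not depend on $\ell$ and so are preserved.

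For parts (b) and (c), write $g_k'=F_k$ and use Lemma \ref{lemma:rel:Fkxik} together with the analyticity of $F_{k-1}$ across $\Sigma_k$ to obtain on $\Sigma_k\setminus\mathbb{R}^-$
\begin{equation*}
(g_{k,+}-g_{k,-})'=\tfrac{1}{t_0}(\xi_{k,+}-\xi_{k,-})-\tfrac{1}{t_0}(\kappa^+-\kappa^-)\,z^{1/d},
\end{equation*}
where $\kappa^{\pm}$ denote the values of \eqref{def:kappa} on the two subsectors adjacent to the ray. The sheet identification $\xi_{k,-}=\xi_{k+1,+}$ converts the first term into $2\varphi'_{k,+}$, so that $(g_{k,+}-g_{k,-})'=2\varphi'_{k,+}-\tfrac{1}{t_0}(\kappa^+-\kappa^-)z^{1/d}$. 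For $k=1$ no $z^{1/d}$-correction is present (since \eqref{rel:xi1F1} contributes only the analytic $t_{d+1}z^d$), and both $g_{1,+}-g_{1,-}$ and $\varphi_{1,+}$ vanish at the endpoint $\omega^{\ell}x^*$ by the square-root behaviour from Theorem~\ref{theo:subcrit}; integration from that endpoint yields \eqref{rel:g1phi1:1}. For $k\geq 2$ the integrated correction is $-\tfrac{d}{(d+1)t_0}(\kappa^+-\kappa^-)z^{(d+1)/d}$, and a direct computation from \eqref{def:kappa} -- using that for $k$ odd the adjacent subsectors share the index $\ell$ and for $k$ even they carry indices $\ell,\ell+1$, in either case making the $\omega_d$-factor combine with $e^{i\theta(d+1)/d}$ from the ray angle into $\omega_d^0=1$ -- collapses the coefficient to $\tfrac{2id\sin((k-1)\pi/d)}{(d+1)t_0 t_{d+1}^{1/d}}|z|^{(d+1)/d}$ as in \eqref{rel:gkphik:1}. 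The $\pm 2\pi i$ and $\mp 2\pi i(d-k+1)/d$ corrections in \eqref{rel:g1phi1:2} and \eqref{rel:gkphik:2} arise from the branch cuts of $\log(z-t)$ used in \eqref{def:gk}: as $z$ enters $\mathbb{R}^-$, the segments $[0,t]\subset\mathbb{R}^-$ for the appropriate $t\in\supp(\mu_k^*)$ are crossed, and a mass-counting argument analogous to that used for Lemma~\ref{lemma:jumpsgk}(b)--(c) yields exactly the stated totals.

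For parts (d) and (e), integrate $(2g_k-g_{k-1}-g_{k+1})'=2F_k-F_{k-1}-F_{k+1}$ and $(2g_1-g_2)'=2F_1-F_2$ inside a single subsector $S_\ell^{\pm}$. Using \eqref{rel:xi1F1}--\eqref{rel:xikFk} and $\kappa_{2,\ell}^{\pm}=\omega_d^{-\ell}t_{d+1}^{-1/d}$ one finds
\begin{equation*}
(2g_1-g_2)'(z)=\tfrac{1}{t_0}(\xi_1-\xi_2)+\tfrac{\omega_d^{-\ell}}{t_0 t_{d+1}^{1/d}}z^{1/d}-\tfrac{t_{d+1}}{t_0}z^d,
\end{equation*}
and analogously for $(2g_k-g_{k-1}-g_{k+1})'$ with an extra $\tfrac{1}{t_0}(\kappa_{k+1,\ell}^{\pm}-\kappa_{k,\ell}^{\pm})z^{1/d}$. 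Antidifferentiating recovers all non-constant terms in \eqref{rel:g1g2phi1:1} and \eqref{rel:gksphik}, and the combination $(-1)^k d(\omega_d^{\mp\lceil(k-1)/2\rceil}-\omega_d^{\pm\lfloor(k-1)/2\rfloor})$ in \eqref{rel:gksphik} appears as $-\tfrac{d}{d+1}\cdot\tfrac{1}{t_0}(\kappa_{k+1,\ell}^{\pm}-\kappa_{k,\ell}^{\pm})$ read off from \eqref{def:kappa}. The additive constant $\ell_1+2\pi\ell i/d$ in \eqref{rel:g1g2phi1:1} is pinned down by specializing to $z\in\Sigma_1^*\cap S_\ell$ and combining Lemma~\ref{lemma:jumpsgk}(a) (which gives $g_{1,+}+g_{1,-}-g_2$) with part~(b) (which gives $g_{1,+}-g_{1,-}$) to isolate $2g_{1,+}-g_2$; the same device fixes the constants in \eqref{rel:g1g2phi1:2} and \eqref{rel:gksphik}.

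The main obstacle is the bookkeeping of the $\kappa$-factors across a ray of $\Sigma_k$: because the two sides typically carry subsector indices $\ell$ and $\ell\pm 1$ with signs that depend on the parity of $k$, one has to verify carefully that the $\omega_d$-factors in $\kappa^+-\kappa^-$ cancel with those coming from $e^{i\theta(d+1)/d}$ after the $z^{1/d}$ integration, leaving a clean $\sin((k-1)\pi/d)$. A secondary nuisance is the choice of branches for $\log(z-t)$ in \eqref{def:gk}, which must be tracked consistently to generate the various $2\pi i$-constants in the formulas when $z$ lies on $\mathbb{R}^-$ or when $d$ is odd.
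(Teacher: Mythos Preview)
Your overall strategy matches the paper's: express the derivatives of both sides via Lemma~\ref{lemma:rel:Fkxik} and the sheet identifications $\xi_{k,\pm}=\xi_{k+1,\mp}$, integrate, and then fix constants. Parts (a), (b), (d) and (e) are handled essentially the same way as in the paper.

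There is, however, a genuine gap in part (c). After you integrate
\[
(g_{k,+}-g_{k,-})'(z)=2\varphi'_{k,+}(z)-\tfrac{1}{t_0}(\kappa^+-\kappa^-)\,z^{1/d},
\]
you obtain $g_{k,+}-g_{k,-}=2\varphi_{k,+}+\tfrac{2\ir d\sin((k-1)\pi/d)}{(d+1)t_0 t_{d+1}^{1/d}}|z|^{\frac{d+1}{d}}+C$ on a ray of $\Sigma_k$, and you never explain why $C=0$. This is not automatic: it depends on the precise additive constant $\mp(-1)^k\tfrac{\pi\ir}{d+1}\|\mu_k^*\|$ built into the definition \eqref{def:phik} of $\varphi_k$, and that constant was chosen exactly to make $C$ vanish. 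Unlike part (b), there is no endpoint where both sides obviously vanish (the rays of $\Sigma_k$ are unbounded for $k\ge2$, and at $z=0$ the star geometry makes the boundary values of $g_k$ delicate). The paper avoids this difficulty by computing both sides directly in terms of the measure: on the ray $\arg x=\tfrac{\pi}{d+1}$ it writes
\[
g_{2j,+}(x)-g_{2j,-}(x)=2\pi\ir\,\mu_{2j}^*\bigl([x,e^{\pi\ir/(d+1)}\infty)\bigr)
=\tfrac{2\pi\ir(d-2j+1)}{d(d+1)}-2\pi\ir\,\mu_{2j}^*([0,x]),
\]
and computes $2\widehat\varphi_{2j,\pm}(x)$ as an explicit integral yielding $\mp2\pi\ir\,\mu_{2j}^*([0,x])\mp(\text{trig term})$; comparing, the constant $\tfrac{2\pi\ir(d-2j+1)}{d(d+1)}$ is exactly absorbed by the additive piece of $\varphi_k$. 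To repair your argument you must either reproduce this matching (e.g.\ evaluate both sides at a specific point on the ray using $\mu_k^*([0,x])$ and the rotational mass $\|\mu_k^*\|/(d{+}1)$ per ray), or invoke the paper's computation directly.

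A smaller point: your handling of the $\kappa^+-\kappa^-$ bookkeeping is sketched rather than carried out. The paper sidesteps this by working on a single reference ray and then invoking the rotational symmetry \eqref{symm:phifunc} together with \eqref{symm:gk}; this is cleaner than tracking sector indices across a generic ray.
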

\begin{proof}
(a) The symmetry property \eqref{symm:phifunc} is immediate from \eqref{symm:xik:2}
and \eqref{def:phi1}--\eqref{def:phik}. 

\medskip

(b)
We first check that \eqref{rel:g1phi1:1} holds for $x\in[0,x^{*}]$. Observe that for $s\in S_{0}\setminus[0,x^{*}]$,
\begin{equation}\label{eq:aux:0}
\xi_{1}(s)-\xi_{2}(s)=2 t_{0} F_{1}(s)-t_{0} F_{2}(s)-\frac{s^{1/d}}{t_{d+1}^{1/d}}+t_{d+1} s^{d},
\end{equation}
cf.~\eqref{rel:xi1F1} and \eqref{rel:xikFk}. Therefore applying \eqref{relationF1F2} we obtain from \eqref{def:phi1}
and \eqref{def:gk}
\begin{align*}
2\varphi_{1,\pm}(x) & =-\frac{1}{t_{0}}\int_{x}^{x^{*}}\Big(2 t_{0} F_{1,\pm}(s)-t_{0} F_{2}(s)-
	\frac{s^{1/d}}{t_{d+1}^{1/d}}+t_{d+1} s^{d}\Big)\ud s\\
& =-\int_{x}^{x^{*}}(F_{1,\pm}(s)-F_{1,\mp}(s)) \ud s \\
& =\pm\,2\pi\ir\,\mu_{1}^{*}([x,x^{*}])=\pm\, (g_{1,+}(x)-g_{1,-}(x)),
\end{align*}
and \eqref{rel:g1phi1:1} follows for $z=x \in [0,x^*]$. Because of \eqref{symm:phifunc}, we have for every $z\in\Sigma_{1}^{*}$,
\[
\varphi_{1,+}(z)=\varphi_{1,+}(|z|)=-\varphi_{1,-}(|z|)=-\varphi_{1,-}(z).
\]
Using now the symmetry property \eqref{symm:gk} we obtain \eqref{rel:g1phi1:1}--\eqref{rel:g1phi1:2}.

\medskip

(c)
The proof of \eqref{rel:gkphik:1}--\eqref{rel:gkphik:2} is analogous. For instance, if we let
\[
\widehat{\varphi}_{2j}(z)=\frac{1}{2 t_{0}}\int_{0}^{z}(\xi_{2j}(s)-\xi_{2j+1}(s))\,\ud s,
	\qquad z\in\mathbb{C}\setminus(\Sigma_{2}\cup\Sigma_{3}),
\]
it follows from the definitions of $\xi_{2j}$ and $\xi_{2j+1}$ on the sectors $S_{0}^{+}$ and $S_{1}^{-}$, 
and relations \eqref{relationFks} and \eqref{relationFdFdm1}, that for $x\in\Sigma_{2}$ with $\arg x=\pi/(d+1)$,
\begin{align}
2\widehat{\varphi}_{2j,\pm}(x) & =\frac{1}{t_{0}}\int_{0}^{|x|}
\Big(t_{0}\,(F_{2j,\pm}(e^{\frac{\pi\ir}{d+1}}t)-F_{2j,\mp}(e^{\frac{\pi\ir}{d+1}}t))\pm 
	\frac{\omega_{d}^{-j}-\omega_{d}^{j-1}}{t_{d+1}^{1/d}}\,e^{\frac{\pi\ir}{d(d+1)}}\,t^{1/d}\Big)\, 
		e^{\frac{\pi\ir}{d+1}}\,\ud t \notag\\
& =\mp\,2\pi\ir\,\mu_{2j}^{*}([0,x])\mp\frac{2\ir d\, \sin((2j-1)\pi/d)}{(d+1)\,t_{0}\,t_{d+1}^{1/d}}\,|x|^{\frac{d+1}{d}}.\label{eq:aux:1}
\end{align}
Since
\begin{equation}\label{eq:aux:2}
g_{2j,+}(x)-g_{2j,-}(x)=2\pi\ir\,\mu_{2j}^{*}([x,e^{\frac{\pi\ir}{d+1}}\infty))
=\frac{2\pi\ir\,(d-2j+1)}{d(d+1)}-2\pi\ir\,\mu_{2j}^{*}([0,x]),
\end{equation}
we deduce from \eqref{eq:aux:1} and \eqref{eq:aux:2} that \eqref{rel:gkphik:1} holds for $k = 2j$ even. We use 
again the symmetry properties of the $g$ and $\varphi$ functions to check that the relations 
\eqref{rel:g1phi1:1}--\eqref{rel:g1phi1:2} are valid on the remaining rays of $\Sigma_{2}$ in case $k$ is
even. The case $k$ odd can be proved in a similar way. 

\medskip
(d)
Using $g_{k}'=F_{k}$ and $2\varphi_{1}'=(\xi_{1}-\xi_{2})/t_{0}$, we can rewrite \eqref{eq:aux:0} as
\[
2 g_{1}'(z)-g_{2}'(z)=2\varphi_{1}'(z)-\frac{t_{d+1}}{t_{0}}\,z^{d}+\frac{z^{1/d}}{t_{0}\,t_{d+1}^{1/d}},
\qquad z\in S_{0}\setminus[0,x^{*}],
\]
and integration yields
\[
2g_{1}(z)-g_{2}(z)-\ell_{1}=2\varphi_{1}(z)+\frac{1}{t_{0}}\left(\frac{d}{(d+1)\,t_{d+1}^{1/d}}\,z^{\frac{d+1}{d}}
-\frac{t_{d+1}}{d+1}\,z^{d+1}\right)+C,
\]
for some constant $C$. It follows from \eqref{rel:g1g2:1} that $C=0$, and this proves \eqref{rel:g1g2phi1:1} 
for $\ell=0$. The remaining cases in \eqref{rel:g1g2phi1:1} and \eqref{rel:g1g2phi1:2} follow again from 
symmetry considerations.
\medskip

(e)
The proof of \eqref{rel:gksphik} is analogous to the proof of \eqref{rel:g1g2phi1:1}--\eqref{rel:g1g2phi1:2} 
and uses the relations \eqref{rel:gfunc:1} and \eqref{rel:gkphik:1}. 
\end{proof}

\begin{lemma}\label{lemma:phifunc:2}
The following relations hold:
\begin{itemize}
\item[\rm (a)] For $z\in\Sigma_{1}^*$,
\begin{equation}\label{rel:psi1:phi1}
e^{n(g_{1,-}(z)-g_{1,+}(z))} = e^{-2n\varphi_{1,+}(z)}=e^{2n\varphi_{1,-}(z)}.
\end{equation}
\item[\rm (b)] For $z\in\Sigma_{2}$,
\begin{equation}\label{rel:psi2k:phi2k}
	e^{n(g_{2k,-}(z) - g_{2k,+}(z))} \,\lambda_{2k-1,n}(z)=e^{-2n\varphi_{2k,+}(z)}=e^{2n\varphi_{2k,-}(z)},
		\qquad 1\leq k\leq \lfloor \tfrac{d}{2} \rfloor.
\end{equation}
\item[\rm (c)] For $z\in\Sigma_{3}$,
\begin{equation}\label{rel:psi2kp1:phi2kp1}
	e^{n(g_{2k+1,-}(z) - g_{2k+1,+}(z))} \,\lambda_{2k,n}(z)=e^{-2n\varphi_{2k+1,+}(z)}=e^{2n\varphi_{2k+1,-}(z)},
		\qquad 1\leq k\leq \lceil \frac{d}{2} \rceil -1.
\end{equation}
The functions $\lambda_{k,n}(z)$ are defined in \eqref{def:lambdafunc}.
\end{itemize}
\end{lemma}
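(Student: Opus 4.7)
The plan is that all three identities follow by exponentiating the boundary value relations for the $g$-functions already recorded in Lemma \ref{lemma:phifunc:1}(b)-(c) and using the explicit form of $\lambda_{k,n}$ in \eqref{def:lambdafunc}. The key technical point is that $n$ is a multiple of $d$, so any additive $2\pi\ir$-ambiguities of the form $\frac{2\pi\ir m}{d}$, $m\in\mathbb Z$, that appear in the $g$-function jumps on $\mathbb R^-$ become $1$ after multiplying by $n$ and exponentiating, and hence do not affect the identities.

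For part (a), on $\Sigma_1^*\setminus[-x^*,0]$ I would simply multiply \eqref{rel:g1phi1:1} by $-n$ and exponentiate to obtain $e^{n(g_{1,-}-g_{1,+})} = e^{-2n\varphi_{1,+}} = e^{2n\varphi_{1,-}}$. On the remaining segment $[-x^*,0]\subset\Sigma_1^*$ (which only occurs when $d$ is odd), relation \eqref{rel:g1phi1:2} differs from this by an additive constant $-2\pi\ir$; since $n\in\mathbb Z$ we have $e^{-2\pi\ir n}=1$, so the same identity persists.

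For part (b), fix $1\leq k\leq\lfloor d/2\rfloor$. On $\Sigma_{2k}\setminus\mathbb R^-$, I would rewrite \eqref{rel:gkphik:1} as
\begin{equation*}
-n(g_{2k,+}(z)-g_{2k,-}(z)) = \mp 2n\varphi_{2k,\pm}(z) - \frac{2\ir n d\sin((2k-1)\pi/d)}{(d+1)\,t_0\,t_{d+1}^{1/d}}\,|z|^{\frac{d+1}{d}},
\end{equation*}
and exponentiate. Comparing the second exponential on the right-hand side with \eqref{def:lambdafunc} yields exactly $\lambda_{2k-1,n}(z)^{-1}$, which when moved to the other side gives \eqref{rel:psi2k:phi2k}. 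On $\mathbb R^-\subset\Sigma_{2k}$ (when the parities permit), the extra constant $-2\pi\ir(d-2k+1)/d$ in \eqref{rel:gkphik:2} contributes the factor $e^{2\pi\ir n(d-2k+1)/d}=1$ since $n$ is a multiple of $d$, so the identity extends. Part (c) is obtained in an entirely analogous way from \eqref{rel:gkphik:1}-\eqref{rel:gkphik:2} with $k$ replaced by $2k+1$; the sine factor $\sin(2k\pi/d)$ appearing in the jump matches $\lambda_{2k,n}$, and the $\mathbb R^-$ correction is again killed by the hypothesis $d\mid n$.

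There is no substantive obstacle here: the lemma is a formal consequence of the already-established additive jump identities together with the bookkeeping that the residual $2\pi\ir$-terms on the negative real axis are integer multiples of $2\pi\ir/d$ and hence are absorbed by $n\equiv 0\pmod d$. The only point that calls for minor care is verifying that in each of parts (b) and (c) the two equalities $e^{-2n\varphi_{k,+}}=e^{2n\varphi_{k,-}}$ hold on the nose; this is automatic because the ``$\pm$'' on the right-hand side of \eqref{rel:gkphik:1} refers to the two sides of the same contour, so the resulting exponentials coincide.
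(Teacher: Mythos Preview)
Your proposal is correct and follows exactly the same approach as the paper: the paper's own proof is a single sentence stating that the relations are immediate from \eqref{rel:g1phi1:1}--\eqref{rel:gkphik:2} since $n$ is a multiple of $d$. You have simply made explicit the bookkeeping (matching the sine factors with $\lambda_{k,n}$ and checking that the $2\pi\ir/d$ corrections on $\mathbb R^-$ disappear under exponentiation) that the paper leaves implicit.
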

\begin{proof}
The relations \eqref{rel:psi1:phi1}--\eqref{rel:psi2kp1:phi2kp1} are immediate from 
\eqref{rel:g1phi1:1}--\eqref{rel:gkphik:2}, since $n$ is a multiple of~$d$.
\end{proof}

\begin{lemma}
The following relations hold:
\begin{itemize}
\item[\rm(a)] For $z\in\Sigma_{1}^*$ and $k$ even, $2\leq k\leq d$,
\begin{equation}\label{rel:phikphikm1phikp1Sigma1}
\varphi_{k,+}(z)-\varphi_{k,-}(z)=\varphi_{k+1,-}(z)+\varphi_{k-1,-}(z),
\end{equation}
where $\varphi_{d+1}\equiv 0$.
\item[\rm (b)] For $z\in\Sigma_{2}$ and $k$ odd, we have
\begin{equation}\label{rel:phikphikm1phikp1Sigma2}
\varphi_{k,+}(z)-\varphi_{k,-}(z)= \begin{cases} 
\varphi_{2,-}(z)-\frac{\pi\ir}{d}, 	& \text{ if } k =1, \\
\varphi_{k+1,-}(z)+\varphi_{k-1,-}(z), & \text{ if } k \geq 3,
\end{cases}
\end{equation}
where $\varphi_{d+1}\equiv 0$.
\end{itemize}
\end{lemma}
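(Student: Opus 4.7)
The plan is to prove each identity by verifying that both sides have the same derivative along the contour, and then matching the single constant of integration at $z=0$. The arguments will rely on the differential relation $2t_0 \varphi_k' = \xi_k - \xi_{k+1}$ together with the $(d+1)$-sheeted Riemann surface, whose sheets $\mathcal{R}_m$ and $\mathcal{R}_{m+1}$ are glued across $\Sigma_m^*$, giving $\xi_{m,\pm} = \xi_{m+1,\mp}$ there. The key geometric fact is that $\Sigma_m^* = \{z^{d+1} \in \mathbb{R}^+\}$ for odd $m \geq 3$ and $\Sigma_m^* = \{z^{d+1} \in \mathbb{R}^-\}$ for even $m \geq 2$, so $\Sigma_1^*$ lies inside $\Sigma_m^*$ for every odd $m \geq 1$, while $\Sigma_2$ lies inside $\Sigma_m^*$ for every even $m \geq 2$.

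For part (a), fix $k$ even with $2 \leq k \leq d$ and let $z \in \Sigma_1^*$. The sheet structure yields $\xi_{k,\pm} = \xi_{k-1,\mp}$ and $\xi_{k+1,\pm} = \xi_{k+2,\mp}$, and substituting these into $2t_0(\varphi_{k,+} - \varphi_{k,-})' = (\xi_{k,+} - \xi_{k,-}) - (\xi_{k+1,+} - \xi_{k+1,-})$ gives $(\xi_{k-1,-} - \xi_{k-1,+}) - (\xi_{k+2,-} - \xi_{k+2,+})$; expanding $2t_0 (\varphi_{k-1,-} + \varphi_{k+1,-})'$ and using $\xi_{k,-} = \xi_{k-1,+}$, $\xi_{k+1,-} = \xi_{k+2,+}$ produces the same expression. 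To fix the constant I let $z \to 0$ along $[0, x^*]$. For $k \geq 2$ the definition \eqref{def:phik} gives $\varphi_{k,\pm}(0) = \mp(-1)^k \tfrac{\pi \ir}{d+1} \|\mu_k^*\|$, while the remaining boundary value $\varphi_{1,-}(0)$, needed in the case $k=2$, comes from \eqref{def:phi1}: using $\xi_{2,-} = \xi_{1,+}$ on $[0,x^*]$ together with \eqref{def:mu1} it reduces to $-\pi \ir\, \mu_1^*([0, x^*]) = -\pi \ir/(d+1)$ by the rotational symmetry of $\mu_1^*$. The match is then the arithmetic identity $\|\mu_{k-1}^*\| + \|\mu_{k+1}^*\| = 2\|\mu_k^*\|$ with $\|\mu_m^*\| = (d-m+1)/d$ (the convention $\varphi_{d+1} \equiv 0$ handles $k = d$), and rotational symmetry of all the measures propagates the identity from $[0, x^*]$ to every ray of $\Sigma_1^*$.

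Part (b) follows the same template with $\Sigma_1^*$ replaced by $\Sigma_2$ and odd $m$ replaced by even $m$, and for $k \geq 3$ odd the derivative check and the arithmetic match at $z=0$ are identical mutatis mutandis. The delicate case is $k = 1$: since $\xi_1$ is analytic across $\Sigma_2$ (because $\Sigma_2 \cap \Sigma_1^* = \{0\}$), only the jump of $\xi_2$ contributes, and $\xi_{2,\pm} = \xi_{3,\mp}$ on $\Sigma_2$ yields $(\varphi_{1,+} - \varphi_{1,-})' = \tfrac{1}{2t_0}(\xi_{3,+} - \xi_{3,-}) = \varphi_{2,-}'$. To identify the constant $-\pi \ir/d$ at $z=0$, I take $\arg z = \pi/(d+1)$, so that the $\pm$-sides of $\Sigma_2$ are $S_1^-$ and $S_0^+$; the symmetry $\varphi_1(\omega z) = \varphi_1(z)$ identifies the limit of $\varphi_{1,+}$ on $\Sigma_2$ at $0$ with the boundary value of $\varphi_1$ from $S_0^-$ on $\Sigma_1^*$ at $0$, and the limit of $\varphi_{1,-}$ on $\Sigma_2$ at $0$ with the boundary value from $S_0^+$. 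Their difference is thus $-2\pi \ir/(d+1)$, which matches $\varphi_{2,-}(0) - \pi \ir/d = -\tfrac{\pi \ir (d-1)}{d(d+1)} - \tfrac{\pi \ir}{d} = -\tfrac{2\pi \ir}{d+1}$ exactly.

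The main obstacle is this last matching: the correction term $-\pi \ir/d$ is not predicted by the sheet structure and is forced by reconciling the different integration basepoints appearing in the definitions of $\varphi_1$ (integration from $\omega^\ell x^*$) and $\varphi_2$ (integration from $0$); its precise value encodes the mass balance $\|\mu_1^*\| - \|\mu_2^*\| = 1/d$ between the first two components of the equilibrium vector. Once this one constant is pinned down via the careful sector-by-sector analysis of $\varphi_1$ at the origin, the remainder of part (b) follows from the same pattern as part (a).
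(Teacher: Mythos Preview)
Your proof is correct and takes a genuinely different route from the paper. The paper's argument goes through the $g$-functions: it applies the identity \eqref{rel:gksphik} (relating $2g_k - g_{k-1} - g_{k+1}$ to $\varphi_k$ plus an explicit $z^{(d+1)/d}$ term) in the two adjacent sectors $S_0^+$ and $S_0^-$, takes boundary values on $(0,x^*)$, subtracts, and then invokes \eqref{rel:gkphik:1} to convert the resulting jumps of $g_{k\pm1}$ back into $\varphi$-functions, after which the explicit $z^{(d+1)/d}$ terms cancel and only the claimed identity survives. Your argument bypasses the $g$-functions entirely: you differentiate both sides using $2t_0\varphi_k'=\xi_k-\xi_{k+1}$, check equality of derivatives directly from the gluing relations $\xi_{m,\pm}=\xi_{m+1,\mp}$ on $\Sigma_m^*$, and then pin down the single constant of integration by evaluating each side at $z=0$ using the explicit additive constants in \eqref{def:phik} and the rotational symmetry of $\mu_1^*$ for $\varphi_{1,-}(0)=-\pi\ir/(d+1)$. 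Your route is more elementary and self-contained in that it does not rely on parts (d)--(e) of Lemma~\ref{lemma:phifunc:1}; the paper's route is shorter once those relations are in hand. One small point to tighten: for the top value $k=d$ in part~(a) (and $k=d$ in part~(b) when $d$ is odd) your derivative computation as written invokes $\xi_{k+2}$, which does not exist; the correct observation there is that $\xi_{d+1}$ is analytic across the relevant star (its only cut is $\Sigma_d$), so $\xi_{d+1,+}=\xi_{d+1,-}$ and the right-hand side reduces to $\varphi_{d-1,-}'$ alone, matching the convention $\varphi_{d+1}\equiv 0$. You acknowledge the edge case, but it would be cleaner to spell this out rather than let the formula with $\xi_{k+2}$ stand.
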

\begin{proof}
(a) By \eqref{symm:phifunc} it is enough to prove \eqref{rel:phikphikm1phikp1Sigma1} on $(0,x^{*})$. 
Assume that $k$ is even with $k\geq 4$; the same proof with a slight variation can be used for $k=2$. Applying \eqref{rel:gksphik} for  $z \in S_{0}^{+}$,
\begin{equation}\label{rel:gks:1}
	g_{k-1}(z)+g_{k+1}(z)-g_{k-2}(z)-g_{k+2}(z)
	=2\sum_{i=k-1}^{k+1}\varphi_{i}(z)+\frac{d(\omega_{d}^{\frac{k}{2}}-\omega_{d}^{-(\frac{k}{2}-1)})z^{\frac{d+1}{d}}}{(d+1)\,
t_{0}\,t_{d+1}^{1/d}},
\end{equation}
and similarly for $z\in S_{0}^{-}$,
\begin{equation}\label{rel:gks:2}
	g_{k-1}(z)+g_{k+1}(z)-g_{k-2}(z)-g_{k+2}(z) =2\sum_{i=k-1}^{k+1}\varphi_{i}(z)
+\frac{d(\omega_{d}^{-\frac{k}{2}}-\omega_{d}^{(\frac{k}{2}-1)}) z^{\frac{d+1}{d}}}{(d+1)\,t_{0}\,t_{d+1}^{1/d}}.
\end{equation}
We let $z$ tend to the interval $(0,x^*)$ in both \eqref{rel:gks:1}--\eqref{rel:gks:2}, which
leads to a $+$-boundary value in \eqref{rel:gks:1} and a $-$-boundary value in \eqref{rel:gks:2}.
We subtract the results and  we get for $z \in (0,x^*)$,
\begin{multline*}
	g_{k-1,+}(z)-g_{k-1,-}(z)+g_{k+1,+}(z)-g_{k+1,-}(z)\\
	=2\sum_{i=k-1}^{k+1}(\varphi_{i,+}(z)-\varphi_{i,-}(z))+\frac{2\ir d\, (\sin(\pi k/d)+\sin(\pi(k-2)/d))}{(d+1)\,t_{0}\,t_{d+1}^{1/d}},
\end{multline*}
where we used the fact that for $i=k-2, k+2,$ the boundary values $g_{i,\pm}(z)$ are the same. Now 
\eqref{rel:phikphikm1phikp1Sigma1} follows immediately from \eqref{rel:gkphik:1}. In the 
case $k=2$ one also uses \eqref{rel:g1g2phi1:1}.

\medskip
(b) The proof of \eqref{rel:phikphikm1phikp1Sigma2} is completely analogous, 
and again by \eqref{symm:phifunc} it is sufficient to prove these relations for $z\in\Sigma_{2}$ with 
$\arg z=\pi/(d+1)$. From \eqref{rel:gksphik} and the symmetry properties of the $g$ and $\varphi$ functions 
one deduces that for $z \in S_{1}^{-}$ and $2\leq k\leq d$,
\[
2g_{k}(z)-g_{k-1}(z)-g_{k+1}(z)=2\varphi_{k}(z)
+(-1)^{k}\,\frac{d(\omega_{d}^{\lceil\frac{k-3}{2}\rceil}-\omega_{d}^{-\lfloor\frac{k+1}{2}\rfloor})}{(d+1)\,
t_{0}\,t_{d+1}^{1/d}}\,z^{\frac{d+1}{d}}.
\]
The details are left to the reader.
\end{proof}

\subsection{Transformation $\mathbf{U} \mapsto \mathbf{T}$}

We now define the third transformation of the steepest descent analysis. 

\begin{definition}
The matrix valued function $\mathbf{T}(z)$ is defined by 
\begin{equation}\label{def:T}
\mathbf{T}(z) =L^{n}\, \mathbf{U}(z)\, \mathbf{G}(z)^{n}\, L^{-n},\qquad z\in\mathbb{C}\setminus(\Sigma_{2}\cup\Sigma_{3}),
\end{equation}
where $\mathbf{G}(z)$ and $L$ are the diagonal matrices of size $(d+1)\times(d+1)$ given by
\begin{align*}
\mathbf{G}(z) & =\diag (e^{-g_{1}(z)}, e^{g_{1}(z)-g_{2}(z)}, e^{g_{2}(z)-g_{3}(z)}, \ldots, e^{g_{d-1}(z)-g_{d}(z)}, e^{g_{d}(z)}),\\
L &=\diag(e^{-\ell_{1}}, 1, 1, \ldots, 1),
\end{align*}
where $\ell_{1}$ is the constant in \eqref{varcondmu1}.
\end{definition}

\begin{rhp}\label{RHPforT}
The matrix $\mathbf{T}$ is the solution to the following RH problem:
\begin{itemize}
\item[$\bullet$] $\mathbf{T}$ is analytic in $\mathbb{C}\setminus \Sigma_T $, where $\Sigma_T = \Sigma_U = \Sigma_2 \cup \Sigma_3$.
\item[$\bullet$] $\mathbf{T}_{+}=\mathbf{T}_{-} \mathbf{J}_{T} $ on $\Sigma_{2}\cup\Sigma_{3}$ with jump matrix  (where we put $\varphi_{d+1} \equiv 0$)
 \begin{align} \label{jump:T:Sigma1}
    \mathbf{J}_T & = \diag(e^{-2n \varphi_{1,+}}, e^{-2n \varphi_{1,-}}, e^{-2n \varphi_{3,+}}, e^{-2n \varphi_{3,-}}, \ldots ) 
			+ \sum_{j=1}^{\lceil \frac{d}{2} \rceil} E_{2j-1,2j} && \text{ on } \Sigma_1^*, \\
			\label{jump:T:Sigma1hat}
    \mathbf{J}_T & = \diag(1,1, e^{-2n \varphi_{3,+}}, e^{-2n \varphi_{3,-}}, \ldots) + e^{2n \varphi_1} E_{1,2} + 
			\sum_{j=2}^{\lceil \frac{d}{2} \rceil} E_{2j-1,2j} && \text{ on }
			\Sigma_1 \setminus \Sigma_1^*, \\
			\label{jump:T:Sigma3}
		\mathbf{J}_T & = \diag(1,1, e^{-2n \varphi_{3,+}}, e^{-2n \varphi_{3,-}}, \ldots) + 
			\sum_{j=2}^{\lceil \frac{d}{2} \rceil} E_{2j-1,2j} && \text{ on }
			\Sigma_3 \setminus \Sigma_1, \\
			\label{jump:T:Sigma2}
		\mathbf{J}_T & = \diag(1, \omega^{-1} e^{-2n \varphi_{2,+}}, \omega e^{-2n \varphi_{2,-}}, \omega^{-2} e^{-2n \varphi_{4,+}}, \ldots)
			+ \sum_{j=1}^{\lfloor \frac{d}{2} \rfloor} E_{2j,2j+1} && \text{ on } \Sigma_2.
 \end{align}
\item[$\bullet$] As $z\rightarrow\infty$ 
\begin{equation}\label{asymp:T}
	\mathbf{T}(z)=\Big(I_{d+1} +O\left( z^{-\frac{2}{d}} \right) \Big) 
		\begin{pmatrix} 1 & 0 \\ 0 & \mathbf{A}(z) \end{pmatrix}.
\end{equation}
\item[$\bullet$] $\mathbf{T}$ satisfies the same endpoint condition \eqref{endpoint:Y} as $\mathbf{Y}$.
\end{itemize}
\end{rhp}

\begin{remark}
The jump matrices are of size $(d+1) \times (d+1)$ and so the pattern on the diagonal in \eqref{jump:T:Sigma1}--\eqref{jump:T:Sigma2}
continues until we reach size $(d+1) \times (d+1)$. If $d$ is even, then the last diagonal entry in \eqref{jump:T:Sigma1}--\eqref{jump:T:Sigma3}
is $e^{-2n \varphi_{d+1,+}} \equiv 1$, because of our convention that $\varphi_{d+1} \equiv 0$.
If $d$ is odd, then the last diagonal entry in \eqref{jump:T:Sigma2} is 
$\omega^{d+1} e^{-2n \varphi_{d+1,+}} \equiv -1$.
\end{remark}

\begin{proof}
By definition \eqref{def:T}, the matrix $\mathbf{J}_{T}(z)$ is given by
\[
\mathbf{J}_{T}(z)=\mathbf{T}_{-}^{-1}(z) \mathbf{T}_{+}(z)=L^{n}\,\mathbf{G}_{-}(z)^{-n}\,\mathbf{J}_{U}(z)\,\mathbf{G}_{+}(z)^{n}\,L^{-n},
\]
and recall that $\mathbf{G}(z)$ and $L$ are diagonal matrices.

The $k$ th diagonal entry of $\mathbf{J}_U$ is multiplied by 
\[ e^{n(g_{k-1,+}-g_{k,+})} e^{-n(g_{k-1,-}-g_{k,-})} \]
where we put $g_0 \equiv 0$ and $g_{d+1} \equiv 0$. On $\Sigma_k$ we have
that $e^{ng_{k-1,+}} = e^{n g_{k-1,-}}$ and then it reduces to multiplication
by $e^{n(g_{k,-}-g_{k,+})}$. Lemma \ref{lemma:phifunc:2} expresses this in terms of
$e^{-2n \varphi_{k,+}}$ and $e^{-2n \varphi_{k,-}}$. It all combines nicely
with the entries $\lambda_{k,n}$ and $\lambda_{k,n}^{-1}$ and this
gives the diagonal entries in the jump matrices \eqref{jump:T:Sigma1}--\eqref{jump:T:Sigma2}.

The $(1,2)$ entry in the jump matrix on $\Sigma_1$ is multiplied by
$e^{n(g_{1,+}(z) + g_{1,-}(z) - g_{2,+}(z) - \ell_1)}$, 
and so we obtain
\[ e^{n(- \frac{1}{t_0} V_1(z) + g_{1,+}(z) + g_{1,-}(z) - g_{2,+}(z) - \ell_1)} 
	= \begin{cases} 1 & \text{ on } \Sigma_1^*,  \\
		e^{2n \varphi_1(z)} & \text{ on } \Sigma_1 \setminus \Sigma_1^*,
		\end{cases} \]
by \eqref{rel:g1g2:1}  and \eqref{rel:g1g2phi1:1}.
We also use here that $n$ is a multiple of $d$.
This gives the $(1,2)$ entries in the jumps in \eqref{jump:T:Sigma1} and \eqref{jump:T:Sigma1hat}.

The entry $(k,k+1)$ with $k \geq 2$ is multiplied by
$e^{n(g_{k,+}(z) + g_{k,-}(z) - g_{k+1,+}(z) - g_{k-1,-}(z))}$.
The relations \eqref{rel:gfunc:1}--\eqref{rel:gfunc:2}  show that this is $1$
for $z \in \Sigma_k$. The expressions with even $k$ are relevant on $\Sigma_2$ and
those with odd $k$ on $\Sigma_3$. All together this implies that 
$(\mathbf{J}_U)_{k,k+1} = 1$ on $\Sigma_2$ if $k$ is even and
$(\mathbf{J}_U)_{k,k+1} = 1$ on $\Sigma_3$ is $k \geq 3$ is odd. 
This proves the jumps in the RH problem.

The asymptotic condition in the RH problem for $\mathbf{T}$ follows from \eqref{asymp:U} and 
the fact that
\[ \mathbf{G}(z)^n = \diag(z^{-n}, z^{\frac{n}{d}} I_d) \times (I_{d+1} + O(z^{-\frac{d+1}{d}})) 
	\qquad \text{ as } z \to \infty \]
because of the normalizations of the measures $\|\mu_k^*\| = 1 - \frac{k-1}{d}$,
and 
\begin{align*} g_1(z) & =  \log z + O(z^{-d-1}) \\
		g_k(z) & = \|\mu_k^*\| \log z + O(z^{- \frac{d+1}{d}}) \qquad \text{for } k \geq 2
		\end{align*}
		as $z \to \infty$.
\end{proof}

\section{Fourth transformation $\mathbf{T}\mapsto \mathbf{S}$}

In the fourth transformation of the RH steepest descent analysis we construct lenses 
around the stars $\Sigma_1^*$, $\Sigma_2$ and $\Sigma_3$. 
Before describing this construction we analyze certain properties of the $\varphi$-functions.

Recall that $\varphi_{1}$ is an analytic function on $\mathbb{C}\setminus(\Sigma_{1}^* \cup\Sigma_{2})$. 
As a consequence of \eqref{rel:g1phi1:1}--\eqref{rel:g1phi1:2} we know that $\Real \varphi_{1,\pm}=0$ on $\Sigma_{1}^*$. 
Moreover, since $\varphi_{2,\pm}$ is purely imaginary on $\Sigma_{2}$ (cf. \eqref{rel:gkphik:1}--\eqref{rel:gkphik:2}), 
it follows from \eqref{rel:phikphikm1phikp1Sigma2}  that $\Real \varphi_{1,+}=\Real \varphi_{1,-}$ on $\Sigma_{2}$. Therefore, 
the harmonic function $\Real \varphi_{1}$ on $\mathbb{C}\setminus(\Sigma_{1}^*\cup\Sigma_{2})$ can be extended 
continuously to the whole complex plane.

Consider now the set
\[
D:=\{z\in\mathbb{C}: \Real \varphi_{1}(z)<0\}.
\]
By the maximum principle for harmonic functions, the boundary $\partial D$ of $D$ consists of a finite union of 
analytic arcs that start and end on $\Sigma_{1}^*\cup \Sigma_{2}$ or at infinity. Since 
$\varphi_{1,\pm}(z)=\pm \pi \ir\, \mu_{1}^{*}([z, \omega^{\ell} x^{*}])$ for 
$z\in [0, \omega^{\ell} x^{*}]\subset\Sigma_{1}^*$, $\ell=0,\ldots,d$, the Cauchy-Riemann equations allow 
us to conclude that for every $z\in [0, \omega^{\ell} x^{*})$, there exists a neighborhood $U$ of 
$z$ such that $\Real\varphi_{1}>0$ on $U\setminus[0,\omega^{\ell} x^{*})$. 
Here we use that the density of $\mu_{1}^{*}$ is strictly positive at every point $z\in[0,\omega^{\ell} x^{*})$. 
Hence the closed set $\mathbb{C}\setminus D$ contains in its interior the segments 
$[0, \omega^{\ell} x^{*})$.

Recall that $-\ir (\xi_2 - \xi_1)_+ > 0$ on $[0, x^*)$, since this is the density of $\mu_1^*$ up
to a positive constant, see \eqref{def:mu1} with $\xi_{1,-}=\xi_{2,+}$ on $\Sigma_{1}^{*}$. 
It also vanishes as a square root at $x^*$. Then it follows
that $\xi_2(x) - \xi_1(x) > 0$ for $x \in (x^*, \widehat{x}]$ provided we take $\widehat{x}$
sufficiently close to $x^*$. We deduce from \eqref{def:phi1} that $(x^{*},\widehat{x}]\subset D$ 
and $x^{*}\in\partial D$. Note also that
\[
\varphi_{1}(z)=-c(z-x^{*})^{3/2}+O((z-x^{*})^{5/3}), \qquad \text{as } z\rightarrow x^{*},
\]
where $c>0$,  
which implies that $\partial D$ makes angles $\pm \pi/3$ with $[0,x^{*}]$ at $x^{*}$. 
By symmetry, the same properties hold for the other segments $(\omega^{\ell}x^{*}, \omega^{\ell} \widehat{x}]$ 
and $[0, \omega^{\ell} x^{*}]$.

On the other hand, observe that
\begin{equation}\label{asymp:phi1}
2\varphi_{1}(z)=\frac{t_{d+1}}{(d+1)\,t_{0}}\, z^{d+1}+O(z^{\frac{d+1}{d}}),\qquad z\rightarrow\infty,
\end{equation}
which follows from \eqref{rel:g1g2phi1:1}--\eqref{rel:g1g2phi1:2}. From \eqref{asymp:phi1} 
it is easy to deduce that the analytic arcs of $\partial D$ that start at the points $\omega^{\ell} x^{*}$ 
cannot end at infinity and therefore necessarily end at points on $\Sigma_{2}$.

As a consequence of \eqref{rel:gkphik:1}--\eqref{rel:gkphik:2} we know that for $z\in\Sigma_{k}$,
\[
\ir\varphi_{k,+}(z)=-\ir\varphi_{k,-}(z)=\pi\mu_{k}^{*}([0,z])+
\frac{d\,\sin( \frac{(k-1)\pi}{d})}{(d+1)\,t_{0}\,t_{d+1}^{1/d}}\,|z|^{\frac{d+1}{d}}-
	\frac{\pi}{(d+1)} \| \mu_k^*\|,
\]
hence $\pm\ir\varphi_{k,\pm}$ is strictly increasing along each ray of $\Sigma_{k}$. 
Again from the Cauchy-Riemann equations it follows that $\Real \varphi_{k}>0$ on both sides of $\Sigma_{k}$.

We now describe the construction of the lenses around the stars $\Sigma_1^*$, $\Sigma_2$ and $\Sigma_3$. They are shown in 
Figure~\ref{lensesaroundstars} in the case $d=3$. We use 
\[
L_{1}=L_{1}^{+}\cup L_{1}^{-} \cup \Sigma_1^*
\]
to denote the lens around $\Sigma_{1}^*$, where $L_{1}^{+}$ ($L_{1}^{-}$) is the part of 
$L_{1}$ that lies on the $+$ side ($-$ side) of $\Sigma_{1}^*$. The boundary $\partial L_{1}$ of $L_{1}$ 
intersects $\Sigma_{3}$ at the points $\omega^{\ell}\,x^{*}$, $\ell=0,\ldots,d$ and intersects 
$\Sigma_{2}$ at points at a positive distance $\delta_{1}>0$ from the origin. By the properties of the 
$\varphi$-functions discussed above, it is possible to take $L_{1}$ so that 
$\partial L_{1}\setminus\{\omega^{\ell}\,x^{*}\}_{\ell=0}^{d}$ is contained in the regions where 
$\Real \varphi_{k}>0$, for every odd $k\geq 1$.

Analogously we construct the lens
\[
L_{2}=L_{2}^{+}\cup L_{2}^{-} \cup \Sigma_2
\]
around $\Sigma_{2}$. It is chosen so that $\partial L_{2}$ lies in the regions where $\Real \varphi_{k}>0$, 
for every even $k\geq 2$. The boundary $\partial L_{2}$ consists of $2(d+1)$ rays that meet $\Sigma_{1}$ at 
points at a positive distance $\delta_{2}>0$ from the origin, and have asymptotic angles 
$\frac{(2\ell+1)\pi}{d+1}\pm \varepsilon$, for some small $\varepsilon>0$.

Finally, we construct a similar lens
\[
L_{3}=L_{3}^{+}\cup L_{3}^{-} \cup \Sigma_3
\]
around $\Sigma_{3}$. We choose it such that $L_1 \subset L_3$, and the boundary $\partial L_3$ 
consists of $\partial L_1 \cap L_2$ and infinite rays that emanate from the intersection points 
of $\partial L_{1}$ and $\partial L_{2}$, see Figure~\ref{lensesaroundstars}. 
These rays are chosen so that they are contained in the regions where $\Real \varphi_{k}>0$, 
for every odd $k\geq 3$, and with asymptotic angles $\frac{2\pi\ell}{d+1}\pm\varepsilon$.

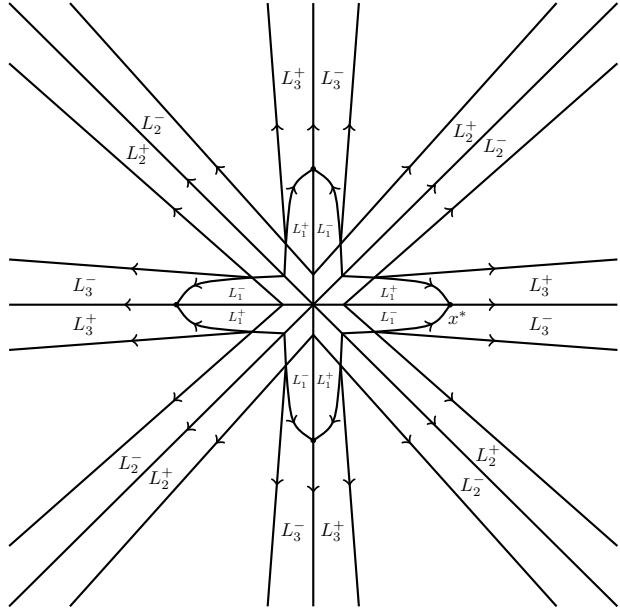
\begin{figure}[t]
\begin{center}
\begin{tikzpicture}[scale = 2,line width = .5]
\draw [thick, postaction = decorate, decoration = {markings, mark = at position .8 with {\arrow[black]{>};}}, decoration = {markings, mark = at position .2 with {\arrow[black]{<};}}] (-2,0) -- (2,0);
\draw [thick,postaction = decorate, decoration = {markings, mark = at position .8 with {\arrow[black]{>};}}, decoration = {markings, mark = at position .2 with {\arrow[black]{<};}}] (0,-2) -- (0,2);
\draw [thick, postaction = decorate, decoration = {markings, mark = at position .7 with {\arrow[black]{>};}}, decoration = {markings, mark = at position .3 with {\arrow[black]{<};}}] (-2,-2) -- (2,2);
\draw [thick, postaction = decorate, decoration = {markings, mark = at position .7 with {\arrow[black]{>};}}, decoration = {markings, mark = at position .3 with {\arrow[black]{<};}}] (-2,2) -- (2,-2);
\filldraw [black] (0.9,0) circle (0.4pt);
\filldraw [black] (0,0.9) circle (0.4pt);
\filldraw [black] (-0.9,0) circle (0.4pt);
\filldraw [black] (0,-0.9) circle (0.4pt);
\draw [thick,postaction = decorate, decoration = {markings, mark = at position .4 with {\arrow[black]{>};}}] (0.2,0) -- (2,1.6);
\draw [thick,postaction = decorate, decoration = {markings, mark = at position .4 with {\arrow[black]{>};}}] (0.2,0) -- (2,-1.6);
\draw [rotate=90,thick,postaction = decorate, decoration = {markings, mark = at position .4 with {\arrow[black]{>};}}] (0.2,0) -- (2,1.6);
\draw [rotate=90,thick,postaction = decorate, decoration = {markings, mark = at position .4 with {\arrow[black]{>};}}] (0.2,0) -- (2,-1.6);
\draw [rotate=180,thick,postaction = decorate, decoration = {markings, mark = at position .4 with {\arrow[black]{>};}}] (0.2,0) -- (2,1.6);
\draw [rotate=180,thick,postaction = decorate, decoration = {markings, mark = at position .4 with {\arrow[black]{>};}}] (0.2,0) -- (2,-1.6);
\draw [rotate=270,thick,postaction = decorate, decoration = {markings, mark = at position .4 with {\arrow[black]{>};}}] (0.2,0) -- (2,1.6);
\draw [rotate=270,thick,postaction = decorate, decoration = {markings, mark = at position .4 with {\arrow[black]{>};}}] (0.2,0) -- (2,-1.6);
\draw (0.95,0) node[scale=0.7, below]{$x^{*}$};
\draw [thick, postaction = decorate, decoration = {markings, mark = at position .5 with {\arrow[black]{>};}}] (0.4,0.18) -- (2,0.3);
\draw [thick, smooth, postaction = decorate, decoration = {markings, mark = at position .7 with {\arrow[black]{>};}}] (0.4,0.18).. controls (0.8,0.15) .. (0.9,0);
\draw [thick, smooth, postaction=decorate, decoration = {markings, mark = at position .7 with {\arrow[black]{>};}}] (0.4,-0.18) .. controls (0.8,-0.15) .. (0.9,0);
\draw [thick, smooth, postaction = decorate, decoration = {markings, mark = at position .7 with {\arrow[black]{>};}}, rotate=90] (0.4,0.18).. controls (0.8,0.15) .. (0.9,0);
\draw [thick, smooth, postaction=decorate, decoration = {markings, mark = at position .7 with {\arrow[black]{>};}}, rotate=90] (0.4,-0.18) .. controls (0.8,-0.15) .. (0.9,0);
\draw [thick, smooth, postaction = decorate, decoration = {markings, mark = at position .7 with {\arrow[black]{>};}},rotate=180] (0.4,0.18).. controls (0.8,0.15) .. (0.9,0);
\draw [thick, smooth, postaction=decorate, decoration = {markings, mark = at position .7 with {\arrow[black]{>};}},rotate=180] (0.4,-0.18) .. controls (0.8,-0.15) .. (0.9,0);
\draw [thick, smooth, postaction = decorate, decoration = {markings, mark = at position .7 with {\arrow[black]{>};}},rotate=270] (0.4,0.18).. controls (0.8,0.15) .. (0.9,0);
\draw [thick, smooth, postaction=decorate, decoration = {markings, mark = at position .7 with {\arrow[black]{>};}},rotate=270] (0.4,-0.18) .. controls (0.8,-0.15) .. (0.9,0);
\draw [thick,smooth] (0.19,0.19) -- (0.4,0.18);
\draw [thick,smooth] (0.19,-0.19) -- (0.4,-0.18);
\draw [thick,smooth, rotate=90] (0.19,0.19) -- (0.4,0.18);
\draw [thick,smooth, rotate=90] (0.19,-0.19) -- (0.4,-0.18);
\draw [thick,smooth,rotate=180] (0.19,0.19) -- (0.4,0.18);
\draw [thick,smooth,rotate=180] (0.19,-0.19) -- (0.4,-0.18);
\draw [thick,smooth,rotate=270] (0.19,0.19) -- (0.4,0.18);
\draw [thick,smooth,rotate=270] (0.19,-0.19) -- (0.4,-0.18);
\draw [thick,postaction = decorate, decoration = {markings, mark = at position .5 with {\arrow[black]{>};}}] (0.4,-0.18) -- (2,-0.3);
\draw [thick, rotate=90,postaction = decorate, decoration = {markings, mark = at position .5 with {\arrow[black]{>};}}] (0.4,0.18) -- (2,0.3);
\draw [thick, rotate=90,postaction = decorate, decoration = {markings, mark = at position .5 with {\arrow[black]{>};}}] (0.4,-0.18) -- (2,-0.3);
\draw [thick, rotate=180,postaction = decorate, decoration = {markings, mark = at position .5 with {\arrow[black]{>};}}] (0.4,0.18) -- (2,0.3);
\draw [thick, rotate=180,postaction = decorate, decoration = {markings, mark = at position .5 with {\arrow[black]{>};}}] (0.4,-0.18) -- (2,-0.3);
\draw [thick, rotate=270,postaction = decorate, decoration = {markings, mark = at position .5 with {\arrow[black]{>};}}] (0.4,0.18) -- (2,0.3);
\draw [thick, rotate=270,postaction = decorate, decoration = {markings, mark = at position .5 with {\arrow[black]{>};}}] (0.4,-0.18) -- (2,-0.3);
\draw (1.5,0.13) node[scale=0.7]{$L_{3}^{+}$};
\draw (1.5,-0.13) node[scale=0.7]{$L_{3}^{-}$};
\draw [rotate=90] (1.5,0.13) node[scale=0.7]{$L_{3}^{+}$};
\draw [rotate=90] (1.5,-0.13) node[scale=0.7]{$L_{3}^{-}$};
\draw [rotate=180] (1.5,0.13) node[scale=0.7]{$L_{3}^{+}$};
\draw [rotate=180] (1.5,-0.13) node[scale=0.7]{$L_{3}^{-}$};
\draw [rotate=270] (1.5,0.13) node[scale=0.7]{$L_{3}^{+}$};
\draw [rotate=270] (1.5,-0.13) node[scale=0.7]{$L_{3}^{-}$};
\draw (0.5,0.08) node[scale=0.5]{$L_{1}^{+}$};
\draw (0.5,-0.08) node[scale=0.5]{$L_{1}^{-}$};
\draw [rotate=90](0.5,0.08) node[scale=0.5]{$L_{1}^{+}$};
\draw [rotate=90](0.5,-0.08) node[scale=0.5]{$L_{1}^{-}$};
\draw [rotate=180](0.5,0.08) node[scale=0.5]{$L_{1}^{+}$};
\draw [rotate=180](0.5,-0.08) node[scale=0.5]{$L_{1}^{-}$};
\draw [rotate=270](0.5,0.08) node[scale=0.5]{$L_{1}^{+}$};
\draw [rotate=270](0.5,-0.08) node[scale=0.5]{$L_{1}^{-}$};
\draw (1,1.15) node[scale=0.7]{$L_{2}^{+}$};
\draw [rotate=90](1,1.15) node[scale=0.7]{$L_{2}^{+}$};
\draw [rotate=180](1,1.15) node[scale=0.7]{$L_{2}^{+}$};
\draw [rotate=270](1,1.15) node[scale=0.7]{$L_{2}^{+}$};
\draw (1.2,1.05) node[scale=0.7]{$L_{2}^{-}$};
\draw [rotate=90](1.2,1.05) node[scale=0.7]{$L_{2}^{-}$};
\draw [rotate=180](1.2,1.05) node[scale=0.7]{$L_{2}^{-}$};
\draw [rotate=270](1.2,1.05) node[scale=0.7]{$L_{2}^{-}$};
\end{tikzpicture}
\caption{The lenses $L_{1}$, $L_{2}$ and $L_{3}$ around the stars $\Sigma_{1}^*$, $\Sigma_{2}$ and $\Sigma_{3}$, in the case $d=3$.}
\label{lensesaroundstars}
\end{center}
\end{figure}

\begin{definition} \label{def:Sz}
Let $\mathbf{S}(z)$ be the matrix-valued function defined as follows. 
We set $\mathbf{S}(z)=\mathbf{T}(z)$ for $z$ outside the lenses $L_2$ and $L_3$,
and
\begin{align}\label{def:S:L3pm:1}
	\mathbf{S} & =\mathbf{T} \left(I_{d+1} \mp \sum_{j=1}^{\lceil \frac{d}{2} \rceil} e^{-2n \varphi_{2j-1}} E_{2j,2j-1} \right)
	&& \text{ in } L_1^{\pm} \setminus L_2, \\
	\label{def:S:L3pm:2}
	\mathbf{S} & = \mathbf{T} \left(I_{d+1} \mp \sum_{j=2}^{\lceil \frac{d}{2} \rceil} e^{-2n \varphi_{2j-1}} E_{2j,2j-1} \right)
	&& \text{ in } L_3^{\pm} \setminus L_1, \\
	\label{def:S:L3pm:3}
	\mathbf{S} & = \mathbf{T} \left(I_{d+1} \mp \sum_{j=1}^{\lfloor \frac{d}{2} \rfloor} \omega^{\mp j} e^{-2n \varphi_{2j}} E_{2j+1,2j} \right)
	&& \text{ in } L_2^{\pm} \setminus L_1. \\
	\label{def:S:L3pm:4}
	\mathbf{S} & = \mathbf{T} \left(I_{d+1} + \sum_{j=2}^{d+1} \sum_{k=1}^{j-1} c_{j,k}^{\pm} e^{-2n(\varphi_k + \cdots + \varphi_{j-1})} E_{j,k}\right)
	&& \text{ in } L_1^{\pm} \cap L_2^{\mp}.
	\end{align}
\end{definition}

The definition of $\mathbf{S}$ in the intersection of the lenses, see \eqref{def:S:L3pm:4}, involves
certain constants $c_{j,k}^{\pm}$ with $1 \leq k < j \leq d+1$, that are not yet defined. We collect these numbers in two lower
triangular matrices
\begin{equation} \label{Cpm} 
	C^\pm  = \begin{pmatrix} 1 & 0 & 0 & \cdots & 0 \\
	c_{2,1}^\pm & 1 & 0 & \cdots & 0 \\
	c_{3,1}^\pm & c_{3,2}^\pm & 1 & \cdots & 0 \\
	\vdots & \vdots & \vdots & \ddots & \vdots \\
	c_{d+1,1}^\pm & c_{d+1,2}^\pm & c_{d+1,3}^\pm & \cdots & 1 \end{pmatrix}.
	\end{equation}
	
Recall that $\sigma_2$ is the Pauli  matrix $\sigma_2 = \begin{pmatrix} 0 & -\ir \\ \ir & 0 \end{pmatrix}$ so that
\[ \ir \sigma_2 = \begin{pmatrix} 0 & 1 \\ -1 & 0\end{pmatrix}. \]
This matrix will appear on the diagonal in the block diagonal matrices \eqref{eq:Cpm1}-\eqref{eq:Cpm2}.
\begin{lemma} \label{lemma:Cpmexist}
There exist two lower triangular matrices $C^+$ and $C^-$ of the form \eqref{Cpm}
such that
\begin{equation} \label{eq:Cpm1} 
	\left(I_{d+1} + \sum_{j=1}^{\lceil \frac{d}{2} \rceil} E_{2j-1,2j} \right) C^+ = C^-
	\times \begin{cases}
	\diag(\ir\sigma_2, \ldots, \ir\sigma_2) & \quad \text{ if $d$ is odd,} \\
	\diag(\ir\sigma_2, \ldots, \ir\sigma_2,1) & \quad \text{ if $d$ is even,} 
	\end{cases} \end{equation}
and
\begin{equation} \label{eq:Cpm2} 
	\left( \diag(1, \omega^{-1}, \omega, \omega^{-2}, \cdots) + \sum_{j=1}^{\lfloor \frac{d}{2} \rfloor} E_{2j,2j+1}\right)
	C^- = C^+ \times \begin{cases}
	\diag(1, \ir \sigma_2, \ldots, \ir \sigma_2,-1) & \quad \text{ if $d$ is odd,} \\
	\diag(1, \ir \sigma_2, \ldots, \ir \sigma_2) & \quad \text{ if $d$ is even.}
	\end{cases} 	
	\end{equation}
\end{lemma}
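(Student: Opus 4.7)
My approach is to collapse the two matrix equations into a single intertwining relation for $C^+$, analyze its solvability by comparing spectra, and then verify the triangular structure of both $C^+$ and the derived $C^-$.

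Write $U_3 := I_{d+1}+\sum_{j=1}^{\lceil d/2\rceil}E_{2j-1,2j}$ and $U_2 := \diag(1,\omega^{-1},\omega,\omega^{-2},\ldots)+\sum_{j=1}^{\lfloor d/2\rfloor}E_{2j,2j+1}$ for the upper-triangular factors on the left in \eqref{eq:Cpm1}--\eqref{eq:Cpm2}, and $B_3$, $B_2$ for the block-diagonal factors on the right. Multiplying \eqref{eq:Cpm1} on the left by $U_2$ and substituting \eqref{eq:Cpm2} produces the equivalent single condition
\[
U_2 U_3 \, C^+ = C^+ \, B_2 B_3 .
\]
Conversely, for any invertible $C^+$ satisfying this, defining $C^- := U_3 C^+ B_3^{-1}$ makes \eqref{eq:Cpm1} hold by construction, and \eqref{eq:Cpm2} becomes equivalent to the displayed equation. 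Hence the lemma reduces to producing a unit lower triangular $C^+$ that intertwines $M:=U_2 U_3$ with $N:=B_2 B_3$, and checking that the resulting $C^-$ is unit lower triangular.

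The next step is a spectral comparison. Since $M$ is upper triangular, its eigenvalues are its diagonal entries $(1,\omega^{-1},\omega,\omega^{-2},\omega^2,\ldots)$, which exhaust the $(d+1)$-st roots of unity without repetition and are therefore pairwise distinct. A direct action on the standard basis identifies $N$ as a single cyclic permutation of length $d+1$: the minus signs from the $\ir\sigma_2$ blocks together with the terminal $-1$ of $B_2$ or $B_3$ cancel systematically, leaving a positive $(d+1)$-cycle. Consequently $N$ has characteristic polynomial $\lambda^{d+1}-1$ and shares the simple spectrum of $M$; the two are conjugate, and the affine space of intertwiners $\{C : MC = CN\}$ is one-parameter-per-eigenvalue (a coset of the commutant of $N$).

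Using this freedom I would construct $C^+$ explicitly by solving $MC^+ = C^+N$ entry by entry in the cyclic order imposed by $N$. Since $N$ sends $e_k \mapsto e_{\pi(k)}$ with $\pi$ a single cycle, the relation $(MC^+)_{j,k} = C^+_{j,\pi^{-1}(k)}$ couples each column of $C^+$ to the next along $\pi$. Starting from $C^+_{1,1}=1$ and following the cycle, each entry below the diagonal is determined recursively; the upper-triangular entries are forced to zero by the triangularity of $M$, and the consistency of the diagonal normalization $C^+_{k,k}=1$ after traversing the cycle reduces to the equality of eigenvalue products $\det M = \det N$, which is automatic from the spectral identification.

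The main obstacle is the final verification that $C^-=U_3 C^+ B_3^{-1}$ is itself unit lower triangular. This is nontrivial because $U_3$ adds row $2j$ into row $2j-1$ (a priori creating upper-triangular entries) and $B_3^{-1}$ permutes columns in pairs with signs. To handle it I would exploit the two equivalent expressions for $C^-$, namely $C^- = U_3 C^+ B_3^{-1} = U_2^{-1} C^+ B_2$: the first is convenient for checking entries in the lower half, and the second, since $U_2^{-1}$ is upper triangular and $B_2$ block-diagonal, is convenient for checking that strictly upper-triangular entries vanish. Matching these two expressions column by column within the $2\times 2$ block structure of $B_2$ and $B_3$, and using the reduced equation to supply the needed off-diagonal cancellations, should yield both the vanishing of $(C^-)_{j,k}$ for $j<k$ and the correct diagonal values $(C^-)_{k,k}=1$. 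Once this is verified, \eqref{eq:Cpm1}--\eqref{eq:Cpm2} hold simultaneously and the lemma is proved.
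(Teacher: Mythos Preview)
Your reduction to the single intertwining relation $MC^+=C^+N$ with $M=U_2U_3$ upper triangular and $N=B_2B_3$ a $(d{+}1)$-cycle is correct and elegant, and the paper's authors in fact used exactly this route to \emph{discover} the matrices (there is a remark to this effect in their manuscript). But as a proof it has a genuine gap at the step ``the upper-triangular entries are forced to zero by the triangularity of $M$''. Spectral coincidence of $M$ and $N$ only gives similarity by \emph{some} invertible matrix; the existence of a \emph{lower triangular} conjugator is an extra condition. Concretely, it is equivalent to the eigenvector matrix of $N$ (with columns ordered so that the eigenvalues match the diagonal of $M$) having all leading principal minors nonzero, i.e.\ admitting an LU factorisation. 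That this holds here is not automatic --- for a $2\times 2$ counterexample, $M=\diag(1,2)$ and $N=\diag(2,1)$ share spectra but admit no lower triangular intertwiner. Your column-by-column recursion $Mc_j=c_{\pi(j)}$ does not by itself prevent entries from spilling above the diagonal, and the ``consistency after traversing the cycle'' is the determinant condition, not the minor conditions. Likewise, the check that $C^-=U_3C^+B_3^{-1}$ is unit lower triangular is left as a plan; the two expressions $U_3C^+B_3^{-1}=U_2^{-1}C^+B_2$ help, but the cancellations you need are precisely the structural identities that remain to be proved.

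The paper avoids these obstructions by taking the opposite, fully constructive route: it writes down explicit closed formulas for the entries $c_{j,k}^{\pm}$ in terms of elementary symmetric polynomials $e_{k,n}$, $e'_{k,n}$ evaluated at sequences of $(d{+}1)$-st roots of unity, and then verifies \eqref{eq:Cpm1} and \eqref{eq:Cpm2} directly using recursion identities like $e_{k,n+1}=e_{k,n}+\omega^{\pm}\,e_{k-1,n}$. This is more laborious but leaves nothing to chance. Your approach would become a valid alternative proof if you either compute the relevant leading minors of the Vandermonde-type eigenvector matrix of the cyclic permutation and show they are nonzero for this particular eigenvalue ordering, or --- which amounts to the same work --- carry the recursion to its end and exhibit the resulting $C^{\pm}$ explicitly.
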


\begin{proof}
In order to define the matrices $C^{\pm }$ we use the elementary symmetric 
polynomials $e_{k,n}(x_1, \ldots, x_n)$ as in \eqref{elementary:1}--\eqref{elementary:2} and  we put
\begin{align} 
	e_{k,n} & = e_{k,n}(1, \omega^{-1}, \omega, \omega^{-2}, \omega^2, \omega^{-3}, \ldots), \label{eknandprime1}\\
	e_{k,n}' & = e_{k,n}(1, \omega, \omega^{-1}, \omega^2, \omega^{-2}, \omega^3, \ldots).\label{eknandprime2}
\end{align}
Recall that $n$ denotes the number of variables appearing in the symmetric function. 

Then we construct lower triangular matrices $C^{\pm}$ as in \eqref{Cpm} with entries
\begin{align*} 
	c^+_{j+1,j-2k}   & = (-1)^{j-k} e'_{j-k,j} && \text{for } j = 1, \ldots, d \text{ and }  k = 0, 1, \ldots, \lfloor \tfrac{j-1}{2} \rfloor, \\
	c^+_{j+1,j+1-2k} & = (-1)^{k} e'_{k,j} &&  \text{for } j=1, \ldots, d \text{ and } k = 0, 1, \ldots, \lfloor \tfrac{j}{2} \rfloor, \\
  c^-_{2j+1,2j+1-2k} & = (-1)^k e_{k, 2j} && \text{for } j=1, \ldots \lfloor \tfrac{d}{2} \rfloor \text{ and } k = 1, \ldots, j, \\
	c^-_{2j+1,2j-2k}  & =  (-1)^{k+1} e_{2j-k, 2j} && \text{for } j= 1, \ldots, \lfloor \tfrac{d}{2} \rfloor \text{ and } k = 0, 1, \ldots, j-1, \\
	c^-_{2j,2j-1-2k } & =  (-1)^k e_{k  , 2j-1} && \text{for } j=1, \ldots, \lceil \tfrac{d}{2} \rceil \text{ and }  k = 0, 1, \ldots, j-1, \\
	c^-_{2j,2j-2k } & =  (-1)^{k} e_{2j-1-k  , 2j-1} && \text{for } j=1, \ldots, \lceil \tfrac{d}{2} \rceil \text{ and } k=1,2, \ldots, j-1.
\end{align*}
For example, for $d=5$ we have
\begin{align*} 
	C^+ & = \begin{pmatrix} 1 & 0 & 0 & 0 & 0 & 0\\
-e'_{1,1} & 1 & 0 & 0 & 0 & 0\\
-e'_{1,2} & e'_{2,2} & 1 & 0 & 0 & 0 \\
e'_{2,3} & -e'_{1,3} & -e'_{3,3} & 1 & 0 & 0 \\
e'_{2,4} & -e'_{3,4} & -e'_{1,4} & e'_{4,4} & 1 & 0 \\
-e'_{3,5} & e'_{2,5} & e'_{4,5} & -e'_{1,5} & -e'_{5,5} & 1
\end{pmatrix} \\ 
	C^- & =  \begin{pmatrix} 1 & 0 & 0 & 0 & 0 & 0 \\
e_{0,1} & 1 & 0 & 0 & 0 & 0 \\
-e_{1,2} & -e_{2,2} & 1 & 0 & 0 & 0 \\
-e_{1,3} & -e_{2,3} & e_{0,3} & 1 & 0 & 0 \\
e_{2,4} & e_{3,4} & -e_{1,4} & -e_{4,4} & 1 & 0 \\
e_{2,5} & e_{3,5} & -e_{1,5} & -e_{4,5} & e_{0,5} & 1 
\end{pmatrix}. 
\end{align*}
Then it is a tedious task to verify that the identities \eqref{eq:Cpm1} and \eqref{eq:Cpm2}
are indeed satisfied. The verification depends on the following properties of the quantities
$e_{k,n}$ and $e_{k,n}'$ defined in \eqref{eknandprime1}--\eqref{eknandprime2}:
\begin{enumerate}
\item[(i)] If $n$ is odd then $e_{k,n} = e'_{k,n}$  and $e_{n-k,n} = e_{k,n}$.
\item[(ii)] If $n$ is even then $e_{k,n} = \omega^{-n/2} e'_{n-k,n}$ and $e'_{k,n} = \omega^{n/2} e_{n-k,n}$.
\item[(iii)] For every $n$,
\begin{equation} \label{eknprop3} 
	e_{k,n+1} = e_{k,n} + \begin{cases} \omega^{n/2} e_{k-1,n}  &  \text{ if $n$ is even}, \\
					\omega^{-(n+1)/2} e_{k-1,n} & \text{ if $n$ is odd}. \end{cases}
	\end{equation}
\end{enumerate}
These identities are easy consequences of the definitions \eqref{eknandprime1} and \eqref{eknandprime2}.
For example, the identity in \eqref{eknprop3} relies on the fact that the elementary symmetric polynomial
$e_{k,n+1}(x_1, \ldots, x_{n+1})$ on $n+1$ variables is the sum of $e_{k,n}(x_1, \ldots, x_n)$
which has the terms without $x_{n+1}$, and $x_{n+1} e_{k-1,n}(x_1, \ldots, x_n)$
which has the terms containing $x_{n+1}$.

Now we are going to verify the identity \eqref{eq:Cpm1}. The identity \eqref{eq:Cpm1} for the
entries in rows $2j-1, 2j$ and columns $2k-1, 2k$
(where $1 \leq k \leq j \leq \lfloor \frac{d+1}{2} \rfloor$) is exactly
\[ \begin{pmatrix} 1 & 1 \\ 0 & 1 \end{pmatrix}
\begin{pmatrix}
c_{2j-1,2k-1}^{+} & c_{2j-1,2k}^{+} \\
c_{2j,2k-1}^{+} & c_{2j,2k}^{+}
\end{pmatrix}=\begin{pmatrix}
c_{2j-1,2k-1}^{-} & c_{2j-1,2k}^{-} \\
c_{2j,2k-1}^{-} & c_{2j,2k}^{-}
\end{pmatrix}
\begin{pmatrix} 0 & 1 \\ -1 & 0 \end{pmatrix},
\]
which when written in terms of the elementary symmetric polynomials is equivalent to
\begin{equation} \label{Cplusjk} 
	(-1)^{j+k} \begin{pmatrix} 1 & 1 \\ 0 & 1 \end{pmatrix}
	\begin{pmatrix} 	e'_{j-k,2j-2} & - e'_{j+k-1,2j-2} \\ - e'_{j+k-1,2j-1} & e'_{j-k,2j-1} \end{pmatrix}  
		= (-1)^{j+k} \begin{pmatrix} e_{j-k,2j-2} & e_{j+k-1,2j-2} \\ e_{j-k,2j-1} & e_{j+k-1,2j-1} \end{pmatrix}
			\begin{pmatrix} 0 & 1 \\ -1 & 0 \end{pmatrix}. 
			\end{equation}
The equality for the entries in the second row of \eqref{Cplusjk} follows from item (i) above, while
the equality for the entries in the first row relies on all three items. For example, the equality
for the first entry follows from (where  $n = 2j-2$ is even)
\begin{align*} 
	e'_{j-k,2j-2} - e'_{j+k-1,2j-1} & = e'_{j-k,n} - e'_{j+k-1,n+1} \\
	& = \omega^{n/2} e_{j+k-2,n} - e_{j+k-1,n+1}  && \text{by (ii) and (i)} \\
		& = - e_{j+k-1,n}  && \text{by (iii)}.
		\end{align*}
				
This argument establishes \eqref{eq:Cpm1} completely if $d$ is odd. If $d$ is even, then the last row needs
separate verification. The equality follows from 
\begin{equation} \label{eknprop4} 
 - e'_{d+1-k, d} = e_{k,d}, \qquad \text{for } k = 1, \ldots, d \text{ and $d$ is even.}
\end{equation}
To see that \eqref{eknprop4} indeed holds we calculate
\begin{align*} 
	e'_{d+1-k,d} + e_{k,d} & = \omega^{d/2} e_{k-1,d} + e_{k,d} && \text{by item (ii)} \\
	& = e_{k, d+1} && \text{by (iii)}.
\end{align*}
Now we recall that $\omega$ is a $(d+1)$-st root of unity, and by the definition \eqref{eknandprime1} of $e_{k,d+1}$,
\[ e_{k,d+1} = e_{k,d+1}(1, \omega, \omega^2, \ldots, \omega^{d}) = 0, \qquad k = 1, \ldots, d. \]
Thus \eqref{eknprop4} holds, and this gives the identity of the last row of \eqref{eq:Cpm1} in case
$d$ is even.

The proof of \eqref{eq:Cpm2} follows along similar lines.
\end{proof}

We return to the matrix-valued function $\mathbf{S}$ defined in \eqref{def:S:L3pm:1}--\eqref{def:S:L3pm:4}.
The coefficients $c_{j,k}^{\pm}$ are taken from the matrices \eqref{Cpm} whose existence
is proved in Lemma \ref{lemma:Cpmexist}. 

\begin{rhp}
The matrix $\mathbf{S}(z)$ is the solution to the following RH problem:
\begin{itemize}
\item[$\bullet$] $\mathbf{S}$ is analytic in $\mathbb{C}\setminus \Sigma_{S}$, where 
$\Sigma_{S}=\Sigma_{2}\cup\Sigma_{3}\cup\bigcup_{j=1}^{3}\partial L_{j}$.
\item[$\bullet$] $\mathbf{S}_{+}=\mathbf{S}_{-} \mathbf{J}_{S}$ on  $\Sigma_{S}$ with jump matrix 
\begin{align} \label{jump:S:Sigma1}
	\mathbf{J}_S & = \begin{cases} \diag(\ir \sigma_2, \ir \sigma_2, \ldots, \ir\sigma_2, 1) & \text{if $d$ is even} \\
		\diag(\ir \sigma_2, \ir \sigma_2, \ldots,  \ir \sigma_2) & \text{if $d$ is odd}
		\end{cases} && \text{ on } \Sigma_1^*, \\
		\label{jump:S:Sigma1hat}
	\mathbf{J}_S & = e^{2n \varphi_1} E_{1,2} + \begin{cases} \diag(1,1, \ir \sigma_2, \ldots, \ir\sigma_2, 1) & \text{if $d$ is even} \\
		\diag(1,1, \ir \sigma_2, \ldots,  \ir \sigma_2) & \text{if $d$ is odd}
		\end{cases} && \text{ on } \Sigma_1 \setminus \Sigma_1^*, \\
		\label{jump:S:Sigma3}
	\mathbf{J}_S & = \begin{cases} \diag(1,1, \ir \sigma_2, \ldots, \ir\sigma_2, 1) & \text{if $d$ is even} \\
		\diag(1,1, \ir \sigma_2, \ldots,  \ir \sigma_2) & \text{if $d$ is odd}
		\end{cases} && \text{ on } \Sigma_3 \setminus \Sigma_1, \\
		\label{jump:S:Sigma2}
	\mathbf{J}_S & = \begin{cases} \diag(1,\ir \sigma_2, \ldots, \ir\sigma_2) & \text{if $d$ is even} \\
		\diag(1,\ir \sigma_2, \ldots,  \ir \sigma_2,-1) & \text{if $d$ is odd}
		\end{cases} && \text{ on } \Sigma_2, 
\end{align}
and on the lips of the lenses,
\begin{align} 
	\label{jump:S:L1}
	\mathbf{J}_S & = I_{d+1} + e^{-2n \varphi_1} E_{2,1}  && \text{on } \partial L_1 \setminus L_2 \\
	\label{jump:S:L3}
	\mathbf{J}_S & = I_{d+1} + \sum_{j=2}^{\lceil \frac{d}{2} \rceil} e^{-2n \varphi_{2j-1}} E_{2j,2j-1} && \text{on } \partial L_3 \setminus L_2 \\
	\label{jump:S:L2}
	\mathbf{J}_S & = I_{d+1} + \sum_{j=1}^{\lfloor \frac{d}{2} \rfloor} \omega^{\mp j} e^{-2n \varphi_{2j}} E_{2j+1,2j} && 
	\text{on } \partial L_2^{\pm} \setminus L_1 \\
	\label{jump:S:lipsnear0}
	\mathbf{J}_S & = I_{d+1} + \sum_{j=2}^{d+1} \sum_{k=1}^{j-1} b_{j,k} e^{-2n(\varphi_{k} + \cdots + \varphi_{j-1})} E_{j,k} 
	&&  \begin{array}{l} \text{on } \partial L_1 \cap L_2 \\
		\text{and on }  \partial L_2 \cap L_1, \end{array}
		\end{align}
for certain constants $b_{j,k}$ (different constants on $\partial L_1 \cap L_2$ and $\partial L_2 \cap L_1$)
whose exact values are not important to us in what follows.
\item[$\bullet$] As $z\rightarrow\infty$ in the sector $S_{\ell}^{\pm}$, $\ell=0,\ldots,d,$
\begin{equation}\label{asymp:S}
\mathbf{S}(z)=\Big(I_{d+1} +O\left(z^{-\frac{2}{d}}\right) \Big)\begin{pmatrix} 1 & 0 \\ 0 & \mathbf{A}(z) 
\end{pmatrix}.
\end{equation}
\end{itemize}
\end{rhp}

\begin{proof} 
The jump matrices \eqref{jump:T:Sigma1}--\eqref{jump:T:Sigma3} in the RH problem for $\mathbf{T}$ have a block
diagonal form with $2 \times 2$ blocks
\[ \begin{pmatrix} e^{-2n \varphi_{k,+}} & 1\\  0 & e^{-2n \varphi_{k,-}} \end{pmatrix}. \]
The $2 \times 2$ block has the standard factorization
\[ \begin{pmatrix} 1 & 0 \\ e^{-2n \varphi_{k,-}} & 1 \end{pmatrix} \begin{pmatrix} 0 & 1 \\ -1 & 0 \end{pmatrix} 
\begin{pmatrix} 1 & 0 \\ e^{-2n \varphi_{k,+}} & 1 \end{pmatrix},
\]
since $\varphi_{k,+} = -\varphi_{k,-}$ on $\Sigma_k$, see e.g.\ \eqref{rel:g1phi1:1}--\eqref{rel:gkphik:2}.
The definition of $S$ in $L_3 \setminus L_2$, see \eqref{def:S:L3pm:1}--\eqref{def:S:L3pm:2}, has 
the effect of moving the outer two factors to the jump matrix on the lips
of the lenses while keeping the  inner factor on the original contour. This accounts for the
jump matrices \eqref{jump:S:L1}--\eqref{jump:S:L3} on the lips of the lenses, and for the jump
matrices \eqref{jump:S:Sigma1}--\eqref{jump:S:Sigma3} on the contour $\Sigma_3$ outside of the lens $L_2$. 
The jump matrix \eqref{jump:T:Sigma2} has $2 \times 2$ blocks
\[ \begin{pmatrix} \omega^{-k} e^{-2n \varphi_{2k,+}} & 1 \\ 0 & \omega^{k} e^{-2n \varphi_{2k,-}} \end{pmatrix} \]
which has the factorization
\[ \begin{pmatrix} 1 & 0 \\ \omega^k e^{-2n \varphi_{2k,-}} & 1 \end{pmatrix} \begin{pmatrix} 0 & 1 \\ -1 & 0 \end{pmatrix} 
\begin{pmatrix} 1 & 0 \\ \omega^{-k} e^{-2n \varphi_{2k,+}} & 1 \end{pmatrix}.
\]
Because of the definition \eqref{def:S:L3pm:3} of $S$ in $L_2 \setminus L_1$, we obtain \eqref{jump:S:Sigma2}
and \eqref{jump:S:L2}.

The form of the jump matrix \eqref{jump:S:lipsnear0} is immediate from the definitions.

\medskip
It remains to check the jumps on the parts of $\Sigma_1^*$ and $\Sigma_2$ that 
lie in the intersection of the lenses. 
To that end we write 
\begin{equation} \label{def:psik} 
	\psi_k = - \sum_{j=1}^{k-1} \varphi_j + \sum_{j=k}^{d} \varphi_j, \qquad k = 1, \ldots, d+1, 
	\end{equation}
where the first sum is $0$ if $k=1$ and the last sum is $0$ if $k=d+1$. Also put
\[ \mathbf{\Psi} = \diag(e^{\psi_1}, e^{\psi_2}, \cdots, e^{\psi_{d+1}}). \]
Then \eqref{def:S:L3pm:4} can be written as
\[ \mathbf{S} = \mathbf{T}  \mathbf{\Psi}^n C^{\pm} \mathbf{\Psi}^{-n}, \qquad \text{ in } L_1^{\pm} \cap L_2^{\mp}. \]

The jump matrix on $\Sigma_1^* \cap L_1 \cap L_2$ therefore is
\begin{equation} \label{eq:JS1} 
	\mathbf{J}_S = \mathbf{\Psi}^n_- \left(C^-\right)^{-1} \mathbf{\Psi}^{-n}_- \mathbf{J}_T \mathbf{\Psi}^n_+ C^+ \mathbf{\Psi}^{-n}_+.
	\end{equation}
Recall that $\varphi_{2k-1,+} = - \varphi_{2k-1,-}$ and $\varphi_{2k,+} = \varphi_{2k-1,-} + \varphi_{2k,-}
+ \varphi_{2k+1,-}$ on $\Sigma_1^*$, see \eqref{rel:g1phi1:1}--\eqref{rel:gkphik:2} and \eqref{rel:phikphikm1phikp1Sigma1}. 
Then if $d$ is odd, we can check from
the definition \eqref{def:psik} of $\psi_k$ that
\begin{equation} \label{jumps:psik}
\begin{aligned} 
	\psi_{2k,+} & = \psi_{2k,-} + \varphi_{1,-} +2 \varphi_{2k-1,-} - \varphi_{d,-} \\
	\psi_{2k-1,+} &=\psi_{2k-1,-}+\varphi_{1,-}-2\varphi_{2k-1,-}-\varphi_{d,-}\\
        \psi_{2k-1,+} & = \psi_{2k,-} + \varphi_{1,-} - \varphi_{d,-}  \\
	\psi_{2k,+} & = \psi_{2k-1,-} + \varphi_{1,-} - \varphi_{d,-}. 
	\end{aligned}
	\end{equation} 
It follows from these relations and \eqref{jump:T:Sigma1} that
\begin{equation} \label{eq:JS2} 
		\mathbf{\Psi}^{-n}_- \mathbf{J}_T \mathbf{\Psi}^n_+ =  e^{n(\varphi_{1,-}-\varphi_{d,-})} 
	    \left( I_{d+1} + \sum_{j=1}^{\lceil \frac{d}{2} \rceil} E_{2j-1,2j}  \right) 
			\end{equation}
which is a scalar function times a constant matrix. 
Then by one of the defining properties \eqref{eq:Cpm1} of $C^{\pm}$, we 
obtain from \eqref{eq:JS1} and \eqref{eq:JS2} 
\begin{equation} \label{eq:JS3} 
	\mathbf{J}_S =  e^{n(\varphi_{1,-}-\varphi_{d,-})}  \mathbf{\Psi}^n_- 
		\diag(\ir\sigma_2, \cdots, \ir\sigma_2) \mathbf{\Psi}^{-n}_+.
\end{equation}
Again using the jump properties \eqref{jumps:psik}  of $\psi_k$ we then see that 
\eqref{eq:JS3} reduces to \eqref{jump:S:Sigma1}.
The argument for $d$ even is similar. The main difference is that the term $-\varphi_{d,-}$
in \eqref{jumps:psik} is absent. 

\medskip
The jump matrix $\mathbf{J}_S$ on $\Sigma_2 \cap L_1 \cap L_2$ is calculated in a similar way. Now
we use the other property \eqref{eq:Cpm2} that defines the matrices $C^{\pm}$.

\medskip

The asymptotic condition \eqref{asymp:S} is obvious from \eqref{asymp:T} outside the lenses, since
$\mathbf{S} = \mathbf{T}$ outside the lenses. Furthermore, it is easy to deduce 
from \eqref{rel:gksphik} that the $\varphi$ functions satisfy
\begin{equation} \label{varphi:atinfty}
\begin{aligned} 
	\varphi_{2k}(z) & = d_{2k,\ell,\pm}\,z^{\frac{d+1}{d}}+O(\log z), && z\in L_{2}\cap S_{\ell}^{\pm}, 
		\quad k = 1, \ldots, \lceil \tfrac{d}{2} \rceil, 	\\
	\varphi_{2k-1}(z) & =d_{2k-1,\ell,\pm}\,z^{\frac{d+1}{d}}+O(\log z), && z\in L_{3}\cap S_{\ell}^{\pm},
		\quad k = 2, \ldots, \lfloor \tfrac{d}{2} \rfloor,
\end{aligned}
\end{equation}
as $z \to \infty$, where $d_{k,\ell,\pm}$ are constants such that 
$\Real \varphi_{2k}(z)>0$ ($\Real\varphi_{2k+1}(z)>0$) for $z$ large enough in $L_{2}$ ($L_{3}$). 
Therefore, from \eqref{asymp:T} and the definition \eqref{def:S:L3pm:2}--\eqref{def:S:L3pm:3}  
of $\mathbf{S}$ in the unbounded lenses, we deduce that the asymptotic condition 
is also valid inside the  lenses $L_{2}$ and $L_{3}$ (although not uniformly up to the sets $\Sigma_2$ and $\Sigma_3$).
\end{proof}

\begin{remark}
Note that the jump matrices \eqref{jump:S:Sigma1}, \eqref{jump:S:Sigma3} and \eqref{jump:S:Sigma2}
are constant. The jump matrix \eqref{jump:S:Sigma1hat} is not constant since it contains the
term $e^{2n \varphi_1} E_{1,2}$. However, by the discussion at the beginning of this section we have
\begin{equation} \label{estimate:Rephi:1}
	\Real \varphi_1 < 0 \qquad \text{ on } \Sigma_1 \setminus \Sigma_1^* 
	\end{equation}
so that the non-constant term is exponentially small if $n$ is large.

The jump matrices \eqref{jump:S:L1}--\eqref{jump:S:lipsnear0} on the lips of the lenses 
are exponentially close to the identity matrix, since the lenses where taken so that
\begin{align} \label{estimate:Rephi:2}
	\Real \varphi_1 > 0  & \quad \text{ on } \partial L_1 \\
	\label{estimate:Rephi:3}
	\Real \varphi_{2k-1} > 0 & \quad \text{ on } \partial L_3 \quad \text{for } k=2, \ldots, \lceil \tfrac{d}{2} \rceil \\
	\label{estimate:Rephi:4}
	\Real \varphi_{2k} > 0 & \quad \text{ on } \partial L_2 \quad \text{for } k=1, \ldots, \lfloor \tfrac{d}{2} \rfloor \\
	\label{estimate:Rephi:5}
	\Real \varphi_{k} \geq 0 & \quad \text{ on } L_1 \cap L_2 \quad \text{for  all } k.
	\end{align}
	
Also taking note of \eqref{varphi:atinfty} we may then conclude that
\begin{equation} \label{estimate:jumpS} 
	\mathbf{J}_S(z) = I_{d+1} + O \left(e^{-cn |z|^{\tfrac{d+1}{d}}} \right), 
	\qquad z \in (\Sigma_1 \setminus \Sigma_1^*) \cup \partial L_1 \cup \partial L_2 \cup \partial L_3, 
	\end{equation}
as $n \to \infty$, and the $O$-term is uniform for $z$ if we stay
away from the endpoints $\omega^{\ell} x^*$ of $\Sigma_1^*$.

\end{remark}

Ignoring all exponentially small entries in the jump matrices, we arrive at a new RH problem
which we study next.

\section{Global parametrix}\label{section:globalparam}

\begin{rhp}
Find $\mathbf{M}:\mathbb{C}\setminus \Sigma_M \to \mathbb C^{(d+1)\times(d+1)}$ where 
$\Sigma_M = \Sigma_{2}\cup\Sigma_{3}$ such
that:
\begin{itemize}
\item[$\bullet$] $\mathbf{M}$ is analytic in $\mathbb{C}\setminus \Sigma_M$.
\item[$\bullet$] $\mathbf{M}_{+}=\mathbf{M}_{-}\mathbf{J}_{M}$ on $\Sigma_M$, where
 \begin{align} \label{jump:M:Sigma1}
\mathbf{J}_M & = \begin{cases} \diag(\ir\sigma_2, \ir\sigma_2, \ldots, \ir\sigma_2,1) & \text{for $d$ even} \\
	\diag(\ir\sigma_2, \ir\sigma_2, \ldots, \ir\sigma_2) & \text{for $d$ odd}
	\end{cases} && \text{on } \Sigma_1^*, \\
	\label{jump:M:Sigma3}
	\mathbf{J}_M & =  \begin{cases} \diag(1,1, \ir\sigma_2, \ldots, \ir\sigma_2,1) & \text{for $d$ even} \\
	\diag(1,1, \ir\sigma_2, \ldots, \ir\sigma_2) & \text{for $d$ odd}
	\end{cases} && \text{on } \Sigma_3 \setminus \Sigma_1^*, \\
	\label{jump:M:Sigma2}
	\mathbf{J}_M & =  \begin{cases} \diag(1,\ir\sigma_2, \ir\sigma_2, \ldots, \ir\sigma_2) & \text{for $d$ even} \\
	\diag(1, \ir\sigma_2, \ir\sigma_2, \ldots, \ir\sigma_2,-1) & \text{for $d$ odd}
	\end{cases} && \text{on } \Sigma_2.
	\end{align}
\item[$\bullet$] $\mathbf{M}$ satisfies the asymptotic condition
\begin{equation} \label{asymp:M}
	\mathbf{M}(z)=\Big(I_{d+1}+O\Big(\frac{1}{z}\Big)\Big) \begin{pmatrix} 1 & 0 \\ 0 & \mathbf{A}(z) \end{pmatrix}
 \end{equation}
as $z\rightarrow\infty$ in the sector $S_{\ell}^{\pm}$, $\ell=0,\ldots,d$.
\item[$\bullet$] $\mathbf{M}(z)=O((z-\omega^{j}\,x^{*})^{-1/4})$ as $z\rightarrow\omega^{j}x^{*}$, $j=0,\ldots,d$.
\item[$\bullet$] $\mathbf{M}(z)$ remains bounded as $z\rightarrow 0$.
\end{itemize}
\end{rhp}

We solve this problem with the help of a meromorphic differential and certain meromorphic functions 
defined on the Riemann surface $\mathcal{R}$ given in \eqref{def:RiemannsurfR}. Let $\bm{\eta}$ be the meromorphic 
differential that has simple poles at the branch points $\omega^{j}\,x^{*}$, $j=0,\ldots,d,$ and a 
simple pole at $\infty_{2}$ (see definition of $\mathcal{R}$), with
\begin{equation}\label{residues}
\Res(\bm{\eta}, \omega^{j}\,x^{*})=-\frac{1}{2},\qquad \Res(\bm{\eta},\infty_{2})=\frac{d+1}{2},
\end{equation}
and is holomorphic elsewhere. This meromorphic differential exists and is uniquely determined by 
these conditions, since $\mathcal{R}$ has genus zero.

We now define
\[
u_{j}(z)=\int_{\infty_{1}}^{z}\bm{\eta},\qquad z\in\mathcal{R}_{j},\qquad j=1,\ldots,d+1,
\]
where $\infty_{1}$ is the point at infinity in $\mathcal{R}_{1}$ and the path of integration is taken so that 
it satisfies the following rules:
\begin{itemize}
\item[1)] The path for $u_{1}(z)$ stays on the first sheet.
\item[2)] The path for $u_{2}(z)$ starts on the first sheet, passes once through $\Sigma_{1}$ and stays on the 
second sheet. The passage is made via the $-$-side on $\mathcal{R}_{1}$.
\item[3)] The definition of the path for $u_{j}(z)$, $3\leq j\leq d$, is based on the following observation. 
Suppose that $z\in\mathcal{R}_{j}$, $j=3,\ldots,d,$ and $z$ lies in any of the $2d+2$ sectors $S_{\ell}^{\pm}$. 
Then there is only one neighboring sector in the upper sheet $\mathcal{R}_{j-1}$ to which the point $z$ can be 
connected through a path that crosses the cut connecting the sheets $\mathcal{R}_{j-1}$ and $\mathcal{R}_{j}$ 
only once.

We then require that the path for $u_{j}(z)$, $3\leq j\leq d$, in its passage from $\mathcal{R}_{1}$ to 
$\mathcal{R}_{2}$, crosses $\Sigma_{1}$ only once through the $-$-side in the upper sheet, and for each 
$2\leq k\leq j-1,$ the path should cross the cut $\Sigma_{k}$ connecting the sheets $\mathcal{R}_{k}$ and 
$\mathcal{R}_{k+1}$ only once. The observation above shows that this path is well-defined and clearly any 
two paths satisfying these requirements will give the same value for $u_{j}(z)$.
\item[4)] The path for $u_{d+1}(z)$ is defined as follows. As before, it is required to cross $\Sigma_{1}$ 
only once through the $-$-side in its passage from $\mathcal{R}_{1}$ to $\mathcal{R}_{2}$, and for 
$2\leq k\leq d-1$, it is required to cross the cut $\Sigma_{k}$ only once in its passage from $\mathcal{R}_{k}$ 
to $\mathcal{R}_{k+1}$. Finally, the path from $\mathcal{R}_{d}$ to $\mathcal{R}_{d+1}$ should cross 
$\Sigma_{d}$ only once through the $-$-side on the upper sheet.
\end{itemize}

These rules should be satisfied for any value of $d$, even or odd. It easily follows from these rules 
and the residue assumptions on the branch points $\omega^{j}\,x^{*}$ that the following relations hold 
in the case $d$ odd:
\begin{align}
u_{1,-} & =u_{2,+},  & \text{on } \Sigma_{1}^*,\label{bv:ufunc:1}\\
u_{1,+} & =u_{2,-}\pm\pi\ir, & \text{on } \Sigma_{1}^*,\label{bv:ufunc:2}\\
u_{2j-1,\pm} & =u_{2j,\mp},\quad 2\leq j \leq \tfrac{d-1}{2}, & \text{on } \Sigma_{3},\label{bv:ufunc:3}\\
u_{d+1,+} & = u_{d,-}, & \text{on }  \Sigma_{3},\label{bv:ufunc:4}\\
u_{d+1,-} & = u_{d,+}\pm\pi\ir, & \text{on } \Sigma_{3},\label{bv:ufunc:5}\\
u_{j,+} & = u_{j,-},\quad j=1, d+1, & \text{on } \Sigma_{2},\label{bv:ufunc:6}\\
u_{2j,\pm} & = u_{2j+1,\mp}, \quad 1\leq j\leq \tfrac{d-1}{2}, & \text{on } \Sigma_{2},\label{bv:ufunc:7}
\end{align}
and in the case $d$ even, the relations \eqref{bv:ufunc:1}--\eqref{bv:ufunc:2} still hold, and we now have
\begin{align}
u_{2j-1,\pm} & =u_{2j,\mp},\quad 2\leq j\leq \tfrac{d}{2}, & \text{on } \Sigma_{3},\label{bv:ufunc:8}\\
u_{d+1,+} & = u_{d+1,-}, & \text{on } \Sigma_{3},\label{bv:ufunc:9}\\
u_{1,+} & = u_{1,-}, & \text{on } \Sigma_{2},\label{bv:ufunc:10}\\
u_{2j,\pm} & = u_{2j+1,\mp}, \quad 1\leq j\leq \tfrac{d}{2}-1 & \text{on } \Sigma_{2}, \label{bv:ufunc:11}\\
u_{d+1,+} & = u_{d,-}, & \text{on } \Sigma_{2}, \label{bv:ufunc:12}\\
u_{d+1,-} & = u_{d,+}\pm\pi\ir, & \text{on } \Sigma_{2}.\label{bv:ufunc:13}
\end{align}

Let us define now the functions $v_{j}(z):=e^{u_{j}(z)}$, $j=1,\ldots,d+1,$ and set
\[
\mathbf{v}(z):=(v_{1}(z)\hspace{0.3cm} v_{2}(z) \hspace{0.3cm} \ldots \hspace{0.3cm} v_{d+1}(z)).
\]
Then the above relations \eqref{bv:ufunc:1}--\eqref{bv:ufunc:13} give in the case $d$ is odd,
\begin{align*}
\mathbf{v}_{+}(z) & =\mathbf{v}_{-}(z)\,
\diag(\ir\sigma_2, \sigma_1, \ldots, \sigma_1, \ir\sigma_2), && z \in \Sigma_1^{*}, \\
\mathbf{v}_{+}(z) & =\mathbf{v}_{-}(z)\,
\diag(1,1, \sigma_1, \ldots, \sigma_1, \ir\sigma_2), && z \in \Sigma_3 \setminus \Sigma_1^{*}, \\
\mathbf{v}_{+}(z) & = \mathbf{v}_{-}(z)\,
\diag(1, \sigma_1, \cdots, \sigma_1, 1), && z \in \Sigma_2,
\end{align*}
and in the case $d$ is even,
\begin{align*}
\mathbf{v}_{+}(z) & =\mathbf{v}_{-}(z)\,
\diag(\ir\sigma_2, \sigma_1, \ldots, \sigma_1, 1), && z \in \Sigma_1^{*}, \\
\mathbf{v}_{+}(z) & =\mathbf{v}_{-}(z)\,
\diag(1,1, \sigma_1, \ldots, \sigma_1, 1), && z \in \Sigma_3 \setminus \Sigma_1^{*}, \\
\mathbf{v}_{+}(z) & = \mathbf{v}_{-}(z)\,
\diag(1, \sigma_1, \cdots, \sigma_1, \ir \sigma_2), && z \in \Sigma_2.
\end{align*}
Here $\sigma_1 = \begin{pmatrix} 0 & 1 \\ 1 & 0 \end{pmatrix}$ and $\ir\sigma_2 = \begin{pmatrix} 0 & 1 \\ -1 & 0 \end{pmatrix}$.

We next introduce the functions
\begin{align*}
\widehat{v}_{1} & :=v_{1}, \\
\widehat{v}_{2j} & :=v_{2j}, && 1 \leq j\leq \lfloor \tfrac{d}{2} \rfloor,\\
\widehat{v}_{2j+1}& := \begin{cases} 
    -v_{2j+1} & \text{on } \bigcup_{\ell=0}^{d} S_{\ell}^{+},\\[0.5em]
    v_{2j+1} & \text{on } \bigcup_{\ell=0}^{d} S_{\ell}^{-},
    \end{cases} 
    &&  1\leq j\leq \lfloor \tfrac{d-1}{2} \rfloor, \\
\widehat{v}_{d+1}& :=\begin{cases} 
    v_{d+1} & \text{on } \bigcup_{\ell=0}^{d} S_{\ell}^{+},\\[0.5em]
    -v_{d+1} & \text{on } \bigcup_{\ell=0}^{d} S_{\ell}^{-},
    \end{cases} && \text{if $d$ is odd}, \\
\widehat{v}_{d+1} & := v_{d+1} && \text{if $d$ is even}.		
\end{align*}
If we let
\[
\widehat{\mathbf{v}}(z):=
	\begin{pmatrix} \widehat{v}_{1}(z) & \widehat{v}_{2}(z) &  \cdots & \widehat{v}_{d+1}(z) \end{pmatrix}
\]
then we readily see that $\widehat{\mathbf{v}}$ is analytic in $\mathbb{C}\setminus(\Sigma_{2}\cup\Sigma_{3})$, and satisfies
\begin{equation}\label{jump:vhat}
	\widehat{\mathbf{v}}_{+}=\widehat{\mathbf{v}}_{-}\,\mathbf{J}_{M},\qquad \text{on }  \Sigma_{2}\cup\Sigma_{3}.
\end{equation}

We now explicitly construct the sought function $\mathbf{M}$ in terms of the 
functions $\widehat{v}_{j}$ and certain meromorphic functions on $\mathcal{R}$. Let
\[
f^{(0)}\equiv 1, f^{(1)}, f^{(2)},\ldots, f^{(d)},
\]
be a basis of the vector space of all meromorphic functions that have a pole of order at 
most $d$ at $\infty_{2}$ and are holomorphic elsewhere in $\mathcal{R}$. We choose these 
functions so that $f^{(j)}$ has a pole of order $j$ at $\infty_{2}$. By $f_{i}^{(j)}$ 
we denote the restriction of $f^{(j)}$ to the sheet $\mathcal{R}_{i}$. Then we set
\begin{equation}\label{def:matrixB}
\mathbf{B}:=\begin{pmatrix}
1 & 1 & 1 & \ldots & 1 \\[0.3em]
f_{1}^{(1)} & f^{(1)}_{2} & f_{3}^{(1)}  & \cdots & f_{d+1}^{(1)} \\[0.3em]
f_{1}^{(2)} & f^{(2)}_{2} & f^{(2)}_{3} & \cdots & f_{d+1}^{(2)} \\[0.3em]
\vdots & \vdots & \vdots & \ddots & \vdots \\[0.3em]
f_{1}^{(d)} & f_{2}^{(d)} & f_{3}^{(d)} & \cdots & f_{d+1}^{(d)}
\end{pmatrix} 
	\diag \begin{pmatrix} \widehat{v}_{1} & \widehat{v}_{2} & \cdots & \widehat{v}_{d+1} \end{pmatrix}.
\end{equation}
In virtue of \eqref{jump:vhat} we also obtain
\begin{equation}\label{jump:B}
\mathbf{B}_{+} = \mathbf{B}_{-}\,\mathbf{J}_{M},\qquad \text{on } \Sigma_{2}\cup\Sigma_{3}.
\end{equation}

By comparing \eqref{jump:M:Sigma3}--\eqref{jump:M:Sigma2} with  \eqref{jump:A:Sigma3}--\eqref{jump:A:Sigma2}
we see that  
\[ \mathbf{J}_M = \begin{pmatrix} 1 & 0 \\ 0 & \mathbf{J}_A \end{pmatrix} \qquad 
\text{ on } (\Sigma_3 \setminus \Sigma_1^* ) \cup \Sigma_2. \]
Thus 
\begin{equation} \label{jump:A} 
	\begin{pmatrix} 1 & 0 \\ 0 & \mathbf{A}_+(z) \end{pmatrix} = 
	\begin{pmatrix} 1 & 0 \\ 0 & \mathbf{A}_-(z) \end{pmatrix} \mathbf{J}_M
	\qquad \text{on } (\Sigma_3 \setminus \Sigma_1^* ) \cup \Sigma_2. 
	\end{equation}
From \eqref{jump:B} and \eqref{jump:A} we deduce that the function 
$\mathbf{B}(z) \begin{pmatrix} 1 & 0 \\ 0 & \mathbf{A}^{-1}(z) \end{pmatrix}$ 
extends to an analytic function in $\mathbb{C}\setminus\Sigma_{1}^{*}$, and therefore has a Laurent
expansion at infinity. 

We have $\widehat{v}_{1}(z)=1+O(1/z)$ as $z\rightarrow\infty$, 
and in virtue of \eqref{residues}, we have  $\widehat{v}_{j}(z)=O(z^{-\frac{d+1}{2d}})$ as 
$z\rightarrow\infty$ for $j \geq 2$. 
As $f^{(j)}$ has a pole of order $j$ at $\infty_2$, we obtain
$f^{(j)}_{1}(z)=O(1)$ for all $j$, and $f^{(j)}_{i}(z)=O(z^{\frac{j}{d}})$ for $i \geq 2$. 
Thus
\[ \mathbf{B}(z) = O(z^{\frac{d-1}{2d}}) \]
Also $\mathbf{A}^{-1}(z) = O(z^{\frac{d-1}{2d}})$ as $z \to \infty$, so that
\[ \mathbf{B}(z) \begin{pmatrix} 1 & 0 \\ 0 & \mathbf{A}^{-1}(z) \end{pmatrix} = O(z^{\frac{d-1}{d}})  
	\qquad \text{ as } z \to \infty. \]
Since $\frac{d-1}{d} < 1$ we see that the Laurent expansion does not have any terms with
strictly positive powers of $z$. Hence
\begin{equation}\label{eq:ABC}
\mathbf{B}(z) \begin{pmatrix} 1 & 0 \\ 0 & \mathbf{A}^{-1}(z) \end{pmatrix} = 
	C+O\Big(\frac{1}{z}\Big),\qquad z\rightarrow\infty,
\end{equation}
for some constant matrix $C$.

The matrix $C$ is easily seen to be invertible, as the functions $f^{(j)}$ are linearly
independent. Then
\begin{equation}\label{formulaGP} 
\mathbf{M}(z) = C^{-1} \mathbf{B}(z)
\end{equation}
satisfies all the conditions in the RH problem for $\mathbf{M}$. 

The $(1,1)$ entry of $\mathbf{M}$ is of special interest, since it appears 
in the asymptotic formula \eqref{strongasympform}. This entry is given by
\begin{equation}\label{formulaM11}
\mathbf{M}_{1,1}(z)=\exp(u_{1}(z))=\exp\left(\int_{\infty_{1}}^{z}\bm{\eta}\right),\qquad z\in\mathbb{C}\setminus\Sigma_{1}^{*},
\end{equation}
where $\bm{\eta}$ is the meromorphic differential defined above. To see this, observe that according to \eqref{formulaGP} and \eqref{def:matrixB}, 
\[
\mathbf{M}_{1,1}(z)=\widehat{v}_{1}(z) \sum_{i=1}^{d+1} (C^{-1})_{1,i}\,f^{(i-1)}_{1}(z)=e^{u_{1}(z)} \sum_{i=1}^{d+1} (C^{-1})_{1,i}\,f^{(i-1)}_{1}(z),
\]
where $(C^{-1})_{1,i}$ denotes the $(1,i)$ entry of $C^{-1}$. Since $\widehat{v}_{j}(z)=O(z^{-\frac{d+1}{2d}})$ as $z\rightarrow\infty$ for $j\geq 2$, 
it follows from the expression of $\mathbf{A}^{-1}(z)$ and \eqref{eq:ABC} that the entries $(1,i)$, $i\geq 2$, of $C$ are all zero. 
This implies that $(C^{-1})_{1,i}=0$ for all $i\geq 2$, hence $\mathbf{M}_{1,1}(z)=c\, e^{u_{1}(z)}$ for some constant $c$. By \eqref{asymp:M} we have $\mathbf{M}_{1,1}(z)=1+O(z^{-1})$, so $c=1$ and \eqref{formulaM11} follows.

\section{The final transformation $\mathbf{S}\mapsto \mathbf{R}$ and 
conclusion of the steepest descent analysis}\label{section:conclusion}

In this section we introduce the final transformation of the RH problem 
and give the proof of Theorem \ref{theo:strongasymp}. This final transformation makes
use of the global parametrix from the previous section and of a local parametrix
$\mathbf{P}^{(\textrm{Airy})}$ involving Airy functions that is defined on small disks 
around the endpoints $\omega^{\ell}\,x^{*}$, $\ell = 0, \ldots, d$ of the star $\Sigma_{1}^*$.
We define disks
\[
D(\omega^{\ell}\,x^{*},\delta)=\{z\in\mathbb{C} \mid |z-\omega^{\ell} x^{*}|<\delta\},
\]
where $\delta>0$ is taken sufficiently small. In any case, we want $\delta < \widehat{x}-x^*$
and the disks should be contained in the lense $L_3$.
Recall that  $t_{0}<t_{0,\crit}$, 
and therefore we know by Lemma \ref{lemma:propmu1} that the density of $\mu_{1}^{*}$ vanishes 
as a square root at the endpoints of $\Sigma_{1}^*$. This property and the fact that the 
RH problem is locally of size $2\times 2$ allow us to construct in a standard way a 
function $\mathbf{P}^{(\textrm{Airy})}$ that is the solution to the following problem.

\begin{rhp}\label{RHPlocalparam}
\begin{itemize}
\item $\mathbf{P}^{(\textrm{Airy})}$ is continuous on 
$\Big(\bigcup_{\ell=0}^{d} \overline{D(\omega^{\ell} x^{*},\delta)}\Big)\setminus 
\Sigma_{S}$ and is analytic in its interior $\Big(\bigcup_{\ell=0}^{d} D(\omega^{\ell}\,x^{*},\delta)\Big)\setminus \Sigma_{S}$.

\item $\mathbf{P}^{(\textrm{Airy})}_{+}=\mathbf{P}^{(\textrm{Airy})}_{-}\,\mathbf{J}_{S}$ on $\Sigma_{S}\cap \bigcup_{\ell=0}^{d} D(\omega^{\ell}\,x^{*},\delta)$, where $\mathbf{J}_{S}$ is the jump matrix for $\mathbf{S}$ as given
in \eqref{jump:S:Sigma1}, \eqref{jump:S:Sigma1hat}, \eqref{jump:S:L1}.

\item $\mathbf{P}^{(\textrm{Airy})}$ matches with the global parametrix $\mathbf{M}$ in the sense that
\begin{equation}\label{matching}
\mathbf{P}^{(\textrm{Airy})}(z)=\mathbf{M}(z)(I_{d+1}+O(n^{-1})),
\end{equation}
uniformly for $z\in \bigcup_{\ell=0}^{d} \partial D(\omega^{\ell}\,x^{*},\delta)$.
\end{itemize}
\end{rhp}

We omit here the construction of $\mathbf{P}^{(\textrm{Airy})}$ in terms of Airy functions. 
The details in the $2 \times 2$ case can be found in \cite{Deift}. 

We now define the final transformation $\mathbf{S}\mapsto \mathbf{R}$.

\begin{definition}
We define the matrix-valued function $\mathbf R: \mathbb C \setminus (\Sigma_S \cup \bigcup_{\ell} \partial D(\omega^{\ell} x^*, \delta))
\to \mathbb C^{(d+1) \times (d+1)}$ by
\begin{equation}\label{eq:defR}
\mathbf{R}(z):=\begin{cases}
\mathbf{S}(z)\,(\mathbf{P}^{(\textrm{Airy})})^{-1}(z), & \text{in the disks } D(\omega^{\ell} x^*, \delta), \\
\mathbf{S}(z)\,\mathbf{M}(z)^{-1}, & \text{outside the disks}.
\end{cases}
\end{equation}
\end{definition}

Since the jump matrices of $\mathbf{S}$  and $\mathbf{M}$ agree on $\Sigma_{1}^*$, $\Sigma_{2}$
and on $\Sigma_{3}\setminus\Sigma_{1}$, we see from \eqref{eq:defR} that $\mathbf{R}$ has analytic 
continuation across $\Sigma_{2}$ and $\Sigma_{3}\setminus\Sigma_{1}$ and across the part of $\Sigma_{1}^*$ outside 
the disks. The jump matrices of $\mathbf{S}$ and $\mathbf{P}^{(\textrm{Airy})}$  agree inside
the disks, and therefore $\mathbf{R}$ can be analytically 
continued inside the disks. The result is that $R$ is analytic in $\mathbb{C}\setminus\Sigma_{R}$, where $\Sigma_{R}$ 
is a system of contours as shown in Figure~\ref{fig:contourGammaR} for the case $d=3$. 
Thus $\Sigma_{R}$ consists of the intervals $\omega^{\ell} [x^*+\delta, \widehat{x}]$ for $\ell= 0, \ldots, d$,
the part of the boundary $\partial L_1$ of the lens $L_1$ that is outside of the disks, 
the full boundaries $\partial L_2$ and $\partial L_3$ of the other lenses, and the circles $\partial D(\omega^{\ell} x^*, \delta)$.  
These circles are given the positive orientation in $\Sigma_{R}$.

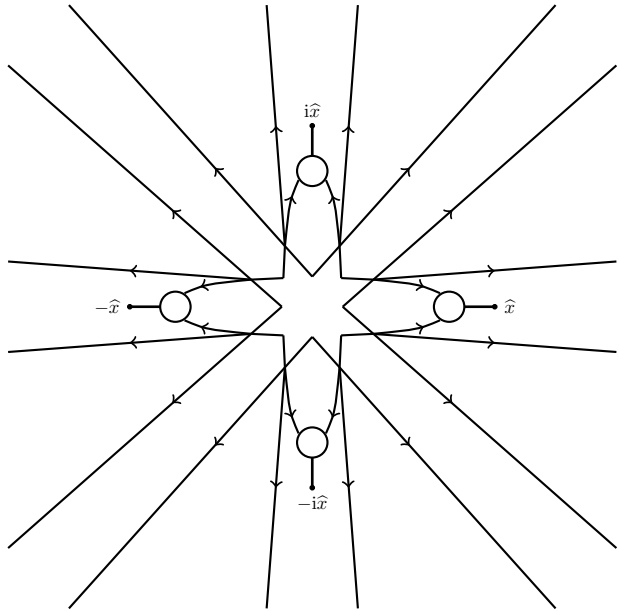
\begin{figure}[h!]
\begin{center}
\begin{tikzpicture}[scale = 2,line width = .5]
\draw [thick,postaction = decorate, decoration = {markings, mark = at position .4 with {\arrow[black]{>};}}] (0.2,0) -- (2,1.6);
\draw [thick,postaction = decorate, decoration = {markings, mark = at position .4 with {\arrow[black]{>};}}] (0.2,0) -- (2,-1.6);
\draw [rotate=90,thick,postaction = decorate, decoration = {markings, mark = at position .4 with {\arrow[black]{>};}}] (0.2,0) -- (2,1.6);
\draw [rotate=90,thick,postaction = decorate, decoration = {markings, mark = at position .4 with {\arrow[black]{>};}}] (0.2,0) -- (2,-1.6);
\draw [rotate=180,thick,postaction = decorate, decoration = {markings, mark = at position .4 with {\arrow[black]{>};}}] (0.2,0) -- (2,1.6);
\draw [rotate=180,thick,postaction = decorate, decoration = {markings, mark = at position .4 with {\arrow[black]{>};}}] (0.2,0) -- (2,-1.6);
\draw [rotate=270,thick,postaction = decorate, decoration = {markings, mark = at position .4 with {\arrow[black]{>};}}] (0.2,0) -- (2,1.6);
\draw [rotate=270,thick,postaction = decorate, decoration = {markings, mark = at position .4 with {\arrow[black]{>};}}] (0.2,0) -- (2,-1.6);
\draw [thick, postaction = decorate, decoration = {markings, mark = at position .5 with {\arrow[black]{>};}}] (0.4,0.18) -- (2,0.3);
\draw [thick, smooth, postaction = decorate, decoration = {markings, mark = at position .75 with {\arrow[black]{>};}}] (0.4,0.18).. controls (0.7,0.15) .. (0.84,0.09);
\draw [thick, smooth, postaction = decorate, decoration = {markings, mark = at position .75 with {\arrow[black]{>};}}] (0.4,-0.18).. controls (0.7,-0.15) .. (0.84,-0.09);
\draw [rotate=90,thick, smooth, postaction = decorate, decoration = {markings, mark = at position .75 with {\arrow[black]{>};}}] (0.4,0.18).. controls (0.7,0.15) .. (0.84,0.09);
\draw [rotate=90,thick, smooth, postaction = decorate, decoration = {markings, mark = at position .75 with {\arrow[black]{>};}}] (0.4,-0.18).. controls (0.7,-0.15) .. (0.84,-0.09);
\draw [rotate=180,thick, smooth, postaction = decorate, decoration = {markings, mark = at position .75 with {\arrow[black]{>};}}] (0.4,0.18).. controls (0.7,0.15) .. (0.84,0.09);
\draw [rotate=180,thick, smooth, postaction = decorate, decoration = {markings, mark = at position .75 with {\arrow[black]{>};}}] (0.4,-0.18).. controls (0.7,-0.15) .. (0.84,-0.09);
\draw [rotate=270,thick, smooth, postaction = decorate, decoration = {markings, mark = at position .75 with {\arrow[black]{>};}}] (0.4,0.18).. controls (0.7,0.15) .. (0.84,0.09);
\draw [rotate=270,thick, smooth, postaction = decorate, decoration = {markings, mark = at position .75 with {\arrow[black]{>};}}] (0.4,-0.18).. controls (0.7,-0.15) .. (0.84,-0.09);
\draw [thick] (0.9,0) circle (0.1);
\draw [thick] (0,0.9) circle (0.1);
\draw [thick] (-0.9,0) circle (0.1);
\draw [thick] (0,-0.9) circle (0.1);
\filldraw [black] (1.2,0) circle (0.4pt);
\filldraw [rotate=90,black] (1.2,0) circle (0.4pt);
\filldraw [rotate=180,black] (1.2,0) circle (0.4pt);
\filldraw [rotate=270,black] (1.2,0) circle (0.4pt);
\draw (1.3,0) node[scale=0.7]{$\widehat{x}$};
\draw[rotate=90] (1.3,0) node[scale=0.7]{$\ir\widehat{x}$};
\draw[rotate=180] (1.35,0) node[scale=0.7]{$-\widehat{x}$};
\draw[rotate=270] (1.3,0) node[scale=0.7]{$-\ir\widehat{x}$};
\draw [line width=1] (1,0) -- (1.2,0);
\draw [line width=1] (0,1) -- (0,1.2);
\draw [line width=1] (-1.2,0) -- (-1,0);
\draw [line width=1] (0,-1) -- (0,-1.2);
\draw [thick,smooth] (0.19,0.19) -- (0.4,0.18);
\draw [thick,smooth] (0.19,-0.19) -- (0.4,-0.18);
\draw [thick,smooth, rotate=90] (0.19,0.19) -- (0.4,0.18);
\draw [thick,smooth, rotate=90] (0.19,-0.19) -- (0.4,-0.18);
\draw [thick,smooth,rotate=180] (0.19,0.19) -- (0.4,0.18);
\draw [thick,smooth,rotate=180] (0.19,-0.19) -- (0.4,-0.18);
\draw [thick,smooth,rotate=270] (0.19,0.19) -- (0.4,0.18);
\draw [thick,smooth,rotate=270] (0.19,-0.19) -- (0.4,-0.18);
\draw [thick,postaction = decorate, decoration = {markings, mark = at position .5 with {\arrow[black]{>};}}] (0.4,-0.18) -- (2,-0.3);
\draw [thick, rotate=90,postaction = decorate, decoration = {markings, mark = at position .5 with {\arrow[black]{>};}}] (0.4,0.18) -- (2,0.3);
\draw [thick, rotate=90,postaction = decorate, decoration = {markings, mark = at position .5 with {\arrow[black]{>};}}] (0.4,-0.18) -- (2,-0.3);
\draw [thick, rotate=180,postaction = decorate, decoration = {markings, mark = at position .5 with {\arrow[black]{>};}}] (0.4,0.18) -- (2,0.3);
\draw [thick, rotate=180,postaction = decorate, decoration = {markings, mark = at position .5 with {\arrow[black]{>};}}] (0.4,-0.18) -- (2,-0.3);
\draw [thick, rotate=270,postaction = decorate, decoration = {markings, mark = at position .5 with {\arrow[black]{>};}}] (0.4,0.18) -- (2,0.3);
\draw [thick, rotate=270,postaction = decorate, decoration = {markings, mark = at position .5 with {\arrow[black]{>};}}] (0.4,-0.18) -- (2,-0.3);
\end{tikzpicture}
\end{center}
\caption{The contour $\Sigma_{R}$ in the case $d=3$.}
\label{fig:contourGammaR}
\end{figure}

We immediately observe from \eqref{eq:defR} that $\mathbf{R}$ satisfies the following RH problem.

\begin{rhp}\label{RHPforR}
\begin{itemize}
\item The matrix $\mathbf{R}$ is analytic in $\mathbb{C}\setminus\Sigma_{R}$.
\item  We have $\mathbf{R}_{+} =\mathbf{R}_{-} \,\mathbf{J}_{R}$ on $\Sigma_{R}$, where
\begin{equation} \label{jump:JR}
\mathbf{J}_{R}(z)=\begin{cases}
\mathbf{M}(z)\,\mathbf{P}^{(\textrm{Airy})}(z)^{-1}, & \text{for } z\in\bigcup\limits_{\ell=0}^{d} \partial D(\omega^{\ell} x^{*},\delta),\\
\mathbf{M}_{-}(z)\,\mathbf{J}_{S}(z)\,\mathbf{M}_{+}^{-1}(z), & \text{for } z\in\Sigma_{R}\setminus 
	\left(\bigcup\limits_{\ell=0}^{d} \partial D(\omega^{\ell} x^{*},\delta)\right).
\end{cases}
\end{equation}
\item $\mathbf{R}(z)=I_{d+1}+O(z^{-1})$ as $z\rightarrow\infty$.
\end{itemize}
\end{rhp}

Note that the asymptotic conditions \eqref{asymp:S} and \eqref{asymp:M} for $\mathbf{S}$ and $\mathbf{M}$ both contain
$\begin{pmatrix} 1 & 0 \\ 0 & A(z) \end{pmatrix}$, and this is cancelled in the asymptotic condition for $\mathbf{R}$.
Then one gets $\mathbf{R}(z)=I_{d+1}+O(z^{-\frac{2}{d}})$ as $z\to \infty$. However, since all jump matrices
in the RH problem for $\mathbf{R}$ are exponentially close to the identity matrix for large $z$, see \eqref{estimate:jumpR:2}, 
it can be shown that the $O$-term improves to $O(z^{-1})$.

The important feature of the RH problem associated with $\mathbf{R}$ is that the jump 
matrix $\mathbf{J}_{R}(z)$ tends to the identity matrix as $n\rightarrow\infty$, provided that the 
point $\widehat{x}>x^{*}$ is chosen so that it lies in the region where $\Real \varphi_{1}(z)<0$. Indeed, 
from \eqref{matching} and \eqref{jump:JR} we get
\begin{equation}\label{estimate:jumpR:1}
\mathbf{J}_{R}(z)=I_{d+1}+O(n^{-1}),\qquad z\in\bigcup_{\ell=0}^{d}\partial D(\omega^{\ell} x^{*},\delta),
\end{equation}
and on the remaining parts of $\Sigma_{R}$ we have convergence to the identity matrix exponentially fast, specifically
\begin{equation}\label{estimate:jumpR:2}
	\mathbf{J}_{R}(z)=I_{d+1}+O \left(e^{-c n |z|^{\frac{d+1}{d}}}\right),
	\qquad \text{for $z$ elsewhere on $\Sigma_{R}$},
\end{equation}
for some constant $c>0$. The estimate \eqref{estimate:jumpR:2} follows from \eqref{jump:JR} and
the corresponding estimate \eqref{estimate:jumpS}  for $\mathbf{J}_S(z)$, which is valid
uniformly on the parts of $\Sigma_R$ that come from $\Sigma_1 \setminus \Sigma_1^*$, and from the lips of the lenses.

\medskip

We can now conclude the steepest descent analysis. From \eqref{estimate:jumpR:1}--\eqref{estimate:jumpR:2} 
we deduce several important consequences. First, it follows that the RH problem \ref{RHPforR} has a unique 
solution if $n$ is large enough. The solution $\mathbf{R}$ can be expressed as a Neumann series. Since the transformations 
performed during the steepest descent analysis
\begin{equation}\label{transformations}
\mathbf{Y}\mapsto \mathbf{X}\mapsto \mathbf{U}\mapsto \mathbf{T}\mapsto \mathbf{S}\mapsto \mathbf{R}
\end{equation}
are all invertible, this implies that a solution $\mathbf{Y}$ to the original RH problem \ref{RHPforY} uniquely exists. 
In particular, the polynomial $P_{n,n}$, which is the $(1,1)$ entry of $\mathbf{Y}$, uniquely exists. We also 
deduce from \eqref{estimate:jumpR:1}--\eqref{estimate:jumpR:2} that the solution $R$ to the RH problem \ref{RHPforR} 
is itself close to the identity as $n\rightarrow\infty$. In fact, it follows from
\eqref{estimate:jumpR:1} and \eqref{estimate:jumpR:2} that $R$ satisfies
\begin{equation}\label{estimate:jumpR:3}
	\mathbf{R}(z)=I_{d+1}+O\Big(\frac{1}{n (1+|z|)}\Big), \qquad \text{as } n \to \infty,
\end{equation}
uniformly for $z\in\mathbb{C}\setminus\Sigma_{R}$. 


\subsection{Proof of Theorem \ref{theo:strongasymp}}

We unravel the transformations \eqref{transformations} in order to express the 
polynomial $P_{n,n}$ in terms of the matrix-valued function $\mathbf{R}$ and then we use \eqref{estimate:jumpR:3}.

We start from the relation $P_{n,n}(z)=\mathbf{Y}_{1,1}(z)$ that we know from Lemma \ref{lemma:RHcharact}. 
From \eqref{eq:def:X}, \eqref{def:U} and \eqref{def:T} we easily obtain
\[
P_{n,n}(z)= \mathbf{X}_{1,1}(z)=\mathbf{U}_{1,1}(z)=\mathbf{T}_{1,1}(z)\,e^{n g_{1}(z)},\qquad z\in\mathbb{C}\setminus\Sigma_{1}^*.
\]
Next, by definition of $\mathbf{S}$ in Definition \ref{def:Sz} we know that $\mathbf{S}(z)=\mathbf{T}(z)$ for 
all $z\in\mathbb{C}\setminus(L_{2}\cup L_{3})$, and so certainly $\mathbf{S}_{1,1}(z) = \mathbf{T}_{1,1}(z)$
for those $z$. But from \eqref{def:S:L3pm:2} and \eqref{def:S:L3pm:3} we see that $\mathbf{S}_{1,1}(z) = \mathbf{T}_{1,1}(z)$
also for $z \in (L_2 \cup L_3) \setminus L_1$. Hence
\begin{equation}\label{rel:P:S}
P_{n,n}(z)=\mathbf{S}_{1,1}(z)\,e^{n g_{1}(z)},\qquad z\in\mathbb{C}\setminus L_1.
\end{equation}
On account of \eqref{eq:defR}, we have $\mathbf{S}=\mathbf{R} \mathbf{M}$ outside of the disks  
$D(\omega^{\ell} x^{*},\delta)$, so using \eqref{estimate:jumpR:3} we obtain
\[
\mathbf{S}_{1,1}(z)=(1+O(n^{-1}))\,\mathbf{M}_{1,1}(z)+O(n^{-1})
\]
for such $z$. Since $\mathbf{M}_{1,1}(z)$ is an analytic function with no zeros 
in $\mathbb{C}\setminus\Sigma_{1}^{*}$, the above estimate can be rewritten as
\begin{equation}\label{estimate:S11}
\mathbf{S}_{1,1}(z)=(1+O(n^{-1}))\,\mathbf{M}_{1,1}(z),\qquad z\in\mathbb{C}\setminus 
\Big(\bigcup_{\ell=0}^{d} D(\omega^{\ell} x^{*},\delta)\Big),
\end{equation}
uniformly for $z$ in the indicated set.

Finally, from \eqref{rel:P:S} and \eqref{estimate:S11} we obtain \eqref{strongasympform} uniformly for 
$z\in\mathbb{C}\setminus \left( L_1 \cup \bigcup\limits_{\ell=0}^{d} D(\omega^{\ell} x^{*},\delta)\right)$. 
Since the lens $L_{1}$ and the disks $D(\omega^{\ell} x^{*},\delta)$ 
can be taken as small as we like, it follows that \eqref{strongasympform} 
is also valid uniformly for $z$ in compact subsets of $\mathbb{C}\setminus\Sigma_{1}^*$.

\section{Acknowledgements}

The first author is supported by KU Leuven Research Grant OT/12/073, the Belgian
Interuniversity Attraction Pole P07/18, and the FWO Flanders projects G.0641.11 and G.0934.13.

The second author was supported by a postdoctoral fellowship from the Fund 
for Scientific Research - Flanders (Belgium)  during his stay at the University of Leuven. 

The authors are grateful to the participants of the AIM workshop 
on Vector Equilibrium problems and their Applications to Random Matrix Models in April 2012
for fruitful and inspiring discussions on the normal matrix model.

\end{document}